\documentclass[11pt]{amsart}
\usepackage[utf8]{inputenc}
\usepackage[T1]{fontenc}
\usepackage{amssymb,amsmath,amsthm,mathtools}
\usepackage[a4paper,margin=1in]{geometry}
\usepackage{xcolor}
\usepackage{pgfplots}
\usepackage{booktabs}
\pgfplotsset{compat=1.18}
\usepackage{hyperref}
\hypersetup{colorlinks=true,linkcolor=blue!45!black,citecolor=blue!45!black,urlcolor=blue!45!black}

\theoremstyle{plain}
\newtheorem{theorem}{Theorem}[section]
\newtheorem{proposition}[theorem]{Proposition}
\newtheorem{corollary}[theorem]{Corollary}
\newtheorem{lemma}[theorem]{Lemma}
\theoremstyle{remark}
\newtheorem{remark}[theorem]{Remark}

\newcommand{\R}{\mathbb{R}}
\newcommand{\norm}[1]{\left\lVert #1\right\rVert}
\newcommand{\abs}[1]{\left\lvert #1\right\rvert}
\newcommand{\spec}{\sigma}
\newcommand{\Emin}{E_{\min}}
\newcommand{\Emax}{E_{\max}}
\DeclareMathOperator{\arccosh}{arccosh}
\DeclareMathOperator{\dist}{dist}
\DeclareMathOperator{\diag}{diag}
\newcommand{\Ecal}{\mathcal{E}}
\newcommand{\cpc}{\operatorname{cap}}
\begin{document}

\title[The remote-stabilization cost rate as a reflected off-spectral exponent]
{The cost rate of nonlinear remote stabilization on the Aubry--Andr\'e lattice:
a reflected off-spectral exponent and the sharp identity for almost every phase}
\author{Nassim Athmouni}
\address{Faculty of Sciences, University of Gafsa, BP 2100, Gafsa, Tunisia}
\email{nassim.athmouni@fsgf.u-gafsa.tn}
\author{Nejib Brahmia}
\address{University of Sfax, Faculty of Sciences of Sfax, Department of Mathematics, BP 1171 Sfax 3000, Tunisia}\email{brahmianajib@gmail.com}
\author{Ahmed Hachani}
\address{University of Gafsa, Faculty of Sciences of Gafsa, Department of Mathematics, Zarroug Gafsa 2112, Tunisia}\email{ahmedhachnai@gmail.com}
\begin{abstract}
We study the exponential rate $r(\alpha,\lambda)$ at which the least energy needed to steer a far site of an Aubry--Andr\'e chain, via a single boundary actuator with closed-loop margin $\alpha$, grows with the distance $N$. An exact eigenbasis reduction writes this energy as a Cauchy quadratic form; its rate equals the off-spectral Lyapunov exponent $\gamma_\lambda(z^\star)$ of the transfer cocycle at a reflected band edge $z^\star$, giving the identity $r(\alpha,\lambda)=\gamma_\lambda(z^\star)$.

We prove this identity unconditionally, for every phase, throughout the metallic and critical range $0<\lambda\le2$. In the localized range $\lambda>2$, an inverse-free cocycle form and a Christoffel--Darboux identity reduce the remaining lower bound to localization as its sole external input, giving the identity at every Diophantine frequency and almost every phase under an explicit, numerically supported localization hypothesis, whose band-edge gap component we prove at strong coupling.
\end{abstract}
\keywords{controllability cost; control energy rate; Aubry--Andr\'e operator;
off-spectral Lyapunov exponent; logarithmic capacity; equilibrium measure; exterior Green's function;
controllability Gramian; Cauchy matrix; Zolotarev problem.}

\subjclass[2020]{81Q10; 47B36; 31A15; 37C55; 93B05; 47A10}

\maketitle

\section{Introduction}\label{sec:intro}

How much energy does it cost to steer a distant site of a quasiperiodic chain from its boundary, and which
spectral quantity sets the exponential rate of that cost in the distance? On a one-dimensional quasiperiodic
chain the question has a sharp answer, and this paper gives it.

The chain carries the Aubry--Andr\'e (almost Mathieu) operator $H_\lambda$, with quasiperiodic potential
$\lambda\cos(2\pi\beta j+\phi)$ at a Diophantine frequency $\beta$ (the golden mean $(\sqrt5-1)/2$ in the numerics). It is a model with a fully understood
metal--insulator transition: for $\lambda<2$ the spectrum is purely absolutely continuous, for $\lambda>2$
it is pure point with eigenfunctions localized at rate $\log(\lambda/2)$, and $\lambda=2$ is the self-dual
critical coupling \cite{GJLS1997,AvronSimon,Jito1999,Avila2015}. Throughout the spectrum the Lyapunov
exponent of the transfer cocycle equals $\log_+(\lambda/2)$. The control quantity is the least energy
$\Ecal_N=e_N^\top W_\alpha^{-1}e_N$ steering the far unit at distance $N$, where $W_\alpha$ is the
controllability Gramian of the chain shifted by a closed-loop margin $\alpha$, and its exponential rate is
$r(\alpha,\lambda)=\liminf_N\frac1{2N}\log\Ecal_N$. This control system is linear, and it is the local model for
remote stabilization of a \emph{nonlinear} chain: at an operating point where a nonlinear lattice linearizes to the
shifted operator $A_\alpha=E_0I-H_\lambda$ (a hyperbolic equilibrium once the margin $\alpha>0$ is imposed), the
least energy of a small boundary maneuver steering the far site is, to leading order in the maneuver amplitude, the
linear cost $\Ecal_N$, so $r(\alpha,\lambda)$ is the cost rate of local nonlinear remote stabilization. We work with
the linear cost throughout.

A natural guess takes $r$ to be the localization rate, or the off-spectral Lyapunov exponent
$\gamma_\lambda(\Emin-\alpha)$ at the band margin. Neither is correct: already the free chain $\lambda=0$, where
there is no localization at all, has cost rate $\arccosh(3+\alpha)$, several times larger than that margin
exponent. The mechanism is different: $\Ecal_N$ is governed by the smallest eigenvalues of an exponentially
ill-conditioned controllability Gramian, not by a single resolvent column, and the resolvent decay does not see
those eigenvalues.

This paper identifies what does set the rate and, at every Diophantine frequency and almost every phase, establishes the identity throughout
the localized regime. Lemma~\ref{lem:reduction} expresses $\Ecal_N$ as the Cauchy quadratic form
$\tilde b^\top C^{-1}\tilde b$ in the boundary-amplitude ratios, a Zolotarev-type quantity for the shifted
spectrum \cite{BeckermannTownsend}. Its rate is the off-spectral exponent of the same cocycle at the
\emph{reflected band edge} $z^\star=2\Emin-2\alpha-\Emax$: Theorem~\ref{thm:exact} pins $r$ in the bracket
$g_{\mathbb C\setminus\Sigma_\lambda}(z^\star)\le r\le\gamma_\lambda(z^\star)$, of width $\log_+(\lambda/2)$,
the two ends coinciding for $\lambda\le2$ because the spectrum has logarithmic capacity $\max(1,\lambda/2)$.
The identity $r=\gamma_\lambda(z^\star)$ is thus rigorous and unconditional for $0<\lambda<2$, at every phase and every
Diophantine frequency (Theorem~\ref{thm:metclosed}, through the vanishing of the Lyapunov exponent on the subcritical
spectrum and a Bernstein--Walsh estimate), and in closed form for the free chain (Proposition~\ref{prop:freerate}); for $\lambda>2$
it is the upper end of the bracket, established below for almost every phase under an explicit localization
hypothesis (Theorem~\ref{thm:uncond}). At the self-dual coupling $\lambda=2$
the identity holds unconditionally as well (Theorem~\ref{thm:critical}): there the Green's function coincides with the
Lyapunov exponent, so the band edge is automatically regular and the metallic mechanism closes the critical case.

Three results, and then a cocycle form of the cost, close the identity. A
one-line extremal argument yields the mode-dominant lower bound
\[
r\ge\max\{g_{\mathbb C\setminus\Sigma_\lambda}(z^\star),\log_+(\lambda/2)\}
\]
(Proposition~\ref{prop:modelower}); the relative gap to $\gamma_\lambda(z^\star)$ vanishes at both ends of
the localized phase, as $\lambda\downarrow2$ and as $\lambda\to\infty$ (Proposition~\ref{prop:edge}); and an
inverse-free cocycle form of the cost reduces the identity to the single statement
that the inverse Gram matrix amplifies the boundary-data direction at the full exterior-Green rate
(Proposition~\ref{prop:reduction}).

This form has a definite sign structure: the coefficients of the cost are of one sign
(Proposition~\ref{prop:possum}), so the cost is a sum of nonnegative terms with no cancellation, and the rate is
the single-mode maximum $r=\liminf_N\max_k\frac1N\log\abs{c_k}$. Corollary~\ref{cor:singlemode} identifies the identity with an extremal statement: a band-edge mode
that is asymptotically right-boundary localized. A Christoffel--Darboux identity (Lemma~\ref{lem:cd}) collapses the
cost coefficients to $\abs{c_k}=Q(\delta_k)\,(\hat\psi^{(k)}_N)^2$, a regular boundary-weighted form; with the three-distance lemma this establishes the extremal statement,
hence the identity at every Diophantine frequency and almost every phase (Theorem~\ref{thm:uncond}), under the
polynomial-prefactor localization hypothesis \textup{(PL)}
(Lemma~\ref{lem:semiloc}, Remark~\ref{rem:plstatus}), which strengthens the semi-uniform localization of
\cite{Jito1999,Avila2015}, is confirmed numerically down to the transition, and whose window-gap component is
proved at strong coupling (Lemma~\ref{lem:edgeexp}); Remark~\ref{rem:locinput} records why
localization is the irreducible analytic input.

A certified finite-scale enclosure (\S\ref{sec:certified}) places
the rate at the upper end to high precision. What remains is the phase-uniform statement, over all phases (the
frequency now being any Diophantine, with a type-dependent rate); the route reducing that to a single overlap bound is set out in
\S\ref{sec:outlook}.

\section{Setup}\label{sec:setup}

On the finite path $\{1,\dots,N\}$ with Dirichlet boundary conditions, the Aubry--Andr\'e (almost
Mathieu) operator $H_\lambda$ acts by
\begin{equation}\label{eq:HAA}
(H_\lambda x)_j=-(x_{j+1}+x_{j-1})+\lambda\cos(2\pi\beta j+\phi)\,x_j,\qquad x_0=x_{N+1}=0,
\end{equation}
with $\beta$ a Diophantine frequency, fixed to the golden mean $(\sqrt5-1)/2$ for the explicit computations; the phase $\phi=0$ is fixed for the explicit computations and the certified enclosure
of \S\ref{sec:certified}, while Theorem~\ref{thm:uncond} holds for almost every $\phi$. Its Dirichlet eigenpairs are $(E_k,\psi^{(k)})_{k=1}^N$
with $E_1<\dots<E_N$; we write $a_k=\psi^{(k)}_1$, $b_k=\psi^{(k)}_N$ and $\tilde b_k=b_k/a_k$ for the
boundary-amplitude ratios. The stabilization margin is $\alpha>0$, the shifted reference level is
$E_0=\Emin-\alpha$ with $\Emin=\inf\spec(H_\lambda)$ (infinite volume), and $\delta_k=E_k-E_0>0$. We set
$\Sigma_\lambda=\spec(H_\lambda)$ and, using the spectral symmetry $\Emax=-\Emin$ recalled below, write
\begin{equation}\label{eq:zstar}
z^\star=2\Emin-2\alpha-\Emax=2E_0-\Emax=3\Emin-2\alpha
\end{equation}
for the reflected band edge. We write $g_{\mathbb C\setminus K}$ for the Green's function of
$\mathbb C\setminus K$ with pole at infinity.

\emph{The cost.} The quantity of interest (Figure~\ref{fig:setup}) is the
least energy of a boundary input that steers the far unit while the margin-$\alpha$ shifted dynamics
decay. With $A_\alpha:=E_0I-H_\lambda$ (symmetric, Hurwitz) and boundary actuator $B=e_1$,
\begin{equation}\label{eq:gram}
W_\alpha:=\int_0^\infty e^{A_\alpha s}BB^\top e^{A_\alpha s}\,ds,\qquad \Ecal_N:=e_N^\top W_\alpha^{-1}e_N,
\end{equation}
and the object of study is the exponential rate
\begin{equation}\label{eq:rate}
r(\alpha,\lambda):=\liminf_{N\to\infty}\frac1{2N}\log\Ecal_N
\end{equation}
(the $\liminf$ is used because existence of the limit is not needed).

\begin{figure}[t]
\centering
\begin{tikzpicture}[x=1.15cm,y=1cm,>=latex]
  \draw[gray!55,thick,smooth,samples=140,domain=-0.3:6.2,variable=\t]
        plot ({\t},{0.92+0.27*cos(deg(2*pi*0.618*\t+0.7))});
  \node[gray!78,anchor=south,font=\footnotesize] at (2.0,1.24)
        {on-site potential $\lambda\cos(2\pi\beta j+\phi)$};
  \draw[thick] (0,0)--(1,0)--(2,0)--(3,0)--(3.7,0);
  \draw[thick,dotted] (3.7,0)--(4.6,0);
  \draw[thick] (4.6,0)--(5.3,0)--(6,0);
  \foreach \x/\lab in {0/1,1/2,2/3,3/4,5.3/{N\!-\!1}}
     {\fill (\x,0) circle (2.2pt); \node[below=2.5pt,font=\footnotesize] at (\x,0) {$\lab$};}
  \draw[thick] (6,0) circle (3.8pt); \fill (6,0) circle (2.3pt);
  \node[below=2.5pt,font=\footnotesize] at (6,0) {$N$};
  \draw[->,thick] (-1.35,0)--(-0.12,0);
  \node[anchor=east,font=\footnotesize,align=center] at (-1.38,0)
        {boundary\\ actuator $B=e_1$};
  \draw[->,thick] (6.7,0)--(6.2,0);
  \node[anchor=west,font=\footnotesize,align=center] at (6.62,0)
        {far site:\\ steer to $e_N$};
  \draw[|-|] (0,-0.62)--(6,-0.62);
  \node[below,font=\footnotesize] at (3,-0.62) {distance $N$};
\end{tikzpicture}
\caption{The remote-stabilization setup. The almost Mathieu operator $H_\lambda$ acts on the Dirichlet path
$\{1,\dots,N\}$ with unit nearest-neighbour hopping and quasiperiodic on-site potential
$\lambda\cos(2\pi\beta j+\phi)$. A single boundary actuator $B=e_1$ drives the margin-$\alpha$ stable dynamics
$A_\alpha=E_0I-H_\lambda$, and $\Ecal_N=e_N^\top W_\alpha^{-1}e_N$ \eqref{eq:gram} is the least input energy that
steers the far site $N$. The object of study is the rate $r(\alpha,\lambda)$ \eqref{eq:rate} at which this energy
grows with the distance $N$.}
\label{fig:setup}
\end{figure}

\emph{The cocycle.} The one-step transfer matrices of $H_\lambda$, from the recurrence
$x_{k+1}=(\lambda\cos(2\pi\beta k+\phi)-z)x_k-x_{k-1}$, are
\[
T_k(z)=\begin{pmatrix}\lambda\cos(2\pi\beta k+\phi)-z&-1\\1&0\end{pmatrix},\qquad
\gamma_\lambda(z)=\lim_{n\to\infty}\frac1n\log\norm{T_n(z)\cdots T_1(z)},
\]
the latter the Lyapunov exponent, defined for every $z\in\mathbb C$. We use the following standard input.

\begin{theorem}[Aubry--Andr\'e; Bourgain--Jitomirskaya; Jitomirskaya; Avron--Simon; Craig--Simon]\label{thm:AA}
For \eqref{eq:HAA} with $\beta$ Diophantine (e.g.\ the golden mean $(\sqrt5-1)/2$),
$\gamma_\lambda(E)=\log_+(\lambda/2):=\max\{0,\log(\lambda/2)\}$ for every $E\in\Sigma_\lambda$ \cite{BourgainJito2002,Avila2015}, and
$\gamma_\lambda(z)>\log_+(\lambda/2)$ off the spectrum; $z\mapsto\gamma_\lambda(z)$ is continuous \cite{BourgainJito2002} and
subharmonic on $\mathbb C$ \cite{CraigSimon}. For $\lambda>2$ and almost every $\phi$ the spectrum is pure point with
eigenfunctions localized at rate $\log(\lambda/2)$ \cite{Jito1999}; for $\lambda<2$ it is purely absolutely continuous \cite{Avila2015};
$\lambda=2$ is the self-dual critical coupling \cite{GJLS1997}. The integrated density of states $N_\lambda$ \cite[\S2]{AvronSimon} obeys the
Thouless formula \cite{Thouless1972}
\begin{equation}\label{eq:thouless}
\gamma_\lambda(E)=\int\log\abs{E-E'}\,dN_\lambda(E'),\qquad E\in\R.
\end{equation}
\end{theorem}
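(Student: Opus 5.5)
This statement is a compilation of standard results, each with its own citation, so the plan is to assemble the literature rather than prove anything new. I would organize the check clause by clause and verify that each cited input applies to the normalization \eqref{eq:HAA}. That operator has hopping $-1$ and potential $\lambda\cos$. It is unitarily equivalent, via $x_j\mapsto(-1)^jx_j$ and $\phi\mapsto\phi+\pi$, to the usual almost Mathieu operator $x_{j+1}+x_{j-1}+2\lambda'\cos(2\pi\beta j+\phi')$ with $\lambda'=\lambda/2$. So the critical coupling $\lambda'=1$ becomes $\lambda=2$, and the Lyapunov exponent $\log_+\lambda'$ becomes $\log_+(\lambda/2)$. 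I would record this conjugation first, since all subsequent citations are phrased in the $\lambda'$ normalization.

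Next comes the value of $\gamma_\lambda$ on $\Sigma_\lambda$. I would invoke Bourgain--Jitomirskaya's formula $\gamma=\max\{0,\log\lambda'\}$ on the spectrum for irrational frequencies; Avila's global theory gives the same through acceleration and quantization. The transfer matrices in the statement are those of the conjugated recurrence up to sign, and a sign change does not affect the norm growth.

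For the off-spectrum strict inequality I would use the Thouless formula \eqref{eq:thouless}, extended to $z\in\mathbb C$ as the logarithmic potential of $dN_\lambda$. Two facts then combine:
\begin{itemize}
\item by Avila's theory, the complexified function $\gamma$ is convex and piecewise affine in the imaginary phase, which gives $\gamma(z)\ge\gamma$ on the spectrum;
\item by Johnson's theorem, uniform hyperbolicity holds off the spectrum.
\end{itemize}
For the strictness I would argue as follows. The function $\gamma(z)-\log_+(\lambda/2)$ is harmonic off $\Sigma_\lambda$, nonnegative, and tends to $+\infty$ at $\infty$. It vanishes on $\Sigma_\lambda$, and its boundary values are continuous there. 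The minimum principle on the unbounded domain then forces strict positivity, because a harmonic function attaining an interior zero minimum would be constant, contradicting its growth at $\infty$.

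Continuity of $\gamma$ comes from Bourgain--Jitomirskaya. Subharmonicity comes from Craig--Simon, being a limit of averages of the plurisubharmonic functions $\frac1n\log\norm{\cdot}$, or directly from the potential representation.

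The spectral-type statements are direct citations under the conjugation:
\begin{itemize}
\item localization at rate $\log(\lambda/2)$ for a.e.\ $\phi$ when $\lambda>2$, by Jitomirskaya 1999 (Diophantine frequency, with the exceptional phase set of measure zero);
\item absolute continuity for every phase when $\lambda<2$, by Avila's almost reducibility theorem;
\item self-duality at $\lambda=2$ via Aubry duality.
\end{itemize}
The Thouless formula for the IDS is the standard Avron--Simon and Craig--Simon statement for ergodic Jacobi operators.

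I expect the main obstacle to be the strict inequality off the spectrum. It needs care on gaps of $\mathbb R$ and near spectral edges, where one must confirm that $\gamma-\log_+(\lambda/2)$ does not vanish at a real point outside $\Sigma_\lambda$. The minimum-principle argument above handles this, provided that continuity up to $\Sigma_\lambda$ is used and that $\Sigma_\lambda$ has positive capacity, which holds because its capacity is $\max(1,\lambda/2)$.
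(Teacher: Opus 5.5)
Your proposal is correct and takes the same approach as the paper, which also treats this theorem as a compilation of citations. The paper adds only two points: the constant $-\log\abs{V}$ in the Thouless formula vanishes for unit hopping, and the Avron--Simon formula, proved for a.e.\ $E$, extends to every $E$ by continuity of $\gamma_\lambda$ (your appeal to Craig--Simon does the same job).

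Your minimum-principle argument for the strict inequality off $\Sigma_\lambda$ is a valid supplement that the paper omits. It needs only harmonicity of $\gamma_\lambda$ off $\Sigma_\lambda$ and Herman's bound $\gamma_\lambda\ge\log_+(\lambda/2)$ on all of $\mathbb C$ (the bound you state only "on the spectrum" is not enough). It does not need the capacity value, which the paper derives afterwards from this very theorem. Finally, the conjugation $x_j\mapsto(-1)^jx_j$ alone already turns \eqref{eq:HAA} into $x_{j+1}+x_{j-1}+\lambda\cos(2\pi\beta j+\phi)$, so drop the extra shift $\phi\mapsto\phi+\pi$: it would flip the sign of the potential.
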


In \eqref{eq:thouless} the additive constant $-\log\abs{V}$ of the general Thouless formula vanishes here, the
hopping being unit; the identity is rigorously due to Avron--Simon \cite[Thm~4.4]{AvronSimon} for almost
every $E$, and for the almost Mathieu operator $\gamma_\lambda$ is continuous \cite{BourgainJito2002}, so it holds for
every $E\in\R$, and we invoke it only off $\Sigma_\lambda$.

Two elementary spectral facts complete the setup. The spectrum is phase-independent, and the conjugation
$(Ux)_j=(-1)^jx_j$ maps $H_\phi$ to $-H_{\phi+\pi}$, so $\Sigma_\lambda=-\Sigma_\lambda$ and
\begin{equation}\label{eq:sym}
\Emax=-\Emin>0,\qquad\Sigma_\lambda\subset[-\Emax,\Emax].
\end{equation}
By Weyl's inequality, $H_\lambda$ being a hopping part with spectrum in $[-2,2]$ plus
$\diag(\lambda\cos(\cdot))\in[-\lambda,\lambda]$,
\begin{equation}\label{eq:edge}
\Emin\ge-\lambda-2,\qquad\text{so}\qquad m=\abs{\Emin}\le\lambda+2.
\end{equation}

\section{The cost as a Cauchy quadratic form}\label{sec:cauchy}

The cost \eqref{eq:gram} reduces, in the eigenbasis of $H_\lambda$, to an exponentially ill-conditioned
Cauchy form.

\begin{lemma}[Exact eigenbasis reduction]\label{lem:reduction}
With $C_{kl}=(\delta_k+\delta_l)^{-1}$ the symmetric positive definite Cauchy matrix of the shifted
eigenvalues, $a_k\ne0$ for all $k$ and
\begin{equation}\label{eq:reduction}
\Ecal_N=\tilde b^\top C^{-1}\tilde b,\qquad\tilde b_k=\frac{b_k}{a_k}=\frac{\psi^{(k)}_N}{\psi^{(k)}_1},
\qquad r(\alpha,\lambda)=\liminf_{N\to\infty}\frac1{2N}\log\Ecal_N.
\end{equation}
Moreover $\tilde b_k=\mathcal T_{21}(E_k)$, where $\mathcal T(E)=T_N(E)\cdots T_1(E)$ and $\det\mathcal T\equiv1$.
\end{lemma}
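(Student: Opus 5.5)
The plan is to diagonalize $H_\lambda$ orthogonally and push the Gramian integral through the eigenbasis. Write $H_\lambda=\Psi\,\diag(E_k)\,\Psi^\top$, where $\Psi$ is orthogonal with columns $\psi^{(k)}$. Then $A_\alpha=E_0I-H_\lambda=-\Psi\,\diag(\delta_k)\,\Psi^\top$ and $e^{A_\alpha s}=\Psi\,\diag(e^{-\delta_k s})\,\Psi^\top$. Since $\Psi^\top e_1=a:=(a_k)_k$, the integrand of \eqref{eq:gram} becomes $\Psi\,\diag(e^{-\delta_k s})\,aa^\top\diag(e^{-\delta_l s})\,\Psi^\top$. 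Integrating entrywise, which is legitimate because every $\delta_k>0$ (we have $E_k\ge\Emin>E_0$, the Dirichlet truncation having spectrum inside $[\Emin,\Emax]$, or at least above $E_0$ by the margin), gives
\[
W_\alpha=\Psi\,D_a\,C\,D_a\,\Psi^\top,\qquad D_a=\diag(a_k),\quad C_{kl}=\int_0^\infty e^{-(\delta_k+\delta_l)s}\,ds=\frac1{\delta_k+\delta_l}.
\]
Once $D_a$ and $C$ are shown to be invertible, we get $W_\alpha^{-1}=\Psi D_a^{-1}C^{-1}D_a^{-1}\Psi^\top$. Using $\Psi^\top e_N=b$, this yields $\Ecal_N=(D_a^{-1}b)^\top C^{-1}(D_a^{-1}b)=\tilde b^\top C^{-1}\tilde b$. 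The formula for $r$ is then just \eqref{eq:rate} rewritten.

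The two nondegeneracy facts come next. First, $a_k\neq0$: the eigen-equation with $x_0=0$ is a second-order recurrence, so $\psi^{(k)}_1=0$ would force $\psi^{(k)}\equiv0$. The same observation shows that every eigenvalue of the Jacobi matrix is simple, so the $\delta_k$ are pairwise distinct. Second, $C$ is symmetric positive definite: it is the Gram matrix of the functions $s\mapsto e^{-\delta_k s}$ in $L^2(0,\infty)$, since $v^\top Cv=\int_0^\infty\bigl(\sum_k v_ke^{-\delta_k s}\bigr)^2ds$. Exponentials with distinct rates are linearly independent, so this quadratic form is strictly positive for $v\neq0$. Together these give the invertibility of $W_\alpha$, i.e.\ controllability from $e_1$.

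For the transfer-matrix statement, rewrite the eigen-equation $-(x_{j+1}+x_{j-1})+\lambda\cos(2\pi\beta j+\phi)x_j=E_kx_j$ as
\[
\binom{x_{j+1}}{x_j}=T_j(E_k)\binom{x_j}{x_{j-1}},
\]
which is exactly the recurrence defining $T_j$. Starting from $(x_1,x_0)=(a_k,0)$ gives $(x_{N+1},x_N)^\top=\mathcal T(E_k)(a_k,0)^\top$. Reading off the second component gives $b_k=\mathcal T_{21}(E_k)a_k$, hence $\tilde b_k=\mathcal T_{21}(E_k)$; the first component encodes the Dirichlet condition $\mathcal T_{11}(E_k)=0$. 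Finally, $\det T_j=0\cdot(\cdot)+1=1$, so $\det\mathcal T\equiv1$.

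None of the steps is deep. The only point requiring care is the justification that $W_\alpha$ is invertible, which rests on the distinctness of the $\delta_k$ and the nonvanishing of the $a_k$ established above. Beyond that, the work is bookkeeping of transposes in the orthogonal conjugation.
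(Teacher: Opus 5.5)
Your proposal is correct and follows the paper's proof essentially step for step: you diagonalize, integrate to get $W_\alpha=D_aCD_a$ in eigencoordinates, use the three-term recurrence to get $a_k\neq0$, invert, and read $\tilde b_k=\mathcal T_{21}(E_k)$ off the transfer relation started from $(\psi_1,\psi_0)=(a_k,0)$. Your Gram-matrix argument for the positive definiteness of $C$ only spells out what the paper states in a parenthesis: the exponentials $e^{-\delta_k s}$ have distinct rates, hence are linearly independent, and the rates are distinct because the Jacobi spectrum is simple.
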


\begin{proof}
Since $A_\alpha=E_0I-H_\lambda$, the eigenbasis gives $A_\alpha\psi^{(k)}=-\delta_k\psi^{(k)}$ with
$\delta_k=E_k-E_0\ge\alpha>0$, hence $e^{A_\alpha s}e_1=\sum_k a_k e^{-\delta_k s}\psi^{(k)}$ and
\[
W_\alpha=\int_0^\infty e^{A_\alpha s}e_1e_1^\top e^{A_\alpha s}\,ds
=\sum_{k,l}\frac{a_ka_l}{\delta_k+\delta_l}\,\psi^{(k)}\psi^{(l)\top}.
\]
In eigencoordinates $W_\alpha=D_aCD_a$ with $D_a=\diag(a_k)$ and $C$ the Cauchy matrix, positive definite
(distinct $\delta_k>0$). An eigenfunction with $\psi_0=\psi_1=0$ vanishes identically by the three-term
recurrence, so $a_k\ne0$ and $W_\alpha\succ0$. Writing $e_N=\sum_k b_k\psi^{(k)}$,
\[
\Ecal_N=e_N^\top W_\alpha^{-1}e_N=b^\top D_a^{-1}C^{-1}D_a^{-1}b=\tilde b^\top C^{-1}\tilde b.
\]
For the cocycle form, a Dirichlet eigenfunction satisfies
$\binom{\psi_{N+1}}{\psi_N}=\mathcal T(E_k)\binom{\psi_1}{0}$, so $\psi_{N+1}=a_k\mathcal T_{11}(E_k)=0$ and
$\psi_N=a_k\mathcal T_{21}(E_k)$, i.e.\ $\tilde b_k=\mathcal T_{21}(E_k)$.
\end{proof}

\section{The cost rate is a reflected off-spectral exponent}\label{sec:identify}

We now identify the rate. A Zolotarev/condenser estimate for the Cauchy form has its extremal energy, by
reflection through $E_0$, land at $z^\star$.

\begin{lemma}[Monotonicity of the exterior Green's function]\label{lem:greenmono}
Let $K\subset\R$ be compact of positive logarithmic capacity. For real $w<\min K$ the map
$w\mapsto g_{\mathbb C\setminus K}(w)$ is strictly decreasing. Consequently, for $J\subset(0,\infty)$ the
function $s\mapsto g_{\mathbb C\setminus J}(-s)$ attains its maximum over $s\in J$ at $s=\max J$.
\end{lemma}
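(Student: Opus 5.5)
We will work with the equilibrium-measure representation of the Green's function. Since $K$ is compact with positive logarithmic capacity, it has an equilibrium measure $\mu_K$ (a probability measure supported on $K$), and
\[
g_{\mathbb C\setminus K}(z)=\int\log\abs{z-t}\,d\mu_K(t)-\log\cpc(K)
\]
holds for every $z\in\mathbb C\setminus K$. This formula needs no regularity of $K$ on the unbounded component, because off $K$ the potential is exactly the Green's function minus the constant. For real $w<\min K$, every $t\in\operatorname{supp}\mu_K\subset K$ satisfies $t-w>0$, so $\abs{w-t}=t-w$. Differentiation under the integral sign is justified because $\dist(w,K)>0$ locally uniformly in $w$. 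It gives
\[
\frac{d}{dw}g_{\mathbb C\setminus K}(w)=-\int\frac{d\mu_K(t)}{t-w}<0,
\]
since the integrand is strictly positive and $\mu_K$ has total mass one. So $w\mapsto g(w)$ is strictly decreasing on $(-\infty,\min K)$. An alternative that avoids differentiation: each map $w\mapsto\log(t-w)$ is strictly decreasing for $w<t$, and integrating against the probability measure $\mu_K$ keeps the inequality strict.

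For the consequence, take $J\subset(0,\infty)$, which we assume compact with positive capacity as the first part requires; in the application $J$ is the shifted spectrum. For $s\in J$ we have $s>0$, so $-s<0<\min J$, and the point $-s$ lies in the region where the first part applies. The map $s\mapsto -s$ is strictly decreasing, and $w\mapsto g(w)$ is strictly decreasing on $(-\infty,\min J)$. Their composition $s\mapsto g_{\mathbb C\setminus J}(-s)$ is therefore strictly increasing in $s$ over $s\in J$. Since $J$ is compact, $\max J$ exists, and the maximum is attained there, uniquely.

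The only delicate point is justifying the potential representation for a general compact $K\subset\R$ of positive capacity, possibly a Cantor set as for $\Sigma_\lambda$. We will cite the standard Frostman theorem. By that theorem, the equilibrium potential equals $-\log\cpc(K)$ quasi-everywhere on $K$, and the formula above is exactly the Green's function on the unbounded complementary component. Because $K\subset\R$, the complement is connected, so this component is all of $\mathbb C\setminus K$ and contains the real points $w<\min K$.
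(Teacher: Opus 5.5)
Your proof is correct and is essentially the paper's proof: the Frostman representation of $g_{\mathbb C\setminus K}$ through the equilibrium measure, the strictly negative derivative $-\int (t-w)^{-1}\,d\mu_K(t)$ for $w<\min K$, and composition with $s\mapsto -s$ for the consequence. Your additions (differentiation under the integral, connectedness of $\mathbb C\setminus K$, and stating explicitly that $J$ must be compact of positive capacity) are all sound; one small convention point is that, with your potential $\int\log\abs{z-t}\,d\mu_K(t)$, the quasi-everywhere value on $K$ is $+\log\cpc(K)$, and the value $-\log\cpc(K)$ belongs to the $\log(1/\abs{z-t})$ convention.
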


\begin{proof}
With $\mu_{\mathrm{eq}}$ the equilibrium measure of $K$, the Frostman representation
\cite{Ransford}
\[
g_{\mathbb C\setminus K}(w)=\int_K\log\abs{w-t}\,d\mu_{\mathrm{eq}}(t)-\log\cpc(K)
\]
holds on the unbounded complement. For $w<\min K$ every $t\in K$ has $t-w>0$, so
$\partial_w g_{\mathbb C\setminus K}(w)=-\int_K(t-w)^{-1}\,d\mu_{\mathrm{eq}}(t)<0$; no regularity of $K$
is used, so this holds for Cantor spectra. As $s$ ranges over $J$, $-s$ ranges to the left of $J$, where
$g_{\mathbb C\setminus J}$ decreases, so $g_{\mathbb C\setminus J}(-s)$ is largest at $s=\max J$.
\end{proof}

\begin{theorem}[The cost rate is a reflected off-spectral exponent]\label{thm:exact}
The density of states of $H_\lambda$ is the equilibrium measure of $\Sigma_\lambda$, and
\begin{equation}\label{eq:capval}
\cpc(\Sigma_\lambda)=\max(1,\lambda/2),\qquad
g_{\mathbb C\setminus\Sigma_\lambda}(w)=\gamma_\lambda(w)-\log_+(\lambda/2)\quad\text{for }w\notin\Sigma_\lambda
\end{equation}
(in particular at $w=z^\star$).
The cost rate is pinned in the bracket
\begin{equation}\label{eq:bracket}
g_{\mathbb C\setminus\Sigma_\lambda}(z^\star)\ \le\ r(\alpha,\lambda)\ \le\ \gamma_\lambda(z^\star),
\qquad\gamma_\lambda(z^\star)=g_{\mathbb C\setminus\Sigma_\lambda}(z^\star)+\log_+(\lambda/2),
\end{equation}
of width $\log_+(\lambda/2)$ --- the upper end at every phase and coupling, the lower end at every phase for
$0<\lambda\le2$ and, for $\lambda>2$, for almost every phase under \textup{(PL)} (Theorem~\ref{thm:uncond}; see the
summary table after Corollary~\ref{cor:summary}). The two ends coincide for $\lambda\le2$, giving
$r(\alpha,\lambda)=\gamma_\lambda(z^\star)$ there --- unconditionally, at every phase, for $0<\lambda<2$
(Theorem~\ref{thm:metclosed}) and at $\lambda=2$ (Theorem~\ref{thm:critical}),
and for the free chain (Proposition~\ref{prop:freerate}), with the conditioning route of
Proposition~\ref{prop:cond} an alternative under its edge-regularity input. For $\lambda>2$ the boundary amplitudes
grow at least at the localization rate, in the almost-sure $\liminf$ sense of Proposition~\ref{prop:amprate}.
\end{theorem}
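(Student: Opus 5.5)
The proof has three parts. First I identify the capacity and the Green's function. Then I prove the upper end of \eqref{eq:bracket} at every phase and coupling. Finally I dispatch the lower end and the coincidence to the results cited in the statement.

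\emph{Capacity and Green's function.} Fix $w\notin\Sigma_\lambda$ with $\abs{w}$ large. By the Thouless formula \eqref{eq:thouless}, extended off $\R$ as the logarithmic potential of $dN_\lambda$, which is harmonic there, $\gamma_\lambda(w)=\int\log\abs{w-E'}\,dN_\lambda(E')$. This potential is harmonic on $\mathbb C\setminus\Sigma_\lambda$ and equals $\log\abs w+o(1)$ at infinity. By Theorem~\ref{thm:AA} it equals the constant $\log_+(\lambda/2)$ on $\Sigma_\lambda$. A probability measure on a compact $K$ whose potential is constant, equal to $c$, on $K$ is the equilibrium measure, and then $\cpc(K)=e^{-c}$ with our sign convention (potential $\int\log\abs{w-t}$). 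For a Cantor set constancy only quasi-everywhere is needed, and continuity of $\gamma_\lambda$ gives it everywhere.

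This yields $dN_\lambda=\mu_{\mathrm{eq}}$. For the capacity, the Frostman representation used in Lemma~\ref{lem:greenmono} gives $g=\int\log\abs{w-t}\,d\mu_{\mathrm{eq}}-\log\cpc$. Since $g$ vanishes on the regular part of $\Sigma_\lambda$, this forces $\log\cpc(\Sigma_\lambda)=\log_+(\lambda/2)$, which is \eqref{eq:capval}. Subtracting gives $g_{\mathbb C\setminus\Sigma_\lambda}(w)=\gamma_\lambda(w)-\log_+(\lambda/2)$ for every $w$ off the spectrum. Note that $z^\star<\Emin$, so it lies off $\Sigma_\lambda$. The second relation in \eqref{eq:bracket} is then a rewriting.

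\emph{Upper bound.} I would bound the Cauchy form of Lemma~\ref{lem:reduction} using the smallest eigenvalue of $C$:
\[
\Ecal_N\le\norm{\tilde b}^2\,\lambda_{\min}(C)^{-1}.
\]
By Lemma~\ref{lem:reduction}, $\tilde b_k=\mathcal T_{21}(E_k)$ with $E_k\in[\Emin,\Emax]$, so $\norm{\tilde b}^2$ grows at most at rate $2\max_{\Sigma}\gamma_\lambda=2\log_+(\lambda/2)$ plus $o(N)$. This is uniform in the phase by the Furman-type uniform upper semicontinuity for continuous cocycles over uniquely ergodic rotations. The Cauchy matrix $C_{kl}=(\delta_k+\delta_l)^{-1}$ is the Gram matrix of the exponentials $e^{-\delta_k s}$ in $L^2(0,\infty)$. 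Its smallest eigenvalue is controlled by the Zolotarev number of $\{\delta_k\}$ against $\{-\delta_k\}$ \cite{BeckermannTownsend}. In the potential-theoretic limit, with $\mu_{\mathrm{eq}}$ as counting measure density, this decays at rate $2g_{\mathbb C\setminus J}(-\max J)$, where $J=\Sigma_\lambda-E_0$ by Lemma~\ref{lem:greenmono}. Translating back, $-\max J$ corresponds to $2E_0-\Emax=z^\star$, which gives the rate $2g_{\mathbb C\setminus\Sigma_\lambda}(z^\star)$. Adding the two contributions and halving gives $r\le g(z^\star)+\log_+(\lambda/2)=\gamma_\lambda(z^\star)$.

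\emph{Lower end and the remaining claims.} The lower bound $r\ge g(z^\star)$ is not proved here. It is delegated to Theorems~\ref{thm:metclosed}, \ref{thm:critical} and~\ref{thm:uncond}, as the statement says. Coincidence for $\lambda\le2$ is immediate since $\log_+(\lambda/2)=0$. The amplitude claim for $\lambda>2$ is Proposition~\ref{prop:amprate}.

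The main obstacle is the Zolotarev step. Converting the discrete $N$-point Cauchy conditioning into the continuum Green's-function rate requires the eigenvalue counting measures to converge to $dN_\lambda$, plus a discrete-to-continuum error of size $o(N)$ in the Zolotarev exponent, valid for a Cantor set. I would first try an explicit rational test function: the Blaschke-type product $\prod(x-\delta_k)/(x+\delta_k)$, whose logarithm is a Riemann sum of the equilibrium potential difference. Weyl convergence of the counting measures would then make the upper bound on $\lambda_{\min}(C)^{-1}$ at rate $g(z^\star)$ rigorous.
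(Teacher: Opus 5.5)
\textbf{The potential theory and the delegations are fine.} Your identification of $dN_\lambda$ with $\mu_{\mathrm{eq}}$, of the capacity and of $g_{\mathbb C\setminus\Sigma_\lambda}$ is the paper's part~(ii), with one sign slip. With the potential $\int\log\abs{w-t}\,d\mu(t)$, Frostman's constant is $\log\cpc(K)$, so $\cpc=e^{c}$, not $e^{-c}$; your next sentence in fact uses the right sign. Delegating the lower end to Theorems~\ref{thm:metclosed}, \ref{thm:critical} and~\ref{thm:uncond} is also what the statement itself does.

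\textbf{The upper bound has a genuine gap, at two points.} First, the claim $\frac1{2N}\log\norm{\tilde b}^2\le\log_+(\lambda/2)+o(1)$ is false in general. The Dirichlet eigenvalues lie in $[\Emin,\Emax]$, not in the Cantor set $\Sigma_\lambda$. They can sit deep in a spectral gap, where $\gamma_\lambda(E_k)>\log_+(\lambda/2)$, and a right-boundary state there has $\abs{\tilde b_k}=e^{\gamma_\lambda(E_k)N(1+o(1))}$. Remark~\ref{rem:gapstates} reports amplitude rate $0.736$ against $\log(3/2)=0.405$. Furman's uniform bound over the compact set that actually contains the $E_k$, namely $[\Emin,\Emax]$, yields only $\max_{[\Emin,\Emax]}\gamma_\lambda$, which sees the gaps. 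You silently replaced $[\Emin,\Emax]$ by $\Sigma_\lambda$, and nothing in your argument excludes the same phenomenon for $0<\lambda\le2$, whose spectrum also has gaps with $\gamma_\lambda>0$. Since $r$ is a $\liminf$, a subsequence of $N$ free of such states would suffice, but you do not produce one.

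Second, the conditioning step points the wrong way. Beckermann--Townsend bounds the singular values of $C$ from above, i.e.\ it bounds $\lambda_{\min}(C)^{-1}$ from below. Any test function built from your Blaschke product does the same, since it bounds a minimum of Rayleigh quotients from above. Bounding $\lambda_{\min}(C)^{-1}$ from above needs the inverse. Via $C^{-1}=DCD$ that means a lower bound on $\abs{P'(\delta_k)}=\prod_{m\ne k}\abs{\delta_k-\delta_m}$ uniformly in $k$, i.e.\ eigenvalue-spacing control, plus the edge regularity that leaves Proposition~\ref{prop:cond} conditional. The paper rejects this route for exactly these two reasons.

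\textbf{The missing idea is not to separate $\tilde b$ from $C^{-1}$.}
\begin{enumerate}
\item Write $C^{-1}=DCD$ with $D=\diag\bigl(Q(\delta_k)/P'(\delta_k)\bigr)$. Then $\Ecal_N=c^\top Cc$ with $c_k=Q(\delta_k)\tilde b_k/P'(\delta_k)$.
\item Normalize $\psi^{(k)}_1=1$ and use $P'(\delta_k)=(-1)^N\mathcal T_{11}'(E_k)$. The Wronskian identity $\mathcal T_{11}'(E_k)\,\mathcal T_{21}(E_k)=-\norm{\psi^{(k)}}^2$ then collapses the coefficient to $\abs{c_k}=Q(\delta_k)\,(\hat\psi^{(k)}_N)^2\le Q(\delta_k)$. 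The gap-state growth of $\tilde b_k$ and any small spacing in $P'(\delta_k)$ cancel exactly.
\item Since $0<C_{kl}\le(2\alpha)^{-1}$, one gets $\Ecal_N\le\sum_{k,l}\abs{c_k}\abs{c_l}C_{kl}\le(2\alpha)^{-1}N^2\max_kQ(\delta_k)^2$.
\item Because $E_k\le\Emax$, one has $\frac1N\log Q(\delta_k)=\frac1N\sum_m\log\bigl(E_m-(2E_0-E_k)\bigr)\le\frac1N\sum_m\log(E_m-z^\star)$. This is a Riemann sum of a bounded continuous integrand, since the reflected points stay at distance $\ge2\alpha$ from $[\Emin,\Emax]$. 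Weak convergence of the counting measures and the Thouless formula send it to $\gamma_\lambda(z^\star)$.
\end{enumerate}
This gives even $\limsup_N\frac1{2N}\log\Ecal_N\le\gamma_\lambda(z^\star)$ at every phase and coupling, with no amplitude, spacing or edge-regularity input. It is the paper's part~(iii), through Lemmas~\ref{lem:cd}--\ref{lem:Qedge}.
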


\begin{proof}
\emph{Logical structure.} Of the two ends of \eqref{eq:bracket}, the upper bound $r\le\gamma_\lambda(z^\star)$ is
proved \emph{unconditionally} in part~(iii) below, from $\abs{c_k}\le Q(\delta_k)$ and Lemma~\ref{lem:Qedge}
(the same estimate that closes Theorem~\ref{thm:uncond}); the lower bound
$r\ge g_{\mathbb C\setminus\Sigma_\lambda}(z^\star)$ holds at every phase from Theorem~\ref{thm:metclosed} for
$0<\lambda<2$ (and at $\lambda=2$ from Theorem~\ref{thm:critical}), and for $\lambda>2$ from
Theorem~\ref{thm:uncond}, for almost every phase under \textup{(PL)}. Parts~(i)--(ii) only \emph{identify} the bracket
value: (ii) is potential theory, and the condenser/Zolotarev computation of~(i) together with
Proposition~\ref{prop:cond} is an alternative \emph{every-phase} route to the lower end, conditional on the
edge-regularity of the equilibrium measure and \emph{used for no result stated in this paper}. A reader interested
only in the identity may read~(i) as motivation and~(ii)--(iii) as proof.

\emph{(i) Zolotarev rate (conditional; not used below).} By Lemma~\ref{lem:reduction}, $\Ecal_N=\tilde b^\top C^{-1}\tilde b$ for the
Cauchy matrix with nodes $\delta_k$ filling $J:=\Sigma_\lambda-E_0$. The Beckermann--Townsend
bound \cite{BeckermannTownsend} controls the singular values of $C$ by the Zolotarev number of the spectra,
$\sigma_{k+1}(C)\le Z_k(\{\delta_j\},\{-\delta_j\})\,\sigma_1(C)$, which bounds $\sigma_{\min}(C)$ from above and
so gives the lower estimate
\[
\liminf_N\tfrac1{2N}\log\sigma_{\min}(C)^{-1}\ \ge\ \max_{s\in J}U(s),\qquad\nu=\mu_{\mathrm{eq}}^J,
\]
where $U(s)=\int_J\log(s+t)\,d\nu(t)-\int_J\log\abs{s-t}\,d\nu(t)$. This is the rate of the \emph{discrete}
Zolotarev number $Z_k(\{\delta_j\},\{-\delta_j\})$, governed by the pointwise
potential once $\nu$ vanishes like a square root at $\partial J$ \cite{BeckermannGryson,Goncar}; the continuous
bound $Z_k([-b,-a],[a,b])\le4\rho^{-2k}$ of \cite{BeckermannTownsend} alone gives only the strictly smaller
condenser rate $\tfrac1{2\cpc(J,-J)}$, the discrete number being the sharper object that \cite{BeckermannTownsend}
explicitly leaves out of scope. The matching upper estimate, hence the limit
$\frac1{2N}\log\sigma_{\min}(C)^{-1}\to\max_{s\in J}U(s)$, is Proposition~\ref{prop:cond}
(modulo edge-regularity), the continuous Zolotarev bound being one-sided on the discrete singular values. The
equilibrium-potential term equals $\log\cpc(J)$ on $J$ and cancels, leaving
$U(s)=g_{\mathbb C\setminus J}(-s)$; by Lemma~\ref{lem:greenmono} the maximum is at $s=\max J=\Emax-E_0$,
so $\max_{s\in J}U(s)=g_{\mathbb C\setminus\Sigma_\lambda}(2E_0-\Emax)=g_{\mathbb C\setminus\Sigma_\lambda}(z^\star)$
by translation covariance. The upper bracket would follow: $\Ecal_N\le\norm{\tilde b}^2\sigma_{\min}(C)^{-1}$ carries the rate
$g_{\mathbb C\setminus\Sigma_\lambda}(z^\star)+\log_+(\lambda/2)=\gamma_\lambda(z^\star)$ only along subsequences on
which $\frac1{2N}\log\norm{\tilde b}^2\to\log_+(\lambda/2)$; by Remark~\ref{rem:gapstates}, gap-edge states can raise
the amplitude rate along other subsequences --- one more reason this route is not used.

The same upper rate $\gamma_\lambda(z^\star)$ is obtained \emph{unconditionally}, with no conditioning input, from
the boundary-weight bound $\abs{c_k}\le Q(\delta_k)$ and Lemma~\ref{lem:Qedge} (Theorem~\ref{thm:uncond}); the
conditioning route above is thus an alternative for the upper bracket. Proposition~\ref{prop:cond}, with its
edge-regularity input, is needed for no result at $\lambda>2$, where the matching lower bound is
Theorem~\ref{thm:uncond}; on the metallic side $0<\lambda<2$ the lower bound $r\ge g_{\mathbb C\setminus\Sigma_\lambda}(z^\star)$
is now supplied unconditionally by Theorem~\ref{thm:metclosed} (\S\ref{sec:metallic}), so Proposition~\ref{prop:cond}
stands as the every-phase condenser-rate statement of independent interest rather than as a needed input.

The lower bracket $r\ge g_{\mathbb C\setminus\Sigma_\lambda}(z^\star)$ holds by either of two routes,
the first unconditional and the one this paper uses. \emph{(a) For almost every phase}, it is a corollary of
Theorem~\ref{thm:uncond}, which gives $r=\gamma_\lambda(z^\star)\ge g_{\mathbb C\setminus\Sigma_\lambda}(z^\star)$
with no conditioning or edge-regularity input; that theorem is proved independently of this bracket, so the
implication is not circular. \emph{(b) For every phase}, it is the condenser (Zolotarev) rate of
$\sigma_{\min}(C)^{-1}$ computed above (made rigorous in Proposition~\ref{prop:cond} below, modulo the
edge-regularity of the equilibrium measure), together with the dual extremal-polynomial certificate of
\S\ref{sec:outlook}(i) (numerator rate $0$) that aligns $\tilde b$ with the worst-conditioned direction at the
full rate. Route (b) does \emph{not} reduce to the naive
$\Ecal_N\ge\langle\tilde b,u_N\rangle^2\sigma_{\min}(C)^{-1}$. The vectors $\tilde b$ and the smallest-eigenvalue
eigenvector $u_N$ of $C$ share signs (Gantmacher--Krein \cite{GantmacherKrein}), so the overlap carries no
cancellation; but that excludes cancellation only, not the exponential smallness of
$\langle\tilde b,u_N\rangle$ itself. Bounding that overlap is the phase-uniform route of \S\ref{sec:condoverlap}.

\emph{(ii) Density of states and capacity.} By Theorem~\ref{thm:AA}, $\gamma_\lambda\equiv\log_+(\lambda/2)$
on $\Sigma_\lambda$; by the Thouless formula \eqref{eq:thouless} the logarithmic potential of $N_\lambda$ is
the constant $\log_+(\lambda/2)$ q.e.\ on $\Sigma_\lambda$. A probability measure whose logarithmic
potential is constant q.e.\ on its support is the equilibrium measure of that support, the constant being
$\log\cpc$; hence $N_\lambda=\mu_{\mathrm{eq}}$ and $\cpc(\Sigma_\lambda)=\max(1,\lambda/2)$. (The rational
approximants $\beta\approx p/q$ each carry $\cpc=1$, but capacity is discontinuous through this limit, so
they do not transmit it: this is the gap in the naive ``unit-hopping $\Rightarrow\cpc=1$'' reading.) For every
$w\notin\Sigma_\lambda$ the Frostman representation $g_{\mathbb C\setminus\Sigma_\lambda}(w)=\int\log\abs{w-t}\,dN_\lambda(t)-\log\cpc$
combined with the Thouless formula \eqref{eq:thouless} gives
$g_{\mathbb C\setminus\Sigma_\lambda}(w)=\gamma_\lambda(w)-\log_+(\lambda/2)$, i.e.\
\eqref{eq:capval}; for $\lambda\le2$ the subtracted term is $0$, and at $\lambda=2$ both sides extend continuously to
$\Sigma_2$, where they vanish.

\emph{(iii) Supercritical amplitudes.} For $\lambda>2$ and almost every phase, every eigenfunction is localized at
rate $\log(\lambda/2)$ (Theorem~\ref{thm:AA}). We stress that the individual ratios $\tilde b_k=\mathcal T_{21}(E_k)=\psi^{(k)}_N/\psi^{(k)}_1$
are \emph{not} uniformly of size $e^{\log(\lambda/2)N}$: a mode localized at the left end
($\psi^{(k)}_1=O(1)$, $\psi^{(k)}_N=O(e^{-\log(\lambda/2)N})$) has $\abs{\tilde b_k}$ exponentially
\emph{small}, a centre mode has $\abs{\tilde b_k}=e^{o(N)}$, and only a right-end mode
($\psi^{(k)}_1=O(e^{-\log(\lambda/2)N})$, $\psi^{(k)}_N=O(1)$) has $\abs{\tilde b_k}=e^{\log(\lambda/2)N(1+o(1))}$;
this is quantified mode-by-mode in Corollary~\ref{cor:singlemode}. Accordingly \eqref{eq:amprate} is a
statement about the \emph{maximum}, equivalently the rate of the \emph{norm}, not about each coefficient.
The almost-sure lower bound at this rate, \eqref{eq:amprate}, is Proposition~\ref{prop:amprate}; a matching upper
bound fails in general, because finite-volume Dirichlet eigenvalues need not lie in $\Sigma_\lambda$ --- they may
fall in the spectral gaps, where a right-boundary state carries the strictly larger exponent
$\gamma_\lambda(E_k)>\log(\lambda/2)$ (Remark~\ref{rem:gapstates}). With the singular-value rate of~(i)---equivalently, and unconditionally, the
boundary-weight bound $\abs{c_k}\le Q(\delta_k)$ of Lemmas~\ref{lem:cd}--\ref{lem:Qedge} that closes
Theorem~\ref{thm:uncond}---this gives
$r\le g_{\mathbb C\setminus\Sigma_\lambda}(z^\star)+\log(\lambda/2)=\gamma_\lambda(z^\star)$, completing
\eqref{eq:bracket}. The high-precision evaluation of \S\ref{sec:certified} places $r$ at the upper end for
$\lambda>0$, but a proof for the quasiperiodic spectrum is the problem this paper resolves for almost every phase (Theorem~\ref{thm:uncond}).
\end{proof}

\begin{proposition}[Exact free-chain cost rate]\label{prop:freerate}
For $\lambda=0$ and $\alpha>0$, with $\delta_k=\alpha+4\sin^2\frac{k\pi}{2(N+1)}$ and
$\tilde b_k=(-1)^{k+1}$,
\[
r(\alpha,0)=\lim_{N\to\infty}\frac1{2N}\log\Ecal_N=\arccosh(3+\alpha)=\gamma_0(z^\star),
\]
strictly above the margin exponent $\gamma_0(\Emin-\alpha)=\arccosh(1+\tfrac\alpha2)$ (e.g.\ $1.7975$
versus $0.3149$ at $\alpha=0.1$): the resolvent rate is not the cost rate even without disorder.
\end{proposition}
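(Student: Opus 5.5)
The plan is to make every ingredient of Lemma~\ref{lem:reduction} explicit for $\lambda=0$ and then evaluate the Cauchy quadratic form in closed form through the classical inverse-Cauchy formula. The rate will appear as a product over the nodes, which Chebyshev polynomials evaluate exactly.

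\emph{Step 1: eigendata.} For $\lambda=0$ the Dirichlet eigenvectors are $\psi^{(k)}_j=\sqrt{2/(N+1)}\,\sin\frac{jk\pi}{N+1}$ with $E_k=-2\cos\frac{k\pi}{N+1}$. The infinite-volume edge is $\Emin=-2$, so $\delta_k=E_k+2+\alpha=\alpha+4\sin^2\frac{k\pi}{2(N+1)}$. Since $\sin\frac{Nk\pi}{N+1}=(-1)^{k+1}\sin\frac{k\pi}{N+1}$, we get $\tilde b_k=(-1)^{k+1}$. In particular $\norm{\tilde b}^2=N$ contributes rate $0$, consistent with $\log_+(0/2)=0$.

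\emph{Step 2: inverse Cauchy form.} For the symmetric Cauchy matrix $C_{kl}=(\delta_k+\delta_l)^{-1}$ I would use the standard formula
\[
(C^{-1})_{kl}=\frac{P_kP_l}{\delta_k+\delta_l},\qquad
P_k=\frac{\prod_{j}(\delta_k+\delta_j)}{\prod_{j\ne k}(\delta_k-\delta_j)}.
\]
The factor $P_k$ is real, and its sign is $(-1)^{N-k}$ because the $\delta_j$ are increasing. Hence $(-1)^{k+1}P_k$ has a sign independent of $k$, and
\[
\Ecal_N=\sum_{k,l}\frac{\abs{P_k}\abs{P_l}}{\delta_k+\delta_l}
\]
is a sum of nonnegative terms. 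This gives the two-sided estimate
\[
\frac{P_N^2}{2\delta_N}\le\Ecal_N\le\frac{N^2}{2\alpha}\max_k P_k^2 .
\]
Therefore $r=\lim\frac1N\log\max_k\abs{P_k}$, provided this limit exists.

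\emph{Step 3: exact evaluation via Chebyshev polynomials.} Set $x=(\delta-\alpha-2)/2$, which maps the nodes to $-\cos\frac{k\pi}{N+1}$, the zeros of $U_N(-x)$. Up to the constant $2^N$, $p(\delta)=\prod_j(\delta-\delta_j)$ is $U_N\bigl(\tfrac{\delta-\alpha-2}{2}\bigr)$ with a sign. The two factors of $P_k$ are then handled separately.
\begin{itemize}
\item The denominator is $\abs{p'(\delta_k)}$. The derivative of $U_N$ at its zeros is $\frac{N+1}{\sin^2\theta_k}$ up to sign, which has subexponential size, so the denominator contributes only polynomial factors.
\item The numerator is $\abs{p(-\delta_k)}$, which equals $U_N$ evaluated at $w_k=(\delta_k+\alpha+2)/2\ge1+\alpha>1$. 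Writing $w_k=\cosh\eta_k$, we have $U_N(\cosh\eta)=\sinh((N+1)\eta)/\sinh\eta$.
\item The numerator also contains the factor $2\delta_k$ from $j=k$, which is polynomial.
\end{itemize}
It follows that $\frac1N\log\abs{P_k}=\eta_k+O(\log N/N)$. Its maximum is attained at the largest node $\delta_N\to\alpha+4$, where $w\to3+\alpha$. Hence $\frac1{2N}\log\Ecal_N\to\arccosh(3+\alpha)$, and the liminf is a genuine limit.

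\emph{Step 4: identification and comparison.} From \eqref{eq:zstar}, $z^\star=3\Emin-2\alpha=-6-2\alpha$. Outside $[-2,2]$ the free Lyapunov exponent is $\gamma_0(z)=\arccosh(\abs z/2)$, which gives $\gamma_0(z^\star)=\arccosh(3+\alpha)$. The margin value is $\gamma_0(-2-\alpha)=\arccosh(1+\alpha/2)$. The numerics at $\alpha=0.1$ are direct evaluation: $\arccosh(3.1)\approx1.7975$ and $\arccosh(1.05)\approx0.3149$.

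\emph{Main obstacle.} The delicate step is Step 3. One must check that the subexponential factors, namely $\sin^2\theta_k$ near the edge $k=N$ where $\theta_N\to\pi$ and $\sin\theta_N\sim\pi/N$, remain polynomial in $N$. One must also check that replacing $\delta_N$ by its limit $\alpha+4$ changes $\eta_N$ only by $O(N^{-2})$. Both follow from the explicit sine formulas. A secondary point is to get the sign bookkeeping in Step 2 right, since the positivity there is what allows a single term to give the lower bound.
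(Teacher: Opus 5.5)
Your proof is correct, and it follows a genuinely different route from the paper's.

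\emph{The paper's route.} It sandwiches the cost as $\sigma_{\min}(C)^{-1}\le\Ecal_N\le N\,\sigma_{\min}(C)^{-1}$. The lower bound uses Gantmacher--Krein: the bottom eigenvector $u_N$ of the totally positive Cauchy matrix alternates exactly like $\tilde b$, so $\langle\tilde b,u_N\rangle^2=\norm{u_N}_1^2\ge1$. The paper then imports the condenser/Zolotarev asymptotic $\frac1{2N}\log\sigma_{\min}(C)^{-1}\to\max_{s\in J}g_{\mathbb C\setminus J}(-s)$ for the arcsine-distributed nodes on $J=[\alpha,\alpha+4]$.

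\emph{Your route.} You never touch $\sigma_{\min}(C)$. Your explicit inverse $(C^{-1})_{kl}=P_kP_l/(\delta_k+\delta_l)$ is the paper's later identity $C^{-1}=DCD$ (Proposition~\ref{prop:cocycleform}). Your sign bookkeeping is its positive-sum reduction (Proposition~\ref{prop:possum}) specialized to $\abs{\tilde b_k}=1$. Your Chebyshev evaluations $\prod_j(\delta_k+\delta_j)=U_N(w_k)$ and $\abs{p'(\delta_k)}=\frac{N+1}{2\sin^2(k\pi/(N+1))}$ compute exactly what Lemma~\ref{lem:Qedge} gives asymptotically for the first factor. The second factor is what the Wronskian identity of Lemma~\ref{lem:cd} gives with $\psi_1=1$.

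\emph{What each buys.} Yours is elementary and self-contained. It proves the limit exists, with an explicit $O(\log N/N)$ error uniform in $k$. It avoids the smallest-singular-value asymptotics for Cauchy matrices, which the paper must import: these are classical for an interval, but Beckermann--Townsend alone is one-sided, and Proposition~\ref{prop:cond} states the general case only conditionally. The paper's route instead interprets the free rate as the pure conditioning rate of the Gram matrix against an order-one data overlap. That is the template for the conditioning term $g_{\mathbb C\setminus\Sigma_\lambda}(z^\star)$ in the bracket \eqref{eq:bracket} at $\lambda>0$.

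\emph{Two cosmetic points.} First, $p(\delta)=U_N\bigl(\tfrac{\delta-\alpha-2}{2}\bigr)$ holds exactly, with no constant or sign; any constant would cancel in $P_k=(-1)^Np(-\delta_k)/p'(\delta_k)$ anyway. Second, the strict inequality with the margin exponent is just monotonicity of $\arccosh$, since $3+\alpha>1+\alpha/2$.
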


\begin{proof}
The nodes $\delta_k=\alpha+4\sin^2\theta_k$, $\theta_k=\frac{k\pi}{2(N+1)}$, equidistribute to the arcsine
equilibrium measure $\nu$ of $J=[\alpha,\alpha+4]$, of capacity $\cpc(J)=1$. As a totally positive Cauchy
matrix $C$ has (Gantmacher--Krein \cite{GantmacherKrein}) a fully alternating smallest-eigenvalue eigenvector $u_N$; since
$\tilde b_k=(-1)^{k+1}$ shares that pattern, $\langle\tilde b,u_N\rangle^2=\norm{u_N}_1^2\ge\norm{u_N}_2^2=1$,
and with $C^{-1}\preceq\sigma_{\min}(C)^{-1}I$,
\[
\sigma_{\min}(C)^{-1}\ \le\ \Ecal_N\ \le\ N\,\sigma_{\min}(C)^{-1},
\]
both carrying the rate $\frac1{2N}\log\sigma_{\min}(C)^{-1}\to\max_{s\in J}U(s)$. As $\cpc(J)=1$ the
subtracted potential vanishes and $U(s)=g_{\mathbb C\setminus J}(-s)=\arccosh\frac{s+\alpha+2}{2}$,
increasing on $J$, maximal at $s=\alpha+4$, giving $r(\alpha,0)=\arccosh(\alpha+3)$. As
$\cpc(\Sigma_0)=1$, this equals $\gamma_0(z^\star)$ by \eqref{eq:capval}.
\end{proof}

\section{The mode-dominant lower bound}\label{sec:lower}

A single coefficient of the Cauchy form already carries the localization rate: a diagonal
Cauchy--Schwarz bound on $\Ecal_N=\tilde b^\top C^{-1}\tilde b$ isolates one $\tilde b_k$, whose
boundary-amplitude growth gives the lower end of the bracket.

\begin{proposition}[Boundary-amplitude rate]\label{prop:amprate}
Let $\lambda>2$, let $\beta$ be Diophantine, and let $\phi$ lie in the full-measure set on which the semi-uniform
localization bound \eqref{eq:sule} of Lemma~\ref{lem:semiloc} holds. Then
\begin{equation}\label{eq:amprate}
\liminf_{N\to\infty}\frac1{2N}\log\norm{\tilde b}^2\ \ge\ \log(\lambda/2).
\end{equation}
No matching upper bound is asserted; the corresponding limit fails in general (Remark~\ref{rem:gapstates}).
\end{proposition}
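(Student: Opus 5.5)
We want a lower bound on the maximum $\max_k\abs{\tilde b_k}=\max_k\abs{\psi^{(k)}_N}/\abs{\psi^{(k)}_1}$, since $\norm{\tilde b}^2\ge\max_k\tilde b_k^2$. The plan is to exhibit, for each large $N$, one Dirichlet eigenvector whose mass sits near the right end. Its first coordinate is then exponentially small at the localization rate, while its last coordinate is at most subexponentially small.

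\emph{Step 1: a right-concentrated mode.} Since $\sum_k(\psi^{(k)}_j)^2=1$ for each $j$ (the rows of an orthogonal matrix), there is an index $k=k(N)$ with $(\psi^{(k)}_N)^2\ge1/N$. This already gives $\abs{b_k}\ge N^{-1/2}$, which costs only $e^{o(N)}$.

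\emph{Step 2: decay to the left end.} Next I apply the semi-uniform localization bound \eqref{eq:sule} of Lemma~\ref{lem:semiloc}, in its finite-volume (Dirichlet) form. Fix $\varepsilon>0$. Every normalized Dirichlet eigenvector has a localization centre $n_k$ such that
\[
\abs{\psi^{(k)}_j}\le C_\varepsilon(\phi)\,e^{\varepsilon\abs{n_k}}\,e^{-(\log(\lambda/2)-\varepsilon)\abs{j-n_k}}.
\]
Applying this at $j=N$ together with $\abs{\psi^{(k)}_N}\ge N^{-1/2}$ forces $\abs{N-n_k}\le\varepsilon' N+O(\log N)$, so the centre lies within $o(N)$ of the right end as $\varepsilon\to0$. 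Applying it at $j=1$ then gives
\[
\abs{a_k}\le C_\varepsilon e^{\varepsilon N}e^{-(\log(\lambda/2)-\varepsilon)(1-\varepsilon')N}.
\]

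\emph{Step 3: conclude.} Combining the two bounds,
\[
\frac1{2N}\log\norm{\tilde b}^2\ge\frac1N\log\frac{\abs{b_k}}{\abs{a_k}}\ge\log(\lambda/2)-O(\varepsilon)-O\Bigl(\frac{\log N}{N}\Bigr).
\]
Taking $\liminf_N$ and then letting $\varepsilon\downarrow0$ yields \eqref{eq:amprate}.

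The main obstacle is Step 2. It needs the localization estimate to hold for the \emph{finite-volume} Dirichlet eigenvectors, uniformly in $N$ and with a prefactor that is only subexponential in the position of the centre. Infinite-volume SULE does not transfer directly: the boundary can create gap-edge states (Remark~\ref{rem:gapstates}). I would therefore invoke Lemma~\ref{lem:semiloc} in exactly the finite-volume form in which \eqref{eq:sule} is stated, and check that its constant does not deteriorate for centres near $N$. The $e^{\varepsilon\abs{n_k}}$ factor is harmless because it is absorbed into $\varepsilon$. If only infinite-volume SULE were available, the fallback would be a Poisson/Green's-function argument on the interval $[1,N]$. That argument would use the large-deviation bound for the Dirichlet determinants of the cocycle $\mathcal T$ (Lemma~\ref{lem:reduction}) to get left-end decay at rate $\log(\lambda/2)$ for an eigenvector that is not small at $N$.
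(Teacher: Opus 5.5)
Your proposal is correct and follows the paper's own proof. You pick the mode with $\abs{\hat\psi^{(k)}_N}\ge N^{-1/2}$ by completeness, apply \eqref{eq:sule} at $n=N$ to put the centre within $o(N)$ of the right end, and apply it at $n=1$ to get left-end decay. You then absorb $e^{\varepsilon\abs{j_k}}\le e^{\varepsilon N}$ and let $\varepsilon\downarrow0$. The finite-volume Dirichlet form you flag as the obstacle is exactly how \eqref{eq:sule} is stated in Lemma~\ref{lem:semiloc}, and the proposition assumes it as a hypothesis, so your fallback argument is not needed.
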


\begin{proof}
Fix $\varepsilon>0$ and let $C_\varepsilon$ be the constant of \eqref{eq:sule}, $\gamma=\log(\lambda/2)$. By
completeness, $\sum_k(\hat\psi^{(k)}_N)^2=1$, so some mode $k=k(N)$ has $\abs{\hat\psi^{(k)}_N}\ge N^{-1/2}$; for
that mode \eqref{eq:sule} at $n=N$ gives $N^{-1/2}\le C_\varepsilon e^{\varepsilon N}e^{-\gamma(N-j_k)}$, i.e.\
$\gamma(N-j_k)\le\varepsilon N+\log C_\varepsilon+\tfrac12\log N$, while \eqref{eq:sule} at $n=1$ gives
$\abs{\hat\psi^{(k)}_1}\le C_\varepsilon e^{\varepsilon N}e^{-\gamma(j_k-1)}$. Hence
\[
\abs{\tilde b_{k(N)}}=\frac{\abs{\hat\psi^{(k)}_N}}{\abs{\hat\psi^{(k)}_1}}
\ \ge\ \frac{N^{-1/2}}{C_\varepsilon e^{\varepsilon N}}\,e^{\gamma(j_k-1)}
\ \ge\ C_\varepsilon^{-2}\,N^{-1}\,e^{-2\varepsilon N}\,e^{\gamma(N-1)},
\]
so $\liminf_N\frac1{2N}\log\norm{\tilde b}^2\ge\liminf_N\frac1N\log\abs{\tilde b_{k(N)}}\ge\gamma-2\varepsilon$;
let $\varepsilon\downarrow0$.
\end{proof}

\begin{remark}[Gap-edge states: the amplitude limit fails]\label{rem:gapstates}
Finite-volume Dirichlet eigenvalues need not lie in $\Sigma_\lambda$: they may fall in the spectral gaps, where
$\gamma_\lambda(E)>\log(\lambda/2)$. A mode localized at the \emph{right} boundary at a gap energy $E_k$ has
$\abs{\hat\psi^{(k)}_1}\asymp e^{-\gamma_\lambda(E_k)N}\abs{\hat\psi^{(k)}_N}$, hence
$\abs{\tilde b_k}=e^{\gamma_\lambda(E_k)N(1+o(1))}$, and $\frac1{2N}\log\norm{\tilde b}^2$ exceeds
$\log(\lambda/2)$ along the scales carrying such a state. High-precision data show exactly this: at $\lambda=3$,
$\phi=0$, the maximizing rate $\max_k\frac1N\log\abs{\tilde b_k}$ over $N=30,45,\dots,120$ takes the values
$0.466,\,0.572,\,0.496,\,0.409,\,0.479,\,0.517,\,0.736$ against $\log(3/2)=0.4055$, and at each $N$ it matches, to
finite-size accuracy, the Lyapunov exponent recomputed independently at the maximizing energy ($\gamma_3=0.7355$
at $N=120$, $E\approx1.73$, deep in a gap); generic phases behave alike. The excess is confined to the boundary
amplitudes and cancels from the cost: the identity \eqref{eq:cdform} divides out $\mathcal T_{11}'(E_k)$, which
carries the same gap exponent, so no statement in this paper relies on an upper bound for $\norm{\tilde b}$.
\end{remark}

\begin{proposition}[Mode-dominant lower bound]\label{prop:modelower}
For every $\lambda\ge0$ and every $\alpha>0$ --- for $\lambda>2$, for almost every phase ---
\[
r(\alpha,\lambda)\ \ge\ \log_+(\lambda/2);
\]
combined with the lower bracket of \eqref{eq:bracket} where it is available (every phase for $\lambda\le2$; almost
every phase under \textup{(PL)} for $\lambda>2$), this gives
$r(\alpha,\lambda)\ge\max\{g_{\mathbb C\setminus\Sigma_\lambda}(z^\star),\log_+(\lambda/2)\}$.
\end{proposition}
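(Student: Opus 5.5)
For $\lambda\le2$ there is nothing to prove beyond $r\ge0$. The reason is that $\Ecal_N=\tilde b^\top C^{-1}\tilde b\ge\tilde b_k^2/C_{kk}$ for any single $k$; this is the diagonal Cauchy--Schwarz bound mentioned in the section introduction. It follows from $(\tilde b^\top C^{-1}\tilde b)(v^\top Cv)\ge(v^\top\tilde b)^2$ applied with $v=e_k$. We have $C_{kk}=(2\delta_k)^{-1}\le(2\alpha)^{-1}$ and $\delta_k\ge\alpha$. For $k=1$ the normalized eigenfunction gives $\tilde b_1=\psi^{(1)}_N/\psi^{(1)}_1$ with $\abs{\psi^{(1)}_1}\le1$, so a polynomial lower bound on $\abs{\psi^{(1)}_N}$ would suffice. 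A cleaner route is to take instead the mode maximizing $\abs{\tilde b_k}$. Completeness gives $\sum_k\tilde b_k^2 a_k^2=\sum_k b_k^2=1$ and $\sum_k a_k^2=1$, so $\max_k\abs{\tilde b_k}\ge1$. Hence $\Ecal_N\ge2\alpha\max_k\tilde b_k^2\ge2\alpha$, and $r\ge0=\log_+(\lambda/2)$ at every phase.

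For $\lambda>2$ I would use the same single-mode bound together with Proposition~\ref{prop:amprate}:
\[
\Ecal_N\ \ge\ \max_k\frac{\tilde b_k^2}{C_{kk}}\ =\ \max_k 2\delta_k\tilde b_k^2\ \ge\ 2\alpha\max_k\tilde b_k^2\ \ge\ \frac{2\alpha}{N}\norm{\tilde b}^2 .
\]
Taking $\frac1{2N}\log$ and $\liminf$, the factor $2\alpha/N$ contributes rate $0$. Proposition~\ref{prop:amprate} then yields $r\ge\log(\lambda/2)$ for every $\phi$ in the full-measure set where \eqref{eq:sule} holds. Its proof in fact exhibits one mode $k(N)$ with $\frac1N\log\abs{\tilde b_{k(N)}}\ge\gamma-2\varepsilon$, so the factor $1/N$ is not even needed. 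Only the localization bound \eqref{eq:sule} enters here; (PL) is not used for this first claim.

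The combined statement is then immediate. Wherever the lower bracket $r\ge g_{\mathbb C\setminus\Sigma_\lambda}(z^\star)$ of \eqref{eq:bracket} is available, the two lower bounds together give $r\ge\max\{g_{\mathbb C\setminus\Sigma_\lambda}(z^\star),\log_+(\lambda/2)\}$. The bracket is available at every phase for $\lambda\le2$, by Theorems~\ref{thm:metclosed} and \ref{thm:critical}, and for almost every phase under (PL) when $\lambda>2$, by Theorem~\ref{thm:uncond}. For $\lambda>2$ the two full-measure phase sets are intersected. The genuine analytic content is entirely the amplitude growth of Proposition~\ref{prop:amprate}, that is, producing a right-boundary-localized mode. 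Once that is granted, the only step requiring care is the sign and direction of the extremal inequality $\tilde b^\top C^{-1}\tilde b\ge\tilde b_k^2/C_{kk}$, which needs only $C\succ0$ (Lemma~\ref{lem:reduction}).
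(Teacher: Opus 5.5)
Your proposal is correct and, for $\lambda>2$, is exactly the paper's argument: the diagonal Cauchy--Schwarz bound $\Ecal_N\ge 2\alpha\max_k\tilde b_k^2\ge(2\alpha/N)\norm{\tilde b}^2$, followed by the amplitude bound of Proposition~\ref{prop:amprate}. The one difference is at $\lambda\le2$. There you prove $r\ge0$ directly, via $\max_k\abs{\tilde b_k}\ge1$ and hence $\Ecal_N\ge2\alpha$ at every phase, whereas the paper instead invokes $r=g_{\mathbb C\setminus\Sigma_\lambda}(z^\star)\ge0$ from the identity theorems; your version is more self-contained, but note that the lower bracket at $\lambda=0$ comes from Proposition~\ref{prop:freerate}, not from Theorems~\ref{thm:metclosed} or~\ref{thm:critical}.
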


\begin{proof}
The combined bound follows from the first assertion together with the lower bracket
$r\ge g_{\mathbb C\setminus\Sigma_\lambda}(z^\star)$ in \eqref{eq:bracket}, on its domain of validity. For $\lambda\le2$ the first
assertion is vacuous, since $\log_+(\lambda/2)=0$ and $r=g_{\mathbb C\setminus\Sigma_\lambda}(z^\star)\ge0$.

Let $\lambda>2$. Because $C$ is symmetric positive definite, the Cauchy--Schwarz inequality in the
$C$-inner product gives, for the standard basis vector $e_k$,
\[
(e_k^\top\tilde b)^2=\bigl(e_k^\top C^{1/2}\cdot C^{-1/2}\tilde b\bigr)^2
\le (e_k^\top C\,e_k)\,(\tilde b^\top C^{-1}\tilde b),
\qquad\text{i.e.}\qquad
\Ecal_N(\alpha)=\tilde b^\top C^{-1}\tilde b\ \ge\ \frac{\tilde b_k^2}{C_{kk}} .
\]
Since $C_{kk}=(2\delta_k)^{-1}$ and $\delta_k=E_k-E_0\ge \Emin-(\Emin-\alpha)=\alpha$, this reads
$\Ecal_N(\alpha)\ge 2\delta_k\,\tilde b_k^2\ge 2\alpha\,\tilde b_k^2$ for every $k$. Maximizing over $k$ and
using $\max_k\tilde b_k^2\ge \norm{\tilde b}^2/N$,
\[
\Ecal_N(\alpha)\ \ge\ 2\alpha\,\max_k\tilde b_k^2\ \ge\ \frac{2\alpha}{N}\,\norm{\tilde b}^2 .
\]
Taking logarithms,
\[
\frac1{2N}\log\Ecal_N(\alpha)\ \ge\ \frac{1}{2N}\log\frac{2\alpha}{N}+\frac{1}{2N}\log\norm{\tilde b}^2 .
\]
The first term tends to $0$; taking $\liminf$, applying the almost-sure amplitude bound \eqref{eq:amprate}
(Proposition~\ref{prop:amprate}) and invoking \eqref{eq:reduction} gives
$r(\alpha,\lambda)\ge\log(\lambda/2)=\log_+(\lambda/2)$.
\end{proof}

\begin{remark}\label{rem:improves}
The bound improves the lower bracket of \eqref{eq:bracket} precisely when localization dominates
conditioning, $\log_+(\lambda/2)>g_{\mathbb C\setminus\Sigma_\lambda}(z^\star)$, equivalently
$2\log(\lambda/2)>\gamma_\lambda(z^\star)$. By \eqref{eq:bracket} this region is non-empty and, as shown
next, comprises all sufficiently large $\lambda$. The two contributions have distinct origins: the
single test vector $e_{k}$ captures the boundary-amplitude growth $\log_+(\lambda/2)$ but is blind to the
ill-conditioning of $C$, whereas the bracket lower bound captures the conditioning rate
$g_{\mathbb C\setminus\Sigma_\lambda}(z^\star)$; their \emph{sum} $\gamma_\lambda(z^\star)$ is the upper
end, and reaching it would require a test vector aligned simultaneously with $\tilde b$ and with the
worst-conditioned direction of $C$ (see \S\ref{sec:remarks}).
\end{remark}

\section{Edge sharpness of the cost rate}\label{sec:edge}

Write the relative gap
\[
\eta(\alpha,\lambda):=1-\frac{r(\alpha,\lambda)}{\gamma_\lambda(z^\star)}\in[0,1],
\]
so that $\eta=0$ is exactly the upper-end identity $r=\gamma_\lambda(z^\star)$.

\begin{proposition}[Vanishing of the relative gap]\label{prop:edge}
Fix $\alpha>0$. Then:
\begin{enumerate}
\item[(i)] $0\le \gamma_\lambda(z^\star)-r(\alpha,\lambda)\le\log(\lambda/2)$ for all $\lambda>2$;
\item[(ii)] $\displaystyle\lim_{\lambda\downarrow2}\eta(\alpha,\lambda)=0$;
\item[(iii)] $\displaystyle\lim_{\lambda\to\infty}\eta(\alpha,\lambda)=0$.
\end{enumerate}
In particular $r(\alpha,\lambda)=\gamma_\lambda(z^\star)\,(1+o(1))$ at both ends of the localized phase.
\end{proposition}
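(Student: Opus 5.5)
The plan is to read everything off the bracket \eqref{eq:bracket} and the mode-dominant bound of Proposition~\ref{prop:modelower}, for $\lambda>2$ and almost every phase. The upper end $r\le\gamma_\lambda(z^\star)$ is stated to hold unconditionally, at every phase. Proposition~\ref{prop:modelower} gives $r\ge\log(\lambda/2)$ for almost every phase, using only the semi-uniform localization behind Proposition~\ref{prop:amprate}. For (i), I intend to use this localization lower bound rather than the (PL)-dependent lower bracket.

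\textbf{Part (i).} By \eqref{eq:bracket}, $\gamma_\lambda(z^\star)=g_{\mathbb C\setminus\Sigma_\lambda}(z^\star)+\log(\lambda/2)$. Hence
\[
0\le\gamma_\lambda(z^\star)-r\le\gamma_\lambda(z^\star)-\log(\lambda/2)=g_{\mathbb C\setminus\Sigma_\lambda}(z^\star).
\]
This is not yet the stated bound $\log(\lambda/2)$. To get it, I would invoke the lower bracket $r\ge g_{\mathbb C\setminus\Sigma_\lambda}(z^\star)$ where available (Theorem~\ref{thm:uncond}), which gives $\gamma_\lambda(z^\star)-r\le\log(\lambda/2)$ directly. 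Combining the two,
\[
\gamma_\lambda(z^\star)-r\le\min\{g_{\mathbb C\setminus\Sigma_\lambda}(z^\star),\log(\lambda/2)\}.
\]
The unconditional half of the statement is the bound by $g_{\mathbb C\setminus\Sigma_\lambda}(z^\star)$. The bound by $\log(\lambda/2)$ is the bracket width and depends on (PL).

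\textbf{Part (ii).} Here I would use $\eta\le\log(\lambda/2)/\gamma_\lambda(z^\star)$. The numerator tends to $0$ as $\lambda\downarrow2$. For the denominator I need $\gamma_\lambda(z^\star)$ bounded below by a positive constant uniformly near $\lambda=2$. Since $z^\star=3\Emin-2\alpha$ lies strictly left of $\Sigma_\lambda$, at distance $\Emax-\Emin+2\alpha\ge2\alpha$ from it, the Green's function term should stay bounded below. I would show this using the Frostman representation together with $\cpc(\Sigma_\lambda)=\lambda/2$ and $\Sigma_\lambda\subset[-\lambda-2,\lambda+2]$ from \eqref{eq:edge}.

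A cleaner route to this lower bound is the subharmonic/Thouless estimate. One has $g(w)\ge\log\bigl(\dist(w,K)/\cpc(K)\bigr)$-type estimates, but these are only useful when the distance is large. I will therefore prefer continuity of $\gamma_\lambda(z)$ jointly in $(\lambda,z)$ off the spectrum. The Thouless integral is continuous in $(\lambda,z)$ when $z$ is separated from the support and the density of states varies weakly continuously in $\lambda$. Then $\gamma_\lambda(z^\star_\lambda)\to\gamma_2(z^\star_2)>0$, and positivity holds by Theorem~\ref{thm:AA}, since $z^\star_2\notin\Sigma_2$.

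\textbf{Part (iii).} I would use $\eta\le\min\{g,\log(\lambda/2)\}/(g+\log(\lambda/2))$ with $g=g_{\mathbb C\setminus\Sigma_\lambda}(z^\star)$, and show that $g$ stays bounded while $\log(\lambda/2)\to\infty$; then $\eta\le g/(g+\log(\lambda/2))\to0$. For boundedness, Thouless gives $\gamma_\lambda(z^\star)=\int\log|z^\star-E'|\,dN_\lambda\le\log\bigl(|z^\star|+\Emax\bigr)$. Since $|z^\star|\le3(\lambda+2)+2\alpha$ and $\Emax\le\lambda+2$, this is at most $\log\bigl(4(\lambda+2)+2\alpha\bigr)$. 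Subtracting $\log(\lambda/2)$ leaves $g\le\log 8+o(1)$, which is bounded.

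\textbf{Main obstacle.} I expect the hardest step to be the uniform positivity of $\gamma_\lambda(z^\star)$ as $\lambda\downarrow2$ in (ii). This needs weak continuity of $N_\lambda$ in $\lambda$ (standard) and continuity of $\Emin(\lambda)$. If that proves awkward, I will use the crude bound $\gamma_\lambda(z)\ge\log\dist(z,\Sigma_\lambda)$ from Thouless. It applies once $\dist\ge2\alpha$ exceeds $1$, and for small $\alpha$ I would fall back on continuity.
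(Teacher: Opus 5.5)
Your proposal is correct and follows the paper's proof essentially step for step. Part (i) comes from the two ends of \eqref{eq:bracket}, and you rightly flag that the lower end for $\lambda>2$ is Theorem~\ref{thm:uncond}, valid for almost every phase under \textup{(PL)}, which is exactly what the paper's ``immediate from the bracket'' uses. Part (ii) comes from $\eta\le\log(\lambda/2)/\gamma_\lambda(z^\star)$ together with continuity and positivity of $\gamma_\lambda(z^\star)$ at $\lambda=2$. Part (iii) comes from Proposition~\ref{prop:modelower} and the crude bound $g_{\mathbb C\setminus\Sigma_\lambda}(z^\star)\le\log8+o(1)$.

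The step you call the main obstacle is actually immediate, because you undercounted the distance. Testing $H_\lambda$ on $e_j$ at a site with $\cos\approx1$ gives $\Emax\ge\lambda$, so $\dist(z^\star,\Sigma_\lambda)=2\Emax+2\alpha\ge2\lambda+2\alpha>4$. The Thouless formula then gives $\gamma_\lambda(z^\star)\ge\log4$ uniformly for $\lambda>2$, with no continuity argument and no small-$\alpha$ fallback needed.
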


\begin{proof}
\emph{(i)} is immediate from \eqref{eq:bracket}: $r\ge g_{\mathbb C\setminus\Sigma_\lambda}(z^\star)
=\gamma_\lambda(z^\star)-\log(\lambda/2)$ and $r\le\gamma_\lambda(z^\star)$.

\emph{(ii)} By (i), $\eta(\alpha,\lambda)\le\log(\lambda/2)/\gamma_\lambda(z^\star)$. As $\lambda\downarrow2$
the numerator tends to $0$. For the denominator, $z^\star$ stays at spectral depth
$\Emax-\Emin+2\alpha\ge2\alpha$ below the band, so it remains in the resolvent set, where the cocycle is
uniformly hyperbolic; by the monotonicity of the off-spectral exponent (Lemma~\ref{lem:greenmono}) and the
continuity of the Lyapunov exponent (Theorem~\ref{thm:AA}), the map
$\lambda\mapsto\gamma_\lambda(z^\star)$ is continuous and strictly positive at $\lambda=2$,
$\gamma_2(z^\star)>0$. Hence $\eta(\alpha,\lambda)\le\log(\lambda/2)/\gamma_\lambda(z^\star)\to 0/\gamma_2(z^\star)=0$.

\emph{(iii)} By Proposition~\ref{prop:modelower}, $r\ge\log(\lambda/2)$, so with \eqref{eq:bracket},
\begin{align*}
\eta(\alpha,\lambda)=1-\frac{r}{\gamma_\lambda(z^\star)}
&\ \le\ 1-\frac{\log(\lambda/2)}{\gamma_\lambda(z^\star)}
\ =\ \frac{\gamma_\lambda(z^\star)-\log(\lambda/2)}{\gamma_\lambda(z^\star)}\\
&\ =\ \frac{g_{\mathbb C\setminus\Sigma_\lambda}(z^\star)}
{g_{\mathbb C\setminus\Sigma_\lambda}(z^\star)+\log(\lambda/2)} .
\end{align*}
It therefore suffices to show $g_{\mathbb C\setminus\Sigma_\lambda}(z^\star)$ stays bounded as
$\lambda\to\infty$, since $\log(\lambda/2)\to\infty$. In fact it converges. Let $\mu_{\mathrm{eq}}$ be the
equilibrium measure of $\Sigma_\lambda$, which is the density of states by Theorem~\ref{thm:exact}; by the Frostman
representation, with $\cpc(\Sigma_\lambda)=\lambda/2$ from \eqref{eq:capval},
\[
g_{\mathbb C\setminus\Sigma_\lambda}(z^\star)
=\int_{\Sigma_\lambda}\log\abs{z^\star-t}\,d\mu_{\mathrm{eq}}(t)-\log(\lambda/2).
\]
Since $\abs{\Emin}=\Emax$, $z^\star=-3\Emax-2\alpha$ and $\abs{z^\star-t}=3\Emax+2\alpha+t$ for $t\in[-\Emax,\Emax]$ by
\eqref{eq:sym}. Rescale $t=\lambda s$: the band edges obey $\Emax/\lambda\to1$, so $z^\star/\lambda\to-3$, and by Weyl
equidistribution of $\{\beta j\}$ the rescaled density of states converges weakly to the arcsine measure
$\mu_{[-1,1]}$, the equilibrium measure of $[-1,1]$ (of capacity $\tfrac12$). Hence
\begin{equation}\label{eq:glimit}
g_{\mathbb C\setminus\Sigma_\lambda}(z^\star)\ \xrightarrow[\lambda\to\infty]{}\
\int_{-1}^{1}\log\abs{3+s}\,d\mu_{[-1,1]}(s)+\log2\ =\ \arccosh 3\ =\ \log\bigl(3+2\sqrt2\bigr)\approx1.7627,
\end{equation}
the Green's function $g_{\mathbb C\setminus[-1,1]}(-3)$. In particular $g_{\mathbb C\setminus\Sigma_\lambda}(z^\star)$
is bounded, with the crude uniform bound $\limsup\le\log8$ from $\abs{z^\star-t}\le4\Emax+2\alpha$ and $\Emax\le\lambda+2$
\eqref{eq:edge}, and $\eta(\alpha,\lambda)\le g_{\mathbb C\setminus\Sigma_\lambda}(z^\star)/
\bigl(g_{\mathbb C\setminus\Sigma_\lambda}(z^\star)+\log(\lambda/2)\bigr)\to0$.
\end{proof}

\begin{corollary}[Bounded absolute gap]\label{cor:summary}
Fix $\alpha>0$ and $\lambda>2$. For almost every phase,
\[
0\ \le\ \gamma_\lambda(z^\star)-r(\alpha,\lambda)\ \le\ g_{\mathbb C\setminus\Sigma_\lambda}(z^\star),
\]
and under \textup{(PL)} the gap is $0$ throughout $(2,\infty)$ (Theorem~\ref{thm:uncond}); in particular, for
almost every phase and under \textup{(PL)},
\begin{equation}\label{eq:tightgap}
0\ \le\ \gamma_\lambda(z^\star)-r(\alpha,\lambda)\ \le\ \min\bigl\{\,g_{\mathbb C\setminus\Sigma_\lambda}(z^\star),\ \log_+(\lambda/2)\,\bigr\}.
\end{equation}
Consequently, for almost every phase, the absolute gap is uniformly bounded on $(2,\infty)$ with
$\limsup_{\lambda\to\infty}\bigl[\gamma_\lambda(z^\star)-r(\alpha,\lambda)\bigr]\le\arccosh3=\log(3+2\sqrt2)$ by
\eqref{eq:glimit}, and under \textup{(PL)} it vanishes as $\lambda\downarrow2$, matching the unconditional
identity on $0\le\lambda\le2$ (no input at $\lambda=0$; Theorem~\ref{thm:metclosed} for $0<\lambda<2$;
Theorem~\ref{thm:critical} at $\lambda=2$). The envelope
$\min\{g_{\mathbb C\setminus\Sigma_\lambda}(z^\star),\log_+(\lambda/2)\}$ never exceeds $1.78$ numerically at
$\alpha=0.1$, with maximum near the crossover $\lambda\approx12$ (Figure~\ref{fig:gap}).
\end{corollary}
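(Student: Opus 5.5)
The corollary combines three earlier ingredients: the unconditional upper end of the bracket, the mode-dominant lower bound, and Theorem~\ref{thm:uncond}. I would first fix $\alpha>0$ and $\lambda>2$, and restrict to the full-measure set of phases on which the semi-uniform localization bound \eqref{eq:sule} holds. On that set Proposition~\ref{prop:modelower} gives $r(\alpha,\lambda)\ge\log(\lambda/2)$.

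The bound $r\le\gamma_\lambda(z^\star)$ is the upper end of \eqref{eq:bracket}, valid at every phase, which yields $\gamma_\lambda(z^\star)-r\ge0$. For the upper bound on the gap, the identity $\gamma_\lambda(z^\star)=g_{\mathbb C\setminus\Sigma_\lambda}(z^\star)+\log(\lambda/2)$ from \eqref{eq:capval} and the mode-dominant bound together give
\[
\gamma_\lambda(z^\star)-r\le\gamma_\lambda(z^\star)-\log(\lambda/2)=g_{\mathbb C\setminus\Sigma_\lambda}(z^\star).
\]
This is the first display, and it uses no (PL). Under (PL), Theorem~\ref{thm:uncond} gives $r=\gamma_\lambda(z^\star)$ for almost every phase, so the gap vanishes on $(2,\infty)$. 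The bound \eqref{eq:tightgap} then holds trivially, since its right side is nonnegative. It is also worth recording that the $\log_+(\lambda/2)$ branch of the minimum is exactly the lower bracket $r\ge g_{\mathbb C\setminus\Sigma_\lambda}(z^\star)$, which is available under (PL) in any case.

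For the ``consequently'' clause, I would take $\limsup_{\lambda\to\infty}$ of the almost-every-phase bound $\gamma_\lambda(z^\star)-r\le g_{\mathbb C\setminus\Sigma_\lambda}(z^\star)$ and apply the limit \eqref{eq:glimit} from Proposition~\ref{prop:edge}(iii). Uniform boundedness on $(2,\infty)$ needs $g_{\mathbb C\setminus\Sigma_\lambda}(z^\star)$ bounded on compact subsets as well as at infinity. I would obtain this from the Frostman representation: since $\abs{z^\star-t}\le4\Emax+2\alpha$ and $\Emax\le\lambda+2$,
\[
g_{\mathbb C\setminus\Sigma_\lambda}(z^\star)\le\log\bigl(4(\lambda+2)+2\alpha\bigr)-\log(\lambda/2),
\]
which is bounded for $\lambda>2$. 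Vanishing of the gap as $\lambda\downarrow2$ under (PL) is immediate from the identity; unconditionally it also follows from the $\log(\lambda/2)$ bound of Proposition~\ref{prop:edge}(i). The matching on $[0,2]$ is just a citation of Proposition~\ref{prop:freerate} and Theorems~\ref{thm:metclosed} and~\ref{thm:critical}.

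The numerical envelope value $1.78$ near $\lambda\approx12$ is not provable from these bounds. I would present it only as a numerical observation referring to Figure~\ref{fig:gap}, computing $g$ via $\gamma_\lambda(z^\star)-\log(\lambda/2)$. The one point needing care is the order of quantifiers, since the exceptional phase set depends on $\lambda$. Each statement is made at fixed $\lambda$, so this causes no problem, but the ``uniform on $(2,\infty)$'' claim must be read pointwise in $\lambda$, for almost every phase at each $\lambda$. I would state that reading explicitly.
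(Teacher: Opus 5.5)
Your proof is correct and is essentially the paper's. The first display comes from the mode-dominant bound of Proposition~\ref{prop:modelower} together with the split \eqref{eq:capval}. Theorem~\ref{thm:uncond} under \textup{(PL)} gives \eqref{eq:tightgap} and the vanishing as $\lambda\downarrow2$, and \eqref{eq:glimit} gives the limit as $\lambda\to\infty$. Your explicit bound $g_{\mathbb C\setminus\Sigma_\lambda}(z^\star)\le\log\bigl(8+(16+4\alpha)/\lambda\bigr)$ makes uniform in $\lambda$ the crude estimate the paper records in the proof of Proposition~\ref{prop:edge}(iii).

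You should remove the aside that the gap vanishes as $\lambda\downarrow2$ \emph{unconditionally} via Proposition~\ref{prop:edge}(i). That bound, $\gamma_\lambda(z^\star)-r\le\log(\lambda/2)$, is equivalent to the lower bracket $r\ge g_{\mathbb C\setminus\Sigma_\lambda}(z^\star)$. Your own remark on the $\log_+(\lambda/2)$ branch already identifies this, and for $\lambda>2$ that bracket is established only under \textup{(PL)}. Proposition~\ref{prop:edge}(i) is stated without the proviso, but its proof runs through \eqref{eq:bracket}. Unconditionally you have only $\gamma_\lambda(z^\star)-r\le g_{\mathbb C\setminus\Sigma_\lambda}(z^\star)$, and this tends to $\gamma_2(z^\star)>0$ as $\lambda\downarrow2$. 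That is why the corollary restricts the vanishing to \textup{(PL)}.
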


\begin{proof}
By Proposition~\ref{prop:modelower} (almost every phase), $r\ge\log_+(\lambda/2)$, so
$\gamma_\lambda(z^\star)-r\le\gamma_\lambda(z^\star)-\log_+(\lambda/2)=g_{\mathbb C\setminus\Sigma_\lambda}(z^\star)$
by the split \eqref{eq:capval}. Under \textup{(PL)}, Theorem~\ref{thm:uncond} gives $r=\gamma_\lambda(z^\star)$,
whence \eqref{eq:tightgap} and the vanishing as $\lambda\downarrow2$. The uniform bound and the limit
$\gamma_\lambda(z^\star)-r\le g_{\mathbb C\setminus\Sigma_\lambda}(z^\star)\to\arccosh3$ follow from
\eqref{eq:glimit} in the
proof of Proposition~\ref{prop:edge}(iii).
\end{proof}

\subsection*{Scope of the results}
The table records, for each regime, the strongest statement proved and its input; \textup{(PL)} is the
polynomial-localization hypothesis of Lemma~\ref{lem:semiloc} (status in Remark~\ref{rem:plstatus}; its window-gap
component is a theorem for $\lambda\ge\lambda_1$, Lemma~\ref{lem:edgeexp}).
\begin{center}
\begin{tabular}{@{}llll@{}}
\hline
regime & statement & phase & input \\
\hline
$\lambda=0$ & $r=\gamma_0(z^\star)=\arccosh(3+\alpha)$ & --- & none (Prop.~\ref{prop:freerate}) \\
$0<\lambda<2$ & $r=\gamma_\lambda(z^\star)$ & every & none (Thm.~\ref{thm:metclosed}) \\
$\lambda=2$ & $r=\gamma_\lambda(z^\star)$ & every & none (Thm.~\ref{thm:critical}) \\
$\lambda>2$ & $r\le\gamma_\lambda(z^\star)$ & every & none (Lem.~\ref{lem:cd}--\ref{lem:Qedge}) \\
$\lambda>2$ & $r\ge\log_+(\lambda/2)$ & a.e. & none (Prop.~\ref{prop:modelower}) \\
$\lambda>2$ & $r=\gamma_\lambda(z^\star)$, gap $O(N^{-2/(2+\tau)})$ & a.e. & \textup{(PL)} (Thm.~\ref{thm:uncond}) \\
\hline
\end{tabular}
\end{center}

\begin{figure}[t]
\centering
\begin{tikzpicture}
\begin{axis}[
  width=0.84\textwidth, height=0.5\textwidth,
  xlabel={$\lambda$}, ylabel={rate / gap}, xmin=2, xmax=28, ymin=0, ymax=2.15,
  legend pos=south east, legend cell align={left}, tick align=outside,
  every axis plot/.append style={line width=0.9pt}]
\addplot[dashed] coordinates {(2.05,2.0334) (2.1,2.0220) (2.25,1.9920) (2.5,1.9530) (2.75,1.9235) (3,1.9007) (3.5,1.8680) (4,1.8462) (4.5,1.8308) (5,1.8195) (6,1.8043) (7,1.7948) (8,1.7885) (9,1.7840) (10,1.7807) (11,1.7782) (12,1.7762) (13,1.7746) (14,1.7734) (15,1.7723) (17,1.7707) (20,1.7690) (24,1.7676) (28,1.7667)};
\addlegendentry{$g_{\mathbb C\setminus\Sigma_\lambda}(z^\star)$ (conditioning)}
\addplot[dotted] coordinates {(2.05,0.0247) (2.1,0.0488) (2.25,0.1178) (2.5,0.2231) (2.75,0.3185) (3,0.4055) (3.5,0.5596) (4,0.6931) (4.5,0.8109) (5,0.9163) (6,1.0986) (7,1.2528) (8,1.3863) (9,1.5041) (10,1.6094) (11,1.7047) (12,1.7918) (13,1.8718) (14,1.9459) (15,2.0149) (17,2.1401) (20,2.3026) (24,2.4849) (28,2.6391)};
\addlegendentry{$\log_+(\lambda/2)$ (localization)}
\addplot[solid, line width=1.5pt] coordinates {(2.05,0.0247) (2.1,0.0488) (2.25,0.1178) (2.5,0.2231) (2.75,0.3185) (3,0.4055) (3.5,0.5596) (4,0.6931) (4.5,0.8109) (5,0.9163) (6,1.0986) (7,1.2528) (8,1.3863) (9,1.5041) (10,1.6094) (11,1.7047) (12,1.7762) (13,1.7746) (14,1.7734) (15,1.7723) (17,1.7707) (20,1.7690) (24,1.7676) (28,1.7667)};
\addlegendentry{gap bound $\min\{g_{\mathbb C\setminus\Sigma_\lambda}(z^\star),\log_+(\lambda/2)\}$}
\end{axis}
\end{tikzpicture}
\caption{The combined absolute-gap bound \eqref{eq:tightgap} (solid) is the lower envelope of the
conditioning rate $g_{\mathbb C\setminus\Sigma_\lambda}(z^\star)$ (dashed) and the localization rate
$\log_+(\lambda/2)$ (dotted), here at $\alpha=0.1$. It vanishes as $\lambda\downarrow2$, is uniformly bounded
on $(2,\infty)$ with maximum $\approx1.78$ near the crossover $\lambda\approx12$, and decreases to
$g_{\mathbb C\setminus\Sigma_\lambda}(z^\star)\to\arccosh3=\log(3+2\sqrt2)\approx1.763$ as $\lambda\to\infty$. The cost rate
$r(\alpha,\lambda)$ lies within this gap below the upper end $\gamma_\lambda(z^\star)$; values of
$g_{\mathbb C\setminus\Sigma_\lambda}(z^\star)=\gamma_\lambda(z^\star)-\log_+(\lambda/2)$ are computed from the
transfer cocycle.}
\label{fig:gap}
\end{figure}

\section{A reduction of the upper-end identity}\label{sec:reduction}

The results above bracket $r$ within $\min\{g_{\mathbb C\setminus\Sigma_\lambda}(z^\star),\log_+(\lambda/2)\}$ of
$\gamma_\lambda(z^\star)$, but the bracket alone leaves the interior of $(2,\infty)$ unpinned. The next proposition
reduces the identity to a single one-sided inequality.

\begin{proposition}[Reduction]\label{prop:reduction}
Let $\lambda>2$ and $\alpha>0$, and set
$\mathcal R_N:=\dfrac{\tilde b^\top C^{-1}\tilde b}{\tilde b^\top\tilde b}$, the Rayleigh quotient of $C^{-1}$ at
the boundary-amplitude vector $\tilde b$. Then, for almost every phase,
\begin{equation}\label{eq:Rrate}
\liminf_{N\to\infty}\frac{1}{2N}\log\mathcal R_N\ \le\ r(\alpha,\lambda)-\log_+(\lambda/2)\ \le\ g_{\mathbb C\setminus\Sigma_\lambda}(z^\star),
\end{equation}
and the identity $r(\alpha,\lambda)=\gamma_\lambda(z^\star)$ holds whenever
\begin{equation}\label{eq:reductioncrit}
\liminf_{N\to\infty}\frac{1}{2N}\log\mathcal R_N\ \ge\ g_{\mathbb C\setminus\Sigma_\lambda}(z^\star).
\end{equation}
Equivalently, writing $C=\sum_{j}\sigma_j u_ju_j^\top$ with $\sigma_1\ge\dots\ge\sigma_N>0$ and
$p_j:=(u_j^\top\tilde b)^2/\norm{\tilde b}^2$ (a probability vector), the criterion \eqref{eq:reductioncrit}
reads $\liminf_N\frac1{2N}\log\sum_j p_j\,\sigma_j^{-1}\ge g_{\mathbb C\setminus\Sigma_\lambda}(z^\star)$.
\end{proposition}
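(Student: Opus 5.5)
The whole argument rests on the factorization
\[
\Ecal_N=\tilde b^\top C^{-1}\tilde b=\mathcal R_N\,\norm{\tilde b}^2 ,
\]
which is valid because $\tilde b\neq0$ (indeed $\tilde b_k=\mathcal T_{21}(E_k)$ and $a_k\ne0$ by Lemma~\ref{lem:reduction}). Taking $\frac1{2N}\log$ gives
\[
\frac1{2N}\log\Ecal_N=\frac1{2N}\log\mathcal R_N+\frac1{2N}\log\norm{\tilde b}^2 .
\]
The strategy is to pass to $\liminf$ in this identity, controlling the amplitude term from below by Proposition~\ref{prop:amprate} and the total from above by the upper end of \eqref{eq:bracket}.

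\emph{Right inequality of \eqref{eq:Rrate}.} This one needs no factorization. The upper end $r\le\gamma_\lambda(z^\star)$ of \eqref{eq:bracket} holds at every phase. Combining it with the split \eqref{eq:capval}, $\gamma_\lambda(z^\star)=g_{\mathbb C\setminus\Sigma_\lambda}(z^\star)+\log_+(\lambda/2)$, and subtracting $\log_+(\lambda/2)$ gives the claim.

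\emph{Left inequality of \eqref{eq:Rrate}.} Use the elementary $\liminf(x_N+y_N)\ge\liminf x_N+\liminf y_N$. Take $x_N=\frac1{2N}\log\mathcal R_N$ and $y_N=\frac1{2N}\log\norm{\tilde b}^2$. For almost every phase, Proposition~\ref{prop:amprate} gives $\liminf y_N\ge\log(\lambda/2)=\log_+(\lambda/2)$. Hence
\[
r=\liminf(x_N+y_N)\ge\liminf x_N+\log_+(\lambda/2).
\]
One caveat: the $\liminf$ of $x_N$ could be $-\infty$, in which case the inequality is trivial. Also, $\liminf y_N$ may be $+\infty$ along some subsequences (gap states, Remark~\ref{rem:gapstates}), but a lower bound is all that is used, so this causes no trouble. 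This yields \eqref{eq:Rrate} on the full-measure set where \eqref{eq:amprate} holds.

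\emph{The criterion.} If \eqref{eq:reductioncrit} holds, the same superadditivity gives
\[
r\ge g_{\mathbb C\setminus\Sigma_\lambda}(z^\star)+\log_+(\lambda/2)=\gamma_\lambda(z^\star).
\]
Together with the upper end this forces equality. Note that \eqref{eq:Rrate} then also pins $r-\log_+(\lambda/2)$ between the two ends.

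\emph{Spectral reformulation.} Expand $\tilde b=\sum_j(u_j^\top\tilde b)u_j$ in the orthonormal eigenbasis of the symmetric positive definite $C$. Then $\tilde b^\top C^{-1}\tilde b=\sum_j\sigma_j^{-1}(u_j^\top\tilde b)^2$, so $\mathcal R_N=\sum_jp_j\sigma_j^{-1}$. Parseval gives $\sum_jp_j=1$ with $p_j\ge0$, so $(p_j)$ is a probability vector. Substituting this expression into \eqref{eq:reductioncrit} gives the stated equivalent form.

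There is no real obstacle in this proof. The only point requiring care is the direction of the $\liminf$ manipulations: superadditivity only gives lower bounds on a sum. This is why the left inequality and the criterion need the almost-sure amplitude bound, which is the only place the almost-every-phase restriction enters, while the right inequality uses only the unconditional upper end.
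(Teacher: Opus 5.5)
Your proof is correct and follows the paper's argument: the factorization $\Ecal_N=\mathcal R_N\norm{\tilde b}^2$, superadditivity of $\liminf$ with the almost-sure amplitude bound \eqref{eq:amprate} for the left inequality and for the criterion, the unconditional upper end with the split \eqref{eq:capval} for the right inequality, and the spectral theorem for the reformulation. Your caveats about infinite $\liminf$s are unnecessary but harmless: $\frac1{2N}\log\norm{\tilde b}^2$ is bounded above by the transfer-matrix growth, and $\mathcal R_N\ge\sigma_{\max}(C)^{-1}\ge(\operatorname{tr}C)^{-1}\ge 2\alpha/N$, so every $\liminf$ involved is finite.
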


\begin{proof}
By \eqref{eq:reduction}, $\frac1{2N}\log\Ecal_N=\frac1{2N}\log\mathcal R_N+\frac1{2N}\log\norm{\tilde b}^2$.
Superadditivity of $\liminf$ and the almost-sure amplitude bound \eqref{eq:amprate} give
\[
\liminf_N\tfrac1{2N}\log\mathcal R_N\ \le\ \liminf_N\tfrac1{2N}\log\Ecal_N-\liminf_N\tfrac1{2N}\log\norm{\tilde b}^2\ \le\ r(\alpha,\lambda)-\log_+(\lambda/2);
\]
no equality is claimed, since $\frac1{2N}\log\norm{\tilde b}^2$ need not converge (Remark~\ref{rem:gapstates}).
The upper bracket \eqref{eq:bracket} gives $r\le\gamma_\lambda(z^\star)=g_{\mathbb C\setminus\Sigma_\lambda}(z^\star)+\log_+(\lambda/2)$,
whence $r-\log_+(\lambda/2)\le g_{\mathbb C\setminus\Sigma_\lambda}(z^\star)$; this is \eqref{eq:Rrate}. Conversely,
if \eqref{eq:reductioncrit} holds then, again by superadditivity and \eqref{eq:amprate},
$r\ge\liminf_N\tfrac1{2N}\log\mathcal R_N+\liminf_N\tfrac1{2N}\log\norm{\tilde b}^2\ge g_{\mathbb C\setminus\Sigma_\lambda}(z^\star)+\log_+(\lambda/2)=\gamma_\lambda(z^\star)$,
and with the unconditional upper bound the identity follows; \eqref{eq:reductioncrit} is thus a \emph{sufficient}
criterion, the exact inversion-free equivalent of the identity being \eqref{eq:cocyclecrit}. Finally
$\mathcal R_N=\tilde b^\top C^{-1}\tilde b/\norm{\tilde b}^2=\sum_j(u_j^\top\tilde b)^2/(\sigma_j\norm{\tilde b}^2)=\sum_j p_j\sigma_j^{-1}$
by the spectral theorem.
\end{proof}

\begin{remark}[What an eventual proof must supply]\label{rem:supply}
The criterion \eqref{eq:reductioncrit} is met as soon as one eigendirection carries the full conditioning
rate and is not exponentially orthogonal to the data: with $\sigma_{\min}=\sigma_N$, $u_{\min}=u_N$,
\[
\frac1{2N}\log\sigma_{\min}^{-1}\ \longrightarrow\ g_{\mathbb C\setminus\Sigma_\lambda}(z^\star)
\qquad\text{and}\qquad
\liminf_{N}\frac1{2N}\log p_{N}\ \ge\ 0 .
\]
The first is an identification of the smallest Gram eigenvalue with the exterior-Green rate at $z^\star$; the
second is a non-degenerate-overlap statement between the boundary data and the worst-conditioned direction.
These are the two ingredients an eventual proof must supply, and \eqref{eq:reductioncrit} shows they are also,
together, sufficient.
\end{remark}

\section{Remarks}\label{sec:remarks}

\begin{remark}[The nature of the obstruction]\label{rem:obstruction}
By Proposition~\ref{prop:reduction}, closing the interior reduces to the sufficient criterion
\eqref{eq:reductioncrit}: the inverse Gram
form must amplify $\tilde b$ at the full rate $g_{\mathbb C\setminus\Sigma_\lambda}(z^\star)$. Two features make
this hard, and one helps. The conditioning rate $g_{\mathbb C\setminus\Sigma_\lambda}(z^\star)$ is a
\emph{collective} (logarithmic-capacity) effect of all $N$ crowded nodes $\{\delta_k\}$, not reproducible by any
fixed number of them; the boundary growth $\log_+(\lambda/2)$ is carried by individual far-localized modes; and
the criterion demands that the worst-conditioned direction, built from the node spacings, overlap the data,
built from the localization centres. What \emph{helps} is that the boundary amplitudes are sign-alternating,
$\tilde b_k=(-1)^{k+1}\abs{\tilde b_k}$ (Sturm oscillation): in $\tilde b^\top C^{-1}\tilde b$ the off-diagonal
Cauchy entries enter the relevant near-null combinations \emph{constructively}, so the difficulty is not
avoiding cancellation but capturing the collective amplification.
\end{remark}

\begin{remark}[Why strong coupling does not trivialize it]\label{rem:strong}
It is natural to seek \eqref{eq:reductioncrit} in the regime $\lambda\gg1$, where Aubry--Andr\'e localization is
sharpest. There the data is indeed explicit: with localization centre $j_k$, the boundary ratio is
$\abs{\tilde b_k}=e^{\gamma(2j_k-N-1)(1+o(1))}$, $\gamma=\log(\lambda/2)$, and the node is
$\delta_k=2\lambda\cos^2(\pi\beta j_k)+\alpha+O(1/\lambda)$. But $\delta_k$ depends on $j_k$ only through
$\cos^2(\pi\beta j_k)$, whereas $\tilde b_k$ depends on $j_k$ itself: two sites with equal $\cos^2(\pi\beta\,\cdot)$
share a node yet have exponentially different amplitudes. Thus, as a function of the node, the data is a signed,
effectively pseudo-random exponential, and the overlap weights $p_j$ of \eqref{eq:reductioncrit} are governed by
the arithmetic of $\{\beta j\bmod1\}$ near the band edge ($\cos(2\pi\beta j)\approx-1$), i.e.\ by the
continued-fraction approximants of $\beta$. Strong coupling sharpens the localization but not this
edge arithmetic, so it does not by itself yield \eqref{eq:reductioncrit}; Proposition~\ref{prop:edge}(iii)
remains the proven strong-coupling statement (relative sharpness).
\end{remark}

\begin{remark}[Numerical consistency (illustrative)]
For $\lambda=3$, $\alpha=0.1$ the bracket \eqref{eq:bracket} is
$[\,g_{\mathbb C\setminus\Sigma_\lambda}(z^\star),\gamma_\lambda(z^\star)\,]=[1.901,\,2.306]$, of width
$\log(3/2)=0.405$. A high-precision evaluation of $\tfrac1{2N}\log\Ecal_N$ (exact tridiagonal eigendata and the
quadratic form $\tilde b^\top C^{-1}\tilde b$ both in $80$-digit arithmetic, which the conditioning
$\operatorname{cond}(C)\sim e^{2Ng}$ requires) gives
$1.835,\,1.90,\,1.98,\,2.10,\,2.16$ for $N=15,20,25,30,35$: the rate approaches the bracket from below, sitting inside
it from $N=20$ on, and increases toward the upper end $\gamma_\lambda(z^\star)$, consistent with the interior sharpness that remains to be
proved, while $\tfrac1{2N}\log\norm{\tilde b}^2$ fluctuates about, and intermittently above, $\log(\lambda/2)=0.405$
--- the excursions above are the gap-edge states of Remark~\ref{rem:gapstates} --- consistent with
Propositions~\ref{prop:amprate} and~\ref{prop:modelower}. The same data give the reduction ratio
$\tfrac1{2N}\log\mathcal R_N=1.38,\,1.48,\,1.52,\,1.63,\,1.63$, matching
$\tfrac1{2N}\log\Ecal_N-\tfrac1{2N}\log\norm{\tilde b}^2$ as in \eqref{eq:Rrate} and increasing toward
$g_{\mathbb C\setminus\Sigma_\lambda}(z^\star)=1.901$, consistent with the criterion
\eqref{eq:reductioncrit}.
\end{remark}

\section{An inverse-free cocycle form of the cost}\label{sec:cocycle}

The criterion \eqref{eq:reductioncrit} is a statement about the inverse Gram matrix $C^{-1}$. We remove the
inversion: the cost is an explicit squared exponential sum whose coefficients are transfer-cocycle and
characteristic-polynomial data, and the criterion becomes a sharp lower bound on the growth of that sum, to which
the large-deviation theory of the cocycle applies directly. Throughout, $P(x)=\prod_{m=1}^N(x-\delta_m)$ and
$Q(x)=\prod_{m=1}^N(x+\delta_m)$, so $P(x)=\chi_N(x+E_0)$ with $\chi_N(E)=\det(E-H_N)$, and $Q(x)=(-1)^NP(-x)$;
write $P'(\delta_k)=\prod_{m\ne k}(\delta_k-\delta_m)$.

\begin{proposition}[Inverse-free form]\label{prop:cocycleform}
Let $D=\operatorname{diag}\bigl(Q(\delta_k)/P'(\delta_k)\bigr)_{k=1}^N$ and $c=D\tilde b$, i.e.\
$c_k=\dfrac{Q(\delta_k)}{P'(\delta_k)}\,\tilde b_k$. Then $C^{-1}=DCD$, and consequently
\begin{equation}\label{eq:Enorm}
\Ecal_N=\tilde b^\top C^{-1}\tilde b=c^\top C c=\Bigl\|\,\textstyle\sum_{k=1}^N c_k\,e^{-\delta_k\,\cdot}\,\Bigr\|_{L^2(0,\infty)}^2 .
\end{equation}
Equivalently, with $\Gamma$ a positively oriented contour enclosing $[\alpha,\max_k\delta_k]$,
\begin{equation}\label{eq:Fcontour}
\sum_{k=1}^N c_k\,e^{-\delta_k s}=\frac{1}{2\pi i}\oint_\Gamma \frac{Q(x)}{P(x)}\,\mathcal T_{21}(x+E_0)\,e^{-xs}\,dx
=\frac{(-1)^N}{2\pi i}\oint_\Gamma Q(x)\,m_N(x+E_0)\,e^{-xs}\,dx,
\end{equation}
where $m_N=\mathcal T_{21}/\mathcal T_{11}$ is the truncated Weyl function and $\mathcal T_{11}(\,\cdot\,)=(-1)^N\chi_N(\,\cdot\,)$.
\end{proposition}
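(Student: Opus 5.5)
The plan has three parts: first invert the Cauchy matrix explicitly, then rewrite the resulting quadratic form as an $L^2$ norm, and finally express the coefficients as residues of a contour integral.

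\emph{Step 1: the Cauchy inverse $C^{-1}=DCD$.} The classical Cauchy determinant formula gives the entries of $C^{-1}$ for $C_{kl}=(x_k+y_l)^{-1}$ with $x=y=\delta$:
\[
(C^{-1})_{kl}=\frac{1}{\delta_k+\delta_l}\,\frac{Q(\delta_k)}{P'(\delta_k)}\,\frac{Q(\delta_l)}{P'(\delta_l)} .
\]
I will check this by partial fractions rather than quote it. For fixed $l$, consider the rational function
\[
R_l(x)=\frac{Q(x)}{P(x)}\,\frac{Q(\delta_l)}{P'(\delta_l)}\,\frac{1}{(x+\delta_l)(\delta_l-x)} .
\]
I will evaluate $\sum_k C_{jk}(DCD)_{kl}$ as the sum of the residues of $\frac{1}{\delta_j+x}\cdot\frac{1}{x+\delta_l}\frac{Q(x)}{P(x)}\cdots$ at the nodes $\delta_k$. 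Then I will move the contour to infinity. The zeros of $Q$ at $-\delta_j$ and $-\delta_l$ cancel the poles there. Since $Q/P\to1$, the only surviving contribution is at infinity or from the remaining factor, and this yields $\delta_{jl}$.

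This residue bookkeeping, with the signs and the factor $(-1)^N$, is the step I expect to be most error-prone. As a fallback I would cite the standard Schechter formula for Cauchy inverses.

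\emph{Step 2: the $L^2$ identity \eqref{eq:Enorm}.} With $c=D\tilde b$ we get $\tilde b^\top C^{-1}\tilde b=\tilde b^\top DCD\tilde b=c^\top Cc$. Then
\[
C_{kl}=\int_0^\infty e^{-(\delta_k+\delta_l)s}\,ds
\]
gives the squared $L^2(0,\infty)$ norm of $\sum_k c_k e^{-\delta_k s}$ at once.

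\emph{Step 3: the contour form \eqref{eq:Fcontour}.} The integrand
\[
\frac{Q(x)}{P(x)}\,\mathcal T_{21}(x+E_0)\,e^{-xs}
\]
is meromorphic inside $\Gamma$, with simple poles exactly at the $\delta_k$. These are simple because the Dirichlet eigenvalues are distinct, and $P'(\delta_k)\neq0$. The residue at $\delta_k$ is
\[
\frac{Q(\delta_k)}{P'(\delta_k)}\,\mathcal T_{21}(E_k)\,e^{-\delta_k s}=c_k\,e^{-\delta_k s},
\]
using $\tilde b_k=\mathcal T_{21}(E_k)$ from Lemma~\ref{lem:reduction}. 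The contour $\Gamma$ encloses all $\delta_k\in[\alpha,\max\delta_k]$ and none of the zeros $-\delta_m$ of $Q$, which are harmless in any case.

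For the second form, the first component of the Dirichlet recurrence gives $\mathcal T_{11}(E)=\pm\det(E-H_N)$: it is a monic-up-to-sign polynomial of degree $N$ in $-E$ that vanishes exactly at the $E_k$, and comparing leading coefficients fixes it as $(-1)^N\chi_N(E)$. Hence
\[
P(x)=\chi_N(x+E_0)=(-1)^N\,\mathcal T_{11}(x+E_0),
\]
and therefore
\[
\frac{\mathcal T_{21}}{P}=(-1)^N\,m_N(\,\cdot\,+E_0),
\]
which is the last equality in \eqref{eq:Fcontour}.
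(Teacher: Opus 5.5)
Your proposal is correct. Steps 2 and 3 follow the paper's argument, and you go slightly further by justifying $\mathcal T_{11}=(-1)^N\chi_N$: $y_{N+1}$ has leading coefficient $(-E)^N$ and $N$ simple zeros at the $E_k$, a fact the paper only asserts.

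The real difference is in Step 1. The paper \emph{derives} $C^{-1}$. It solves $Cw=\tilde b$ through the rational function $R(x)=\sum_l w_l/(x+\delta_l)=S(x)/Q(x)$, determines $S$ by Lagrange interpolation of $S(\delta_k)=\tilde b_kQ(\delta_k)$, and reads off $w_l$ as the residue at $-\delta_l$. That needs no prior knowledge of the answer, and it shows $C^{-1}\tilde b$ as residue data of an interpolant, the same mechanism behind the contour form \eqref{eq:Fcontour}. You instead \emph{verify} the known answer, $C\,(DCD)=I$, by a residue sum; once the formula is in hand this is shorter, and it is equivalent to citing Schechter's formula.

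Your sketch of that verification is muddled, though. The function $R_l$ with the factor $(\delta_l-x)^{-1}$ has a double pole at $\delta_l$ and is not what you want. Also, nothing survives at infinity. The clean version uses $f_{jl}(x)=Q(x)/\bigl(P(x)(x+\delta_j)(x+\delta_l)\bigr)$, for which $\sum_kC_{jk}(DCD)_{kl}=D_l\sum_k\operatorname{Res}_{\delta_k}f_{jl}$.
\begin{itemize}
\item Since $f_{jl}=O(x^{-2})$, the residue at infinity vanishes.
\item For $j\ne l$, the zeros of $Q$ remove both poles at $-\delta_j$ and $-\delta_l$, so the sum is $0$.
\item For $j=l$, one simple pole remains at $-\delta_l$, with residue $-P'(\delta_l)/Q(\delta_l)$. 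This gives $D_l\cdot P'(\delta_l)/Q(\delta_l)=1$.
\end{itemize}
That last residue uses exactly the two sign identities the paper's computation rests on: $P(-\delta_l)=(-1)^NQ(\delta_l)$ and $Q'(-\delta_l)=(-1)^{N-1}P'(\delta_l)$.
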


\begin{proof}
Solve $Cw=\tilde b$ and set $R(x)=\sum_{l}w_l/(x+\delta_l)$, rational with $R(\delta_k)=(Cw)_k=\tilde b_k$. Writing
$R=S/Q$ with $\deg S\le N-1$ gives $S(\delta_k)=\tilde b_k\,Q(\delta_k)$, so by Lagrange interpolation
$S(x)=P(x)\sum_{k}\tilde b_kQ(\delta_k)\big/\bigl(P'(\delta_k)(x-\delta_k)\bigr)$. The residue of $R$ at its pole
$-\delta_l$ is $w_l=S(-\delta_l)/Q'(-\delta_l)$; using $P(-\delta_l)=(-1)^NQ(\delta_l)$ and
$Q'(-\delta_l)=(-1)^{N-1}P'(\delta_l)$,
\[
w_l=\frac{Q(\delta_l)}{P'(\delta_l)}\sum_{k}\frac{1}{\delta_l+\delta_k}\,\frac{Q(\delta_k)}{P'(\delta_k)}\tilde b_k
=D_{ll}\,(C\,c)_l=(DCD\,\tilde b)_l .
\]
As $\tilde b$ is arbitrary, $C^{-1}=DCD$; then $\Ecal_N=\tilde b^\top w=c^\top Cc$, and
$c^\top Cc=\sum_{k,l}c_kc_l/(\delta_k+\delta_l)=\|\sum_kc_ke^{-\delta_k\cdot}\|^2_{L^2(0,\infty)}$ since
$\int_0^\infty e^{-(\delta_k+\delta_l)s}\,ds=(\delta_k+\delta_l)^{-1}$, which is \eqref{eq:Enorm}. The integrand of
\eqref{eq:Fcontour} has simple poles only at the $\delta_k$, with residue
$\frac{Q(\delta_k)}{P'(\delta_k)}\mathcal T_{21}(E_k)e^{-\delta_k s}=c_ke^{-\delta_k s}$ (recall
$\tilde b_k=\mathcal T_{21}(E_k)$); the second equality uses $P=(-1)^N\mathcal T_{11}(\,\cdot+E_0)$.
\end{proof}

The coefficients $c_k$ turn out to share a common sign, which removes the apparent cancellation in
\eqref{eq:Enorm} entirely and collapses the cost to a single-mode extremal problem.

\begin{proposition}[Cancellation-free positive-sum reduction]\label{prop:possum}
Order $\delta_1<\dots<\delta_N$. The coefficients $c_k=Q(\delta_k)\tilde b_k/P'(\delta_k)$ of
Proposition~\ref{prop:cocycleform} all share one sign: writing $\operatorname{sign}\tilde b_k=\epsilon_b(-1)^k$
(Sturm oscillation, \S\ref{sec:remarks}),
\begin{equation}\label{eq:csign}
\operatorname{sign}(c_k)=\epsilon_b(-1)^{N}\qquad\text{for every }k .
\end{equation}
Consequently $\Ecal_N$ is a sum of nonnegative terms,
\begin{equation}\label{eq:possum}
\Ecal_N=c^\top Cc=\sum_{k,l=1}^N\frac{\abs{c_k}\,\abs{c_l}}{\delta_k+\delta_l},
\end{equation}
and its exponential rate is carried by the largest coefficient:
\begin{equation}\label{eq:maxc}
r(\alpha,\lambda)=\liminf_{N\to\infty}\ \max_{1\le k\le N}\ \frac1N\log\abs{c_k},
\qquad c_k=\frac{Q(\delta_k)}{P'(\delta_k)}\,\tilde b_k .
\end{equation}
\end{proposition}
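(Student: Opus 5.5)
The plan is a direct sign count, followed by two elementary bounds on the positive sum.

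\emph{Sign of $c_k$.} Since every $\delta_m>0$, the factor $Q(\delta_k)=\prod_m(\delta_k+\delta_m)$ is positive. With $\delta_1<\dots<\delta_N$, the product $P'(\delta_k)=\prod_{m\ne k}(\delta_k-\delta_m)$ has exactly $N-k$ negative factors, namely those with $m>k$. Hence $\operatorname{sign}P'(\delta_k)=(-1)^{N-k}$.

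For $\tilde b_k$ I would use the Sturm oscillation theorem for the Dirichlet Jacobi matrix: $\psi^{(k)}$ has exactly $k-1$ sign changes. Therefore the ratio $\psi^{(k)}_N/\psi^{(k)}_1$ has sign $(-1)^{k-1}$, which is $\epsilon_b(-1)^k$ with $\epsilon_b=-1$. One can also see this from $\tilde b_k=\mathcal T_{21}(E_k)$ together with the interlacing of the zeros of $\mathcal T_{11}$ and $\mathcal T_{21}$, which are the Dirichlet determinants on $[1,N]$ and $[1,N-1]$. Combining the three signs gives
\[
\operatorname{sign}c_k=\epsilon_b(-1)^k(-1)^{N-k}=\epsilon_b(-1)^N,
\]
which does not depend on $k$. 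This is \eqref{eq:csign}. Substituting $c_kc_l=\abs{c_k}\abs{c_l}$ into $c^\top Cc$ from Proposition~\ref{prop:cocycleform} then gives \eqref{eq:possum}.

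\emph{Rate formula \eqref{eq:maxc}.} I bound the positive sum from both sides.
\begin{itemize}
\item Lower bound: keep only the diagonal term at the largest coefficient. This gives $\Ecal_N\ge\max_k c_k^2/(2\delta_k)$.
\item Upper bound: each Cauchy entry satisfies $(\delta_k+\delta_l)^{-1}\le(2\alpha)^{-1}$. This gives $\Ecal_N\le N^2\max_kc_k^2/(2\alpha)$.
\end{itemize}
The nodes satisfy $\delta_k\le\Emax-E_0\le 2(\lambda+2)+\alpha$ by \eqref{eq:edge} and \eqref{eq:sym}. (Dirichlet eigenvalues lie in $[-\lambda-2,\lambda+2]$ anyway, so this bound is uniform in $N$ whatever one uses for $E_0$.) Consequently both prefactors are at most polynomial in $N$. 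Taking $\frac1{2N}\log$, I get
\[
\frac1{2N}\log\Ecal_N=\max_k\frac1N\log\abs{c_k}+O\!\left(\frac{\log N}{N}\right).
\]
Taking $\liminf$ and using \eqref{eq:reduction} yields \eqref{eq:maxc}.

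\emph{Main obstacle.} The only delicate step is to pin down the sign pattern of $\tilde b_k$ rigorously and to confirm it is strictly alternating, with $\psi^{(k)}_N\ne0$ so that no $\tilde b_k$ vanishes. Nonvanishing holds by the same recurrence argument as in Lemma~\ref{lem:reduction}: $\psi_{N+1}=\psi_N=0$ would force $\psi\equiv0$. The alternation I would settle by the standard Sturm/Jacobi-matrix argument. Concretely, the vectors $\psi^{(k)}$ are the values of orthogonal polynomials $p_{j-1}(E_k)$ scaled by $\psi^{(k)}_1$. The zeros of $p_{N-1}$ strictly interlace the eigenvalues $E_k$, the zeros of $p_N$, so $p_{N-1}(E_k)$ alternates in sign in $k$. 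Everything else is bookkeeping.
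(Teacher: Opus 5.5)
Your proposal is correct and follows the paper's proof essentially step for step. It uses the same three-factor sign count ($Q(\delta_k)>0$, $\operatorname{sign}P'(\delta_k)=(-1)^{N-k}$, Sturm alternation of $\tilde b_k$) and the same diagonal lower bound. Your upper bound differs only cosmetically: you use the uniform estimate $(\delta_k+\delta_l)^{-1}\le(2\alpha)^{-1}$ where the paper uses $\delta_k+\delta_l\ge2\sqrt{\delta_k\delta_l}$ together with $\delta_1$. Your explicit check that $\tilde b_k\neq0$ and your interlacing justification of the alternation fill in points the paper leaves implicit.
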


\begin{proof}
$Q(\delta_k)=\prod_m(\delta_k+\delta_m)>0$ since every $\delta_m>0$. With $\delta_1<\dots<\delta_N$ the product
$P'(\delta_k)=\prod_{m\ne k}(\delta_k-\delta_m)$ has exactly $N-k$ negative factors, so
$\operatorname{sign}P'(\delta_k)=(-1)^{N-k}$. By Sturm oscillation the $k$-th Dirichlet eigenvector has $k-1$ sign
changes, hence $\tilde b_k=\psi^{(k)}_N/\psi^{(k)}_1$ strictly alternates,
$\operatorname{sign}\tilde b_k=\epsilon_b(-1)^k$ (the alternation recorded in \S\ref{sec:remarks}). Therefore
$\operatorname{sign}(c_k)=(+1)\cdot\epsilon_b(-1)^k\cdot(-1)^{N-k}=\epsilon_b(-1)^{N}$, independent of $k$,
which is \eqref{eq:csign}. The common sign squares to $1$ in $c^\top Cc=\sum_{k,l}c_kc_l/(\delta_k+\delta_l)$,
giving the nonnegative form \eqref{eq:possum}. For the rate, the diagonal of \eqref{eq:possum} gives
$\Ecal_N\ge\max_k c_k^2/(2\delta_k)$, while $\delta_k+\delta_l\ge2\sqrt{\delta_k\delta_l}$ gives
$\Ecal_N\le\tfrac12\bigl(\sum_k\abs{c_k}/\sqrt{\delta_k}\bigr)^2\le(2\delta_1)^{-1}\bigl(\sum_k\abs{c_k}\bigr)^2$.
Since $\delta_k$ is bounded in $N$ (so $\frac1{2N}\log\delta_k\to0$) and
$\frac1{2N}\log\bigl(\sum_k\abs{c_k}\bigr)^2=\max_k\frac1N\log\abs{c_k}+O(\tfrac{\log N}N)$, both bounds carry the
rate $\max_k\frac1N\log\abs{c_k}$; with \eqref{eq:reduction} this is \eqref{eq:maxc}.
\end{proof}

The next lemma rewrites $c_k$ in a form that makes the single-mode maximum \eqref{eq:maxc} tractable.

\begin{lemma}[Boundary-weight form of the coefficient]\label{lem:cd}
Let $\hat\psi^{(k)}=\psi^{(k)}/\norm{\psi^{(k)}}$ be the $\ell^2$-normalized Dirichlet eigenvector. Then
\begin{equation}\label{eq:cdform}
\abs{c_k}\ =\ Q(\delta_k)\,\bigl(\hat\psi^{(k)}_N\bigr)^2 ,
\end{equation}
so $0\le\abs{c_k}\le Q(\delta_k)$. The near-degeneracies of the cocycle factors cancel:
a band-edge tunneling pair that makes $\mathcal T_{21}(E_k)$ exponentially small makes $\mathcal T_{11}'(E_k)$
equally small, and \eqref{eq:cdform} is free of both.
\end{lemma}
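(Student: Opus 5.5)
The plan is to show that the two cocycle factors in $c_k=Q(\delta_k)\tilde b_k/P'(\delta_k)$ combine into a squared normalized boundary amplitude. Since $Q(\delta_k)>0$, it suffices to prove
\[
\abs{\tilde b_k}\big/\abs{P'(\delta_k)}=(\hat\psi^{(k)}_N)^2 .
\]
I will get this from the $(1,N)$ entry of the finite-volume resolvent, which encodes the Christoffel--Darboux (Wronskian) summation.

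\emph{Step 1: the $(1,N)$ resolvent entry, two ways.} For the Dirichlet matrix $H_N$ with unit off-diagonal entries $-1$, the cofactor formula gives
\[
\bigl((z-H_N)^{-1}\bigr)_{1N}=\frac{\prod_{j=1}^{N-1}(\pm1)}{\chi_N(z)}=\frac{\pm1}{\chi_N(z)} .
\]
The cofactor of the $(N,1)$ entry of $z-H_N$ is a triangular determinant whose diagonal consists of the $N-1$ hopping entries, so its numerator has modulus one. The spectral theorem gives the same entry as
\[
\sum_k \hat\psi^{(k)}_1\hat\psi^{(k)}_N\,(z-E_k)^{-1}.
\]
The eigenvalues are simple, as in Lemma~\ref{lem:reduction}, so comparing residues at $z=E_k$ yields
\[
\hat\psi^{(k)}_1\hat\psi^{(k)}_N=\pm\frac{1}{\chi_N'(E_k)} .
\]

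\emph{Step 2: identify the derivative with $P'(\delta_k)$.} From $P(x)=\chi_N(x+E_0)$ we have
\[
P'(\delta_k)=\chi_N'(E_k)=\prod_{m\ne k}(E_k-E_m),
\]
which is the same product as the one defining $P'(\delta_k)$. Hence $\abs{P'(\delta_k)}=1/\abs{\hat\psi^{(k)}_1\hat\psi^{(k)}_N}$.

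\emph{Step 3: rewrite the amplitude ratio.} Because $\tilde b_k$ is a ratio of entries of the same eigenvector, normalization does not change it:
\[
\tilde b_k=\frac{\hat\psi^{(k)}_N}{\hat\psi^{(k)}_1}=\frac{(\hat\psi^{(k)}_N)^2}{\hat\psi^{(k)}_1\hat\psi^{(k)}_N}.
\]
Combining with Step 2,
\[
\abs{c_k}=Q(\delta_k)\,\frac{\abs{\tilde b_k}}{\abs{P'(\delta_k)}}=Q(\delta_k)\,(\hat\psi^{(k)}_N)^2,
\]
which is \eqref{eq:cdform}. The bound $0\le\abs{c_k}\le Q(\delta_k)$ then follows from $\norm{\hat\psi^{(k)}}=1$.

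\emph{Step 4: the cancellation statement.} By Lemma~\ref{lem:reduction} and Proposition~\ref{prop:cocycleform}, $\mathcal T_{11}=(-1)^N\chi_N$, so $\abs{\mathcal T_{11}'(E_k)}=\abs{\chi_N'(E_k)}$. Steps 1--2 give
\[
\abs{\mathcal T_{11}'(E_k)}=\frac{1}{\abs{\hat\psi^{(k)}_1\hat\psi^{(k)}_N}},
\qquad
\abs{\mathcal T_{21}(E_k)}=\abs{\tilde b_k}=\frac{\abs{\hat\psi^{(k)}_N}}{\abs{\hat\psi^{(k)}_1}} .
\]
A tunneling pair, or a gap-edge state as in Remark~\ref{rem:gapstates}, changes these two by exactly the same factor $\abs{\hat\psi^{(k)}_1}^{-1}$. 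That factor cancels in the ratio, leaving only $(\hat\psi^{(k)}_N)^2$. As a cross-check, the same identity follows from the Christoffel--Darboux sum
\[
\sum_{j}\psi_j(E)^2=\pm\,\psi_N\,\partial_E\psi_{N+1}\big|_{E=E_k},
\]
obtained by differentiating the recurrence and telescoping the Wronskian.

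The only delicate point is Step 1: getting the cofactor right, so that the numerator really is a product of the hopping entries of modulus one. I would verify it on the $2\times2$ case before trusting it. All signs are irrelevant, since only absolute values enter.
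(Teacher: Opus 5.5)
Your proof is correct, but it reaches the key identity by a different route from the paper's.

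You obtain $\chi_N'(E_k)\,\hat\psi^{(k)}_1\hat\psi^{(k)}_N=(-1)^{N+1}$ by comparing residues of the $(1,N)$ resolvent entry: on one side the triangular cofactor is the product of the $N-1$ hoppings, on the other the spectral theorem gives the residue. The paper instead normalizes $y_1=1$, differentiates the transfer recurrence in $E$, and telescopes the Wronskian $W_n=W_{n-1}-y_n^2$. This gives $\mathcal T_{11}'(E_k)\,\mathcal T_{21}(E_k)=-\norm{\psi^{(k)}}^2$, which is exactly the Christoffel--Darboux sum you list only as a cross-check. The two identities coincide once $\psi^{(k)}_1=1$ is substituted.

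Your version is symmetric in the two ends and independent of normalization. It also gives, with no extra work, $\abs{P'(\delta_k)}=\abs{\hat\psi^{(k)}_1\hat\psi^{(k)}_N}^{-1}\ge2$ for $N\ge2$. That is a clean quantitative form of the claim that tunnelling near-degeneracies cannot make $\prod_{m\ne k}(E_k-E_m)$ small.

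The paper's version stays in the transfer-cocycle variables $\mathcal T_{11}'(E_k)$ and $\mathcal T_{21}(E_k)$, which are used later in \eqref{eq:ccocycle} and in the $SL_2$ amplitude bounds. It also supplies the sign $\epsilon_b=-1$ of Proposition~\ref{prop:possum} directly. Your factor $(-1)^{N+1}$, combined with $\operatorname{sign}P'(\delta_k)=(-1)^{N-k}$, yields the same sign if you track it.

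Your Step~4 is the accurate reading of the lemma's last sentence. Since $\abs{\mathcal T_{21}(E_k)\,\mathcal T_{11}'(E_k)}=(\hat\psi^{(k)}_1)^{-2}\ge1$, the two cocycle factors can never be exponentially small at the same time, as that sentence literally says. What they share is the factor $\abs{\hat\psi^{(k)}_1}^{-1}$. This factor is large for right-boundary and gap-edge states, as in Remark~\ref{rem:gapstates}, and it is what cancels in \eqref{eq:cdform}.
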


\begin{proof}
Normalize the transfer solution by $y_1=1$ (so $\psi^{(k)}_1=1$, $\psi^{(k)}_n=y_n(E_k)$). Differentiating the
recurrence $y_{n+1}=(V_n-E)y_n-y_{n-1}$ in $E$, the Wronskian $W_n=y_{n+1}'y_n-y_{n+1}y_n'$ obeys
$W_n=W_{n-1}-y_n^2$ with $W_0=0$, so at a root $y_{N+1}(E_k)=0$ one gets the classical identity
$y_{N+1}'(E_k)\,y_N(E_k)=-\sum_{n=1}^N y_n(E_k)^2=-\norm{\psi^{(k)}}^2$. Since
$\mathcal T_{11}=y_{N+1}$ and $\mathcal T_{21}(E_k)=y_N(E_k)=\tilde b_k$, this reads
$P'(\delta_k)=(-1)^N\mathcal T_{11}'(E_k)=(-1)^{N+1}\norm{\psi^{(k)}}^2/\tilde b_k$, consistent with the sign
bookkeeping of Proposition~\ref{prop:possum} ($\operatorname{sign}P'(\delta_k)=(-1)^{N-k}$,
$\operatorname{sign}\tilde b_k=\epsilon_b(-1)^k$, $\epsilon_b=-1$). Hence
\[
\abs{c_k}=\frac{Q(\delta_k)\,\abs{\tilde b_k}}{\abs{P'(\delta_k)}}
=\frac{Q(\delta_k)\,\tilde b_k^2}{\norm{\psi^{(k)}}^2}
=Q(\delta_k)\,\frac{(\psi^{(k)}_N)^2}{\norm{\psi^{(k)}}^2}=Q(\delta_k)\,(\hat\psi^{(k)}_N)^2,
\]
using $\tilde b_k=\psi^{(k)}_N/\psi^{(k)}_1=\psi^{(k)}_N$. The bound follows from $(\hat\psi^{(k)}_N)^2\le1$.
\end{proof}

\begin{lemma}[The kernel carries the reflected edge]\label{lem:Qedge}
The zeros $-\delta_m$ of $Q$ correspond under $E=x+E_0$ to the reflected eigenvalues
$2E_0-E_m\in[z^\star,\Emin-2\alpha]$, and by the Thouless formula
\[
\frac1N\log\abs{Q(\delta_k)}=\frac1N\sum_{m}\log\abs{E_k+E_m-2E_0}\ \xrightarrow[N\to\infty]{}\ \gamma_\lambda(2E_0-E_k);
\]
in particular $\frac1N\log Q\bigl(\max_k\delta_k\bigr)\to\gamma_\lambda(z^\star)$, since $2E_0-\Emax=z^\star$.
\end{lemma}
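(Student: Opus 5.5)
The plan has two parts: an algebraic identification of $Q(\delta_k)$ as a characteristic-polynomial value at a reflected energy, followed by a limit argument that combines equidistribution of the Dirichlet eigenvalues with the Thouless formula \eqref{eq:thouless} off the spectrum.

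\emph{Algebra.} Since $\delta_m=E_m-E_0$, we have
\[
\delta_k+\delta_m=E_k+E_m-2E_0=E_m-(2E_0-E_k).
\]
Hence
\[
Q(\delta_k)=\prod_m\bigl(E_m-(2E_0-E_k)\bigr)=(-1)^N\chi_N(2E_0-E_k).
\]
In the same way, the zero $x=-\delta_m$ of $Q$ sits at $E=x+E_0=2E_0-E_m$. The finite-volume Dirichlet eigenvalues of a compression lie in the numerical range. I would first check that they lie in $[\Emin,\Emax]$. This holds because the compression of $H_\lambda$ to $\{1,\dots,N\}$ (the whole-line operator at the same phase) has numerical range inside $[\inf\spec,\sup\spec]$, and the spectrum is phase-independent. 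Therefore $E_m\in[\Emin,\Emax]$, which gives
\[
2E_0-E_m\in[2E_0-\Emax,\,2E_0-\Emin]=[z^\star,\Emin-2\alpha].
\]
Every reflected point is then at distance at least $2\alpha$ below $\Emin$, and in particular off $\Sigma_\lambda$.

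\emph{Limit.} I would write $\frac1N\log\abs{Q(\delta_k)}=\int\log\abs{E_k-2E_0+E'}\,dN_{N}(E')$, with $N_N$ the normalized eigenvalue counting measure of $H_N$. Two inputs are needed.
\begin{enumerate}
\item[(a)] $N_N\to N_\lambda$ weakly. This is standard for ergodic operators: Avron--Simon \cite{AvronSimon}, uniquely ergodic rotation, so it holds at every phase.
\item[(b)] The test function $E'\mapsto\log\abs{w+E'-2E_0}$ is continuous and bounded on $[\Emin,\Emax]$. This holds uniformly for $w$ in the compact set $[\Emin,\Emax]$, because $\abs{w+E'-2E_0}\ge2\alpha$.
\end{enumerate}
Since all measures are supported in that interval, weak convergence together with equicontinuity in $w$ gives
\[
\sup_{w\in[\Emin,\Emax]}\Bigl|\tfrac1N\log\abs{\chi_N(2E_0-w)}-\int\log\abs{2E_0-w-E'}\,dN_\lambda(E')\Bigr|\to0 .
\]
The limiting integral is $\gamma_\lambda(2E_0-w)$ by \eqref{eq:thouless} at the real point $2E_0-w\notin\Sigma_\lambda$. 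Taking $w=E_k$ gives the displayed convergence, in the uniform-in-$k$ sense that $\sup_k\bigl|\frac1N\log Q(\delta_k)-\gamma_\lambda(2E_0-E_k)\bigr|\to0$.

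\emph{Edge case.} For the last claim, I take $w=E_N=\max_kE_k$. By continuity of $\gamma_\lambda$ (Theorem~\ref{thm:AA}) it suffices to show $E_N\to\Emax$. This holds because $E_N\le\Emax$ and weak convergence forces $N_N$ to charge every neighbourhood of $\Emax\in\operatorname{supp}N_\lambda$ eventually. Combined with uniformity, this yields
\[
\tfrac1N\log Q(\max_k\delta_k)\to\gamma_\lambda(2E_0-\Emax)=\gamma_\lambda(z^\star).
\]

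\emph{Main obstacle.} The only delicate point is the uniformity in $k$, since $E_k$ moves with $N$. The plan handles it by the uniform lower bound $2\alpha$ on the log argument, which makes the family of test functions equicontinuous. Without the margin $\alpha>0$, the log singularity would meet the support and the argument would need capacity or large-deviation estimates instead.
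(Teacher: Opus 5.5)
Your proposal is correct and follows essentially the same route as the paper: the $2\alpha$ margin makes $\{t\mapsto\log\abs{w-t}\}$ a uniformly bounded, equicontinuous family on $[\Emin,\Emax]$, the eigenvalue counting measures converge weakly to $N_\lambda$ at every phase, and the Thouless formula evaluates the limit off $\Sigma_\lambda$, uniformly in the moving point $2E_0-E_k$. The one small difference is the edge step. You obtain $\max_kE_k\to\Emax$ from weak convergence together with $\Emax\in\operatorname{supp}N_\lambda=\Sigma_\lambda$, whereas the paper cites its translation argument (Lemma~\ref{lem:topedge}); both are valid.
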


\begin{proof}
Since $w=2E_0-E_k\in[z^\star,\Emin-2\alpha]$ while every $E_m\in[\Emin,\Emax]$, the arguments satisfy
$\abs{w-E_m}=E_m-w\in[2\alpha,\,4\Emax+2\alpha]$, so $\{t\mapsto\log\abs{w-t}\}_{w}$ is a uniformly bounded,
equicontinuous family on $[\Emin,\Emax]$. The finite-volume eigenvalue
counting measures $\frac1N\sum_m\delta_{E_m}$ converge weakly to the density of states $N_\lambda$
\cite[\S4]{AvronSimon}, uniformly in the phase by unique ergodicity of the rotation; testing against this family
and using the Thouless formula \eqref{eq:thouless} gives
$\frac1N\sum_m\log\abs{w-E_m}\to\int\log\abs{w-t}\,dN_\lambda(t)=\gamma_\lambda(w)$, uniformly in $w$ on the
reflected segment --- in particular at the moving point $w=2E_0-E_k$. The last claim follows since
$\max_k\delta_k=E^{(N)}_{\max}-E_0$ with $E^{(N)}_{\max}\to\Emax$ (Lemma~\ref{lem:topedge}) and $\gamma_\lambda$
is continuous off $\Sigma_\lambda$.
\end{proof}

For the almost Mathieu operator the density of states is the equilibrium measure of $\Sigma_\lambda$
(Theorem~\ref{thm:exact}); this is why the Thouless rate in Lemma~\ref{lem:Qedge} agrees with the
exterior-Green split \eqref{eq:bracket}.

Combining Proposition~\ref{prop:possum} with Lemma~\ref{lem:Qedge} turns the identity into an extremal
problem over the modes, with no reference to the inverse Gram matrix.

\begin{corollary}[The rate as a single-mode extremal problem]\label{cor:singlemode}
For $\lambda>2$ let $j_k$ be the localization centre of mode $k$. By Lemma~\ref{lem:Qedge}, the Thouless formula
applied to $P'(\delta_k)=\prod_{m\ne k}(\delta_k-\delta_m)$, and the boundary-amplitude asymptotics of
Remark~\ref{rem:strong} (so that $\frac1N\log\abs{\tilde b_k}\to\log_+(\lambda/2)\,(2j_k/N-1)$), the summand in
\eqref{eq:maxc} has the mode-wise limit
\begin{equation}\label{eq:cprofile}
\begin{aligned}
\frac1N\log\abs{c_k}
&\ \longrightarrow\ \gamma_\lambda(2E_0-E_k)-\gamma_\lambda(E_k)+\log_+(\lambda/2)\,(2j_k/N-1)\\
&=\ g_{\mathbb C\setminus\Sigma_\lambda}(2E_0-E_k)+\log_+(\lambda/2)\,\bigl(\tfrac{2j_k}N-1\bigr),
\end{aligned}
\end{equation}
using $\gamma_\lambda(E_k)=\log_+(\lambda/2)$ on $\Sigma_\lambda$ and \eqref{eq:capval}. The first summand is maximal
at the band edge $E_k=\Emax$ (value $g_{\mathbb C\setminus\Sigma_\lambda}(z^\star)$, by Lemma~\ref{lem:greenmono}),
the second at the right boundary $j_k=N$ (value $\log_+(\lambda/2)$); their separate maxima sum to
$\gamma_\lambda(z^\star)$, recovering $r\le\gamma_\lambda(z^\star)$. The identity $r=\gamma_\lambda(z^\star)$ holds
\emph{if and only if} a sequence of modes attains both maxima jointly, i.e.\ band-edge modes ($E_k\to\Emax$)
that are asymptotically right-boundary localized ($j_k/N\to1$). This is the joint band-edge/boundary statement of
\S\ref{sec:outlook}. The profile \eqref{eq:cprofile} invokes the localization asymptotic
for $\tilde b_k$ at a heuristic level. The rigorous realization of the joint criterion is carried out in
Theorem~\ref{thm:uncond}, which replaces that asymptotic by the exact boundary-weight identity of
Lemma~\ref{lem:cd} and so needs no control of the localization tail.
\end{corollary}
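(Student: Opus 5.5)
The plan is to work from the boundary-weight identity of Lemma~\ref{lem:cd} rather than from the raw quotient $Q(\delta_k)\tilde b_k/P'(\delta_k)$. The reason is that $P'(\delta_k)=\prod_{m\ne k}(\delta_k-\delta_m)$ contains the near-singular factors from neighbouring eigenvalues, and a Thouless-type limit for it is delicate. Instead I would write
\[
\tfrac1N\log\abs{c_k}=\tfrac1N\log Q(\delta_k)+\tfrac2N\log\abs{\hat\psi^{(k)}_N}.
\]
The first term converges to $\gamma_\lambda(2E_0-E_k)$ uniformly in $k$ by Lemma~\ref{lem:Qedge}. That lemma already gives uniformity over the reflected segment $[z^\star,\Emin-2\alpha]$, so the moving point $w=2E_0-E_k$ causes no difficulty.

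For the second term I would use the localization asymptotics of Remark~\ref{rem:strong}, at the heuristic level the statement allows. A normalized mode centred at $j_k$ has $\abs{\hat\psi^{(k)}_N}=e^{-\log(\lambda/2)(N-j_k)(1+o(1))}$. Hence
\[
\tfrac2N\log\abs{\hat\psi^{(k)}_N}\to-2\log_+(\lambda/2)\,(1-j_k/N)=-\log_+(\lambda/2)+\log_+(\lambda/2)\,(2j_k/N-1).
\]
Adding this to the $Q$-limit, and writing $-\log_+(\lambda/2)=-\gamma_\lambda(E_k)$ for $E_k\in\Sigma_\lambda$ (Theorem~\ref{thm:AA}), gives the first line of \eqref{eq:cprofile}. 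The second line is then the split \eqref{eq:capval} applied at $w=2E_0-E_k\notin\Sigma_\lambda$. As a consistency check, the same profile follows from the quotient form: the $P'$ factor contributes $-\gamma_\lambda(E_k)+\tfrac1N\log\abs{\tilde b_k}$ through the identity $\abs{P'(\delta_k)}=\norm{\psi^{(k)}}^2/\abs{\tilde b_k}$ from the proof of Lemma~\ref{lem:cd}.

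For the maximization, the two summands are treated separately.
\begin{enumerate}
\item[(a)] $E\mapsto g_{\mathbb C\setminus\Sigma_\lambda}(2E_0-E)$ is increasing in $E$, because $2E_0-E$ moves left of $\Sigma_\lambda$ and Lemma~\ref{lem:greenmono} applies. It is therefore maximal at $E=\Emax$, with value $g_{\mathbb C\setminus\Sigma_\lambda}(z^\star)$.
\item[(b)] $\log_+(\lambda/2)(2j_k/N-1)\le\log_+(\lambda/2)$, with equality exactly when $j_k/N\to1$.
\end{enumerate}
Summing the two maxima reproduces $\gamma_\lambda(z^\star)$, and with \eqref{eq:maxc} this recovers the upper bound.

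For the equivalence, I would argue by compactness. Pass to a subsequence realizing the $\liminf$ in \eqref{eq:maxc}, choose maximizing modes $k(N)$, and extract a further subsequence along which $(E_{k(N)},j_{k(N)}/N)\to(E_\ast,t_\ast)\in[\Emin,\Emax]\times[0,1]$. The limit value is $g_{\mathbb C\setminus\Sigma_\lambda}(2E_0-E_\ast)+\log_+(\lambda/2)(2t_\ast-1)$, by continuity of $g$ off $\Sigma_\lambda$. Since $g$ is strictly monotone, this value equals $\gamma_\lambda(z^\star)$ if and only if $E_\ast=\Emax$ and $t_\ast=1$. Conversely, a sequence of modes with $E_k\to\Emax$ and $j_k/N\to1$ makes the maximum at least $\gamma_\lambda(z^\star)-o(1)$.

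The main obstacle is uniformity of the mode-wise limit in $k$. Localization holds only with $e^{\varepsilon N}$-type prefactors, as in the semi-uniform bound \eqref{eq:sule}, so the $o(1)$ terms must be controlled uniformly over the maximizing modes. In addition, finite-volume eigenvalues can sit in gaps, where $\gamma_\lambda(E_k)>\log_+(\lambda/2)$ (Remark~\ref{rem:gapstates}).

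I would handle the gap states by relying only on the $\abs{\hat\psi^{(k)}_N}^2$ form of Lemma~\ref{lem:cd}, which is insensitive to the gap exponent. I would handle the uniformity issue by replacing exact asymptotics with the upper bound $\abs{\hat\psi^{(k)}_N}\le C_\varepsilon e^{\varepsilon N}e^{-\log(\lambda/2)(N-j_k)}$, together with a matching lower bound for at least one band-edge, right-localized mode. That realization is deferred to Theorem~\ref{thm:uncond}, so at this point the profile is stated only heuristically.
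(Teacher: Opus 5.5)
Your proposal is correct at the heuristic level the statement itself adopts, but it derives the profile through a different decomposition of $c_k$.

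\textbf{The paper's route.} The paper reads \eqref{eq:cprofile} off the three-factor quotient $c_k=Q(\delta_k)\tilde b_k/P'(\delta_k)$ and takes three separate limits:
Lemma~\ref{lem:Qedge} for $Q$, a Thouless limit $\frac1N\log\abs{P'(\delta_k)}\to\gamma_\lambda(E_k)$, and the ratio asymptotic of Remark~\ref{rem:strong} for $\tilde b_k$.
This displays each factor as cocycle or density-of-states data, which is the viewpoint reused in Remark~\ref{rem:reform} and in \eqref{eq:decomp}.
The last two limits are the delicate ones. $P'(\delta_k)$ contains the near-singular neighbouring factors, so the weak-convergence argument of Lemma~\ref{lem:Qedge} does not cover it. For gap states (Remark~\ref{rem:gapstates}), both the amplitude asymptotic and the replacement $\gamma_\lambda(E_k)=\log_+(\lambda/2)$ fail; the profile survives only because the two errors cancel.

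\textbf{Your route.} You start instead from the two-factor boundary-weight form of Lemma~\ref{lem:cd}, which the paper reserves for Theorem~\ref{thm:uncond}. This buys several things:
\begin{itemize}
\item you need only one heuristic input, the one-sided decay of $\hat\psi^{(k)}_N$;
\item you obtain the Green's-function line of \eqref{eq:cprofile} directly from \eqref{eq:capval}, without assuming $E_k\in\Sigma_\lambda$;
\item your compactness argument states the ``if and only if'' more precisely than the paper's bare assertion.
\end{itemize}

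\textbf{Your form makes the equivalence rigorous.} In fact it holds at every phase where \eqref{eq:sule} holds, not just heuristically.
For the ``only if'' direction: if $r=\gamma_\lambda(z^\star)$, the maximizers must have $E_k\to\Emax$, by the uniform $Q$-limit and strict monotonicity. They must also have $\frac2N\log\abs{\hat\psi^{(k)}_N}\to0$, which \eqref{eq:sule} at $n=N$ turns into $j_k/N\to1$.
For the ``if'' direction: the $SL_2$ inversion in the proof of Theorem~\ref{thm:uncond} gives $\abs{\hat\psi^{(k)}_N}\ge(\kappa\lambda)^{-(N-j_k)}N^{-1/2}$ deterministically.

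\textbf{One small slip.} The Wronskian identity $\abs{P'(\delta_k)}=\norm{\psi^{(k)}}^2/\abs{\tilde b_k}$ shows only that $-\frac1N\log\abs{P'(\delta_k)}+\frac1N\log\abs{\tilde b_k}$ equals $\frac2N\log\abs{\hat\psi^{(k)}_N}$ exactly.
It does not by itself produce the term $-\gamma_\lambda(E_k)$: with $\psi_1=1$ one has $\frac1N\log\norm{\psi^{(k)}}^2\approx2\log(\lambda/2)\,j_k/N$.
So your ``consistency check'' really tests that the paper's two heuristic limits are compatible with your single one.
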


\begin{corollary}[The criterion as a cocycle estimate]\label{cor:cocyclecrit}
By \eqref{eq:Enorm},
\[
\frac{1}{2N}\log\Ecal_N=\frac1N\log\Bigl\|\textstyle\sum_kc_k e^{-\delta_k\cdot}\Bigr\|_{L^2(0,\infty)},
\]
so, the upper bound $r\le\gamma_\lambda(z^\star)$ being unconditional, the identity
$r(\alpha,\lambda)=\gamma_\lambda(z^\star)$ is \emph{exactly} equivalent to
\begin{equation}\label{eq:cocyclecrit}
\liminf_{N\to\infty}\ \frac1N\log\Bigl\|\,\textstyle\sum_{k=1}^N c_k\,e^{-\delta_k\,\cdot}\,\Bigr\|_{L^2(0,\infty)}\ \ge\ \gamma_\lambda(z^\star) ,
\end{equation}
and, for almost every phase, is implied by the criterion \eqref{eq:reductioncrit}
(Proposition~\ref{prop:reduction}).
\end{corollary}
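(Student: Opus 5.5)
The statement to prove is Corollary~\ref{cor:cocyclecrit}. It is essentially a rewriting, so the argument has three short steps.

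\emph{Step 1: rewrite the rate.} Start from the inverse-free identity \eqref{eq:Enorm} of Proposition~\ref{prop:cocycleform}:
\[
\Ecal_N=\Bigl\|\textstyle\sum_k c_ke^{-\delta_k\cdot}\Bigr\|_{L^2(0,\infty)}^2 .
\]
Take logarithms and divide by $2N$. This gives the displayed equality
\[
\frac1{2N}\log\Ecal_N=\frac1N\log\Bigl\|\textstyle\sum_kc_ke^{-\delta_k\cdot}\Bigr\|_{L^2(0,\infty)}
\]
for every $N$. It uses $\Ecal_N>0$, which holds because $W_\alpha\succ0$ by Lemma~\ref{lem:reduction}. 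Taking $\liminf_N$ of both sides and recalling the definition \eqref{eq:rate} identifies the left side of \eqref{eq:cocyclecrit} with $r(\alpha,\lambda)$ exactly.

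\emph{Step 2: the equivalence.} Statement \eqref{eq:cocyclecrit} is therefore literally $r(\alpha,\lambda)\ge\gamma_\lambda(z^\star)$. The reverse inequality $r\le\gamma_\lambda(z^\star)$ is the unconditional upper end of \eqref{eq:bracket}. I would rederive it here to stay self-contained:
\begin{itemize}
\item Lemma~\ref{lem:cd} gives $\abs{c_k}\le Q(\delta_k)\le Q(\max_m\delta_m)$, where the second inequality holds because $Q$ is increasing on $(0,\infty)$.
\item Lemma~\ref{lem:Qedge} gives $\frac1N\log Q(\max_m\delta_m)\to\gamma_\lambda(z^\star)$.
\item The positive-sum bound in the proof of Proposition~\ref{prop:possum} then yields $\limsup_N\frac1{2N}\log\Ecal_N\le\gamma_\lambda(z^\star)$.
\end{itemize}
Hence $r=\gamma_\lambda(z^\star)$ if and only if \eqref{eq:cocyclecrit} holds.

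\emph{Step 3: the implication from \eqref{eq:reductioncrit}.} For almost every phase, I would cite Proposition~\ref{prop:reduction}. There, superadditivity of $\liminf$ combined with the almost-sure amplitude bound \eqref{eq:amprate} shows that \eqref{eq:reductioncrit} gives $r\ge\gamma_\lambda(z^\star)$. By Step 2 this is \eqref{eq:cocyclecrit}.

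The only point needing care is the phrase ``exactly equivalent''. This requires the upper bound to be genuinely unconditional, valid at every phase and without (PL). The Step 2 check of Lemmas~\ref{lem:cd} and~\ref{lem:Qedge} confirms it: neither lemma uses localization, and the uniformity in $w$ in Lemma~\ref{lem:Qedge} handles the moving top eigenvalue. There is no real obstacle.
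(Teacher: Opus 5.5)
Your proposal is correct and follows the paper's own inline argument: taking logarithms in \eqref{eq:Enorm} identifies the $\liminf$ in \eqref{eq:cocyclecrit} with $r(\alpha,\lambda)$. The unconditional upper bound then closes the equivalence; you rederive it from Lemmas~\ref{lem:cd} and~\ref{lem:Qedge} exactly as the paper does in \eqref{eq:uppersup} and Theorem~\ref{thm:uncond}. Proposition~\ref{prop:reduction} supplies the almost-every-phase implication, and Step~3 inherits its standing assumption $\lambda>2$.
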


\begin{remark}[Role of the reformulation]\label{rem:reform}
By Lemma~\ref{lem:Qedge} the right-hand side of \eqref{eq:cocyclecrit} is the growth rate of the kernel $Q$ at
the reflected edge $z^\star$, which is also the natural order of the contour integral \eqref{eq:Fcontour}. The
known upper bound $r\le\gamma_\lambda(z^\star)$ says the $L^2$ norm of the explicit sum
$F=\sum_kc_ke^{-\delta_k\cdot}$ does not exceed that order; \eqref{eq:cocyclecrit} is the matching lower bound.
Proposition~\ref{prop:possum} settles the cancellation question that this lower bound might seem to pose: the
$c_k$ are of one sign, so $\norm{F}^2=\sum_{k,l}\abs{c_k}\abs{c_l}/(\delta_k+\delta_l)$ has no cancellation and its
rate is exactly $\max_k\frac1N\log\abs{c_k}$ \eqref{eq:maxc}. The content is thus carried by the single
coefficient $c_k=\frac{Q(\delta_k)}{P'(\delta_k)}\tilde b_k$: the reflected-edge growth $\gamma_\lambda$ (the factor
$Q$, a density-of-states/Thouless quantity), the Lagrange conditioning weight
$1/P'(\delta_k)=1/\prod_{m\ne k}(\delta_k-\delta_m)$ (band-edge sensitive, carrying the eigenvalue spacings), and
the boundary amplitude $\tilde b_k=\mathcal T_{21}(E_k)$. Controlling $C^{-1}$ is replaced by the single-mode
maximum \eqref{eq:maxc}, equivalently the joint band-edge/boundary statement of
Corollary~\ref{cor:singlemode}, in which $Q$, $m_N$ and $\tilde b$ are cocycle and characteristic-polynomial data
at the phase $\phi$, the setting in which the large-deviation method is posed.
\end{remark}

\section{The conditioning--overlap reduction, and phase averaging}\label{sec:condoverlap}

Proposition~\ref{prop:possum} and Corollary~\ref{cor:singlemode} already reduce the identity to the
single-mode statement $\max_k\frac1N\log\abs{c_k}\to\gamma_\lambda(z^\star)$, with no Gram-eigenvector overlap.
The present section develops the complementary \emph{overlap} viewpoint, which is the form in which the
almost-every-phase large-deviation method is naturally posed; it is logically subsumed by
Corollary~\ref{cor:singlemode} but isolates the same arithmetic obstruction in spectral language. By
Remark~\ref{rem:supply} the identity $r=\gamma_\lambda(z^\star)$ holds as soon as
\begin{itemize}
\item[\textbf{(I)}] \emph{Conditioning:} $\dfrac{1}{2N}\log\sigma_{\min}(C)^{-1}\to g_{\mathbb C\setminus\Sigma_\lambda}(z^\star)$, and
\item[\textbf{(II)}] \emph{Overlap:} $\displaystyle\liminf_{N}\frac1{2N}\log p_N\ge0$, with $p_N=(u_N^\top\tilde b)^2/\norm{\tilde b}^2$ and $u_N$ the
eigenvector of $C$ for $\sigma_{\min}$.
\end{itemize}
We show that \textbf{(I)} is phase-independent and reduces to a standard condenser computation, so the whole identity
reduces to the single estimate \textbf{(II)}; and we delineate what the large-deviation method for almost every phase
does and does not supply.

\subsection*{(I) is the exterior-Green rate, for every phase}
Since $C$ is the Gram matrix of the exponential system $\{e^{-\delta_k\,\cdot}\}_{k=1}^N$ in $L^2(0,\infty)$,
\[
\sigma_{\min}(C)=\min_{\norm v=1}\Bigl\|\,\textstyle\sum_k v_k\,e^{-\delta_k\,\cdot}\,\Bigr\|_{L^2(0,\infty)}^2
\]
measures the near-linear-dependence of these exponentials, whose nodes $\delta_k$ lie in $J:=\Sigma_\lambda-E_0$.

\begin{proposition}[Conditioning rate]\label{prop:cond}
For every phase $\phi$,
$\ \frac{1}{2N}\log\sigma_{\min}(C)^{-1}\to g_{\mathbb C\setminus\Sigma_\lambda}(z^\star)$.
\end{proposition}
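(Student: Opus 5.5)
The plan is to drop the Zolotarev machinery and use the inverse-free identity $C^{-1}=DCD$ of Proposition~\ref{prop:cocycleform}. Then $\sigma_{\min}(C)^{-1}=\norm{C^{-1}}=\norm{DCD}$ is comparable, up to polynomial factors, to $\max_kD_{kk}^2$. From below, $\norm{DCD}\ge(DCD)_{kk}=D_{kk}^2/(2\delta_k)$ for every $k$, and $\delta_k$ is bounded. From above, $\norm{DCD}\le\norm{C}\max_kD_{kk}^2$ with $\norm{C}\le\operatorname{tr}C\le N/(2\alpha)$. Hence
\[
\lim_N\tfrac1{2N}\log\sigma_{\min}(C)^{-1}=\lim_N\max_k\tfrac1N\log\abs{D_{kk}},\qquad D_{kk}=\frac{Q(\delta_k)}{P'(\delta_k)},
\]
and it remains to identify the right-hand side with $g_{\mathbb C\setminus\Sigma_\lambda}(z^\star)$.

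\emph{Numerator and denominator.} By Lemma~\ref{lem:Qedge}, $\frac1N\log Q(\delta_k)\to\gamma_\lambda(2E_0-E_k)$ uniformly in $k$, for every phase. By Lemma~\ref{lem:greenmono} together with \eqref{eq:capval}, this is largest at the top edge, where it equals $\gamma_\lambda(z^\star)$. For the denominator I would use Lemma~\ref{lem:cd}: $\abs{P'(\delta_k)}=\norm{\psi^{(k)}}^2/\abs{\tilde b_k}$, which gives the exact formula $\abs{D_{kk}}=Q(\delta_k)\,\abs{\hat\psi^{(k)}_1\hat\psi^{(k)}_N}$. The claim is therefore equivalent to
\[
\max_k\Bigl[\gamma_\lambda(2E_0-E_k)+\tfrac1N\log\abs{\hat\psi^{(k)}_1\hat\psi^{(k)}_N}\Bigr]\longrightarrow\gamma_\lambda(z^\star)-\log_+(\lambda/2).
\]

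\emph{Lower bound.} I would argue by averaging over an edge window. Fix $\varepsilon>0$ and let $K_\varepsilon=\{k:E_k\ge\Emax-\varepsilon\}$; it has $\ge c_\varepsilon N$ elements by weak convergence of the eigenvalue counting measure to $N_\lambda$, since $\Emax\in\operatorname{supp}N_\lambda$. Next I would average $\frac1N\log\abs{P'(\delta_k)}=\frac1N\sum_{m\ne k}\log\abs{E_k-E_m}$ over $k\in K_\varepsilon$. The key point is that the pair sum $\frac1{N^2}\sum_{k\ne m}\log\abs{E_k-E_m}$ (discriminant asymptotics) converges, as a $\limsup$, to at most the logarithmic energy of $N_\lambda$, which is $\log\cpc(\Sigma_\lambda)=\log_+(\lambda/2)$ because $N_\lambda=\mu_{\mathrm{eq}}$ by Theorem~\ref{thm:exact}. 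This uses upper semicontinuity of $\iint\log\abs{t-s}$ under weak convergence. Localized to the window, the same semicontinuity together with the Thouless formula \eqref{eq:thouless} shows that the window average of $\frac1N\log\abs{P'(\delta_k)}$ is at most $\sup_{E\in[\Emax-\varepsilon,\Emax]}\gamma_\lambda(E)+o(1)$. By continuity of $\gamma_\lambda$ this is $\log_+(\lambda/2)+o_\varepsilon(1)$. So some $k\in K_\varepsilon$ satisfies $\frac1N\log\abs{D_{kk}}\ge\gamma_\lambda(z^\star)-\log_+(\lambda/2)-o_\varepsilon(1)$, and letting $\varepsilon\downarrow0$ gives the lower bound. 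For $\lambda\le2$ an alternative is simply $\abs{P'(\delta_k)}\le(2\Emax+1)^{N}$ combined with a sharper edge count.

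\emph{Upper bound (main obstacle).} One needs $\frac1N\log\abs{\hat\psi^{(k)}_1\hat\psi^{(k)}_N}\le-\log_+(\lambda/2)+o(1)$ uniformly in $k$, or at least for the $k$ that nearly maximize the bracket. Equivalently, $\frac1N\log\abs{P'(\delta_k)}\ge\gamma_\lambda(E_k)-o(1)$, a lower bound on products of eigenvalue spacings. For $0<\lambda\le2$ this is free: $\abs{\hat\psi_1\hat\psi_N}\le1$, and $\gamma_\lambda(2E_0-E_k)\le\gamma_\lambda(z^\star)=g_{\mathbb C\setminus\Sigma_\lambda}(z^\star)$. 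For $\lambda>2$ it says that no normalized eigenvector is simultaneously large at both ends of the box. I would first try a transfer-matrix argument: $\hat\psi_N/\hat\psi_1=\mathcal T_{21}(E_k)$, and $\hat\psi_1^2\le(1+\mathcal T_{21}(E_k)^2)^{-1}$. Hence $\abs{\hat\psi_1\hat\psi_N}\le\abs{\mathcal T_{21}}/(1+\mathcal T_{21}^2)\le\min(\abs{\mathcal T_{21}},\abs{\mathcal T_{21}}^{-1})$. It then suffices to show that $\abs{\mathcal T_{21}(E_k)}$ cannot be $e^{o(N)}$-close to $1$ in logarithmic scale with small $\norm{\psi}$. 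Controlling the near-edge eigenvectors uniformly in $\phi$, where a tunneling pair could make $\hat\psi_1$ and $\hat\psi_N$ both of order one, is exactly the step I expect to resist an every-phase proof. If it does, I would use the uniform upper bound $\frac1N\log\norm{\mathcal T(E)}\le\gamma_\lambda(E)+o(1)$ (Furman/unique ergodicity, valid for every phase by continuity of $\gamma_\lambda$) applied to the $N/2$-step half-cocycles from each end to bound $\abs{\hat\psi_1\hat\psi_N}$.
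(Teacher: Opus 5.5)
Your reduction is correct, and it takes a different route from the paper's sketch, which invokes the Beckermann--Townsend/Zolotarev bound together with an edge-regularity input it does not prove. From $C^{-1}=DCD$ you get $D_{kk}^2/(2\delta_k)\le\sigma_{\min}(C)^{-1}\le\frac{N}{2\alpha}\max_lD_{ll}^2$, with $\abs{D_{kk}}=Q(\delta_k)\abs{\hat\psi^{(k)}_1\hat\psi^{(k)}_N}$. Your edge-window averaging is also sound, for every phase and every $\lambda$. Since $N_\lambda$ has no atoms, the eigenvalue counting measure of $W=[\Emax-\varepsilon,\Emax]$ tends to $N_\lambda(W)>0$. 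The truncated kernel $\mathbf{1}_W(t)\max\{\log\abs{t-s},-M\}$ passes to the weak limit. The Thouless formula then bounds the window average of $\frac1N\log\abs{P'(\delta_k)}$ by $\log_+(\lambda/2)+o(1)$. Combined with $\abs{\hat\psi_1\hat\psi_N}\le1$ and Lemma~\ref{lem:Qedge}, this proves the proposition unconditionally for $0\le\lambda\le2$ at every phase, which the paper's sketch does not. Your aside $\abs{P'(\delta_k)}\le(2\Emax+1)^N$ is not usable, since it loses the positive rate $\log(2\Emax+1)$.

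The gap is the upper bound for $\lambda>2$, and your fallback points the wrong way. Write $\gamma=\log(\lambda/2)$. Uniform bounds $\norm{\mathcal T_n(E;\phi)}\le C_\delta e^{n(\gamma+\delta)}$ for $E$ near $\Emax$ bound the boundary entries from \emph{below}. From the peak $n^\star$ of $\hat\psi^{(k)}$ one gets $\abs{\hat\psi^{(k)}_1}\ge N^{-1/2}\norm{\mathcal T_{n^\star}}^{-1}$, and likewise at $N$; this is the mechanism of Proposition~\ref{prop:metamp}. The result is $\abs{\hat\psi_1\hat\psi_N}\ge N^{-1}e^{-N(\gamma+\delta)-O(1)}$. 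That gives a shorter proof of your \emph{lower} bound at the top Dirichlet eigenvalue. It can never give $\abs{\hat\psi_1\hat\psi_N}\le e^{-(\gamma-o(1))N}$, which needs the eigenvector to grow away from both ends, i.e.\ finite-volume localization. Similarly, $\min(\abs{\mathcal T_{21}},\abs{\mathcal T_{21}}^{-1})$ says nothing when $\abs{\tilde b_k}\approx1$.

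In fact the every-phase upper bound is false for large $\lambda$. If $\phi\equiv-\pi\beta(N+1)\pmod\pi$, the box $[1,N]$ is reflection symmetric. Each eigenvector then has definite parity, so $\abs{\hat\psi^{(k)}_N}=\abs{\hat\psi^{(k)}_1}$. Choose $k$ with $(\hat\psi^{(k)}_1)^2\ge1/N$ and use $Q(\delta_k)\ge\prod_m(\alpha+\delta_m)$ together with Lemma~\ref{lem:Qedge} at $w=\Emin-2\alpha$. This gives $\frac1{2N}\log\sigma_{\min}(C)^{-1}\ge\gamma_\lambda(\Emin-2\alpha)-o(1)\ge\log(\lambda/2)-o(1)$, uniformly in the phase. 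The entries of $C$ move by $O(\lambda\abs{\phi-\phi'})$, so Weyl's inequality carries this bound, along infinitely many $N$, to any $\phi$ lying within $e^{-cN}$ ($c$ large) of such a symmetric phase infinitely often. Those phases form a dense $G_\delta$ set. Since $g_{\mathbb C\setminus\Sigma_\lambda}(z^\star)$ stays bounded by \eqref{eq:glimit}, the claimed limit fails there once $\log(\lambda/2)>g_{\mathbb C\setminus\Sigma_\lambda}(z^\star)$. The paper's sketch, whose inputs are all phase independent, overlooks this as well.

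What your route does prove for $\lambda>2$ is the almost-every-phase statement. The semi-uniform bound \eqref{eq:sule} gives $\abs{\hat\psi_1\hat\psi_N}\le C_\varepsilon^2e^{2\varepsilon N}e^{-\gamma(N-1)}$, and with Lemma~\ref{lem:Qedge} this closes the upper bound. At the level of rates, no polynomial prefactor is needed.
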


\noindent\emph{This is a conditional statement (see the proof below); no result of this paper depends on it, and it is recorded here for independent interest.}

\begin{proof}[Sketch]
By the Beckermann--Townsend theory the singular values of a Cauchy matrix with node sets $J$ and $-J$ are
controlled by Zolotarev numbers of the condenser $(J,-J)$: $\sigma_{\min}(C)/\sigma_{\max}(C)$ decays at the
geometric rate fixed by the condenser, and $\sigma_{\max}(C)=O(1)$. A standard condenser computation identifies that
rate with the exterior-Green value $g_{\mathbb C\setminus\Sigma_\lambda}$ at the reflected edge $z^\star$, which in the
shifted variable is the point $-\delta_{\max}=2\Emin-\alpha$ of $-J$ opposite the node interval. The nodes $\delta_k$
equidistribute to the equilibrium measure of $J$ for \emph{every} $\phi$ (the density of states is phase-independent and,
for the almost Mathieu operator, equals the equilibrium measure, \S\ref{sec:cocycle}), so the rate is phase-independent. In sum,
Proposition~\ref{prop:cond} rests on two inputs standard in the potential-theoretic and
orthogonal-polynomial literature but external to this note: the Beckermann--Townsend singular-value bound for
Cauchy matrices through the Zolotarev numbers of the condenser $(J,-J)$, and the edge-regularity of the
equilibrium measure of $\Sigma_\lambda$, namely the square-root vanishing of its density at the band edges, which
fixes the condenser rate and supplies the matching lower bound on $\sigma_{\min}$. We use it in this
form, and the reduction of the identity to the overlap bound \eqref{eq:overlap} inherits the same status.
Numerically, with
$\lambda=3,\alpha=0.1$, $\frac1{2N}\log\sigma_{\min}^{-1}$ rises monotonically toward
$g_{\mathbb C\setminus\Sigma_\lambda}(z^\star)=1.901$ (e.g.\ $1.72,1.73,1.77,1.78$ for $N=32,40,48,56$ at $\phi=0$, and
$1.79,1.81,1.81,1.82$ at a generic phase).
\end{proof}

\subsection*{(II) is the single remaining estimate}
Granting Proposition~\ref{prop:cond}, and using the trivial $p_N\le1$ (Cauchy--Schwarz, $\norm{u_N}=1$), the
identity $r=\gamma_\lambda(z^\star)$ is equivalent to the one-sided overlap bound
\begin{equation}\label{eq:overlap}
\liminf_{N\to\infty}\ \frac1{2N}\log p_N\ \ge\ 0,\qquad p_N=\frac{(u_N^\top\tilde b)^2}{\norm{\tilde b}^2}:
\end{equation}
the boundary-amplitude vector $\tilde b$ is not exponentially orthogonal to the worst-conditioned Gram direction $u_N$.
The upper bound $\frac1{2N}\log p_N\le0$ is automatic; the content of \eqref{eq:overlap} is that no exponential loss
occurs. Numerically $\frac1{2N}\log p_N$ is small and its modulus decreases with $N$ (for the generic phase above,
$-0.50,-0.24,-0.13$ at $N=16,24,32$; at $\phi=0$, $-0.03$ to $-0.06$), consistent with \eqref{eq:overlap}.

\subsection*{Scope and limits of phase averaging}
Three facts hold for almost every phase. First, the irrational shift $\phi\mapsto\phi+2\pi\beta$ is ergodic and moves the
box by one site, so $r(\phi)$ is shift-invariant up to a one-site boundary change and hence almost surely equal to a
constant $r$ (granting that a one-site window shift does not change the exponential rate). Second, the boundary
amplitudes $\tilde b_k=\mathcal T_{21}(E_k)$ and the amplitude rate of Proposition~\ref{prop:amprate} are governed
by the transfer cocycle, for which the large-deviation theorems of Goldstein--Schlag and Bourgain--Goldstein
\cite{GoldsteinSchlag2001,BourgainGoldstein} give
$\bigl|\tfrac1N\log\norm{M_N(E,\phi)}-\gamma_{\lambda,N}(E)\bigr|>\varepsilon$ on a set of phases of measure
$<e^{-cN^{\sigma}}$ (sharpened to $<e^{-cN}$ for trigonometric-polynomial potentials \cite{Bourgain2005});
this underlies Proposition~\ref{prop:amprate} and the localization of the data off a small phase set. Third,
near-degeneracies of the Dirichlet spectrum at scale $N$ are confined to a phase set of measure $<e^{-cN^{\sigma}}$,
removable along a subsequence by Borel--Cantelli.

What these do \emph{not} supply is \eqref{eq:overlap} itself. The overlap $p_N$ is a pairing of $\tilde b$ with the
smallest eigenvector $u_N$ of the Gram matrix $C$, not a transfer-matrix norm, and the large-deviation control of
$\norm{M_N}$ does not bound it. Excluding the exponential orthogonality $u_N\perp\tilde b$ is an anti-concentration
(transversality) statement for which the phase measure is the natural tool, but a complete argument is not in hand. This
is the precise sense in which the almost-every-phase identity, and separately the distinguished point $\phi=0$
(which the almost-everywhere statement does not cover), are reduced
to \eqref{eq:overlap}: the conditioning \textbf{(I)} is the phase-independent exterior-Green rate, and \eqref{eq:overlap}
is the analytic core, the one place where the band-edge arithmetic (\S\ref{sec:remarks}) still enters.

\subsection*{A regularity criterion for the every-phase identity}
The reduction to \eqref{eq:overlap} is analytic; we record a complementary \emph{soft} criterion. Write $r(\phi)$ for the
cost rate \eqref{eq:rate} at phase $\phi$. The universal upper bound is pointwise in $\phi$ and holds along the full
sequence: by Lemma~\ref{lem:cd}, $\abs{c_k}=Q(\delta_k)(\hat\psi^{(k)}_N)^2\le Q(\max_m\delta_m)$, and since
$\tfrac1{2N}\log\Ecal_N=\max_k\tfrac1N\log\abs{c_k}+o(1)$ (Proposition~\ref{prop:possum}), Lemma~\ref{lem:Qedge} gives
\begin{equation}\label{eq:uppersup}
\limsup_{N\to\infty}\tfrac1{2N}\log\Ecal_N(\phi)\ \le\ \gamma_\lambda(z^\star)\qquad\text{for every }\phi,
\end{equation}
so $r(\phi)\le\gamma_\lambda(z^\star)$ everywhere, with equality for almost every $\phi$ (Theorem~\ref{thm:uncond}).

\begin{proposition}[Every-phase identity from upper semicontinuity]\label{prop:phaseunif}
Fix $\lambda>2$ and $\beta$ Diophantine. If $\phi\mapsto r(\phi)$ is upper semicontinuous, then
$r(\phi)=\gamma_\lambda(z^\star)$ for \emph{every} phase $\phi$.
\end{proposition}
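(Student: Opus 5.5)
We will combine three facts: the pointwise upper bound \eqref{eq:uppersup}, the almost-everywhere identity of Theorem~\ref{thm:uncond}, and the phase-shift structure of the problem. The upper bound gives $r(\phi)\le\gamma_\lambda(z^\star)$ for every $\phi$, so only $r(\phi)\ge\gamma_\lambda(z^\star)$ at an arbitrary fixed phase $\phi_0$ needs proof.

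Upper semicontinuity alone does not give this lower bound. Indeed, $\{\phi: r(\phi)<\gamma_\lambda(z^\star)\}$ is the sublevel set $\{r<c\}$, and for an upper semicontinuous function such sets are \emph{open}. So the exceptional set $\{r<\gamma_\lambda(z^\star)\}$ is open. By Theorem~\ref{thm:uncond} it has measure zero, and an open set of measure zero is empty. Hence $r(\phi)\ge\gamma_\lambda(z^\star)$ for every $\phi$, and the identity holds everywhere.

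The steps, in order, are:
\begin{enumerate}
\item Record $r(\phi)\le\gamma_\lambda(z^\star)$ for all $\phi$ from \eqref{eq:uppersup}.
\item Note that the set $U=\{\phi: r(\phi)<\gamma_\lambda(z^\star)\}=\bigcup_{q<\gamma_\lambda(z^\star),\,q\in\mathbb Q}\{r<q\}$ is open, being a union of open sets by the upper semicontinuity hypothesis.
\item Invoke Theorem~\ref{thm:uncond} to get that $U$ is Lebesgue-null.
\item Conclude $U=\emptyset$, since a nonempty open subset of the circle $\R/2\pi\mathbb Z$ has positive measure.
\end{enumerate}

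The main obstacle is not in this deduction. It lies in the standing inputs: Theorem~\ref{thm:uncond} is stated under \textup{(PL)}, so the conclusion inherits that hypothesis. I would state this explicitly, or phrase the proposition as ``under \textup{(PL)}''.

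One more point needs care. The full-measure set in Theorem~\ref{thm:uncond} may depend on $\alpha$. This is harmless, because $\alpha$ is fixed throughout and the argument is carried out for that $\alpha$.

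I would also add a remark that the hypothesis cannot be weakened to lower semicontinuity. With that version, the sublevel sets are closed, and a closed null set can be nonempty; the distinguished phase $\phi=0$ could then escape. This shows why upper semicontinuity, which is unproved, is exactly the soft input needed.
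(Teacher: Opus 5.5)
Your argument is correct and is the paper's proof: by \eqref{eq:uppersup} and Theorem~\ref{thm:uncond} the exceptional set $\{\phi: r(\phi)<\gamma_\lambda(z^\star)\}$ is Lebesgue-null, upper semicontinuity makes it open, and so it is empty. Your caveat is also accurate: the conclusion inherits hypothesis \textup{(PL)} through Theorem~\ref{thm:uncond}, which the paper leaves implicit in the statement.
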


\begin{proof}
By \eqref{eq:uppersup} and Theorem~\ref{thm:uncond} the set
$G=\{\phi:r(\phi)=\gamma_\lambda(z^\star)\}=\{\phi:r(\phi)\ge\gamma_\lambda(z^\star)\}$ has full Lebesgue measure. If $r$ is
upper semicontinuous then $G$, a superlevel set, is closed, so its complement
$\{\phi:r(\phi)<\gamma_\lambda(z^\star)\}$ is open; being also Lebesgue-null it is empty. Hence $G$ is the whole circle.
\end{proof}

\begin{remark}
This is the route by which an almost-everywhere ergodic identity is upgraded to every point, and its merit is to bypass
the dynamics at the exceptional phases of \cite{Jito1999}: it asks not that the operator localize there --- it does not
--- but only that the rate not dip below its full-measure value at an isolated phase. Equivalently, by minimality of
$\phi\mapsto\phi+2\pi\beta$, it is enough that $r$ be shift-invariant and upper semicontinuous (the closed invariant
full-measure set $G$ then meets, hence contains, every orbit). The criterion is sharp rather than a strict weakening:
since $r\le\gamma_\lambda(z^\star)$ everywhere with equality a.e., the every-phase identity is \emph{equivalent} to upper
semicontinuity of $r$. For fixed $N$ the map $\phi\mapsto\tfrac1{2N}\log\Ecal_N$ is real-analytic and \eqref{eq:uppersup}
controls the $\limsup$; what remains open is that the $\liminf$ inherit the regularity --- a statement about the
band-edge transfer fluctuations of \S\ref{sec:remarks} at the resonant phases, equivalent in turn to \eqref{eq:overlap}.
\end{remark}

\section{The golden-mean overlap carries no cancellation}\label{sec:nocancel}

The overlap \eqref{eq:overlap} entered \S\ref{sec:condoverlap} as a \emph{signed} pairing $u_N^\top\tilde b$, hence a
priori subject to exponential cancellation among oscillating terms. We show that this cancellation does not occur:
the two vectors alternate in sign in the same pattern, so the overlap is a sum of nonnegative terms, and
\eqref{eq:overlap} reduces to a magnitude bound on the extremal Gram eigenvector. Order the eigenvalues
$E_1<\dots<E_N$, equivalently $\delta_1<\dots<\delta_N$.

\begin{lemma}[No cancellation]\label{lem:nocancel}
For every phase $\phi$, both $\tilde b$ and the eigenvector $u_N$ of $C$ for $\sigma_{\min}$ have exactly $N-1$ sign
changes, i.e.\ strictly alternate in $k$. Consequently their signs agree up to a single global factor, and
\[
\abs{u_N^\top\tilde b}=\sum_{k=1}^N\abs{u_{N,k}}\,\abs{\tilde b_k}=\big\langle\,\abs{u_N},\,\abs{\tilde b}\,\big\rangle,
\]
a sum of nonnegative terms.
\end{lemma}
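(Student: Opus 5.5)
The plan has two halves, one for each vector, followed by a one-line combination.

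\emph{Step 1: the data vector $\tilde b$.} Its alternation is the Sturm oscillation fact already used in Proposition~\ref{prop:possum}. The $k$-th Dirichlet eigenvector of the Jacobi matrix $H_\lambda$, with off-diagonal entries $-1$, has exactly $k-1$ strict sign changes. Since no eigenvector vanishes at site $1$ or site $N$ (Lemma~\ref{lem:reduction}), the ratio $\tilde b_k=\psi^{(k)}_N/\psi^{(k)}_1$ has sign $(-1)^{k-1}$ times a fixed constant. More directly, Lemma~\ref{lem:cd} gives $\tilde b_k\,\mathcal T_{11}'(E_k)=-\norm{\psi^{(k)}}^2<0$. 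Since $\mathcal T_{11}=(-1)^N\chi_N$ has simple real zeros, its derivative alternates in sign across consecutive zeros, so $\tilde b_k$ strictly alternates. This half is phase-free and routine.

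\emph{Step 2: the extremal Gram eigenvector $u_N$.} I would use total positivity. With distinct nodes $0<\delta_1<\dots<\delta_N$, the Cauchy matrix $C_{kl}=(\delta_k+\delta_l)^{-1}$ is strictly totally positive. Every minor is a Cauchy determinant
\[
\prod_{i<j}(\delta_{k_j}-\delta_{k_i})(\delta_{l_j}-\delta_{l_i})\Big/\prod_{i,j}(\delta_{k_i}+\delta_{l_j}),
\]
which is positive for increasing index sets. By the Gantmacher--Krein oscillation theorem \cite{GantmacherKrein}, a strictly totally positive matrix has simple positive eigenvalues $\sigma_1>\dots>\sigma_N>0$. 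The eigenvector of the $j$-th largest eigenvalue has exactly $j-1$ sign changes and no zero entries. Hence $u_N$, the eigenvector for $\sigma_{\min}$, has $N-1$ strict sign changes, so $\operatorname{sign}u_{N,k}=\pm(-1)^k$. Distinctness of the $\delta_k$ comes from simplicity of the Dirichlet spectrum, so this holds at every phase.

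\emph{Step 3: combination.} Both vectors have all entries nonzero, and both have sign pattern $(-1)^k$ up to a global factor. Therefore $u_{N,k}\tilde b_k=\epsilon\,\abs{u_{N,k}}\abs{\tilde b_k}$ with a single $\epsilon\in\{\pm1\}$. Summing over $k$ and taking absolute values gives $\abs{u_N^\top\tilde b}=\langle\abs{u_N},\abs{\tilde b}\rangle$.

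The only step needing care is Step 2. One must confirm strict total positivity of every order, including non-contiguous index sets, which the Cauchy determinant formula settles. One must also confirm that the oscillation theorem gives strict alternation with no zero entries, rather than only a count of weak sign changes. The strict-TP version of Gantmacher--Krein provides exactly this, so I expect no real obstacle beyond citing it correctly.
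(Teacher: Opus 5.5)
Your proposal is correct and follows the paper's proof essentially step for step. You get the alternation of $\tilde b$ from Sturm oscillation, which you also cross-check via the Wronskian identity $\tilde b_k\,\mathcal T_{11}'(E_k)=-\norm{\psi^{(k)}}^2$ and the alternation of $\mathcal T_{11}'$ at consecutive simple zeros. You get the alternation of $u_N$ from strict total positivity of the Cauchy matrix plus Gantmacher--Krein, and then combine the common sign pattern. One small correction: Gantmacher--Krein does not exclude zero entries in intermediate eigenvectors (a centrosymmetric $3\times3$ strictly totally positive matrix such as $\bigl(e^{-(i-j)^2}\bigr)$ has $(1,0,-1)^\top$ as its second eigenvector), but for $u_N$ the count of $N-1$ sign changes ignoring zeros, among $N$ entries, forces every entry to be nonzero, which is all the lemma needs.
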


\begin{proof}
The boundary ratios $\tilde b_k$ strictly alternate in $k$ (Sturm oscillation, as established in
Proposition~\ref{prop:possum}). A Cauchy matrix $C_{kl}=1/(\delta_k+\delta_l)$ with
$\delta_1<\dots<\delta_N$ is strictly totally positive, hence an oscillation matrix; by the Gantmacher--Krein theorem \cite{GantmacherKrein}
its eigenvalues are simple and positive and the eigenvector for the $j$-th largest has exactly $j-1$ sign changes, so
$u_N$ (the case $j=N$) has $N-1$ changes. Writing $\operatorname{sign}(u_{N,k})=\epsilon_u(-1)^k$ and
$\operatorname{sign}(\tilde b_k)=\epsilon_b(-1)^k$ gives $(u_N)_k\tilde b_k=\epsilon_u\epsilon_b\abs{u_{N,k}}\abs{\tilde b_k}$
with a common sign, whence the identity.
\end{proof}

\begin{corollary}[Cancellation-free reduction]\label{cor:nocancel}
Write $\gamma_{\mathrm{amp},N}:=\frac1{2N}\log\norm{\tilde b}^2$ for the finite-volume amplitude rate; by
Proposition~\ref{prop:amprate}, $\liminf_N\gamma_{\mathrm{amp},N}\ge\log_+(\lambda/2)$ for almost every phase, with
excursions above along gap-edge subsequences (Remark~\ref{rem:gapstates}). Since
$\langle\abs{u_N},\abs{\tilde b}\rangle=\sum_k\abs{u_{N,k}}\abs{\tilde b_k}$ is a sum of nonnegative terms, its
exponential rate equals that of its largest term, and by Lemma~\ref{lem:nocancel}
\[
\frac1{2N}\log p_N=\max_{1\le k\le N}\frac1N\log\bigl(\abs{u_{N,k}}\,\abs{\tilde b_k}\bigr)-\gamma_{\mathrm{amp},N}+o(1).
\]
As $\max_k\abs{\tilde b_k}\le\norm{\tilde b}$ and $\abs{u_{N,k}}\le1$ force the maximum to be at most
$\gamma_{\mathrm{amp},N}+o(1)$, the overlap bound
\eqref{eq:overlap} (hence, with Proposition~\ref{prop:cond}, the identity $r=\gamma_\lambda(z^\star)$) is equivalent to the
magnitude statement
\begin{equation}\label{eq:magbound}
\liminf_{N\to\infty}\ \Bigl[\ \max_{1\le k\le N}\frac1N\log\bigl(\abs{u_{N,k}}\,\abs{\tilde b_k}\bigr)\ -\ \gamma_{\mathrm{amp},N}\Bigr]\ \ge\ 0:
\end{equation}
the worst-conditioned Gram direction $u_N$ retains weight of full rate zero on a mode that carries the running
amplitude rate $\gamma_{\mathrm{amp},N}$.
\end{corollary}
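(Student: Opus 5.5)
The plan is to start from Lemma~\ref{lem:nocancel} and divide the identity there by $\norm{\tilde b}$. This gives
\[
p_N=\frac{\bigl(\sum_k\abs{u_{N,k}}\abs{\tilde b_k}\bigr)^2}{\norm{\tilde b}^2},
\qquad
\frac1{2N}\log p_N=\frac1N\log\sum_k\abs{u_{N,k}}\abs{\tilde b_k}-\gamma_{\mathrm{amp},N}.
\]
Because the sum has $N$ nonnegative terms, it is sandwiched as
\[
\max_k\abs{u_{N,k}}\abs{\tilde b_k}\le\sum_k\abs{u_{N,k}}\abs{\tilde b_k}\le N\max_k\abs{u_{N,k}}\abs{\tilde b_k}.
\]
Taking $\frac1N\log$ then yields the displayed formula with an error term of size $O(\frac{\log N}N)=o(1)$. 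This step is exact bookkeeping and needs no input on $\tilde b$ beyond $\tilde b\neq0$, which holds since $a_k\neq0$ and $b_k\neq0$ by the three-term recurrence.

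Next I would establish the a priori upper bound. We have $\abs{u_{N,k}}\le\norm{u_N}=1$ and $\abs{\tilde b_k}\le\norm{\tilde b}$. Hence every term satisfies $\frac1N\log(\abs{u_{N,k}}\abs{\tilde b_k})\le\frac1N\log\norm{\tilde b}=\gamma_{\mathrm{amp},N}$, so the bracket in \eqref{eq:magbound} is $\le o(1)$ for every $N$. Equivalently, $\frac1{2N}\log p_N\le o(1)$, which reproduces the automatic half of \eqref{eq:overlap}.

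For the equivalence, the displayed formula shows that $\frac1{2N}\log p_N$ and the bracket in \eqref{eq:magbound} differ by $o(1)$ along the full sequence. Their $\liminf$'s therefore coincide, so \eqref{eq:overlap} holds if and only if \eqref{eq:magbound} holds. Chaining this with the equivalence already recorded in \S\ref{sec:condoverlap} gives the statement about the identity. That equivalence says that, granting Proposition~\ref{prop:cond}, \eqref{eq:overlap} is equivalent to $r=\gamma_\lambda(z^\star)$; it rests on Proposition~\ref{prop:reduction}, Remark~\ref{rem:supply}, and the unconditional upper bound.

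The main point needing care is that $\gamma_{\mathrm{amp},N}$ need not converge, by Remark~\ref{rem:gapstates}. So I will never split $\liminf$ of a difference into a difference of $\liminf$'s. Instead, I will keep the comparison termwise at each finite $N$, which the displayed formula allows. The almost-sure bound $\liminf\gamma_{\mathrm{amp},N}\ge\log_+(\lambda/2)$ from Proposition~\ref{prop:amprate} is only needed to connect back to the identity through Proposition~\ref{prop:reduction}, not for the equivalence of \eqref{eq:overlap} and \eqref{eq:magbound} itself.
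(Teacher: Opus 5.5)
Your finite-$N$ bookkeeping is correct and is the paper's own argument. Lemma~\ref{lem:nocancel} gives $\frac1{2N}\log p_N=\frac1N\log\sum_k\abs{u_{N,k}}\abs{\tilde b_k}-\gamma_{\mathrm{amp},N}$, and the sandwich $\max\le\sum\le N\max$ gives the $O(\frac{\log N}{N})$ error. The bracket in \eqref{eq:magbound} is in fact $\le0$ for every $N$. Since the two sequences differ termwise by $o(1)$, their $\liminf$'s agree, so \eqref{eq:overlap}$\iff$\eqref{eq:magbound}.

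The gap is in the last step, where you chain to the identity. The results you cite for ``\eqref{eq:overlap}$\iff r=\gamma_\lambda(z^\star)$ given Proposition~\ref{prop:cond}'' give only one direction:
\begin{itemize}
\item[(a)] Proposition~\ref{prop:reduction} explicitly calls \eqref{eq:reductioncrit} a \emph{sufficient} criterion.
\item[(b)] Remark~\ref{rem:supply} says its two ingredients are ``together, sufficient''.
\item[(c)] The upper bound contributes nothing to a converse.
\end{itemize}
What you actually get is \eqref{eq:magbound}$\Rightarrow$\eqref{eq:overlap}$\Rightarrow r=\gamma_\lambda(z^\star)$ for almost every phase. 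This uses $\Ecal_N\ge\norm{\tilde b}^2p_N\,\sigma_{\min}(C)^{-1}$, superadditivity of $\liminf$, Proposition~\ref{prop:amprate} and Proposition~\ref{prop:cond}.

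The reverse implication is not just unsupported; it conflicts with the paper's own account of the amplitudes. Combine the same inequality with the unconditional bound \eqref{eq:uppersup} and Proposition~\ref{prop:cond}. Along the full sequence this gives
\[
\frac1{2N}\log p_N\ \le\ \frac1{2N}\log\Ecal_N-\gamma_{\mathrm{amp},N}-\frac1{2N}\log\sigma_{\min}(C)^{-1}\ \le\ \log_+(\lambda/2)-\gamma_{\mathrm{amp},N}+o(1).
\]
So \eqref{eq:overlap}, and hence \eqref{eq:magbound}, forces $\limsup_N\gamma_{\mathrm{amp},N}\le\log_+(\lambda/2)$, i.e.\ $\gamma_{\mathrm{amp},N}\to\log_+(\lambda/2)$. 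The paper itself says this convergence fails in general, along gap-edge scales (Proposition~\ref{prop:amprate}, Remark~\ref{rem:gapstates}). Meanwhile Theorem~\ref{thm:uncond} gives $r=\gamma_\lambda(z^\star)$ regardless.

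Your own correct caution that $\gamma_{\mathrm{amp},N}$ need not converge is exactly what breaks the converse. The defensible conclusion is \eqref{eq:overlap}$\iff$\eqref{eq:magbound}, plus the one-way implication to the identity. The word ``equivalent'' in the statement's parenthetical, inherited from \S\ref{sec:condoverlap}, should be weakened to ``implies''; your route does not establish it, and under (PL), Proposition~\ref{prop:cond} and the gap-state behaviour the paper asserts, it is false.
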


\begin{remark}
Numerically ($\phi=0$, $\lambda=3$): the identity $\abs{u_N^\top\tilde b}=\langle\abs{u_N},\abs{\tilde b}\rangle$ holds to
machine precision (the ratio is $1.0000$), confirming Lemma~\ref{lem:nocancel}; the overlap is concentrated on the
boundary-localized modes (the five largest $\abs{\tilde b_k}$ supply about $90\%$ of it), where $\abs{u_{N,k}}$ is of
order one; and $\frac1{2N}\log p_N$ is small and fluctuates with $N$ in step with the band-edge fluctuations of
$\frac1{2N}\log\norm{\tilde b}^2$ (the arithmetic of \S\ref{sec:remarks}), consistent with \eqref{eq:magbound} without
yet establishing it. The content of \eqref{eq:magbound} is a magnitude lower bound on the extremal Cauchy
eigenvector at a boundary-carrying mode, free of the sign cancellation that made the overlap appear to be an
oscillatory-sum problem. For the golden mean it is the joint statement, through the returns of $\{\beta j\}$ to a
neighbourhood of $1/2$, that a right-boundary-localized mode ($j_k\approx N$, large $\abs{\tilde b_k}$) sits at an index
on which $u_N$ does not decay.
\end{remark}

\section{A certified enclosure at finite scale}\label{sec:certified}

The inverse-free identity of Proposition~\ref{prop:cocycleform} carries a computational payoff. The Gram matrix $C$
is exponentially ill-conditioned, so an interval inversion is useless; but the cost in the form
$\Ecal_N=c^\top Cc=\sum_{k,l}c_kc_l/(\delta_k+\delta_l)$ involves no inversion and is amenable to rigorous interval
arithmetic. One certifies, by outward-rounded interval arithmetic, the eigenvalues $E_k$ by interval Sturm (LDL)
bisection on the tridiagonal $H_N$; the boundary amplitudes $\tilde b_k=\mathcal T_{21}(E_k)$ by an interval
transfer-matrix product over the certified enclosure of $E_k$; and then $c_k$ and $\Ecal_N$ by the inverse-free sum.

\begin{table}[t]
\centering
\begin{tabular}{l r r r r r}
\toprule
\multicolumn{6}{l}{\emph{Generic scales:}\ \ $\tfrac1{2N}\log\Ecal_N$ fluctuates within the bracket}\\
\midrule
$N$ & $16$ & $24$ & $32$ & $40$ & $48$\\
$\tfrac1{2N}\log\Ecal_N$ & $2.1353$ & $2.1106$ & $2.1104$ & $2.1539$ & $2.1954$\\
\addlinespace[2pt]
\midrule
\multicolumn{6}{l}{\emph{Fibonacci scales} $N=F_n$:\ \ gap $\gamma_\lambda(z^\star)-\tfrac1{2N}\log\Ecal_N$ decreases monotonically}\\
\midrule
$N=F_n$ & $13$ & $21$ & $34$ & $55$ & $89$\\
$\gamma_\lambda(z^\star)-\tfrac1{2N}\log\Ecal_N$ & $0.139$ & $0.085$ & $0.061$ & $0.044$ & $0.035$\\
\bottomrule
\end{tabular}
\caption{Certified finite-scale cost rate for the golden mean, $\lambda=3$, $\alpha=0.1$ ($E_0=E_1-\alpha$;
$\gamma_\lambda(z^\star)=2.3062$; enclosure relative width $<10^{-80}$). The generic scales fluctuate within the
bracket \eqref{eq:bracket} with the band-edge deviations of \S\ref{sec:remarks}; along the Fibonacci
renormalization scales $N=F_n$ the rate approaches the upper end $\gamma_\lambda(z^\star)$ monotonically from below,
the gap decreasing roughly like $\log N/N$. These are rigorous interval-arithmetic statements (script
\texttt{verify\_partial.py}), not floating-point values.}
\label{tab:certified}
\end{table}

For the golden mean, $\lambda=3$, $\alpha=0.1$, with $E_0=E_1-\alpha$, this yields the certified cost rates collected in Table~\ref{tab:certified} (generic scales), the underlying
enclosure of $\Ecal_N$ having relative width below $10^{-80}$. Every value lies in the bracket \eqref{eq:bracket} and
fluctuates within it, with the band-edge fluctuations of \S\ref{sec:remarks}; the regular Fibonacci subsequence
(Table~\ref{tab:certified}) approaches the upper end monotonically. These are rigorous finite-$N$
statements, not floating-point values, and in this they differ from the high-precision but non-certified numerical
checks reported elsewhere in the paper (\S\ref{sec:remarks}, \S\ref{sec:condoverlap} and \S\ref{sec:outlook}), which
are consistency illustrations only; they confirm the identity at the scales reached, without establishing the
$N\to\infty$ limit, which remains the content of \eqref{eq:magbound}. A self-contained script \texttt{verify\_partial.py}
reproduces them.

A regular subsequence appears at the golden-mean renormalization scales. Taking $N=F_n$ Fibonacci (the scales at
which the trace map of $\beta=(\sqrt5-1)/2$ is periodic), the certified cost rate increases monotonically toward the
upper bracket: the gaps $\gamma_\lambda(z^\star)-\frac1{2N}\log\Ecal_N$ at $F_n=13,21,34,55,89$
(Table~\ref{tab:certified}) decrease roughly like $\log N/N$ --- faster than the generic bound $O(N^{-2/(2+\tau)})$ of
Theorem~\ref{thm:uncond}, the residual along these scales being governed by the polynomial Christoffel and
boundary-weight prefactors of Lemma~\ref{lem:cd} once the band-edge resonance is made optimally small. The generic-$N$ fluctuations above are the deviations between consecutive
Fibonacci scales. This is the finite-scale signature of the renormalization that the appendix would make rigorous: along
the natural subsequence the upper bracket $\gamma_\lambda(z^\star)$ is approached monotonically and from below, consistently with the identity.

\section{The edge identity for almost every phase}\label{sec:routeAredux}

The positive-sum reduction (Proposition~\ref{prop:possum}) expresses the cost rate through quantities of the
transfer cocycle alone. We make this explicit, prove the arithmetic (boundary-resonance) ingredient, and close the
identity for almost every phase: the boundary-weight form of Lemma~\ref{lem:cd} evaluates the maximizing
coefficient, leaving only a one-sided amplitude bound that the same transfer estimate supplies.

\subsection*{The cost rate is a maximum of cocycle ratios}
With $P(x)=\chi_N(x+E_0)=(-1)^N\mathcal T_{11}(x+E_0)$ one has, from $Q(\delta_k)=\prod_m(E_k+E_m-2E_0)=(-1)^N\chi_N(2E_0-E_k)$
and $P'(\delta_k)=\prod_{m\ne k}(E_k-E_m)=(-1)^N\chi_N'(E_k)$, the identities $Q(\delta_k)=\mathcal T_{11}(2E_0-E_k)$ and
$P'(\delta_k)=(-1)^N\mathcal T_{11}'(E_k)$, while $\tilde b_k=\mathcal T_{21}(E_k)$. The single coefficient of
Proposition~\ref{prop:possum} is therefore a ratio of cocycle entries, of which the rate \eqref{eq:maxc} is the
single-mode maximum:
\begin{equation}\label{eq:ccocycle}
\abs{c_k}=\frac{\bigl|\mathcal T_{11}(2E_0-E_k)\bigr|\;\bigl|\mathcal T_{21}(E_k)\bigr|}{\bigl|\mathcal T_{11}'(E_k)\bigr|} .
\end{equation}
Every factor is a transfer-matrix quantity at the phase $\phi$, evaluated at $E_k$ or at the reflected point
$2E_0-E_k$; none is a Gram-eigenvector overlap: such
quantities are characteristic-polynomial and cocycle data, governed by the density of states and by eigenvalue
spacing (and, where needed, by the Bourgain--Goldstein large-deviation theorem \cite{GoldsteinSchlag2001,BourgainGoldstein,Bourgain2005}), whereas the overlap $p_N$
is none of these. The boundary-weight form of Lemma~\ref{lem:cd} reduces the one factor that is not immediate to a
normalized eigenfunction amplitude.

\subsection*{The boundary-resonance lemma (arithmetic)}
By Corollary~\ref{cor:singlemode} the maximizing mode in \eqref{eq:ccocycle} is a band-edge mode ($E_k\to\Emax$)
that is right-boundary localized ($j_k/N\to1$). For any \emph{Diophantine} $\beta$ both can be met at once --- with a
type-dependent rate --- by the three-distance theorem; the bounded-type case ($\tau=1$) is the sharpest.

\begin{lemma}[Boundary resonance]\label{lem:resonance}
Let $\beta$ be Diophantine of type $\tau\ge1$, i.e.\ $\norm{q\beta}\ge c_\beta\,q^{-\tau}$ for all $q\ge1$ (bounded type
is $\tau=1$, e.g.\ $\beta=(\sqrt5-1)/2$), and $\|\cdot\|$ the distance to $\mathbb Z$. There is $C_\beta<\infty$ such
that for every $\delta\in(0,\tfrac12)$ and every $N$, the largest $j\le N$ with $\|\beta j+\tfrac{\phi}{2\pi}\|<\delta$
obeys $N-j\le C_\beta\,\delta^{-\tau}$.
\end{lemma}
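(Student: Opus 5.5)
The plan is to reduce the statement to a covering property of a block of consecutive orbit points, and then control the largest gap of that block by the three-distance theorem together with the Diophantine condition on $\beta$. Put $\theta=\phi/2\pi$. A window $\{N-L+1,\dots,N\}$ of $L$ consecutive indices gives the points $\beta j+\theta=(\beta(N-L+1)+\theta)+\beta m$ with $0\le m<L$. Up to a rigid rotation of the circle, these are the first $L$ points $\{\beta m\}_{m<L}$ of the orbit.

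Let $g_L$ be the largest gap of the partition of $\mathbb R/\mathbb Z$ cut out by $\{\beta m\}_{m<L}$. Any point of the circle, in particular $0$, lies within $g_L/2$ of some point of the block, since it sits inside one gap and is at most half its length from an endpoint. Rotation does not change the gaps. Hence if $g_L/2<\delta$, some $j\in\{N-L+1,\dots,N\}$ has $\|\beta j+\theta\|<\delta$, and therefore the largest admissible $j\le N$ satisfies $N-j\le L-1$. It therefore suffices to find $L\le C_\beta\delta^{-\tau}$ with $g_L<2\delta$.

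To choose $L$, let $(q_n)$ be the continued-fraction denominators of $\beta$. I will use two facts.
\begin{itemize}
\item The three-distance theorem: if $L\ge q_n$, the gaps of $\{\beta m\}_{m<L}$ are at most those for $L=q_n$. Those gaps are $\|q_{n-1}\beta\|$ and $\|q_{n-1}\beta\|+\|q_n\beta\|$.
\item The best-approximation bounds $\|q_n\beta\|<\|q_{n-1}\beta\|<1/q_n$.
\end{itemize}
Together these give $g_{q_n}<2/q_n$.

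Now take $n$ minimal with $q_n\ge1/\delta$ and set $L=q_n$. Then $g_L/2<1/q_n\le\delta$, as required. For the size of $L$, minimality gives $q_{n-1}<1/\delta$. The Diophantine hypothesis gives $c_\beta q_{n-1}^{-\tau}\le\|q_{n-1}\beta\|<1/q_n$, so
\[
q_n<c_\beta^{-1}q_{n-1}^{\tau}<c_\beta^{-1}\delta^{-\tau}.
\]
Thus $N-j\le L<C_\beta\delta^{-\tau}$ with $C_\beta=c_\beta^{-1}$, enlarged if needed to handle $q_0=1$ or small $n$, where $q_n\le 1/\delta$ holds trivially. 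For bounded type one can take $\tau=1$ and get the linear window.

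The expected difficulties are bookkeeping rather than substance.
\begin{itemize}
\item The case $N<L$: the window then runs off the start of the index range, and the claim must be read as vacuous or with $j\ge$ the first site. Either way $N-j\le N<L$ whenever such a $j$ exists.
\item Stating the three-distance monotonicity of the maximal gap, for $L\ge q_n$, precisely and citing it.
\end{itemize}
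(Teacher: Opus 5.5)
Your proof is correct and follows essentially the paper's route: the three-distance theorem at $L=q_n$, the first convergent denominator past the threshold, and the Diophantine bound $q_n<c_\beta^{-1}q_{n-1}^{\tau}$ obtained from $\|q_{n-1}\beta\|<1/q_n$. Your gap bookkeeping is in fact the more accurate of the two, and the monotonicity in $L$ you list as a difficulty is trivial, since enlarging the point set only refines the partition. (The paper names $\|q_{m-1}\beta\|$ as the maximal gap, whereas at $\ell=q_m$ it is $\|q_{m-1}\beta\|+\|q_m\beta\|\ge 1/q_m$; this affects only the constant $C_\beta$.)
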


\begin{proof}
By the three-distance theorem any $\ell$ consecutive points $\{\beta j:j\in \mathcal W\}$ of the rotation have maximal gap
$\norm{q_{m-1}\beta}\le1/q_m$, where $q_m$ is the largest convergent denominator of $\beta$ with $q_m\le\ell$. Let $q_m$
be the first convergent with $q_m\ge 1/(2\delta)$; its predecessor satisfies $q_{m-1}<1/(2\delta)$, so by the type bound
$\norm{q_{m-1}\beta}\ge c_\beta q_{m-1}^{-\tau}$ and the relation $q_m\le 1/\norm{q_{m-1}\beta}$,
\[
q_m\ \le\ c_\beta^{-1}q_{m-1}^{\tau}\ <\ c_\beta^{-1}(2\delta)^{-\tau}\ =:\ C_\beta\,\delta^{-\tau}.
\]
Taking $\ell=\lceil C_\beta\delta^{-\tau}\rceil\ge q_m$, the maximal gap of any $\ell$ consecutive rotation points is
$\norm{q_{m-1}\beta}\le 1/q_m<2\delta$, so the window meets the arc $\|\cdot\|<\delta$ around the peak phase. Hence every
window of length $\ell$ in $\{1,\dots,N\}$ contains a return, and the largest return $\le N$ is within $C_\beta\delta^{-\tau}$
of $N$. For $\tau=1$ this is the linear bound $C_\beta/\delta$.
\end{proof}

Fix $\delta_N\to0$ and let $j^\star=j^\star(N)$ be the largest such site; for every $\lambda>2$ and almost every $\phi$ let $k^\star$ index the
near-edge mode that Lemma~\ref{lem:edgeexp} furnishes at $j^\star$ (Anderson localization, Theorem~\ref{thm:AA}, gives a
mode centred within $O(1)$ of $j^\star$). Then $\|\beta j^\star+\tfrac{\phi}{2\pi}\|<\delta_N$
forces $E_{k^\star}\to\Emax$ (the on-site value $\lambda\cos(2\pi\beta j^\star+\phi)\to\lambda$ sets the band-edge mode), and
$1-j^\star/N\le C_\beta/(\delta_N^{\tau} N)\to0$ provided $\delta_N^{\tau}N\to\infty$. Balancing the two penalties of Corollary~\ref{cor:singlemode}: the band-edge defect
$g_{\mathbb C\setminus\Sigma_\lambda}(z^\star)-g_{\mathbb C\setminus\Sigma_\lambda}(2E_0-E_{k^\star})=O(\Emax-E_{k^\star})$
(Lipschitz $g$) against the boundary defect $2\log_+(\lambda/2)\,C_\beta/(\delta_N^{\tau} N)$. By
Lemma~\ref{lem:edgeexp} the band-edge defect is $O(\delta_N^2)$ (the parity of the window edge cancels the linear
term, and a variational bound makes it quadratic), so balancing $\delta_N^2\asymp\delta_N^{-\tau}/N$ against the boundary defect the optimal width is
$\delta_N\asymp N^{-1/(2+\tau)}$ and the gap bound is $O(N^{-2/(2+\tau)})$; both tend to zero for every finite type
$\tau$, so the identity holds for every Diophantine $\beta$. For bounded type ($\tau=1$) this is the sharpest case,
$\delta_N\asymp N^{-1/3}$ and gap $O(N^{-2/3})$, confirmed numerically:
for $\lambda=3$, $\alpha=0.1$ the gap $\gamma_\lambda(z^\star)-\max_k\frac1N\log\abs{c_k}$ stays below $2.5\,N^{-2/3}$ at
every tested $N\le500$, the smallest gaps falling on the Fibonacci scales $N=89,144,377$ (gap $\approx0.02$).

\subsection*{The boundary weight evaluates the rate}
By \eqref{eq:ccocycle} the rate at the boundary mode splits into three cocycle rates: the first factor has
$\frac1N\log|\mathcal T_{11}(2E_0-E_{k^\star})|\to\gamma_\lambda(2E_0-E_{k^\star})\to\gamma_\lambda(z^\star)$
(off-spectral hyperbolicity, Lemma~\ref{lem:Qedge}); the denominator has
$\frac1N\log|\mathcal T_{11}'(E_{k^\star})|\to\gamma_\lambda(E_{k^\star})=\log_+(\lambda/2)$ (Thouless,
\S\ref{sec:cocycle}); and the middle factor is the boundary amplitude
$\mathcal T_{21}(E_{k^\star})=\tilde b_{k^\star}$. The boundary-weight form
$\abs{c_k}=Q(\delta_k)(\hat\psi^{(k)}_N)^2$ of Lemma~\ref{lem:cd} evaluates this coefficient directly: the
near-degeneracy shared by $\mathcal T_{21}(E_k)$ and $\mathcal T_{11}'(E_k)$ cancels against the eigenfunction norm,
so no tail estimate is needed. The upper bound is then immediate from $(\hat\psi^{(k)}_N)^2\le1$; the lower bound
needs only $\frac2N\log\abs{\hat\psi^{(k^\star)}_N}\to0$, the one-sided amplitude estimate proved in
Theorem~\ref{thm:uncond} by inverting the $SL_2$ transfer across the $R_N=O(\delta_N^{-1})$ sites separating the
resonant centre from the boundary (so the propagator norm is sub-exponential in $N$). (The amplitude is also the truncated characteristic polynomial,
$\mathcal T_{21}(E)=(-1)^{N-1}\det(E-H_{[1,N-1]})$ from $\psi_n=(-1)^{n-1}\det(E-H_{[1,n-1]})$, so its rate is a
density-of-states potential; the boundary-weight form makes that route unnecessary.)

\subsection*{A band-edge phase expansion}
The lower bound needs the localized eigenvalue at the resonant site to lie near the spectral edge --- a fact
about the eigenvalue, not the on-site potential. For a window radius $R$ and a centre phase $\theta$, let
$H_R(\theta)$ be the Dirichlet window operator on $\ell^2(\{-R,\dots,R\})$ with hopping $1$ and potential
$v_k(\theta)=\lambda\cos(\theta+k\cdot2\pi\beta)$, and let $\Lambda_R(\theta)=\max\sigma(H_R(\theta))$ be its top
eigenvalue (simple, by Perron--Frobenius for a tridiagonal matrix with positive off-diagonal, hence real-analytic
in $\theta$ near any point where it is the maximum).

\begin{lemma}[Localization prefactor: semi-uniform bound and its polynomial strengthening]\label{lem:semiloc}
Fix $\lambda>2$ and $\beta$ Diophantine of type $\tau$. For almost every phase $\phi$ the operator $H_\phi$ is
Anderson localized at the exact rate $\gamma=\log(\lambda/2)$ \cite{Jito1999}: there is, for every $\varepsilon>0$, a
constant $C_\varepsilon=C_\varepsilon(\lambda,\beta,\phi)<\infty$ such that every normalized Dirichlet eigenvector
$\hat\psi^{(k)}$ of $H_{[1,N]}$, with localization centre $j_k$ the site of its maximal entry, obeys the
\emph{semi-uniform} bound
\begin{equation}\label{eq:sule}
\bigl|\hat\psi^{(k)}_n\bigr|\ \le\ C_\varepsilon\,e^{\varepsilon\abs{j_k}}\,e^{-\gamma\abs{n-j_k}},\qquad
\gamma=\log(\lambda/2)\quad(1\le n\le N).
\end{equation}
Below we use the stronger \emph{polynomial} form: there are $\xi=\xi(\lambda,\beta,\phi)<\infty$ and
$C=C(\lambda,\beta,\phi)<\infty$ such that, for every $N$ and every mode $k$,
\begin{equation}\label{eq:semiloc}
\bigl|\hat\psi^{(k)}_n\bigr|\ \le\ C\,N^{\xi}\,e^{-\gamma\abs{n-j_k}},\qquad \gamma=\log(\lambda/2)\quad(1\le n\le N),
\end{equation}
the prefactor being polynomial in the volume \emph{uniformly in the mode}, in particular for modes whose centre
$j_k$ approaches the boundary. Throughout $\lambda>2$ we take \eqref{eq:semiloc}, together with the window-gap
property of Lemma~\ref{lem:edgeexp} (a theorem for $\lambda\ge\lambda_1$), as a hypothesis, written \textup{(PL)};
its status is recorded in Remark~\ref{rem:plstatus}.
\end{lemma}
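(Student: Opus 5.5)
The statement has two parts of different logical status, and I would treat them separately. The polynomial form \eqref{eq:semiloc} is not proved here. It is adopted as the hypothesis \textup{(PL)}, and Remark~\ref{rem:plstatus} records its status. What requires an argument is the semi-uniform bound \eqref{eq:sule} for the \emph{finite-volume} Dirichlet eigenvectors of $H_{[1,N]}$, for almost every $\phi$. My plan is to rerun Jitomirskaya's localization proof \cite{Jito1999} on the box $[1,N]$, not on $\mathbb Z$. Her argument uses only Green's-function estimates on finite intervals together with the Poisson formula, and both are available verbatim on a Dirichlet box.

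\emph{Step 1 (admissible phases).} I would fix the full-measure set of phases used in \cite{Jito1999}: $\phi$ is \emph{Diophantine-nonresonant}, meaning $\norm{2\phi/2\pi+\beta n}\ge c_\phi\abs{n}^{-A}$ for all $n\ne0$. On this set the large-deviation bound for $\frac1n\log\abs{\det(E-H_{[a,a+n-1]})}$ holds with rate $\gamma_\lambda(E)=\log(\lambda/2)$ on the spectrum. I would then call an interval $I=[x_1,x_2]$ \emph{$(\gamma-\varepsilon)$-regular} for $E$ if
\[
\abs{G_I(E;x,x_i)}\le e^{-(\gamma-\varepsilon)\abs{x-x_i}}
\]
for $x$ in the middle of $I$. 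By Cramer's rule and the large-deviation bound, regularity fails only if the phase $\phi+2\pi\beta x_1$ lies in an exceptional set of measure $e^{-cn}$. Intervals of the form $[1,n]$, or ones touching $N$, are windows of the same operator at shifted phase, so the same estimates apply to them.

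\emph{Step 2 (no double resonance).} For a Dirichlet eigenpair $(E_k,\hat\psi^{(k)})$ with centre $j_k$, the site $j_k$ is singular, since $\abs{\hat\psi^{(k)}_{j_k}}\ge N^{-1/2}$ and the Poisson formula forbids regular windows around it. Next I would show that every $n$ with $\abs{n-j_k}\ge\varepsilon\abs{j_k}+C_\varepsilon$ admits a regular window of length comparable to $\abs{n-j_k}$. If it did not, both windows would be singular at the common energy $E_k$. Expanding the determinants as trigonometric polynomials in $\phi$ (Lagrange interpolation, as in \cite{Jito1999}) then forces $\norm{2\phi/2\pi+\beta(n+j_k)}$ or $\norm{\beta(n-j_k)}$ to be exponentially small in the window length. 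The Diophantine conditions on $\beta$ and $\phi$ exclude this once the length exceeds $C\log\abs{j_k}$, up to an $\varepsilon$-fraction. This is where the prefactor $e^{\varepsilon\abs{j_k}}$ comes from: the scale at which double resonance is excluded grows with the distance to the resonance, and that distance is controlled only polynomially in $\abs{j_k}$.

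\emph{Step 3 (iteration).} I would iterate the Poisson formula $\hat\psi_n=-\sum_{x_i\in\partial I}G_I(n,x_i)\hat\psi_{x_i^\pm}$ across the regular windows tiling the stretch from $n$ toward $j_k$, stopping at the singular region of size $\varepsilon\abs{j_k}$. At a Dirichlet end, the boundary term vanishes because $\hat\psi_0=\hat\psi_{N+1}=0$, which only helps. Each step gains $e^{-(\gamma-\varepsilon)\ell}$ and multiplies by a factor of at most $2$. Since $\norm{\hat\psi}_\infty\le1$, this gives $\abs{\hat\psi_n}\le C_\varepsilon e^{\varepsilon\abs{j_k}}e^{-(\gamma-2\varepsilon)\abs{n-j_k}}$. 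The loss $2\varepsilon\abs{n-j_k}\le2\varepsilon N$ is absorbed by renaming $\varepsilon$, exactly as \eqref{eq:sule} is used in Proposition~\ref{prop:amprate}.

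\emph{Main obstacle.} I expect the difficulty to be uniformity of $C_\varepsilon$ in $N$ and in the mode, especially for modes whose centre sits near the boundary $N$. The exceptional phase sets at each scale must be summed by Borel--Cantelli over all window positions $a\le N$ and all $N$, and the union bound costs a polynomial factor in $N$ at each scale. I would first control this by the measure bound $e^{-cn}$, which beats the polynomial count once $n\ge C\log N$. The residual $\log N$ scale is exactly what separates \eqref{eq:sule} from the stronger \eqref{eq:semiloc}, which is why the latter is left as the hypothesis \textup{(PL)}.
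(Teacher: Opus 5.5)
Your overall structure agrees with the paper's proof. The paper cites the non-perturbative localization of \cite{BourgainGoldstein,Jito1999} for \eqref{eq:sule} and adopts \eqref{eq:semiloc} as \textup{(PL)}. Your attempt to derive \eqref{eq:sule} yourself, however, contains a step that fails. Step~3 yields $\abs{\hat\psi^{(k)}_n}\le C_\varepsilon e^{\varepsilon\abs{j_k}}e^{-(\gamma-2\varepsilon)\abs{n-j_k}}$. The extra factor $e^{2\varepsilon\abs{n-j_k}}$ cannot be ``absorbed by renaming $\varepsilon$'' into a prefactor that depends only on the centre: take $j_k$ near $1$ and $n=N$. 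What you have actually proved is the exact-rate bound with prefactor $C_\varepsilon e^{3\varepsilon N}$. That is all Proposition~\ref{prop:amprate} uses, so nothing downstream breaks, but it is not \eqref{eq:sule}. No rearrangement will recover the literal statement. For modes centred near site $1$, \eqref{eq:sule} is a constant-prefactor bound at the exact rate, which is exactly what Remark~\ref{rem:plstatus} says fails because of the polynomial satellite corrections. The quantitative bounds from the theorems you invoke are at rate $\gamma-\varepsilon$.

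For the same reason, your explanation of why \eqref{eq:semiloc} must be assumed is off. A residual $\log N$ scale, or (as in your Step~2) a resonance distance controlled polynomially in $\abs{j_k}$, costs only a factor $N^{O(1)}$. That is precisely what \eqref{eq:semiloc} permits, so your mechanism would give a polynomial prefactor at rate $\gamma-\varepsilon$, not an obstruction to one. What the method cannot deliver is the exact rate: the $\varepsilon$-loss in the exponent amounts to $e^{\varepsilon N}$, which is super-polynomial in the volume.

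Second, Steps~1 and~2 mix two different localization proofs. The exceptional phase set of measure $e^{-cn}$ in Step~1 is a fixed-energy statement. Regularity is needed at the eigenvalue $E_k$, which itself depends on $\phi$ and $N$, and a Borel--Cantelli sum over window positions does not address this. That is the energy-elimination problem, which Bourgain--Goldstein handle with semialgebraic sets; so the ``main obstacle'' you identify is not the real one. Jitomirskaya's route, which your Step~2 actually uses, needs no measure estimate. Regularity at $E_k$ follows deterministically from Lagrange interpolation, which plays Herman's bound $\int_0^{2\pi}\log\abs{\det(E-H_{[a,a+\ell-1]})}\,\tfrac{d\phi}{2\pi}\ge\ell\log(\lambda/2)$ against the $E$-uniform upper bound $\abs{\det(E-H_{[a,a+\ell-1]})}\le e^{(\gamma_\lambda(E)+\varepsilon)\ell}$ for $\ell\ge\ell_0(\varepsilon)$.

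Those two exponents match only when $\gamma_\lambda(E_k)\approx\log(\lambda/2)$, that is, when $E_k$ is near $\Sigma_\lambda$. Finite-volume Dirichlet eigenvalues can sit deep in spectral gaps (Remark~\ref{rem:gapstates}). For such modes the interpolation argument no longer produces $(\gamma-\varepsilon)$-regular windows. A separate argument is needed, for instance uniform hyperbolicity of the cocycle away from $\Sigma_\lambda$, with constants that degenerate as $E_k$ approaches it, to cover every mode as the lemma claims.
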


\begin{proof}
On $\Sigma_\lambda$ the Lyapunov exponent is the positive constant $\gamma=\log(\lambda/2)$ (Theorem~\ref{thm:AA}).
The transfer cocycle obeys the large-deviation estimate of Goldstein--Schlag and Bourgain--Goldstein
\cite{GoldsteinSchlag2001,BourgainGoldstein}: for every $\varepsilon>0$,
$\bigl|\tfrac1n\log\norm{\mathcal T_n(E;\phi)}-\gamma\bigr|<\varepsilon$ off a set of phases of measure $<e^{-n^\sigma}$
($\sigma>0$), uniformly in $E\in\Sigma_\lambda$. Combined with the Diophantine bound $\norm{q\beta}\ge c_\beta q^{-\tau}$,
which confines the resonant scales to a polynomially bounded count, this is the non-perturbative Anderson localization
of \cite{BourgainGoldstein,Jito1999} in semi-uniform form: for almost every $\phi$ the eigenfunctions decay at the
exact rate $\gamma$ with a centre-dependent prefactor $e^{\varepsilon\abs{j_k}}$, which is \eqref{eq:sule}; the centre
$j_k$ is the essentially unique site of the maximal entry. The factor $e^{\varepsilon\abs{j_k}}$ of \eqref{eq:sule}
is super-polynomial once $\abs{j_k}\to\infty$ and yields no polynomial-prefactor bound at any coupling; the passage
from \eqref{eq:sule} to \eqref{eq:semiloc} is the content of \textup{(PL)} (Remark~\ref{rem:plstatus}).
\end{proof}

\begin{remark}[Status of the polynomial-localization hypothesis \textup{(PL)}]\label{rem:plstatus}
The cited references furnish the semi-uniform bound \eqref{eq:sule}, whose prefactor $e^{\varepsilon\abs{j_k}}$
depends on the centre; the almost Mathieu operator is known to be semi-uniformly but \emph{not} uniformly localized,
so \eqref{eq:semiloc} is strictly stronger than \eqref{eq:sule} and is not a consequence of it --- at \emph{any}
coupling. Indeed, the satellites of a mode at the continued-fraction wells carry relative weight of order
$(1+\abs k)^{2\tau}e^{-\gamma\abs k}/\lambda$, so a constant-prefactor bound at the exact rate fails even for the
central window mode, and the reflection symmetry of $H_R(0)$ makes the well-pair eigenvectors two-peaked
(Lemma~\ref{lem:uniloc}); polynomial prefactors are the natural uniform form. Accordingly we do \emph{not} claim
\eqref{eq:semiloc} as a theorem at any coupling, and the identity on $(2,\infty)$ is stated conditionally on
\textup{(PL)}. What \emph{is} proved at strong coupling is the second, spectral component of \textup{(PL)}: the
window-gap estimate of Lemma~\ref{lem:edgeexp} holds unconditionally for $\lambda\ge\lambda_1(\beta,\tau)$, by a
Combes--Thomas/tunnelling argument with no localization input. The strengthening \eqref{eq:semiloc} is
needed precisely for the boundary-centred near-edge mode of Theorem~\ref{thm:uncond}, whose centre $j_k$ lies at
distance $R_N=O(\delta_N^{-\tau})\gg\log N$ from the boundary, hence $\abs{j_k}\asymp N$ from the origin: there
$e^{\varepsilon\abs{j_k}}$ overruns the polynomial gain $N^{-\gamma A}$ of the $\lceil A\log N\rceil$-window, whereas a
polynomial prefactor $N^{\xi}$ is absorbed by choosing $A>(\xi+\tfrac{2}{2+\tau})/\gamma$. The
numerics of \S\ref{sec:certified} exhibit a bounded prefactor down to $\lambda=2.2$ (the maximizing mode alternating
between band edge and right boundary), consistent with \textup{(PL)} throughout $\lambda>2$.
\end{remark}

\begin{lemma}[Boundary-mode edge expansion]\label{lem:edgeexp}
Fix $\lambda>2$, $\beta$ Diophantine of type $\tau\ge1$ ($\norm{q\beta}\ge c_\beta q^{-\tau}$), and a phase $\phi$ at
which $H_\phi$ is localized with rate $\gamma=\log(\lambda/2)$ and satisfies \textup{(PL)}. Let $j$ have on-site
phase $\theta_j=2\pi\beta j+\phi\in(-\pi,\pi]$ and carry the maximal on-site value of the window
$\{j-R,\dots,j+R\}\subset[1,N]$, i.e.\ $R<d_{\mathcal W}:=\min\{\abs{k}\ge1:\lambda\cos(\theta_j+k\cdot2\pi\beta)>\lambda\cos\theta_j\}$,
and assume the non-reflection condition $\abs{\theta_j}\le\tfrac\pi2\norm{k\beta}$ for all $1\le\abs k\le R$
(automatic in the application, where $R=O(\log N)$ while $\abs{\theta_j}<2\pi\delta_N$);
the three-distance theorem makes $d_{\mathcal W}\to\infty$ as $\theta_j\to0$ ($d_{\mathcal W}\asymp\norm{\theta_j/2\pi}^{-1}$ at bounded type). Then
$H_{[1,N]}$ carries a Dirichlet eigenvalue $E^{(j)}$, with normalized eigenvector $\hat\psi^{(j)}$ centred within
$O(1)$ of $j$, such that $E^{(j)}\le\Emax$, the window edge $\Lambda_R$ is even,
\begin{equation}\label{eq:parity}
\Lambda_R(-\theta)=\Lambda_R(\theta),
\end{equation}
and the band-edge defect is quadratic in the resonance phase:
\begin{equation}\label{eq:edgequad}
\begin{aligned}
\Emax-E^{(j)}\ &\le\ \lambda S_R\,(1-\cos\theta_j)+Ce^{-\gamma R}\ \le\ \tfrac{\lambda}{2}\,\theta_j^2+Ce^{-\gamma R},\\
S_R&=\textstyle\sum_{\abs{k}\le R}\cos(k\cdot2\pi\beta)\,\bigl(\hat\psi^{(0)}_k\bigr)^2\in\bigl(\tfrac{\lambda-2}{\lambda},1\bigr],
\end{aligned}
\end{equation}
where $\hat\psi^{(0)}$ is the symmetric top eigenvector of $H_R(0)$. The constant $C$ is uniform in $R$ and at
most polynomial in $N$ (the semi-uniform localization prefactor $C_N=O(N^\xi)$ of Lemma~\ref{lem:semiloc},
hypothesis \textup{(PL)}), and no off-spectral or small-divisor input is used.
\end{lemma}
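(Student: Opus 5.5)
The plan has three parts: a parity statement for the window edge, a variational lower bound for $\Lambda_R(\theta_j)$, and a transfer of that bound from the window to the full box.

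\emph{Parity.} For \eqref{eq:parity}, let $J_R$ be the reflection $k\mapsto-k$ on $\{-R,\dots,R\}$. We have $v_{-k}(-\theta)=\lambda\cos(-\theta-k\cdot2\pi\beta)=v_k(\theta)$, so $J_RH_R(-\theta)J_R=H_R(\theta)$. The two operators are therefore unitarily equivalent and have the same top eigenvalue. At $\theta=0$ the same relation shows that the Perron eigenvector $\hat\psi^{(0)}$, being simple and positive, is $J_R$-symmetric.

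\emph{Variational lower bound on the window edge.} Test $H_R(\theta_j)$ against $\hat\psi^{(0)}$. Since $\Lambda_R(\theta_j)$ is the top eigenvalue,
\[
\Lambda_R(\theta_j)\ \ge\ \langle\hat\psi^{(0)},H_R(\theta_j)\hat\psi^{(0)}\rangle
=\Lambda_R(0)+\lambda\sum_{\abs k\le R}\bigl(\cos(\theta_j+2\pi\beta k)-\cos(2\pi\beta k)\bigr)(\hat\psi^{(0)}_k)^2 .
\]
Expand $\cos(\theta+a)-\cos a=-(1-\cos\theta)\cos a-\sin\theta\sin a$. The $\sin a$ term pairs $k$ with $-k$, and by the symmetry of $\hat\psi^{(0)}$ it vanishes. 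This is where the linear term drops, and the estimate needs no real-analyticity of $\Lambda_R$. What remains is
\[
\Lambda_R(\theta_j)\ge\Lambda_R(0)-\lambda S_R(1-\cos\theta_j).
\]
For $S_R\le1$ it suffices that the weights are a probability vector. For $S_R>(\lambda-2)/\lambda$, compare the Rayleigh quotient at $0$: $\Lambda_R(0)\ge\lambda-O(1/\lambda)$ is at least the value at $\delta_0$, namely $\lambda$, while the quotient at $\hat\psi^{(0)}$ is at most $2+\lambda S_R$. Hence $\lambda S_R\ge\lambda-2$, and strictness comes from the hopping term not attaining $2$ on a finite window. Next, compare $\Lambda_R(0)$ with $\Emax$. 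Approximate the peak configuration by rational phases, or use that $\Emax$ is attained by approximate eigenvectors of the infinite operator at phases near $0$ localized on a window. This gives $\Emax\le\Lambda_R(0)+Ce^{-\gamma R}$, by restricting a localized near-edge generalized eigenfunction to the window and using decay at rate $\gamma$. The non-reflection condition together with $R<d_{\mathcal W}$ ensures that the window maximum of the potential sits at the centre, so the restricted edge mode is the window's top mode.

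\emph{Transfer to the full box.} Embed the window top eigenvector $\varphi$ of $H_R(\theta_j)$ into $[1,N]$. It is an approximate eigenvector of $H_{[1,N]}$ with residual of size the boundary entries $\abs{\varphi_{\pm R}}\le CN^\xi e^{-\gamma R}$, using (PL) or localization of the window mode. By the spectral theorem some Dirichlet eigenvalue $E^{(j)}$ lies within that residual of $\Lambda_R(\theta_j)$. Its eigenvector is centred within $O(1)$ of $j$: localization pins it, since its overlap with $\varphi$ is bounded below once the window gap separates it from other modes. The bound $E^{(j)}\le\Emax$ would come from the finite-volume Dirichlet top eigenvalue not exceeding $\Emax$ by more than $e^{-\gamma R}$, and this slack is absorbed into $C$. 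The final inequality uses $1-\cos\theta\le\theta^2/2$ and $S_R\le1$.

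The main obstacle is the comparison $\Emax\le\Lambda_R(0)+Ce^{-\gamma R}$ with a constant polynomial in $N$, and the identification of $E^{(j)}$ with the window mode rather than a nearby gap-edge state. Both depend on the window-gap/(PL) input. I would first attempt them via a Combes--Thomas estimate on the window complement, where the potential is strictly below $\lambda\cos\theta_j$.
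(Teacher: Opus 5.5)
Your route is the paper's: the same reflection $J_RH_R(-\theta)J_R=H_R(\theta)$, the same even trial vector $\hat\psi^{(0)}$ that kills the $\sin\theta_j$ cross term, the same bounds $\lambda-\norm{\Delta}\le\lambda S_R\le\lambda$, and the same anchor and quasimode transfer resting on \textup{(PL)} and the window gap. Three points need repair, but none of them changes the route.

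\emph{Exactness of $E^{(j)}\le\Emax$.} This inequality holds exactly, with no $e^{-\gamma R}$ slack. $H_{[1,N]}$ is the compression of $H_\phi$ to $\ell^2(\{1,\dots,N\})$, so every Rayleigh quotient, and hence every Dirichlet eigenvalue, is $\le\sup\spec(H_\phi)=\Emax$. As you wrote it, you would only get $E^{(j)}\le\Emax+Ce^{-\gamma R}$, which is weaker than the statement.

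\emph{Where the window gap enters.} You use it in the wrong place. It separates $\Lambda_R(\theta_j)$ from the other eigenvalues of $H_R(\theta_j)$. It does not separate it from the eigenvalues of $H_{[1,N]}$, which can come arbitrarily close from modes centred far from $j$. The paper runs the quasimode argument in both directions.
\begin{itemize}
\item[(a)] Box to window. Restricting the box eigenvector $\hat\psi^{(j)}$ to $\mathcal W$ gives, by \textup{(PL)}, $\norm{(H_R(\theta_j)-E^{(j)})\hat\psi^{(j)}|_{\mathcal W}}\le2C_Ne^{-\gamma R}$. Only then does the window gap pin $E^{(j)}$ to the top window level. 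You omit this direction, yet it is where the polynomial constant $C_N$ of the statement comes from, since \textup{(PL)} bounds box eigenvectors, not the window vector $\varphi$.
\item[(b)] Window to box. For $\varphi$ extended by zero, what confines its spectral weight near $\Lambda_R(\theta_j)$ to modes centred near $j$ is \textup{(PL)} for the box eigenvectors: far-centred modes are exponentially small on $\mathcal W$. It is not a spectral gap.
\end{itemize}

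\emph{The Combes--Thomas attack.} As described, it would not work. Off $\mathcal W$ the potential is not below $\lambda\cos\theta_j$ in general; by definition, the site at distance $d_{\mathcal W}>R$ exceeds it. Inside $\mathcal W\setminus\{0\}$ the margin is only $\gtrsim4\lambda\norm{k\beta}^2$, which is polynomially small in $R$, so the decay rate degenerates. This detuning bound against quasi-reflected competitors is also what the non-reflection condition is for; $R<d_{\mathcal W}$ alone already puts the window maximum at the centre.

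The paper instead removes only the near-top wells $\{k:\lambda\cos\theta_j-v_{j+k}<\kappa\}$ with $\kappa>2$. It gets fixed-rate decay on the rest and compares inter-well tunnelling with detuning through a Schur complement. This works only for $\lambda\ge\lambda_1$. For $2<\lambda<\lambda_1$ the gap is itself part of \textup{(PL)}, so inside this lemma you may simply invoke it.
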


\begin{proof}
$E^{(j)}\le\Emax$ because $H_{[1,N]}$ is a compression of $H_\phi$, whose spectrum lies in $[\Emin,\Emax]$. Next,
localization gives $\abs{\hat\psi^{(j)}_n}\le C_N\,e^{-\gamma\abs{n-j}}$ with $\gamma=\log(\lambda/2)$ exact
(Theorem~\ref{thm:AA}). By Lemma~\ref{lem:semiloc} the prefactor $C_N$ is at most polynomial, $C_N=O(N^\xi)$, uniformly over the
near-edge modes used below; this is the only localization input. Only the polynomial bound is needed: it is absorbed
in Theorem~\ref{thm:uncond} by taking the window radius $\rho_N=\lceil A\log N\rceil$ with $A>\xi/\gamma$ (so
$C_N e^{-\gamma\rho_N}=O(N^{\xi-\gamma A})$ stays $o(\delta_N^2)$), and we write $C$ for $C_N$ below.
Restricting the eigenvalue equation to
$\mathcal W=\{j-R,\dots,j+R\}$, the two truncated hopping terms at $\partial \mathcal W$ have norm
$\le2Ce^{-\gamma R}$, so $\norm{(H_R(\theta_j)-E^{(j)})(\hat\psi^{(j)}|_{\mathcal W})}\le2Ce^{-\gamma R}$ and
$\dist(E^{(j)},\sigma(H_R(\theta_j)))\le C'e^{-\gamma R}$ \emph{for any} eigenvalue $E^{(j)}$ carried by a mode that
peaks at $j$. We now bound the gap below the top, pin $E^{(j)}$ to it, and produce the mode; throughout, the rate
$\gamma=\log(\lambda/2)$ is the \emph{exact Lyapunov exponent} supplied by localization at $\lambda>2$
(Theorem~\ref{thm:AA}). The quasimode error carries the exact localization rate $\gamma$; the gap analysis below is, at strong
coupling, a Combes--Thomas/tunnelling estimate needing no localization input, and for $2<\lambda<\lambda_1$ the
resulting simple-gap property is the window-gap component of \textup{(PL)}.

\emph{Gap below the top.} Since $R<d_{\mathcal W}$ the central site $k=0$ is the strictly highest well of
$\mathcal W$, of value $\lambda\cos\theta_j$. Writing $2\pi s_k\equiv2\pi k\beta\ (\mathrm{mod}\ 2\pi)$ with
$s_k\in(-\tfrac12,\tfrac12]$, $\abs{s_k}=\norm{k\beta}$, a competitor at displacement $k$ ($0<\abs k\le R$) is
detuned by
\[
\lambda(\cos\theta_j-\cos\theta_{j+k})\ =\ 2\lambda\,\sin(\theta_j+\pi s_k)\sin(\pi s_k)\ \ge\ 4\lambda\norm{k\beta}^2,
\]
since $\abs{\sin\pi s_k}\ge2\norm{k\beta}$ and the non-reflection condition $\abs{\theta_j}\le\tfrac\pi2\norm{k\beta}$
forces $\abs{\sin(\theta_j+\pi s_k)}\ge\sin\bigl(\tfrac\pi2\norm{k\beta}\bigr)\ge\norm{k\beta}$; without that
condition the bound fails for quasi-reflected competitors $\theta_{j+k}\approx-\theta_j$, which can occur once $R$
is comparable to $d_{\mathcal W}$. At the nearest return the detuning is $\gtrsim\lambda R^{-2\tau}$ by
$\norm{k\beta}\ge c_\beta\abs k^{-\tau}$. It remains to dominate the effective couplings. Fix $\kappa\in(2,3)$ and
let $\mathcal O=\{k:\lambda\cos\theta_j-v_{j+k}<\kappa\}$ be the near-top wells; by the detuning bound each
$k\in\mathcal O\setminus\{0\}$ has $\norm{k\beta}\le(\kappa/4\lambda)^{1/2}$, so two wells at scales
$\norm{k\beta},\norm{k'\beta}\le s$ are at least $(c_\beta/2s)^{1/\tau}$ apart. Off $\mathcal O$ the diagonal lies
$\ge\kappa>2$ below the top, so block resolvents at spectral parameters within $(\kappa-2)/2$ of the top decay at a
fixed Combes--Thomas rate $c_\kappa>0$. Eliminating $\mathcal W\setminus\mathcal O$ by the Schur complement, a well
at scale $s=\norm{k\beta}$ couples to any other well through at least $(c_\beta/2s)^{1/\tau}$ off-well sites, with
effective coupling $\le C_\kappa e^{-c_\kappa(c_\beta/2s)^{1/\tau}}$, against a detuning $\ge4\lambda s^2$. Since
$\sup_{0<s\le1/2}s^{-2}e^{-c_\kappa(c_\beta/2s)^{1/\tau}}<\infty$, there is $\lambda_1=\lambda_1(\beta,\tau)<\infty$
such that for $\lambda\ge\lambda_1$ every competitor level, after hybridization within and between well clusters,
remains at least $2\lambda\norm{k\beta}^2$ below the top well, whose own level moves by at most
$\sum_{k\in\mathcal O\setminus\{0\}}C_\kappa^2e^{-2c_\kappa\abs k}\big/\bigl(4\lambda\norm{k\beta}^2\bigr)=O\bigl(e^{-c\lambda^{1/(2\tau)}}\bigr)$.
Hence for $\lambda\ge\lambda_1$ the top eigenvalue $\Lambda_R(\theta_j)$ is \emph{simple}, separated from the rest
of $\sigma(H_R(\theta_j))$ by $\mathfrak{g}_R\ge\lambda c_\beta^2R^{-2\tau}\gg e^{-\gamma R}$, uniformly in $R$,
with eigenvector $\phi$ localized at $k=0$; for $2<\lambda<\lambda_1$ this simple-gap property is exactly the
window-gap component of \textup{(PL)}.

\emph{Pinning and existence.} The quasimode bound gives $\dist(E^{(j)},\sigma(H_R(\theta_j)))\le C'e^{-\gamma R}$,
well inside $\mathfrak{g}_R$, so the eigenvalue carried by a $j$-peaked mode is the top one:
$E^{(j)}=\Lambda_R(\theta_j)+O(e^{-\gamma R})$. Conversely $\phi$ extended by zero to $[1,N]$ is a quasimode,
$\norm{(H_{[1,N]}-\Lambda_R(\theta_j))\phi}\le2\norm{\Delta}\,\abs{\phi(\partial \mathcal W)}\le Ce^{-\gamma R}$, so $H_{[1,N]}$
has a Dirichlet eigenvalue within $Ce^{-\gamma R}$ of $\Lambda_R(\theta_j)$; since $\phi$ overlaps (the
polynomial-prefactor localization \eqref{eq:semiloc} of \textup{(PL)}) only modes centred within $O(\log N)$ of $j$,
that eigenvalue is $E^{(j)}$ and its eigenvector
$\hat\psi^{(j)}$ is so centred. Existence is thus established, not assumed, and no site-to-mode bijection is invoked.

\emph{Edge anchor.} The construction at the exact peak ($\theta=0$) calibrates the reference. $H_R(0)$ is a
Dirichlet restriction of the phase-shifted operator whose spectrum is again $\Sigma_\lambda$ (minimality of the
rotation), so $\Lambda_R(0)\le\Emax$; and localization (Theorem~\ref{thm:AA}) supplies eigenvalues approaching
$\Emax=\sup\Sigma_\lambda$ with eigenvectors localized at rate $\gamma$ at near-peak sites, which restricted to the
peak window and pinned to its isolated top give $\Lambda_R(0)\ge\Emax-O(e^{-\gamma R})$. Hence
$\Lambda_R(0)=\Emax+O(e^{-\gamma R})$, the only spectral input being the localization rate, exactly as for $E^{(j)}$. For \eqref{eq:parity}, the reflection $R_\flat:e_k\mapsto e_{-k}$ commutes
with the window Laplacian and conjugates the potential, $R_\flat\,v(\theta)\,R_\flat=v(-\theta)$, since
$v_{-k}(-\theta)=\lambda\cos(-\theta-ka)=\lambda\cos(\theta+ka)=v_k(\theta)$; hence $R_\flat H_R(\theta)R_\flat=H_R(-\theta)$
and $\Lambda_R(-\theta)=\Lambda_R(\theta)$. In particular $H_R(0)$ is reflection-symmetric, so its Perron top
eigenvector $\hat\psi^{(0)}$ is even, $\hat\psi^{(0)}_{-k}=\hat\psi^{(0)}_k$. Using $\hat\psi^{(0)}$ as a trial
vector for $H_R(\theta_j)$ (with $a=2\pi\beta$),
\[
\begin{aligned}
\Lambda_R(\theta_j)\ &\ge\ \langle\hat\psi^{(0)},H_R(\theta_j)\hat\psi^{(0)}\rangle
=\Lambda_R(0)+\sum_k\lambda\bigl(\cos(\theta_j+ka)-\cos(ka)\bigr)\bigl(\hat\psi^{(0)}_k\bigr)^2\\
&=\Lambda_R(0)+\lambda(\cos\theta_j-1)S_R,
\end{aligned}
\]
since the cross term $-\lambda\sin\theta_j\sum_k\sin(ka)(\hat\psi^{(0)}_k)^2$ vanishes by evenness. Hence
$\Lambda_R(0)-\Lambda_R(\theta_j)\le\lambda S_R(1-\cos\theta_j)$. Moreover $S_R>0$: with
$H_R(0)=\lambda\,\mathrm{diag}(\cos ka)+\Delta$ ($\Delta$ the hopping),
$\lambda S_R=\Lambda_R(0)-\langle\hat\psi^{(0)},\Delta\hat\psi^{(0)}\rangle\ge\lambda-\norm{\Delta}>\lambda-2>0$, using
$\Lambda_R(0)\ge\langle e_0,H_R(0)e_0\rangle=\lambda$ and $\norm{\Delta}<2$; and $S_R\le1$. Combining with
$E^{(j)}=\Lambda_R(\theta_j)+O(e^{-\gamma R})$, $\Lambda_R(0)=\Emax+O(e^{-\gamma R})$ and $1-\cos\theta_j\le\theta_j^2/2$
gives \eqref{eq:edgequad}.
\end{proof}

\begin{remark}
The defect bound \eqref{eq:edgequad} is unconditional and uses only the reflection symmetry of $H_R(0)$ --- no
off-spectral hyperbolicity, conditioning, or small-divisor control --- and it already yields the sharp rate below.
The \emph{exact} edge curvature $\kappa_\lambda:=-\tfrac12\liminf_{R\to\infty}\Lambda_R''(0)$ satisfies
$\kappa_\lambda\le\tfrac{\lambda}{2}$ (the upper bound is \eqref{eq:edgequad}), with $\kappa_\lambda>0$ (a nondegenerate edge) at strong coupling (Proposition~\ref{prop:curv}); its precise value is not needed, only the
one-sided bound \eqref{eq:edgequad} enters. We record only that
evaluating $\kappa_\lambda$ by second-order perturbation meets small denominators at the continued-fraction scales
$k=q_n$ ($\norm{q_n\beta}\asymp q_n^{-1}$), tamed by the exponentially small overlap of the far modes with the
centre. Numerically $\kappa_3\approx0.92$, $\kappa_4\approx1.29$, $\kappa_{10}\approx3.51$ (all $<\lambda/2$); $S_R$
ranges from $0.70$ ($\lambda=2.1$) to $0.99$ ($\lambda=10$); the parity \eqref{eq:parity} holds to machine
precision; and $\Lambda_R(0)\to\Emax$ to $10^{-6}$ by $R=20$.
\end{remark}

\begin{lemma}[Window localization: parity constraint and two-centre input]\label{lem:uniloc}
Every eigenvector of the reflection-symmetric window $H_R(0)=\lambda\,\mathrm{diag}(\cos ka)+\Delta$ has definite
parity, $\hat\psi_{-k}=\pm\hat\psi_k$; in particular the eigenvectors attached to a well pair $\pm j$ ($j\ne0$)
carry equal mass at both wells, and \emph{no} single-centre bound $\abs{\hat\psi^{(j)}_k}\le C\,e^{-\gamma\abs{k-j}}$
with a mode-uniform constant can hold. The correct uniform statement is two-centre, and we take it as an input,
written \textup{(PLW)}: there are $\xi'\ge0$ and $C_1<\infty$ such that for $\lambda\ge3$, every $R$, and every
eigenvector of $H_R(\theta)$, $\abs\theta$ small, with well centres $\{j,j'\}$ ($j'=-j$ at $\theta=0$; $j=j'=0$ for
the central mode),
\begin{equation}\label{eq:uniloc}
\abs{\hat\psi^{(j)}_k}\ \le\ C_1\Bigl[(1+\abs{k-j})^{\xi'}e^{-\gamma\abs{k-j}}+(1+\abs{k-j'})^{\xi'}e^{-\gamma\abs{k-j'}}\Bigr],
\qquad\gamma=\log(\lambda/2).
\end{equation}
Under \textup{(PLW)} the central mode has bounded edge current,
$C_0=\sup_R\norm{\dot H(0)\hat\psi^{(0)}}^2\le C(C_1,\xi')<\infty$, uniformly in $\lambda\ge3$;
Remark~\ref{rem:locinput} explains why an input of this kind cannot be removed.
\end{lemma}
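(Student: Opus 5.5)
The plan is to get the structural assertions from the reflection symmetry already exhibited in the proof of Lemma~\ref{lem:edgeexp}, and the edge-current bound from a direct summation under \textup{(PLW)}.

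\emph{Parity and equal well mass.} At $\theta=0$ the proof of Lemma~\ref{lem:edgeexp} gives $R_\flat H_R(0)R_\flat=H_R(0)$. $H_R(0)$ is a Jacobi matrix with nonzero off-diagonal entries, so every eigenvalue is simple: an eigenvector vanishing at the left endpoint vanishes identically by the three-term recurrence. Each eigenspace is therefore a one-dimensional $R_\flat$-invariant line. Hence $R_\flat\hat\psi=\pm\hat\psi$, since $R_\flat^2=I$, and this gives $\hat\psi_{-k}=\pm\hat\psi_k$. For a mode whose maximal entry sits at a well $j\ne0$, parity gives $\abs{\hat\psi_{-j}}=\abs{\hat\psi_j}$, so both wells carry equal mass.

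\emph{Failure of a single-centre bound.} Suppose $\abs{\hat\psi^{(j)}_k}\le Ce^{-\gamma\abs{k-j}}$ held with $C$ independent of the mode. Take $k=-j$. Since $j$ is the site of maximal entry, $\abs{\hat\psi_j}\ge(2R+1)^{-1/2}$, so
\[
C\ \ge\ (2R+1)^{-1/2}e^{2\gamma\abs{j}} .
\]
It then suffices to exhibit modes whose centres satisfy $\abs j\to\infty$ while $\abs j\gg\log R$; near-top wells at the continued-fraction returns $\abs j\asymp q_n$ should serve. Concretely, the $2R+1$ eigenvectors cannot all peak within a bounded neighbourhood of $0$. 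Under the localization used in Lemma~\ref{lem:edgeexp}, at most $O(1)$ modes can be concentrated near any given site, so centres spread over the window and some satisfy $\abs j\ge cR$. The exponential factor then beats the $(2R+1)^{-1/2}$ loss. I expect this counting step, guaranteeing modes centred far out, to be the only delicate point. I would make it rigorous by pairing each site with a mode through the quasimode/pinning argument of Lemma~\ref{lem:edgeexp}.

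\emph{Bounded edge current.} Here $\dot H(0)=-\lambda\,\mathrm{diag}(\sin ka)$, so
\[
\norm{\dot H(0)\hat\psi^{(0)}}^2=\lambda^2\sum_{\abs k\le R}\sin^2(ka)\,(\hat\psi^{(0)}_k)^2 .
\]
The $k=0$ term vanishes. For the central mode, \eqref{eq:uniloc} with $j=j'=0$ gives $\abs{\hat\psi^{(0)}_k}\le2C_1(1+\abs k)^{\xi'}(2/\lambda)^{\abs k}$. For $\abs k\ge1$ one has
\[
\lambda^2(2/\lambda)^{2\abs k}=4(2/\lambda)^{2\abs k-2}\le4(2/3)^{2\abs k-2}\qquad(\lambda\ge3).
\]
Hence
\[
\norm{\dot H(0)\hat\psi^{(0)}}^2\le16C_1^2\sum_{k\ne0}(1+\abs k)^{2\xi'}(2/3)^{2\abs k-2}=:C(C_1,\xi')<\infty,
\]
uniformly in $R$ and in $\lambda\ge3$. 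The explicit factor $\lambda^2$ is absorbed exactly by one power of $(2/\lambda)^2$; this absorption is the reason the hypothesis is stated for $\lambda\ge3$.
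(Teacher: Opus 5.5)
Your proposal is correct and follows the paper's proof essentially step for step: parity from $R_\flat H_R(0)R_\flat=H_R(0)$ together with simplicity of the Jacobi spectrum, failure of a single-centre bound from $\abs{\hat\psi_{-j}}=\abs{\hat\psi_j}$ set against the decay $e^{-2\gamma\abs{j}}$, and the edge-current bound by direct summation using $\lambda^2e^{-2\gamma}=4$. Your counting step is more careful than the paper's one-line assertion, and it closes without localization or the pinning argument: the supposed mode-uniform bound itself puts mass at least $\tfrac12$ of every mode within a fixed distance $L=L(C,\gamma)$ of its centre, so completeness allows at most $R+4L+2$ modes centred in $\{\abs{j}\le R/4\}$, which forces some centre with $\abs{j}>R/4$ and hence $C\ge(2R+1)^{-1/2}e^{\gamma R/2}\to\infty$.
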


\begin{proof}
The reflection $R_\flat:e_k\mapsto e_{-k}$ commutes with $H_R(0)$ (evenness of the potential at $\theta=0$,
cf.\ \eqref{eq:parity}), and the Dirichlet spectrum of a Jacobi matrix is simple, so each eigenvector is even or
odd; the eigenvector pair of a well doublet $\pm j$ then has $\abs{\hat\psi_{-j}}=\abs{\hat\psi_j}$ while
$e^{-2\gamma j}\to0$, which rules out any mode-uniform single-centre bound and forces the two-centre form. The
hypothesis \textup{(PLW)} itself is the window analogue of \textup{(PL)} and is \emph{not} claimed as a theorem: at
strong coupling it is supported by the well structure --- near-top wells are $c\lambda^{1/(2\tau)}$-separated (type
bound), and off the wells $\abs{\Lambda_R(0)-v_k}\ge\kappa>2$ gives fixed-rate Combes--Thomas decay of the block
resolvents --- and numerically the prefactor is bounded, while the satellite weights
$\asymp(1+\abs k)^{2\tau}e^{-\gamma\abs k}/\lambda$ at the continued-fraction wells show that some polynomial factor
$\xi'>0$ is in general necessary (Remark~\ref{rem:plstatus}). For the edge current,
$\lambda\cos(ka)\hat\psi^{(0)}=(H_R(0)-\Delta)\hat\psi^{(0)}=\Lambda_R(0)\hat\psi^{(0)}-\Delta\hat\psi^{(0)}$ gives the
exact identity
\begin{equation}\label{eq:currentid}
\norm{\dot H(0)\hat\psi^{(0)}}^2=\lambda^2\langle\hat\psi^{(0)},\sin^2(ka)\hat\psi^{(0)}\rangle
=\tfrac{\lambda^2}{2}\bigl(1-\langle\hat\psi^{(0)},\cos(2ka)\hat\psi^{(0)}\rangle\bigr),
\end{equation}
and since $1-\langle\hat\psi^{(0)},\cos(2ka)\hat\psi^{(0)}\rangle=2\sum_k\sin^2(ka)(\hat\psi^{(0)}_k)^2\le2\sum_{k\ne0}(\hat\psi^{(0)}_k)^2$,
\eqref{eq:uniloc} bounds it by $4\lambda^2C_1^2\sum_{k\ge1}(1+k)^{2\xi'}e^{-2\gamma k}\le C(C_1,\xi')$, uniformly in
$\lambda\ge3$, since $\lambda^2e^{-2\gamma}=4$. Thus the localization input collapses to the single scalar
concentration $\sum_{k\ne0}(\hat\psi^{(0)}_k)^2=O(\lambda^{-2})$; numerically
$\langle\hat\psi^{(0)},\Delta\hat\psi^{(0)}\rangle\approx2.3/\lambda$
and $\norm{\dot H(0)\hat\psi^{(0)}}^2\to0.30$.
\end{proof}

\begin{proposition}[Nondegenerate edge at strong coupling]\label{prop:curv}
Assume the window-localization input \textup{(PLW)} of Lemma~\ref{lem:uniloc}. There is $\lambda_0<\infty$ (a
curvature threshold, in general distinct from the gap threshold $\lambda_1$ of Lemma~\ref{lem:edgeexp}) such that
for every $\lambda\ge\lambda_0$ and every Diophantine frequency $\beta$ (type $\tau$)
the limit $\Lambda''_\infty(0):=\lim_{R\to\infty}\Lambda_R''(0)$
exists, with the bound below uniform in $R$, and the band edge is a nondegenerate maximum in phase: $\kappa_\lambda\in(0,\tfrac{\lambda}{2}]$, with
\[
\kappa_\lambda=-\tfrac12\Lambda''_\infty(0)\ \ge\ \tfrac12\bigl(\lambda-2-2C_0\bigr)\ >\ 0,
\qquad C_0:=\sup_R\norm{\dot H(0)\hat\psi^{(0)}}^2<\infty .
\]
The constants degrade as $\lambda\downarrow2$, where $\gamma=\log(\lambda/2)\downarrow0$; the threshold regime $\lambda\to2^+$
is left open.
\end{proposition}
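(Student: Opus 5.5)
Since $\Lambda_R(0)$ is a simple eigenvalue (Perron--Frobenius), I would compute $\Lambda_R''(0)$ by analytic (Rayleigh--Schrödinger) perturbation theory and bound each term, which gives the stated curvature inequality. Write $H_R(\theta)=\Delta+\lambda\,\diag(\cos(\theta+ka))$, so that $\dot H(0)=-\lambda\,\diag(\sin ka)$ and $\ddot H(0)=-\lambda\,\diag(\cos ka)$. The second-order formula then reads
\[
\Lambda_R''(0)=\langle\hat\psi^{(0)},\ddot H(0)\hat\psi^{(0)}\rangle-2\sum_{\mu\neq\Lambda_R(0)}\frac{\abs{\langle\varphi_\mu,\dot H(0)\hat\psi^{(0)}\rangle}^2}{\Lambda_R(0)-\mu}.
\]
Here the first-order term $\Lambda_R'(0)=\langle\hat\psi^{(0)},\dot H(0)\hat\psi^{(0)}\rangle$ vanishes by the evenness of $\hat\psi^{(0)}$, which is consistent with the parity \eqref{eq:parity}.

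\emph{Step 1 (diagonal term).} The diagonal term is $-\lambda S_R$. As in the proof of Lemma~\ref{lem:edgeexp}, $\lambda S_R=\Lambda_R(0)-\langle\hat\psi^{(0)},\Delta\hat\psi^{(0)}\rangle$. Combining this with $\Lambda_R(0)\ge\lambda$ and $\norm{\Delta}<2$ gives $\lambda S_R>\lambda-2$. For the upper bound I would use $S_R\le1$.

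\emph{Step 2 (second-order term).} The sum is nonnegative, so it only makes $\Lambda_R''(0)$ more negative. Its upper bound is $\norm{\dot H(0)\hat\psi^{(0)}}^2/\mathfrak g_R$, where $\mathfrak g_R$ is the gap below the top. The obstacle is that the gap bound of Lemma~\ref{lem:edgeexp} decays polynomially in $R$, so it is not uniform. I would therefore split $\dot H(0)\hat\psi^{(0)}$ by location. Its component on the off-well sites sees block resolvents with spectral distance at least $\kappa-2$ below the top, which after the Schur reduction gives a uniform effective gap of order one. Its component on the near-top wells $\mathcal O\setminus\{0\}$ carries amplitude $\hat\psi^{(0)}_k\lesssim(1+\abs k)^{\xi'}e^{-\gamma\abs k}$ by \textup{(PLW)}. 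The detuning of such a well is at least $4\lambda\norm{k\beta}^2\ge4\lambda c_\beta^2\abs k^{-2\tau}$, and the exponential decay beats this small divisor. After normalizing, the sum is at most $C_0$ plus a correction that vanishes as $\lambda\to\infty$. I would arrange the constants so that, for $\lambda\ge\lambda_0$, the second-order term is at most $2C_0\cdot\frac12$ in the normalization of the statement. This yields $-\Lambda_R''(0)\ge\lambda-2-2C_0$ uniformly in $R$. The bound $C_0<\infty$ uniformly in $\lambda\ge3$ is exactly Lemma~\ref{lem:uniloc}, and choosing $\lambda_0$ large makes $\lambda-2-2C_0>0$.

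\emph{Step 3 (limit in $R$).} For existence of $\lim_R\Lambda_R''(0)$, I would show the sequence is Cauchy. Increasing $R$ to $R+1$ adds boundary sites where, by \textup{(PLW)}, $\hat\psi^{(0)}$ has weight $O(R^{\xi'}e^{-\gamma R})$. Hence $\hat\psi^{(0)}$, $\Lambda_R(0)$, and the reduced resolvent restricted to the well cluster all change by summable amounts. The uniform bound from Step 2 controls the resolvent, and summability gives convergence.

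\emph{Conclusion.} Setting $\kappa_\lambda=-\frac12\Lambda''_\infty(0)$, Steps 1--3 give $\kappa_\lambda\ge\frac12(\lambda-2-2C_0)>0$. The upper bound $\kappa_\lambda\le\lambda/2$ follows from \eqref{eq:edgequad}: the trial-vector bound gives $\Lambda_R(0)-\Lambda_R(\theta)\le\lambda S_R(1-\cos\theta)$ with $S_R\le1$. I expect the main difficulty to be the uniform control of the reduced resolvent in Step 2, where the small divisors at the scales $k=q_n$ have to be absorbed by the \textup{(PLW)} decay.
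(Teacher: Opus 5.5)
The architecture matches the paper: second-order perturbation at the simple top eigenvalue, the diagonal term $-\lambda S_R$ with $\lambda S_R>\lambda-2$, then an $R$-uniform bound on the remaining sum. But Step~2, which carries all the content, is wrong as written.

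First, the sign. For the top eigenvalue every denominator $\Lambda_R(0)-E_m$ is positive, and the sum enters with a \emph{plus} sign:
\[
\Lambda_R''(0)=-\lambda S_R+2\sum_{m\ne\mathrm{top}}\frac{\abs{\langle\varphi_m,w\rangle}^2}{\Lambda_R(0)-E_m},\qquad w=\dot H(0)\hat\psi^{(0)} .
\]
So the sum pushes $\Lambda_R''(0)$ \emph{up}, toward degeneracy. Your own conclusion confirms this. The trial-vector inequality $\Lambda_R(\theta)\ge\Lambda_R(0)-\lambda S_R(1-\cos\theta)$, which you invoke for $\kappa_\lambda\le\lambda/2$, forces $\Lambda_R''(0)\ge-\lambda S_R$; that is incompatible with your formula unless the sum vanishes. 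If the sum really helped, Step~2 would be superfluous and you would get $\kappa_\lambda\ge(\lambda-2)/2$ with no $C_0$ at all. The whole proposition is an $R$-uniform \emph{upper} bound on this nonnegative sum, and your normalization ``$2C_0\cdot\frac12$'' has no source.

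Second, splitting $w$ by sites does not deliver that bound. There is no uniform order-one effective gap for off-well vectors. A near-top mode $\varphi_m$ peaked at a far well $c_m$ has weight $\asymp1/\lambda$ at the off-well neighbour $c_m+1$. Hence $\langle e_{c_m+1},(\Lambda_R(0)-H_R(0))^{-1}P_\perp e_{c_m+1}\rangle\gtrsim\lambda^{-2}/(\Lambda_R(0)-E_m)$, which is unbounded in $R$. The Schur complement onto the off-well block still contains the near-resonant well propagators $(z-v_k)^{-1}$, $k\in\mathcal O$. Nor is the on-site detuning $4\lambda\norm{k\beta}^2$ an eigenvalue gap.

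The paper instead splits the \emph{mode} sum by spectral gap.
\begin{itemize}
\item Modes with $\Lambda_R(0)-E_m\ge1$ contribute at most $2\norm{w}^2\le2C_0$ by Bessel; this is where $2C_0$ comes from.
\item A mode with gap $<1$ must peak at a far well, $\abs{c_m}\ge c\lambda^{1/(2\tau)}$. For such modes the two-centre \textup{(PLW)} decay of \emph{both} $\varphi_m$ and $\hat\psi^{(0)}$ gives $\abs{\langle\varphi_m,w\rangle}\le C\lambda(1+\abs{c_m})^{1+2\xi'}e^{-\gamma\abs{c_m}}$.
\item The matching denominator bound is $\Lambda_R(0)-E_m\ge c\lambda\norm{c_m\beta}^2\ge c\,c_\beta^2\lambda\abs{c_m}^{-2\tau}$. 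It comes from a Feshbach reduction onto the well set, where the self-energy difference cancels to first order, $\Xi_{c_mc_m}-\Xi_{00}=O(\norm{c_m\beta}^2/\lambda)$. A mere Lipschitz bound $O(\norm{c_m\beta}/\lambda)$ would swamp the detuning at the deep returns.
\end{itemize}
Summing gives $o_\lambda(1)$ uniformly in $R$.

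Your Step~3 needs the same mode-wise structure. The reduced resolvent norm grows like $\mathfrak g_R^{-1}$, so summability of the increments must come from exponential boundary decay beating that polynomial growth, not from a uniform resolvent bound.
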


\begin{proof}
$\Lambda_R$ is simple (Perron) and real-analytic near $0$ with even profile \eqref{eq:parity}, so $\Lambda_R'(0)=0$ and
second-order perturbation theory gives, with $\dot H(0)=\mathrm{diag}(-\lambda\sin ka)$, $\ddot H(0)=\mathrm{diag}(-\lambda\cos ka)$,
remaining eigenpairs $\{\varphi_m,E_m\}$, and $w:=\dot H(0)\hat\psi^{(0)}$,
\begin{equation}\label{eq:FH}
\Lambda_R''(0)=\langle\hat\psi^{(0)},\ddot H(0)\hat\psi^{(0)}\rangle+2\!\!\sum_{m\ne\mathrm{top}}\!\frac{\abs{\langle\varphi_m,w\rangle}^2}{\Lambda_R(0)-E_m}
=-\lambda S_R+T_2,\qquad T_2\ge0,
\end{equation}
the sign because $\Lambda_R(0)$ is the largest eigenvalue; $S_R\ge(\lambda-2)/\lambda$ by the computation of
\eqref{eq:edgequad} at radius $R$ ($\lambda S_R=\Lambda_R(0)-\langle\hat\psi^{(0)},\Delta\hat\psi^{(0)}\rangle\ge\lambda-\norm{\Delta}$). Also
$\langle\hat\psi^{(0)},w\rangle=-\lambda\sum_k\sin(ka)(\hat\psi^{(0)}_k)^2=0$ (odd summand), $w_0=0$, and
$\abs{w_k}\le\lambda\abs{\hat\psi^{(0)}_k}$. By \textup{(PLW)} (Lemma~\ref{lem:uniloc}),
$C_0=\sup_R\norm{w}^2<\infty$ (numerically $C_0\le0.43$, decreasing to $0.30$ in $\lambda$).

It remains to bound $T_2$, splitting the sum at gap $1$.

\emph{(i) Gap $\ge1$.} By Bessel,
$2\sum_{\Lambda_R(0)-E_m\ge1}\abs{\langle\varphi_m,w\rangle}^2/(\Lambda_R(0)-E_m)\le2\sum_m\abs{\langle\varphi_m,w\rangle}^2=2\norm{w}^2\le2C_0$.

\emph{(ii) Gap $<1$.} A mode with $E_m>\Lambda_R(0)-1>\lambda-1$ is localized far from the centre: at its peak
$c_m=\arg\max_k\abs{\varphi_m(k)}$ the eigenvalue equation gives $\abs{E_m-v_{c_m}}\le2$, hence
$\lambda\cos(c_m a)=v_{c_m}>\lambda-3$, i.e.\ $\norm{c_m\beta}\le\tfrac1{2\pi}\arccos(1-3/\lambda)\le C\lambda^{-1/2}$;
by the type bound two such sites differ by $\ge c\lambda^{1/(2\tau)}$ ($=c\sqrt\lambda$ at bounded type), so
$\abs{c_m}\ge c\lambda^{1/(2\tau)}$. By \textup{(PLW)} (Lemma~\ref{lem:uniloc}; $w$ centred at $0$, $\varphi_m$ at
$\pm c_m$, both at rate $\gamma$ up to polynomial factors),
$\abs{\langle\varphi_m,w\rangle}\le C\lambda(1+\abs{c_m})^{1+2\xi'}e^{-\gamma\abs{c_m}}$.

For the denominator we reduce by a Feshbach map onto the well set $\Omega=\{k:v_k>\lambda-3\}$. Its complement carries
$Q_\Omega H_R(0)Q_\Omega$, with diagonal $\le\lambda-3$ and hopping of norm $<2$, hence spectrum $\le\lambda-1$; so for
$z\in[\lambda-\tfrac12,\Lambda_R(0)]$ the resolvent $(z-Q_\Omega H_R(0)Q_\Omega)^{-1}$ is bounded by $2$ and decays at an
$O(1)$ Combes--Thomas rate. As the wells are $c\lambda^{1/(2\tau)}$-separated, the effective
$\abs\Omega\times\abs\Omega$ matrix has inter-well entries $O(e^{-c\lambda^{1/(2\tau)}})$ and diagonal self-energies
$\Xi_{pp}(z)=\tfrac1{z-v_{p+1}}+\tfrac1{z-v_{p-1}}+O(\lambda^{-2})=O(1/\lambda)$ (a Neumann series in
hopping over detuning, the detuning $v_p-v_{p\pm1}\asymp\lambda$); the two neighbours of a well enter with opposite
first-order phase shifts, so $\Xi_{c_mc_m}-\Xi_{00}=O(\norm{c_m\beta}^2/\lambda)$. Thus
$E_m=v_{c_m}+\Xi_{c_mc_m}+O(e^{-c\lambda^{1/(2\tau)}})$ and $\Lambda_R(0)=\lambda+\Xi_{00}+O(e^{-c\lambda^{1/(2\tau)}})$, whence
\[
\begin{aligned}
\Lambda_R(0)-E_m&=\lambda\bigl(1-\cos2\pi\norm{c_m\beta}\bigr)-(\Xi_{c_mc_m}-\Xi_{00})+O(e^{-c\lambda^{1/(2\tau)}})\\
&\ge\ c\lambda\norm{c_m\beta}^2\ \ge\ c\,c_\beta^2\,\lambda\,\abs{c_m}^{-2\tau},
\end{aligned}
\]
using $1-\cos x\ge x^2/4$, the cancellation above, and the type-$\tau$ Diophantine bound $\norm{c_m\beta}\ge c_\beta\abs{c_m}^{-\tau}$
(at bounded type $\norm{c_m\beta}\ge\tfrac1{4\abs{c_m}}$). Numerically $(\Lambda_R(0)-E_m)/(\lambda\norm{c_m\beta}^2)\to2\pi^2$,
stable in $R$. Summing over the near-edge peaks $\abs{c_m}\ge c\lambda^{1/(2\tau)}$,
\begin{equation}\label{eq:faredge}
2\!\!\sum_{\Lambda_R(0)-E_m<1}\!\!\frac{\abs{\langle\varphi_m,w\rangle}^2}{\Lambda_R(0)-E_m}
\ \le\ C\lambda\!\!\sum_{\abs{c_m}\ge c\lambda^{1/(2\tau)}}\!\!\abs{c_m}^{2+2\tau+4\xi'}e^{-2\gamma\abs{c_m}}
\ \le\ C'\lambda^{C(\tau,\xi')}e^{-2c\lambda^{1/(2\tau)}\log(\lambda/2)}\ \xrightarrow[\lambda\to\infty]{}\ 0 .
\end{equation}
The same term-by-term bound shows the increments $\Lambda''_{R+1}(0)-\Lambda''_R(0)\to0$, so $\Lambda''_\infty(0)=\lim_R\Lambda_R''(0)$
exists. Combining, $T_2\le2C_0+o_\lambda(1)$ uniformly in $R$, whence by \eqref{eq:FH}
$\Lambda''_\infty(0)\le-(\lambda-2)+2C_0+o_\lambda(1)$, negative once $\lambda\ge\lambda_0:=2C_0+3$, and
$\kappa_\lambda\ge\tfrac12(\lambda-2-2C_0)>0$. As $\lambda\downarrow2$ the rate $\gamma\downarrow0$, the suppression
\eqref{eq:faredge} loses uniformity and $S_\infty$ may degenerate; the case $\lambda\to2^+$ is open.
\end{proof}

\begin{remark}[Necessity of the localization input]\label{rem:locinput}
The window input \textup{(PLW)} of Lemma~\ref{lem:uniloc} cannot be replaced by an elementary argument. Its substantive use is the finiteness of the central edge current $C_0=\sup_R\norm{w}^2$, $w=\dot H(0)\hat\psi^{(0)}$,
which by the exact identity \eqref{eq:currentid} reduces to the scalar concentration
$h:=\langle\hat\psi^{(0)},\Delta\hat\psi^{(0)}\rangle=O(1/\lambda)$, equivalently $\sum_{k\ne0}(\hat\psi^{(0)}_k)^2=O(\lambda^{-2})$.
Every bound on $h$ from energy and norm alone stalls at the constant $h\le\norm{\Delta}<2$: the relations
$\Lambda_R(0)\ge\lambda$, $S_R\le1$ and $h=\Lambda_R(0)-\lambda S_R$ are mutually circular, whereas the true value is
$h\approx2.30/\lambda$. The gap between $O(1)$ and $O(1/\lambda)$ is precisely the strong-coupling concentration of the
top eigenstate: a Neumann or fixed-rate Combes--Thomas expansion breaks at the band-edge wells $v_k\approx E$, where the
resolvent is resonant, and controlling those uniformly in $R$ is the localization theorem (Theorem~\ref{thm:AA}) ---
reproducing it from scratch would mean the multiscale or large-deviation analysis of \cite{Jito1999} and its precursors.
Hypothesis \textup{(PLW)} isolates this one input; the remainder --- the Feshbach gap law and the cancellation in
\eqref{eq:faredge} --- is elementary, and the near-edge sum (ii) survives with only a fixed Combes--Thomas rate.
\end{remark}

\begin{lemma}[Off-spectral determinant asymptotics]\label{lem:detasym}
Let $\beta$ be Diophantine of type $\tau$ and let $K\subset\mathbb R$ be a compact interval with
$\inf_{n}\,(V_n-w)\ge c_1>2$ for all $w\in K$; since $V_n\ge-\lambda$ and $\Emax\ge\lambda$, this holds with
$c_1=2\lambda$ on a neighbourhood of $z^\star=-3\Emax-2\alpha$ for every $\lambda\ge2$. Then, uniformly in $w\in K$
and in the phase,
\[
\frac1N\log\abs{\det(w-H_{[1,N]})}\ =\ \gamma_\lambda(w)+O(1/N).
\]
\end{lemma}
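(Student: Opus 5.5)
The plan is to exploit the diagonal dominance of $H_{[1,N]}-w$. Write $d_n:=V_n-w\ge c_1>2$ for its diagonal; its off-diagonal entries are $-1$. Let $D_n:=\det(H_{[1,n]}-w)$, so that $\abs{\det(w-H_{[1,N]})}=\abs{D_N}$. The $D_n$ obey the three-term recurrence $D_n=d_nD_{n-1}-D_{n-2}$, with $D_0=1$ and $D_{-1}=0$. This is, up to signs, the first column of the transfer product $\mathcal T(w)$ of Lemma~\ref{lem:reduction}.

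First, pass to the ratios $\rho_n:=D_n/D_{n-1}$. They satisfy the Riccati map $\rho_n=d_n-1/\rho_{n-1}$ with $\rho_1=d_1$. Let $\rho_*>1$ be the larger root of $\rho=c_1-1/\rho$; it exists because $c_1>2$. The half-line $[\rho_*,\infty)$ is invariant under each map $\rho\mapsto d-1/\rho$ with $d\ge c_1$. Hence $\rho_n\in[\rho_*,\,\sup_n d_n]$, so every $D_n$ is positive and $\log\abs{D_N}=\sum_{n=1}^N\log\rho_n$. Each map is a contraction on this half-line, with derivative $\rho^{-2}\le\rho_*^{-2}=:q<1$.

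Second, replace $\rho_n$ by a stationary object. Run the Riccati recursion from the infinite past. By the uniform contraction it converges to a function $f(\theta_n;w)$ of the phase $\theta_n=2\pi\beta n+\phi$ alone. This $f$ is the projective coordinate of the dominated (unstable) direction of the uniformly hyperbolic cocycle at $w$. It follows that $\abs{\rho_n-f(\theta_n)}\le Cq^{n}$. Since $\log$ is Lipschitz on $[\rho_*,\infty)$, this gives $\sum_{n\le N}\log\rho_n=\sum_{n\le N}\log f(\theta_n)+O(1)$, uniformly in $w\in K$ and $\phi$. The unstable direction grows at rate $\log f$ at each step, so Birkhoff's theorem gives $\gamma_\lambda(w)=\int\log f\,\frac{d\theta}{2\pi}$; this identification is standard for dominated splittings. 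Uniform convergence of the iterates also shows that $f$ is real-analytic in $\theta$ and extends holomorphically to a fixed strip $\abs{\operatorname{Im}\theta}<h$, with $\log f$ bounded there uniformly in $w\in K$. To see this, note that $V_n$ is a trigonometric polynomial, and a small imaginary part of $\theta$ keeps $\operatorname{Re}d_n>2+\varepsilon$, so the contraction persists in a complex neighbourhood.

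The third step, which I expect to be the main obstacle, is to show that the Birkhoff sum $\sum_{n=1}^N\log f(\theta_n)-N\int\log f$ is $O(1)$ uniformly in $\phi$ and $w$. Ergodicity alone gives only $o(N)$. Here the Diophantine condition enters. Set $g:=\log f-\int\log f$, which is analytic on the strip and has mean zero. I will solve the cohomological equation $g=u(\cdot+2\pi\beta)-u$ via Fourier series: $\hat u(m)=\hat g(m)/(e^{2\pi i m\beta}-1)$. The small divisors obey $\abs{e^{2\pi i m\beta}-1}\ge 4\norm{m\beta}\ge 4c_\beta\abs m^{-\tau}$, and they are beaten by the decay $\abs{\hat g(m)}\le \norm{g}_h e^{-h\abs m}$. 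Hence $u$ is continuous, with $\norm{u}_\infty\le C(h,\tau,c_\beta)\norm{g}_h$ uniformly in $w\in K$. The Birkhoff sum then telescopes to $u(\theta_{N+1})-u(\theta_1)=O(1)$. The points to check carefully are the uniformity of the strip width $h$ and of the bound on $\log f$ over the compact set $K$. Combining the three steps gives $\log\abs{D_N}=N\gamma_\lambda(w)+O(1)$, which is the claim. The stated verification $c_1=2\lambda$ near $z^\star$ follows from $V_n\ge-\lambda$ and $z^\star\le-3\lambda$.
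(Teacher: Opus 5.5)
Your proof is correct and follows essentially the same route as the paper. Diagonal dominance gives an invariant cone and hence uniform hyperbolicity; the unstable section extends holomorphically to a complex strip; and the Birkhoff sum of the resulting analytic observable is controlled through the Diophantine condition, where your cohomological-equation bound is the same Fourier computation as the paper's exponential-sum estimate. Your Riccati ratio $\rho_n$ is a concrete coordinate version of the paper's cone and graph transform, and the exact identity $\log\abs{D_N}=\sum_n\log\rho_n$ lets you bypass the paper's comparison of $\abs{\mathcal T_{11}}$ with $\norm{\mathcal T}$ and its adapted-norm step.
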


\begin{proof}
Since $\abs{\det(w-H_{[1,N]})}=\abs{\mathcal T^{(N)}_{11}(w)}$, it suffices to prove
$\abs{\mathcal T^{(N)}_{11}(w)}=e^{N\gamma_\lambda(w)+O(1)}$. The cone
$\mathcal C=\{(a,b):a\ge b\ge0\}\setminus\{0\}$ is invariant and uniformly expanded by the one-step matrices
$\begin{psmallmatrix}V_n-w&-1\\1&0\end{psmallmatrix}$: if $a\ge b\ge0$ then $a'=(V_n-w)a-b\ge(c_1-1)a\ge a=b'\ge0$.
Hence the cocycle is uniformly hyperbolic on $K$, its unstable direction lies in $\overline{\mathcal C}$ and its
stable direction outside, so the angle between $(1,0)^\top\in\mathcal C$ and the stable direction is bounded below,
uniformly on $K$ by compactness. As the forward image of $(1,0)^\top$ stays in $\mathcal C$, where the first
coordinate dominates,
$\abs{\mathcal T^{(N)}_{11}(w)}\ge\tfrac1{\sqrt2}\,\bigl\|\mathcal T^{(N)}(w)(1,0)^\top\bigr\|\ge c\,\norm{\mathcal T^{(N)}(w)}$
with $c>0$ uniform, and trivially $\abs{\mathcal T^{(N)}_{11}}\le\norm{\mathcal T^{(N)}}$; so it suffices to prove
$\norm{\mathcal T^{(N)}(x;w)}=e^{N\gamma_\lambda(w)+O(1)}$ uniformly in the base point $x$. The invariant unstable
section $u(x;w)$ is the nested limit of the cone field under the graph transform; widening the cone slightly, the
transform remains a uniform contraction of the projective (Hilbert) metric on a thin complex strip
$\abs{\operatorname{Im}x}<\rho$, so $u(\cdot;w)$ extends holomorphically there, uniformly in $w\in K$. With
$f_w(x):=\log\norm{A_w(x)|_{u(x;w)}}$ analytic on the strip, an adapted-norm computation along the hyperbolic
splitting gives $\log\norm{\mathcal T^{(N)}(x;w)}=\sum_{j=0}^{N-1}f_w(x+j\beta)+O(1)$, and for $\beta$ Diophantine
the Birkhoff deviations of an analytic observable are bounded:
\[
\Bigl|\sum_{j=0}^{N-1}f_w(x+j\beta)-N\widehat{f_w}(0)\Bigr|
\ \le\ \sum_{k\ne0}\abs{\widehat{f_w}(k)}\,\Bigl|\sum_{j=0}^{N-1}e^{2\pi\mathrm{i}\,jk\beta}\Bigr|
\ \le\ \sum_{k\ne0}\frac{\abs{\widehat{f_w}(k)}}{2\norm{k\beta}}
\ \le\ \frac1{2c_\beta}\sum_{k\ne0}\abs k^{\tau}\,\abs{\widehat{f_w}(k)}\ <\ \infty,
\]
uniformly in $w$ by the uniform strip width; finally $\widehat{f_w}(0)=\int_0^1 f_w\,dx=\gamma_\lambda(w)$ by unique
ergodicity along the dominated splitting. This is the only place a small-divisor bound enters the lower-bound
route, and only through the $O(1/N)$ rate; the identity itself needs merely the uniform $o(1)$ of
Lemma~\ref{lem:Qedge}.
\end{proof}

\begin{theorem}[Edge identity for almost every phase]\label{thm:uncond}
Fix $\alpha>0$, a Diophantine frequency $\beta$ of type $\tau\ge1$ (bounded type is $\tau=1$, e.g.\
$\beta=(\sqrt5-1)/2$), and almost every phase $\phi$, so that the operator is localized (Theorem~\ref{thm:AA}). The
upper bound $r(\alpha,\lambda)\le\gamma_\lambda(z^\star)$ holds unconditionally, at every phase, for every
$\lambda>2$. The identity
\[
r(\alpha,\lambda)=\gamma_\lambda(z^\star),\qquad
\gamma_\lambda(z^\star)-\max_k\tfrac1N\log\abs{c_k}=O\!\bigl(N^{-2/(2+\tau)}\bigr)
\]
\textup{(}for bounded type, $O(N^{-2/3})$\textup{)} holds for every $\lambda>2$ under the polynomial-localization
hypothesis \textup{(PL)} of Lemma~\ref{lem:semiloc} \textup{(}Remark~\ref{rem:plstatus}\textup{)}, whose window-gap
component is a theorem for $\lambda\ge\lambda_1$ \textup{(}Lemma~\ref{lem:edgeexp}\textup{)}.
\end{theorem}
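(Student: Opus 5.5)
The plan is to work entirely with the positive-sum reduction of Proposition~\ref{prop:possum}: $r(\alpha,\lambda)=\liminf_N\max_k\frac1N\log\abs{c_k}$, with the boundary-weight form $\abs{c_k}=Q(\delta_k)(\hat\psi^{(k)}_N)^2$ of Lemma~\ref{lem:cd}.

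\emph{Upper bound.} This part is unconditional and holds at every phase. Since $(\hat\psi^{(k)}_N)^2\le1$ and $Q$ is increasing on $(0,\infty)$, we have $\abs{c_k}\le Q(\max_m\delta_m)$. Lemma~\ref{lem:Qedge}, whose Thouless convergence is uniform in the phase, gives $\frac1N\log Q(\max_m\delta_m)\to\gamma_\lambda(z^\star)$. Hence $\limsup_N\frac1{2N}\log\Ecal_N\le\gamma_\lambda(z^\star)$, which is \eqref{eq:uppersup}. No localization is used here.

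\emph{Lower bound.} For the a.e.\ phase at which (PL) holds, it suffices to exhibit, for each $N$, a single mode $k^\star$ with
\[
\tfrac1N\log Q(\delta_{k^\star})\ge\gamma_\lambda(z^\star)-O\bigl(N^{-2/(2+\tau)}\bigr)
\quad\text{and}\quad
\tfrac2N\log\abs{\hat\psi^{(k^\star)}_N}\ge-O\bigl(N^{-2/(2+\tau)}\bigr).
\]
I would construct it in the following order.

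\begin{enumerate}
\item Choose $\delta_N\asymp N^{-1/(2+\tau)}$. By Lemma~\ref{lem:resonance}, take $j^\star\le N$ maximal with $\|\beta j^\star+\phi/2\pi\|<\delta_N$. Then $N-j^\star\le C_\beta\delta_N^{-\tau}$.
\item Choose a window radius $\rho_N=\lceil A\log N\rceil$ with $A>(\xi+\frac2{2+\tau})/\gamma$. The non-reflection condition holds because $\rho_N=O(\log N)$ while $\abs{\theta_{j^\star}}<2\pi\delta_N$. The window-maximum condition $\rho_N<d_{\mathcal W}\asymp\delta_N^{-1}$ also holds. If $j^\star$ is within $\rho_N$ of $N$, I would instead take the next earlier return, at most an extra $O(\delta_N^{-\tau})$ distance back.
\item Apply Lemma~\ref{lem:edgeexp}, which under (PL) gives the window gap. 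It yields an eigenvalue $E_{k^\star}$ of $H_{[1,N]}$ whose eigenvector is centred within $O(1)$ of $j^\star$, with
\[
\Emax-E_{k^\star}\le\tfrac\lambda2\theta_{j^\star}^2+CN^{\xi}e^{-\gamma\rho_N}=O(\delta_N^2).
\]
\item The first factor then follows from the uniform convergence in Lemma~\ref{lem:Qedge}, or at rate $O(1/N)$ from Lemma~\ref{lem:detasym}, using $Q(\delta_k)=\abs{\det(2E_0-E_k-H_N)}$. Lipschitz continuity of $\gamma_\lambda$ off the spectrum gives
\[
\tfrac1N\log Q(\delta_{k^\star})\ge\gamma_\lambda(z^\star)-O(\delta_N^2)-O(1/N).
\]
\end{enumerate}

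\emph{Main obstacle: the boundary amplitude.} The second estimate requires that the right-end entry of a mode centred at distance $R_N=N-j^\star=O(\delta_N^{-\tau})$ from the boundary is not exponentially small in $N$. I would first get a point of non-negligible mass. By (PL), the mass of $\hat\psi^{(k^\star)}$ outside the $\rho_N$-window is $O(N^{\xi-\gamma A})$, which is negligible. So some site $n_0$ within $\rho_N$ of $j^\star$ has $\abs{\hat\psi_{n_0}}\ge c\rho_N^{-1/2}$, and likewise for the pair $(\hat\psi_{n_0},\hat\psi_{n_0+1})$.

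I would then propagate to the boundary with the $SL_2$ transfer matrices from $n_0$ to $N$. The catch is that a lower bound on the endpoint entry needs more than a norm bound on the propagator. The boundary condition $\psi_{N+1}=0$ gives $(\psi_{N+1},\psi_N)=(0,\psi_N)$. Running the transfer backwards with determinant $1$, $\abs{\psi_N}\ge\abs{(\psi_{n_0+1},\psi_{n_0})}/\norm{\mathcal T_{[n_0,N]}^{-1}}$, and the inverse has the same norm as $\mathcal T_{[n_0,N]}$. This propagator has norm at most $\exp\bigl((\lambda+2+\abs{E})R_N\bigr)$. The resulting loss in the rate is therefore $O(R_N/N)=O(\delta_N^{-\tau}/N)=O(N^{-2/(2+\tau)})$. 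That balances the $O(\delta_N^2)$ band-edge defect, which is the optimisation behind the choice of $\delta_N$.

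Combining the two factors in $\abs{c_{k^\star}}=Q(\delta_{k^\star})(\hat\psi^{(k^\star)}_N)^2$ gives
\[
\max_k\tfrac1N\log\abs{c_k}\ge\gamma_\lambda(z^\star)-O\bigl(N^{-2/(2+\tau)}\bigr).
\]
Together with the upper bound, this proves both the identity and the stated rate. The delicate points are the uniformity of the (PL) constants at the moving centre $j^\star\asymp N$ and the pinning of the mode to the window's top level, which is exactly what (PL) and Lemma~\ref{lem:edgeexp} are assumed to supply.
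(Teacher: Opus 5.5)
Your proposal is correct and follows essentially the same route as the paper. The upper bound comes from $\abs{c_k}\le Q(\max_m\delta_m)$ and Lemma~\ref{lem:Qedge}; the lower bound comes from the three-distance resonance site, Lemma~\ref{lem:edgeexp} at radius $\rho_N=\lceil A\log N\rceil$, Lemma~\ref{lem:detasym} for the $Q$-factor, and $SL_2$ inversion of the transfer from a large entry near $j^\star$ to the Dirichlet boundary, balanced at $\delta_N=N^{-1/(2+\tau)}$. Your window-mass choice of $n_0$ and your step-back rule are cosmetic variants of the paper's argmax site $n^\star$ (with $\abs{\hat\psi^{(k^\star)}_{n^\star}}\ge N^{-1/2}$) and its buffer $h_N=\lceil\log^2N\rceil$. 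One small correction: for general type $\tau$ the first competing well only satisfies $d_{\mathcal W}\gtrsim\delta_N^{-1/\tau}$, not $d_{\mathcal W}\asymp\delta_N^{-1}$, but this still dominates $\rho_N$, so the argument is unaffected.
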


\begin{proof}
By Proposition~\ref{prop:possum} and Lemma~\ref{lem:cd},
$r=\liminf_N\max_k\frac1N\log\bigl(Q(\delta_k)(\hat\psi^{(k)}_N)^2\bigr)$. For the upper bound,
$(\hat\psi^{(k)}_N)^2\le1$ and Lemma~\ref{lem:Qedge} give
\[
\max_k\tfrac1N\log\abs{c_k}\ \le\ \tfrac1N\log Q\bigl(\textstyle\max_m\delta_m\bigr)\ \longrightarrow\ \gamma_\lambda(z^\star),
\]
hence $r\le\gamma_\lambda(z^\star)$.

The lower bound uses the polynomial-localization bound \eqref{eq:semiloc} of hypothesis \textup{(PL)}
(Remark~\ref{rem:plstatus}), together with the simple-gap property of Lemma~\ref{lem:edgeexp} (a theorem for
$\lambda\ge\lambda_1$, and the window-gap component of \textup{(PL)} below it). Set $h_N=\lceil\log^2 N\rceil$ and let $j^\star=j^\star(N)$ be the largest site $\le N-h_N$
with $\norm{\beta j^\star+\tfrac{\phi}{2\pi}}<\delta_N$, and $k^\star=k^\star(N)$ the near-edge mode that
Lemma~\ref{lem:edgeexp} furnishes at $j^\star$ (existence and $O(1)$-centring, in place of any site-to-mode bijection);
Lemma~\ref{lem:resonance} gives $R_N:=N-j^\star\le C_\beta\delta_N^{-\tau}+h_N$, so $R_N\to\infty$ while $R_N=O(\delta_N^{-\tau})$.
Two estimates bound $\frac1N\log\abs{c_{k^\star}}$ from below. First, write
$\frac1N\log Q(\delta_{k^\star})=\frac1N\log\abs{\det(w_N-H_{[1,N]})}$ with $w_N:=2E_0-E_{k^\star}$. Since $w_N\to z^\star$
stays off $\Sigma_\lambda$ at distance $\ge2\Emax+2\alpha$, and $V_n-w_N\ge2\lambda$ for all $n$,
Lemma~\ref{lem:detasym} applies:
$\frac1N\log\abs{\det(w-H_{[1,N]})}=\frac1N\log\abs{\mathcal T^{(N)}_{11}(w)}=\gamma_\lambda(w)+O(1/N)$, uniformly in
$w$ near $z^\star$ and in the phase. With the Lipschitz continuity of $\gamma_\lambda$ off $\Sigma_\lambda$ this gives
$\frac1N\log Q(\delta_{k^\star})=\gamma_\lambda(2E_0-E_{k^\star})+O(1/N)=\gamma_\lambda(z^\star)-O(\Emax-E_{k^\star})+O(1/N)$,
legitimately at the moving index $k^\star(N)$. The band-edge defect is controlled by Lemma~\ref{lem:edgeexp}, applied with window radius
$\rho_N=\lceil A\log N\rceil$ for a fixed $A>\bigl(\xi+\tfrac{2}{2+\tau}\bigr)/\gamma$ (with $\xi$ the
polynomial-localization exponent of Lemma~\ref{lem:semiloc}): this is admissible because $\rho_N\le R_N$ (as
$R_N\ge h_N=\lceil\log^2N\rceil$) and $\rho_N<d_{\mathcal W}$, since $\abs{\theta_{j^\star}}<2\pi\delta_N$ places the first competing
well at $d_{\mathcal W}\gtrsim\delta_N^{-1/\tau}\gg\rho_N$ (the first return of $k\beta$ to within $\delta_N$ of $0$ has
$\abs{k}\ge c_\beta^{1/\tau}\delta_N^{-1/\tau}$ by the type bound), so $j^\star$ carries the maximal
on-site value of its $\rho_N$-window. With $C_{\rho_N}e^{-\gamma\rho_N}=O(N^{\xi-\gamma A})=o(\delta_N^2)$, the quadratic bound
\eqref{eq:edgequad} gives
$\Emax-E_{k^\star}\le\tfrac{\lambda}{2}\theta_{j^\star}^2+O(N^{\xi-\gamma A})\le\tfrac{\lambda}{2}(2\pi\delta_N)^2+o(\delta_N^2)=O(\delta_N^2)$,
so $\frac1N\log Q(\delta_{k^\star})\ge\gamma_\lambda(z^\star)-O(\delta_N^2)-O(1/N)$. Second,
let $n^\star=\arg\max_n\abs{\hat\psi^{(k^\star)}_n}$, so $\abs{\hat\psi^{(k^\star)}_{n^\star}}\ge N^{-1/2}$ by
normalization; localization places $n^\star$ within $O(1)$ of the centre $j^\star$, hence
$N-n^\star\le R_N+O(1)$. Since $E_{k^\star}$ is a Dirichlet eigenvalue, $\psi^{(k^\star)}_{N+1}=0$, so with
$M=\mathcal T_N\cdots\mathcal T_{n^\star+1}$ ($\norm{\mathcal T_j}\le \kappa\lambda$, $\det\mathcal T_j=1$, hence
$\norm{M^{-1}}=\norm{M}\le(\kappa\lambda)^{N-n^\star}$) the transfer relation
$\binom{0}{\hat\psi^{(k^\star)}_N}=M\binom{\hat\psi^{(k^\star)}_{n^\star+1}}{\hat\psi^{(k^\star)}_{n^\star}}$ gives
$\abs{\hat\psi^{(k^\star)}_{n^\star}}\le\norm{M^{-1}}\bigl|\!\binom{0}{\hat\psi^{(k^\star)}_N}\!\bigr|=\norm{M}\,\abs{\hat\psi^{(k^\star)}_N}$
(we have inverted the transfer; the left vector has norm exactly $\abs{\hat\psi^{(k^\star)}_N}$, no cancellation, and since
$n^\star$ lies within $R_N=O(\delta_N^{-\tau})$ of the boundary the factor $\norm{M}$ is sub-exponential in $N$), whence
$\abs{\hat\psi^{(k^\star)}_N}\ge(\kappa\lambda)^{-(N-n^\star)}N^{-1/2}$ and
$\frac2N\log\abs{\hat\psi^{(k^\star)}_N}\ge-O(R_N/N)=-O\bigl(1/(\delta_N^{\tau} N)\bigr)$. Adding,
\[
\frac1N\log\abs{c_{k^\star}}\ \ge\ \gamma_\lambda(z^\star)-O(\delta_N^2)-O\bigl(1/(\delta_N^{\tau} N)\bigr)-O(1/N),
\]
the Thouless term $O(1/N)$ being dominated by the first; the two leading penalties balance at
$\delta_N=N^{-1/(2+\tau)}$, where the right side is $\gamma_\lambda(z^\star)-O(N^{-2/(2+\tau)})$. Hence
$r\ge\liminf_N\frac1N\log\abs{c_{k^\star}}\ge\gamma_\lambda(z^\star)$, and with the upper bound equality holds, with
gap $O(N^{-2/(2+\tau)})$ (for bounded type, $O(N^{-2/3})$).
\end{proof}

\begin{remark}
The boundary-weight cancellation of Lemma~\ref{lem:cd} is what frees the identity of any conditioning input: the
proof uses
neither the Beckermann--Townsend conditioning bound nor the edge regularity of Proposition~\ref{prop:cond}, which
entered only through the Gram-eigenvector overlap of \S\S\ref{sec:condoverlap}--\ref{sec:nocancel}. The one
substantive input is the localization hypothesis \textup{(PL)} --- Anderson localization itself holds for almost
every phase throughout $\lambda>2$ (Theorem~\ref{thm:AA}), and \textup{(PL)} strengthens it as recorded in
Remark~\ref{rem:plstatus} --- entering through the near-edge mode furnished by Lemma~\ref{lem:edgeexp} at the resonant site of
Lemma~\ref{lem:resonance} and the elementary
$SL_2$ amplitude bound; the Diophantine arithmetic enters only through the three-distance estimate. At the distinguished
phase $\phi=0$ of \S\ref{sec:certified}, where the numerics below are computed, the conclusion is corroborated at
finite scale but is not a consequence of the almost-everywhere statement. Strong coupling is not required for the
identity: the quadratic band-edge bound \eqref{eq:edgequad} and the transfer bound are uniform in the \emph{rate},
only the constants depending on $\lambda$. The gap $\gamma_\lambda(z^\star)-\max_k\frac1N\log\abs{c_k}$ closes at
$O(N^{-2/3})$ under \textup{(PL)}, with an arithmetic
fluctuation in the constant; numerically $\mathrm{gap}\cdot N^{2/3}$ stays bounded
($\le2.2$ over all tested $N\le120$) down to the transition, the maximizing mode alternating between band-edge and
right-boundary according to the three-gap returns. Sample gaps: at $\lambda=2.2$, $0.106,0.068,0.105,0.058,0.052$
for $N=40,60,80,100,120$; at $\lambda=3$, $0.17,0.12,0.069,0.062$ for $N=30,40,50,60$. The identity
$\abs{c_k}=Q(\delta_k)(\hat\psi^{(k)}_N)^2$ is confirmed to machine precision throughout.
\end{remark}

\section{Boundary amplitude and the metallic identity}\label{sec:metallic}

For $\lambda<2$ the bracket of \S\ref{sec:edge} collapses and the identity $r(\alpha,\lambda)=\gamma_\lambda(z^\star)$
followed there through the edge-regularity input of Proposition~\ref{prop:cond}. We recast the metallic lower bound
through the transfer dynamics of the subcritical cocycle. The recasting is the exact dual of the localized-side argument
of Theorem~\ref{thm:uncond} and rests on one deterministic fact --- the Dirichlet boundary amplitude is bounded below by
the transfer-matrix norm --- which, combined with the uniform subexponential transfer bound forced on the spectrum by the vanishing Lyapunov
exponent (Lemma~\ref{lem:unifsub}, via Furman's uniform subadditive ergodic theorem \cite{Furman1997}; the almost
reducibility of \cite{AvilaJitomirskaya2010} gives far finer subcritical control, none of which is used), closes the
identity for $0<\lambda<2$ unconditionally and for every
phase (Theorem~\ref{thm:metclosed}), without the edge-regularity input of Proposition~\ref{prop:cond}. On the localized side that norm was sub-exponential because a near-boundary mode sat $O(\delta_N^{-1})$
sites from the edge; here it is sub-exponential, at each fixed spectral energy, because the spectral Lyapunov
exponent vanishes.

Throughout, $y_n(E)$ is the transfer solution normalized by $y_0=0$, $y_1=1$, so $\psi^{(k)}_n=y_n(E_k)$ and
$\binom{y_{N+1}}{y_N}=\mathcal T_N(E)\binom10$ with $\mathcal T_N=T_N\cdots T_1$, $\det\mathcal T_N=1$.

\begin{proposition}[Boundary amplitude from the transfer norm]\label{prop:metamp}
Let $E_k$ be a Dirichlet eigenvalue of $H_{[1,N]}$ and set $\mathfrak M_N(E_k)=\max_{0\le n\le N}\norm{\mathcal T_n(E_k)}$
(with $\mathcal T_0=I$). Then
\begin{equation}\label{eq:metamp}
\frac{1}{\sqrt N\,\mathfrak M_N(E_k)^2}\ \le\ \abs{\hat\psi^{(k)}_N}\ \le\ 1,
\qquad\text{so}\qquad
\frac2N\log\abs{\hat\psi^{(k)}_N}\ \ge\ -\frac4N\log\mathfrak M_N(E_k)-\frac{\log N}{N}.
\end{equation}
In particular, along any sequence of eigenvalues with $\frac1N\log\mathfrak M_N(E_{k(N)})\to0$ the boundary amplitude has
rate zero, $\frac2N\log\abs{\hat\psi^{(k(N))}_N}\to0$.
\end{proposition}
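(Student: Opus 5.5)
The bound $\abs{\hat\psi^{(k)}_N}\le1$ is immediate from normalization, so everything rests on the lower bound. The plan is to compare the eigenvector norm with its boundary entry $\abs{y_N}$ by running the transfer backwards from the right end. The right end is where the Dirichlet condition pins the state vector to a single known direction.

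Work with the unnormalized solution $y_n=y_n(E_k)$, so $y_0=0$, $y_1=1$ and $\hat\psi^{(k)}_n=y_n/\norm{y}$ with $\norm y^2=\sum_{n=1}^N y_n^2$. Since $E_k$ is a Dirichlet eigenvalue, $y_{N+1}=0$. Hence the state vector at the right end is $v_N:=\binom{y_{N+1}}{y_N}=\binom{0}{y_N}$, with norm exactly $\abs{y_N}$.

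For each $1\le n\le N$, the state $v_n=\binom{y_{n+1}}{y_n}$ satisfies $v_N=\mathcal T_N\mathcal T_n^{-1}v_n$. Therefore
\[
\abs{y_n}\le\norm{v_n}=\norm{\mathcal T_n\mathcal T_N^{-1}v_N}\le\norm{\mathcal T_n}\,\norm{\mathcal T_N^{-1}}\,\abs{y_N}.
\]
The factor $\mathcal T_N$ is a product of $SL_2(\mathbb R)$ matrices, so $\norm{\mathcal T_N^{-1}}=\norm{\mathcal T_N}$. Both norms are at most $\mathfrak M_N(E_k)$, which gives $\abs{y_n}\le\mathfrak M_N^2\abs{y_N}$ for every $n$. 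Summing the squares over $n$ yields $\norm y\le\sqrt N\,\mathfrak M_N^2\abs{y_N}$. Dividing by $\norm y$ gives $\abs{\hat\psi^{(k)}_N}\ge(\sqrt N\,\mathfrak M_N^2)^{-1}$, which is \eqref{eq:metamp}. The logarithmic form follows by taking $\frac2N\log$.

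For the final assertion, assume $\frac1N\log\mathfrak M_N(E_{k(N)})\to0$. Then the lower bound tends to $0$, since $\frac{\log N}{N}\to0$. The upper bound $\frac2N\log\abs{\hat\psi_N}\le0$ then forces the limit to be $0$.

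I expect no real obstacle, since the statement is deterministic. The only point needing care is the $SL_2$ identity $\norm{M^{-1}}=\norm{M}$ (from $\det=1$ in dimension two). One also needs the fact that $\mathfrak M_N$ includes $\norm{\mathcal T_N}$, which it does because the maximum ranges over $0\le n\le N$.
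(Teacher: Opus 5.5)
Your proof is correct and rests on the same ingredients as the paper's: the Dirichlet pinning $y_{N+1}=0$, the $SL_2$ identity $\norm{\mathcal T_N^{-1}}=\norm{\mathcal T_N}$, and the bound $\norm{\mathcal T_n}\le\mathfrak M_N$ applied to each entry before summing over $n$. The only difference is organizational. The paper splits your single chain $\abs{y_n}\le\norm{\mathcal T_n}\,\norm{\mathcal T_N^{-1}}\,\abs{y_N}$ into two separate estimates through the normalization $y_1=1$, namely $\abs{y_N}\ge\norm{\mathcal T_N}^{-1}$ and $\norm{\psi^{(k)}}\le\sqrt N\,\mathfrak M_N$, and arrives at the same constant $\sqrt N\,\mathfrak M_N^2$.
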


\begin{proof}
At the eigenvalue $\mathcal T_{11}(E_k)=y_{N+1}(E_k)=0$, so $\binom{0}{y_N}=\mathcal T_N(E_k)\binom10$; as $\det\mathcal T_N=1$
gives $\sigma_{\min}(\mathcal T_N)=\norm{\mathcal T_N}^{-1}$,
\[
\abs{y_N(E_k)}=\bigl\|\mathcal T_N(E_k)\tbinom10\bigr\|\ \ge\ \sigma_{\min}\bigl(\mathcal T_N(E_k)\bigr)=\norm{\mathcal T_N(E_k)}^{-1}\ \ge\ \mathfrak M_N(E_k)^{-1}.
\]
For the norm, $\binom{y_n}{y_{n-1}}=\mathcal T_{n-1}(E_k)\binom10$ gives $\abs{y_n(E_k)}\le\norm{\mathcal T_{n-1}(E_k)}\le\mathfrak M_N(E_k)$
for every $1\le n\le N$, so $\norm{\psi^{(k)}}^2=\sum_{n=1}^N y_n^2\le N\,\mathfrak M_N(E_k)^2$. Hence
\[
\abs{\hat\psi^{(k)}_N}=\frac{\abs{y_N(E_k)}}{\norm{\psi^{(k)}}}\ \ge\ \frac{\mathfrak M_N(E_k)^{-1}}{\sqrt N\,\mathfrak M_N(E_k)}=\frac{1}{\sqrt N\,\mathfrak M_N(E_k)^2},
\]
and $\abs{\hat\psi^{(k)}_N}\le1$ by normalization. Taking logarithms gives \eqref{eq:metamp}.
\end{proof}

The bound is deterministic --- an $SL_2$ identity, no spectral hypothesis --- and already gives the lower bound once a
near-edge sequence of \emph{sub-exponential} eigenvalues is in hand.

\begin{proposition}[Metallic identity, conditional on a sub-exponential edge sequence]\label{prop:metallic}
Fix $\alpha>0$ and $0<\lambda\le2$. Suppose there is a sequence of Dirichlet eigenvalues $E_{k(N)}\to\Emax$ with
\begin{equation}\label{eq:edgeseq}
\frac1N\log\mathfrak M_N(E_{k(N)})\ \xrightarrow[N\to\infty]{}\ 0 .
\end{equation}
Then $r(\alpha,\lambda)=\gamma_\lambda(z^\star)$, the lower bound following from \eqref{eq:edgeseq} alone.
\end{proposition}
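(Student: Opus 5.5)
The plan is to follow the localized-side argument of Theorem~\ref{thm:uncond}, replacing the resonant-mode construction by the given edge sequence. The upper bound $r\le\gamma_\lambda(z^\star)$ holds unconditionally at every phase and coupling, from $\abs{c_k}\le Q(\delta_k)$ (Lemma~\ref{lem:cd}) and Lemma~\ref{lem:Qedge}. So only the lower bound $r\ge\gamma_\lambda(z^\star)$ needs proof. By Proposition~\ref{prop:possum} and Lemma~\ref{lem:cd},
\[
r=\liminf_{N\to\infty}\max_k\tfrac1N\log\bigl(Q(\delta_k)(\hat\psi^{(k)}_N)^2\bigr)\ \ge\ \liminf_{N\to\infty}\Bigl[\tfrac1N\log Q(\delta_{k(N)})+\tfrac2N\log\abs{\hat\psi^{(k(N))}_N}\Bigr],
\]
where $k(N)$ is the given sequence. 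We bound the two terms separately.

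\emph{Kernel term.} We have $Q(\delta_{k(N)})=\prod_m(E_{k(N)}+E_m-2E_0)=\prod_m\abs{w_N-E_m}$ with $w_N:=2E_0-E_{k(N)}$. The point $w_N$ lies in the reflected segment $[z^\star,\Emin-2\alpha]$, which sits at distance $\ge2\alpha$ from $[\Emin,\Emax]$. Lemma~\ref{lem:Qedge} gives
\[
\tfrac1N\sum_m\log\abs{w-E_m}\to\gamma_\lambda(w)
\]
uniformly in $w$ on that segment, so it applies at the moving point $w_N$. Because $E_{k(N)}\to\Emax$, we have $w_N\to 2E_0-\Emax=z^\star$. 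Since $\gamma_\lambda$ is continuous (Theorem~\ref{thm:AA}), this yields
\[
\tfrac1N\log Q(\delta_{k(N)})\to\gamma_\lambda(z^\star).
\]
Lemma~\ref{lem:detasym} could be used instead for a quantitative rate, but it is not needed.

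\emph{Amplitude term.} Proposition~\ref{prop:metamp}, which is deterministic and needs no hypothesis, gives
\[
\tfrac2N\log\abs{\hat\psi^{(k(N))}_N}\ \ge\ -\tfrac4N\log\mathfrak M_N(E_{k(N)})-\tfrac{\log N}{N}.
\]
By \eqref{eq:edgeseq} the right-hand side tends to $0$. The upper bound $\abs{\hat\psi_N}\le1$ shows this term tends to exactly $0$.

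\emph{Conclusion.} Adding the two terms gives $r\ge\gamma_\lambda(z^\star)$, which matches the upper bound. For $\lambda\le2$ we have $\log_+(\lambda/2)=0$, so $\gamma_\lambda(z^\star)=g_{\mathbb C\setminus\Sigma_\lambda}(z^\star)$ by \eqref{eq:capval}, consistent with the collapsed bracket. No step is a serious obstacle. The two points to check carefully are that Lemma~\ref{lem:Qedge} really is uniform in $w$ along the moving point (the equicontinuity argument in its proof covers this), and that the $\liminf$ of a sum is bounded below by the sum of the two limits, which holds because both terms converge. The genuine difficulty, producing a near-edge sequence that satisfies \eqref{eq:edgeseq}, lies outside this proposition and is deferred to Lemma~\ref{lem:unifsub}.
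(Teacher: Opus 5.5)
Your proof is correct and follows the paper's argument. You get the upper bound from $\abs{c_k}\le Q(\delta_k)$, split $\frac1N\log\abs{c_{k(N)}}$ via Lemma~\ref{lem:cd} into a kernel term and a boundary-amplitude term, and use Proposition~\ref{prop:metamp} with \eqref{eq:edgeseq} to send the amplitude term to zero. The one difference is in the kernel term: you use the uniform Thouless convergence of Lemma~\ref{lem:Qedge} on the fixed reflected segment $[z^\star,\Emin-2\alpha]$, whereas the paper invokes the $O(1/N)$ determinant asymptotics of Lemma~\ref{lem:detasym} ``exactly as in Theorem~\ref{thm:uncond}''. The paper itself notes that the $o(1)$ of Lemma~\ref{lem:Qedge} suffices, and your choice avoids checking the cone hypothesis of Lemma~\ref{lem:detasym}, which the paper verifies only for $\lambda\ge2$.
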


\begin{proof}
The upper bound $r\le\gamma_\lambda(z^\star)$ is the unconditional boundary-weight bound of \S\ref{sec:cocycle}
(Proposition~\ref{prop:possum}, Lemmas~\ref{lem:cd}--\ref{lem:Qedge}), valid for every $\lambda$. For the lower bound,
Lemma~\ref{lem:cd} splits the coefficient,
\[
\frac1N\log\abs{c_{k(N)}}=\frac1N\log Q(\delta_{k(N)})+\frac2N\log\abs{\hat\psi^{(k(N))}_N}.
\]
The reflected point $w_N:=2E_0-E_{k(N)}\to z^\star$ stays off $\Sigma_\lambda$ at distance $\ge2\Emax+2\alpha$, so the cocycle
is uniformly hyperbolic there, and exactly as in the proof of Theorem~\ref{thm:uncond}
\[
\tfrac1N\log Q(\delta_{k(N)})=\tfrac1N\log\bigl|\det(w_N-H_{[1,N]})\bigr|=\gamma_\lambda(w_N)+O(1/N)\ \longrightarrow\ \gamma_\lambda(z^\star),
\]
by continuity of $\gamma_\lambda$ off the spectrum. The amplitude term has rate zero by \eqref{eq:metamp} and
\eqref{eq:edgeseq}. Hence $\frac1N\log\abs{c_{k(N)}}\to\gamma_\lambda(z^\star)$, so by Proposition~\ref{prop:possum}
$r\ge\liminf_N\frac1N\log\abs{c_{k(N)}}\ge\gamma_\lambda(z^\star)$, and with the upper bound equality holds.
\end{proof}

The hypothesis \eqref{eq:edgeseq} is, for subcritical coupling, not a hypothesis: it follows from the uniform
subexponential transfer bound of Lemma~\ref{lem:unifsub} below, and the implication of
Proposition~\ref{prop:metallic} becomes an
unconditional identity. We first record that the top Dirichlet eigenvalue reaches the band edge --- the one place
where the eigenvalue sequence is fixed rather than assumed.

\begin{lemma}[The top Dirichlet eigenvalue reaches the band edge]\label{lem:topedge}
For every $\lambda\ge0$ and every phase $\phi$, $\max\spec(H_{[1,N]})\to\Emax$ as $N\to\infty$.
\end{lemma}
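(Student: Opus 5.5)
The plan is to prove the two inequalities $\limsup_N\max\spec(H_{[1,N]})\le\Emax$ and $\liminf_N\max\spec(H_{[1,N]})\ge\Emax$ separately, both by variational (Rayleigh quotient) arguments.

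\emph{Upper bound.} Embed $\ell^2(\{1,\dots,N\})$ into $\ell^2(\mathbb Z)$ by extension by zero. The Dirichlet operator $H_{[1,N]}$ is exactly the compression $P_NH_\phi P_N$ of the full-line operator at the same phase $\phi$, since the truncated hopping terms are just dropped. Hence for every unit $x$ supported in $[1,N]$,
\[
\langle x,H_{[1,N]}x\rangle=\langle x,H_\phi x\rangle\le\sup\spec(H_\phi)=\Emax .
\]
The spectrum is phase-independent (\S\ref{sec:setup}), so $\max\spec(H_{[1,N]})\le\Emax$ for every $N$ and every $\phi$, with no limit needed.

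\emph{Lower bound.} Fix $\varepsilon>0$. Since $\Emax\in\Sigma_\lambda=\spec(H_\phi)$, Weyl's criterion gives a unit vector $u\in\ell^2(\mathbb Z)$ with $\langle u,H_\phi u\rangle\ge\Emax-\varepsilon$. The operator is bounded, so we may truncate $u$ to a finite support $[a,b]$ and renormalize, changing the Rayleigh quotient by at most $\varepsilon$. Thus there is a finitely supported unit vector $v$, supported in a window of length $L=L(\varepsilon)$, with $\langle v,H_\phi v\rangle\ge\Emax-2\varepsilon$.

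The remaining issue is that $v$ sits at an arbitrary location, not inside $[1,N]$. To move it there, note that translating $v$ by $m$ sites produces a vector whose Rayleigh quotient is computed with the phase $\phi$ replaced by $\phi+2\pi\beta m$. The quotient depends continuously on the phase, with modulus of continuity controlled by $\lambda\,\abs{\Delta\phi}$ because the potential is $\lambda$-Lipschitz in the phase. By minimality of the irrational rotation there are infinitely many $m$ with $\norm{\beta m}$ arbitrarily small, and bounded gaps between such returns (the three-distance theorem, as in Lemma~\ref{lem:resonance}). So for all $N\ge N_0(\varepsilon)$ some translate $v_m$ is supported in $[1,N]$ and satisfies
\[
\langle v_m,H_{[1,N]}v_m\rangle=\langle v_m,H_\phi v_m\rangle\ge\Emax-3\varepsilon .
\]
Therefore $\liminf_N\max\spec(H_{[1,N]})\ge\Emax-3\varepsilon$, and letting $\varepsilon\downarrow0$ gives the claim. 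The identity $\langle v_m,H_{[1,N]}v_m\rangle=\langle v_m,H_\phi v_m\rangle$ for vectors supported inside the box is the same compression observation used in the upper bound.

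\emph{Main obstacle.} The only delicate step is making the phase-shift argument uniform in $\phi$, so that the statement holds at every phase rather than almost every phase. The cleanest fix is to construct $v$ at the phase $\phi$ itself, where Weyl's criterion applies directly because the spectrum is phase-independent. Then only the shift by multiples of $\beta$ needs to be controlled, and minimality handles this for every $\phi$. A simpler alternative, which I would try first, is to use the shift directly: $H_\phi$ restricted to $[m+1,m+N']$ is unitarily equivalent to $H_{\phi+2\pi\beta m}$ restricted to $[1,N']$. Combined with continuity of $\langle v,H_\theta v\rangle$ in $\theta$, this gives the result with no quantitative arithmetic input.
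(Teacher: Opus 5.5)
Your proof is correct and follows the paper's argument essentially step for step: compression gives $\max\spec(H_{[1,N]})\le\Emax$ for every $N$. For the reverse inequality, a finitely supported near-maximizing vector is translated into $[1,N]$ along a return time of the rotation, at which the potential nearly repeats on the support. The three-distance (bounded-gap) input and the discussion of uniformity in $\phi$ are unnecessary: at the fixed phase, a single return $m$ large enough to push the support into $[1,\infty)$ already works for every $N\ge N_0(\varepsilon)$.
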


\begin{proof}
Since $H_{[1,N]}$ is the compression of $H_\lambda$ to $\ell^2(\{1,\dots,N\})$ and $\spec(H_\lambda)\subset[\Emin,\Emax]$,
every unit $\psi$ supported in $[1,N]$ has $\langle\psi,H_{[1,N]}\psi\rangle=\langle\psi,H_\lambda\psi\rangle\le\Emax$, so
$\max\spec(H_{[1,N]})\le\Emax$ for all $N$.

For the reverse bound fix $\varepsilon>0$. As $\Emax=\max\spec(H_\lambda)$, there is a unit $\psi\in\ell^2(\mathbb Z)$,
which we may take finitely supported in $[-R,R]$ (the quadratic form is bounded and $\psi$ is approximable in $\ell^2$),
with $\langle\psi,H_\lambda\psi\rangle\ge\Emax-\varepsilon$. The potential $j\mapsto\lambda\cos(2\pi\beta j+\phi)$ is
almost periodic: by minimality of $x\mapsto x+\beta$ on $\mathbb R/\mathbb Z$ ($\beta$ irrational) the set of
$t\in\mathbb Z_{>0}$ with $\sup_{\abs j\le R}\abs{\lambda\cos(2\pi\beta(j+t)+\phi)-\lambda\cos(2\pi\beta j+\phi)}<\varepsilon$
is infinite; pick one with $t\ge R+1$. For $N\ge t+R$ the translate $\psi_t(\cdot)=\psi(\cdot-t)$ is a unit vector
supported in $[1,N]$; the hopping is translation invariant and the potential matches that of $\psi$ on its support up
to $\varepsilon$, so $\langle\psi_t,H_\lambda\psi_t\rangle\ge\langle\psi,H_\lambda\psi\rangle-\varepsilon\ge\Emax-2\varepsilon$
(the error is $\sum_{\abs j\le R}\abs{\psi(j)}^2\varepsilon=\varepsilon$). Hence
$\max\spec(H_{[1,N]})\ge\langle\psi_t,H_\lambda\psi_t\rangle\ge\Emax-2\varepsilon$ for $N\ge t+R$, so
$\liminf_N\max\spec(H_{[1,N]})\ge\Emax-2\varepsilon$; letting $\varepsilon\to0$ gives the claim.
\end{proof}

\begin{lemma}[Uniform subexponential bound at zero Lyapunov exponent]\label{lem:unifsub}
Let $\beta$ be irrational and let $K\subset\mathbb R$ be compact with $\gamma_\lambda(E)=0$ for every $E\in K$. Then
for every $\delta>0$ there is $C_\delta=C_\delta(K,\lambda,\beta)<\infty$ such that
\begin{equation}\label{eq:unifsub}
\sup_{E\in K}\ \sup_{\phi}\ \norm{\mathcal T_n(E;\phi)}\ \le\ C_\delta\,e^{\,n\delta}\qquad(n\ge0).
\end{equation}
Only unique ergodicity of the rotation enters; no Diophantine condition is used.
\end{lemma}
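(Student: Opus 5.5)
We prove \eqref{eq:unifsub} by the standard uniform subadditive argument (Furman's theorem) applied to the family of cocycles parametrized by $E\in K$. Set $A_E(\phi)=\begin{pmatrix}\lambda\cos\phi-E&-1\\1&0\end{pmatrix}$ and consider the skew product on $X=K\times\mathbb R/2\pi\mathbb Z$ given by $(E,\phi)\mapsto(E,\phi+2\pi\beta)$. The functions $f_n(E,\phi)=\log\norm{\mathcal T_n(E;\phi)}$ are continuous and subadditive, $f_{n+m}\le f_n+f_m\circ\sigma^n$. The invariant probability measures of this map are exactly the measures $\int_K(\mathrm{Leb}\otimes\delta_E)\,d\mu(E)$, because for each fixed $E$ the irrational rotation is uniquely ergodic.

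Since $K$ is not a single orbit space, we argue directly instead of invoking unique ergodicity on $X$. Suppose \eqref{eq:unifsub} fails for some $\delta>0$. Then there are $n_j\to\infty$ and points $(E_j,\phi_j)\in X$ with $\frac1{n_j}f_{n_j}(E_j,\phi_j)\ge\delta$. We then bound $f_{n}$ by blocks of a fixed length $m$:
\[
\tfrac1n f_n(E,\phi)\le\tfrac1n\sum_{i=0}^{n-1}\tfrac1m f_m(E,\phi+2\pi\beta i)+O(m/n),
\]
with the error uniform in $(E,\phi)$ by compactness and the bound $\norm{A_E}\le\lambda+2+\max_K\abs E$. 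This inequality comes from averaging the $m$ possible block decompositions of $\{0,\dots,n-1\}$.

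Unique ergodicity of the rotation makes the Birkhoff averages of the continuous function $\phi\mapsto\frac1m f_m(E,\phi)$ converge to $\frac1m\int f_m(E,\cdot)$ uniformly in $\phi$. The convergence is also uniform in $E\in K$: the family $\{f_m(E,\cdot)\}_{E\in K}$ is equicontinuous, and a uniform approximation by finitely many trigonometric polynomials gives uniform Weyl sums. Hence
\[
\limsup_n\sup_{E,\phi}\tfrac1n f_n\le\sup_{E\in K}\tfrac1m\int f_m(E,\phi)\,d\phi=:\Gamma_m(K).
\]

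It remains to show $\Gamma_m(K)\to0$ as $m\to\infty$. For each $E$, $\frac1m\int f_m(E,\cdot)$ decreases to $\gamma_\lambda(E)=0$ along $m=2^\ell$ by subadditivity. Each of these functions is continuous in $E$, so Dini's theorem on the compact set $K$ upgrades the pointwise convergence to zero into uniform convergence. This is the one place the hypothesis $\gamma_\lambda\equiv0$ on $K$ enters. Choosing $m$ with $\Gamma_m(K)<\delta/2$ gives $\sup_{E,\phi}f_n\le n\delta$ for all $n\ge n_0$. The finitely many $n<n_0$ are absorbed into $C_\delta$ by the trivial bound $\norm{\mathcal T_n}\le(\lambda+2+\max_K\abs E)^n$.

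The main obstacle is the uniformity in $E$: in both the Birkhoff step and the decay of $\Gamma_m$, convergence at each fixed energy is not enough. Compactness of $K$, continuity of $f_m$, and Dini's theorem supply this uniformity. Only unique ergodicity of the irrational rotation is used; no Diophantine condition enters.
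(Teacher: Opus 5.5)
Your proof is correct, but it follows a different route from the paper's.

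\textbf{The paper's route.} The paper works with $a_n(E)=\sup_\phi\log\norm{\mathcal T_n(E;\phi)}$. This quantity is itself subadditive in $n$ and continuous in $E$. The paper cites Furman's uniform subadditive ergodic theorem at each \emph{fixed} $E$, which identifies $\inf_n a_n(E)/n$ with $\gamma_\lambda(E)=0$. Uniformity in $E$ then comes from a finite-cover argument. Each $E$ has an $n_E$ with $a_{n_E}<n_E\delta/2$ on a neighbourhood of $E$. Compactness of $K$ leaves finitely many such times, and subadditivity in $n$ propagates the bound to all $n$; this is a subadditive substitute for Dini.

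\textbf{Your route.} You never use Furman as a black box. Your block-averaging inequality, together with uniform Birkhoff convergence over the compact family $\{f_m(E,\cdot)\}_{E\in K}$, amounts to a proof of Furman's theorem with a compact parameter. You then obtain uniformity in $E$ at the level of the \emph{integrated} averages. There the dyadic monotonicity $I_{2m}\le 2I_m$ makes Dini's theorem directly applicable.

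\textbf{What each buys.} The paper's argument is shorter once Furman is granted. Yours is self-contained: it needs only unique ergodicity of the rotation and the Furstenberg--Kesten identification $\lim_m\frac1m\int f_m(E,\cdot)\,\frac{d\phi}{2\pi}=\gamma_\lambda(E)$.

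\textbf{Minor fixes.}
\begin{itemize}
\item You dropped the $2\pi$ normalization in $\Gamma_m(K)$; it should be $\sup_{E\in K}\frac1m\int f_m(E,\phi)\,\frac{d\phi}{2\pi}$.
\item The opening contradiction hypothesis (the points $(n_j,E_j,\phi_j)$) is never used and can be deleted.
\item The description of the invariant measures of the skew product is likewise never used and can be deleted.
\item The uniform Birkhoff step is cleanest stated this way. The map $E\mapsto f_m(E,\cdot)$ is continuous into $C(\mathbb R/2\pi\mathbb Z)$, so its image is compact. The Birkhoff averaging operators are sup-norm contractions that converge pointwise by unique ergodicity. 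Hence they converge uniformly on that compact image, which replaces the trigonometric-polynomial approximation.
\end{itemize}
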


\begin{proof}
Set $a_n(E):=\sup_\phi\log\norm{\mathcal T_n(E;\phi)}\ge0$. The cocycle property
$\mathcal T_{n+m}(E;\phi)=\mathcal T_m(E;\phi+n\beta)\,\mathcal T_n(E;\phi)$ gives $a_{n+m}(E)\le a_n(E)+a_m(E)$, and
$(E,\phi)\mapsto\log\norm{\mathcal T_n(E;\phi)}$ is continuous, so $E\mapsto a_n(E)$ is continuous and
$\lim_n a_n(E)/n=\inf_n a_n(E)/n$ exists. Since the rotation by $\beta$ is uniquely ergodic and
$\phi\mapsto\log\norm{\mathcal T_n(E;\phi)}$ is a continuous subadditive cocycle over it, Furman's uniform
subadditive ergodic theorem \cite{Furman1997} identifies this limit with the Lyapunov exponent:
$\lim_n a_n(E)/n=\gamma_\lambda(E)=0$ for $E\in K$. Fix $\delta>0$. For each $E\in K$ pick $n_E$ with
$a_{n_E}(E)<n_E\delta/2$; by continuity the same holds on a neighbourhood of $E$, and by compactness finitely many
such neighbourhoods, with times $n_1,\dots,n_m$, cover $K$. For $E\in K$ in the $i$-th neighbourhood and any $n$,
write $n=qn_i+s$ with $0\le s<n_i$; subadditivity gives
$a_n(E)\le q\,a_{n_i}(E)+a_s(E)\le n\delta/2+C'$, where
$C'=\sup_{E\in K}\max_{0\le s<\max_i n_i}a_s(E)<\infty$ by continuity and compactness. Hence
$a_n(E)\le n\delta+C'$ for all $n\ge0$ and $E\in K$, which is \eqref{eq:unifsub} with $C_\delta=e^{C'}$.
\end{proof}

\begin{theorem}[Metallic identity for $0<\lambda<2$]\label{thm:metclosed}
Let $0<\lambda<2$ and let the frequency $\beta$ be Diophantine (in particular, the golden mean). Then
$r(\alpha,\lambda)=\gamma_\lambda(z^\star)$ for every $\alpha>0$ and every phase $\phi$.
\end{theorem}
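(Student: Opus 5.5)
The plan is to verify the hypothesis \eqref{eq:edgeseq} of Proposition~\ref{prop:metallic} directly, for every phase, and then invoke that proposition. For the sequence we take the top Dirichlet eigenvalue, $E_{k(N)}:=\max\spec(H_{[1,N]})$. By Lemma~\ref{lem:topedge} this tends to $\Emax$ at every phase. The upper bound $r\le\gamma_\lambda(z^\star)$ is unconditional (Lemmas~\ref{lem:cd}--\ref{lem:Qedge}), so only the lower bound remains. That lower bound follows from Proposition~\ref{prop:metallic} as soon as we show $\frac1N\log\mathfrak M_N(E_{k(N)})\to0$.

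\emph{Choosing the compact set.} We apply Lemma~\ref{lem:unifsub} with $K=\Sigma_\lambda$. This set is compact, and for $0<\lambda<2$ Theorem~\ref{thm:AA} gives $\gamma_\lambda\equiv\log_+(\lambda/2)=0$ on $\Sigma_\lambda$.

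The difficulty is that $E_{k(N)}$ need not lie in $\Sigma_\lambda$. It lies in $[\Emin,\Emax]$ but may sit in a spectral gap, where the Lyapunov exponent is positive. Two observations handle this.

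\begin{enumerate}
\item Since $E_{k(N)}\le\Emax$ is the top eigenvalue and tends to $\Emax$, it lies either in $\Sigma_\lambda$ or in a gap adjacent to the top edge, at distance $\eta_N:=\Emax-E_{k(N)}\to0$ from $\Emax\in\Sigma_\lambda$.
\item The transfer matrices depend polynomially on $E$, so we can use a perturbative estimate. Writing $\mathcal T_n(E)-\mathcal T_n(E')$ as a telescoping sum of single-step differences and using $\norm{T_j}\le\lambda+\Emax+2$, we get
\[
\norm{\mathcal T_n(E)}\le\norm{\mathcal T_n(E')}+n\,\abs{E-E'}\,\sup_{m\le n}\norm{\mathcal T_m(\cdot)}^2 .
\]
This is crude, and on its own it does not close.
\end{enumerate}

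A cleaner route is to enlarge $K$. By continuity of $\gamma_\lambda$ (Theorem~\ref{thm:AA}), for each $\delta>0$ there is a neighbourhood $U_\delta$ of $\Sigma_\lambda$ on which $\gamma_\lambda<\delta/2$. Lemma~\ref{lem:unifsub} only used $\lim_n a_n(E)/n=\gamma_\lambda(E)$ together with compactness. Its proof, rerun on the compact $\overline{U_\delta}\cap\R$ with the threshold $\delta$ in place of $0$, then gives
\[
\sup_{E\in\overline{U_\delta}}\sup_\phi\norm{\mathcal T_n(E;\phi)}\le C_\delta e^{n\delta}.
\]
This is the same Furman argument with $a_{n_E}(E)<n_E\delta$. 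Since $E_{k(N)}\to\Emax\in\Sigma_\lambda$, we have $E_{k(N)}\in U_\delta$ for all large $N$. Hence $\mathfrak M_N(E_{k(N)})\le C_\delta e^{N\delta}$, so $\limsup\frac1N\log\mathfrak M_N\le\delta$ for every $\delta>0$. Combined with $\mathfrak M_N\ge1$, this gives \eqref{eq:edgeseq}.

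\emph{Concluding.} Proposition~\ref{prop:metallic} now gives $r=\gamma_\lambda(z^\star)$. Its ingredients are the boundary amplitude bound \eqref{eq:metamp}, which shows the amplitude has rate $0$, and the off-spectral determinant asymptotics at $w_N=2E_0-E_{k(N)}\to z^\star$, which give rate $\gamma_\lambda(z^\star)$. Lemma~\ref{lem:detasym} is stated for $\lambda\ge2$, so in this range we instead use the uniform Thouless convergence of Lemma~\ref{lem:Qedge}, which suffices at the moving point and is uniform in phase. Every step holds at every phase and uses no localization or edge regularity.

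\emph{Main obstacle.} The step I expect to need the most care is the gap-eigenvalue issue above: justifying the uniform subexponential bound slightly off the spectrum. This rests on the continuity of $\gamma_\lambda$ in $E$ and the upper-semicontinuity form of Furman's theorem, namely that $\limsup_n\sup_\phi\frac1n\log\norm{\mathcal T_n}\le\gamma_\lambda(E)$ holds at every $E$ for continuous cocycles over a uniquely ergodic base, uniformly on compacts. I would state this as a one-line extension of Lemma~\ref{lem:unifsub}.
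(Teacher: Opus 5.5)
Your proof is correct. It shares the paper's skeleton: Proposition~\ref{prop:metallic} applied to the top Dirichlet eigenvalue, Lemma~\ref{lem:topedge} for $E_{k(N)}\to\Emax$, and the Furman bound of Lemma~\ref{lem:unifsub}. The difference is how you handle the one delicate point, that $E_{k(N)}$ may sit in a gap.

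The paper keeps $K=\Sigma_\lambda$ and pushes the bound off the spectrum by Bernstein--Walsh, $\mathfrak M_N(E')\le 2C_\delta e^{N\delta}e^{N g_{\mathbb C\setminus\Sigma_\lambda}(E')}$. That step needs the potential theory of \S\ref{sec:identify}: non-polarity of $\Sigma_\lambda$, and the identity $g_{\mathbb C\setminus\Sigma_\lambda}=\gamma_\lambda-\log_+(\lambda/2)$ from \eqref{eq:capval}, so that the Green's function is continuous and vanishes at $\Emax$.

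You instead use that Furman gives $\lim_n a_n(E)/n=\gamma_\lambda(E)$ at every real $E$, not only on the spectrum. The compactness and subadditivity argument of Lemma~\ref{lem:unifsub} then runs verbatim on any compact set where $\gamma_\lambda\le\delta/2$. Take the neighbourhood bounded so its closure is compact. This removes capacity and Green's functions entirely.

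Your route needs even less than you invoke. Since $\lim_n a_n(E)/n=\inf_n a_n(E)/n$ is an infimum of continuous functions, a single $n_0$ with $a_{n_0}(\Emax)<n_0\delta/2$ persists on an interval around $\Emax$ by continuity of $a_{n_0}$, and subadditivity finishes. So only $\gamma_\lambda(\Emax)=0$ is used. You need neither the Bourgain--Jitomirskaya continuity of $\gamma_\lambda$ nor any information on the rest of $\Sigma_\lambda$.

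What the paper's route buys in exchange is a quantitative bound at every $E'\in\mathbb C$ in terms of the exterior Green's function. That ties the metallic argument to the bracket \eqref{eq:bracket}.

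Your detour through Lemma~\ref{lem:Qedge} is valid but unnecessary. That step already sits inside Proposition~\ref{prop:metallic}, which you are invoking. Also, the actual hypothesis of Lemma~\ref{lem:detasym}, $\inf_n(V_n-w)>2$ near $z^\star$, holds for $\lambda<2$ as well: $V_n-z^\star\ge 3\Emax-\lambda+2\alpha$, and $\Emax\ge2$ (test $H_\lambda$ on long alternating vectors).
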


\begin{proof}
By Proposition~\ref{prop:metallic} it suffices to exhibit Dirichlet eigenvalues $E_{k(N)}\to\Emax$ satisfying
\eqref{eq:edgeseq}; we show the top eigenvalue $E_{k(N)}:=\max\spec(H_{[1,N]})$ does.

\emph{Uniform transfer bound on the spectrum.} Write $\mathcal T_n(E;\phi)$ for the $n$-step transfer cocycle
of $H_\lambda$ started at phase $\phi$, so that $\mathcal T_n(E)=\mathcal T_n(E;\phi)$ at the phase of the chain. By
Theorem~\ref{thm:AA}, $\gamma_\lambda(E)=\log_+(\lambda/2)=0$ for every $E\in\Sigma_\lambda$ when $0<\lambda<2$, so
Lemma~\ref{lem:unifsub}, applied with $K=\Sigma_\lambda$, gives for every $\delta>0$ a constant
$C_\delta=C_\delta(\lambda,\beta)$ with
\[
\sup_{\phi}\ \norm{\mathcal T_n(E;\phi)}\ \le\ C_\delta\,e^{\,n\delta}\qquad(E\in\Sigma_\lambda,\ n\ge0),
\]
which is \eqref{eq:unifsub}; in particular it holds at our fixed phase $\phi$, uniformly over $E\in\Sigma_\lambda$.

\emph{Bernstein--Walsh off the spectrum.} Fix $n\le N$. At the phase $\phi$ each entry of $E\mapsto\mathcal T_n(E;\phi)$
is a polynomial in $E$ of degree $\le n$, bounded on $\Sigma_\lambda$ by $C_\delta e^{n\delta}$ through \eqref{eq:unifsub}. Since
$\mathbb C\setminus\Sigma_\lambda$ is connected and $\Sigma_\lambda$ is non-polar (it has positive capacity, by
\eqref{eq:capval}), the Bernstein--Walsh inequality, i.e.\ Bernstein's lemma \cite[Thm.~5.5.7(a)]{Ransford} for the
Green's function $g_{\mathbb C\setminus\Sigma_\lambda}$ of the unbounded component with pole at infinity, gives
\[
\norm{\mathcal T_n(E';\phi)}\ \le\ 2C_\delta\,e^{\,n\delta}\,e^{\,n\,g_{\mathbb C\setminus\Sigma_\lambda}(E')}\qquad(E'\in\mathbb C),
\]
and therefore, using $g_{\mathbb C\setminus\Sigma_\lambda}\ge0$ and $n\le N$,
\[
\mathfrak M_N(E')\ \le\ 2C_\delta\,e^{\,N\delta}\,e^{\,N\,g_{\mathbb C\setminus\Sigma_\lambda}(E')}.
\]

\emph{Edge passage.} By Lemma~\ref{lem:topedge} the top eigenvalue satisfies $E_{k(N)}\to\Emax\in\Sigma_\lambda$. By
\eqref{eq:capval}, $g_{\mathbb C\setminus\Sigma_\lambda}=\gamma_\lambda-\log_+(\lambda/2)$ is continuous
(Theorem~\ref{thm:AA}) and vanishes on $\Sigma_\lambda$, so $g_{\mathbb C\setminus\Sigma_\lambda}(E_{k(N)})\to
g_{\mathbb C\setminus\Sigma_\lambda}(\Emax)=0$. Hence
\[
\frac1N\log\mathfrak M_N(E_{k(N)})\ \le\ \frac{\log(2C_\delta)}{N}+\delta+g_{\mathbb C\setminus\Sigma_\lambda}(E_{k(N)}),
\]
and letting $N\to\infty$ and then $\delta\downarrow0$ (note $\mathfrak M_N\ge1$) yields
\eqref{eq:edgeseq}. Proposition~\ref{prop:metallic} now yields $r(\alpha,\lambda)=\gamma_\lambda(z^\star)$.
\end{proof}

\begin{theorem}[Critical identity at the self-dual coupling]\label{thm:critical}
Let $\lambda=2$ and $\beta$ Diophantine. Then
\[
r(\alpha,2)=\gamma_2(z^\star)=g_{\mathbb C\setminus\Sigma_2}(z^\star)
\]
for every $\alpha>0$ and every phase $\phi$.
\end{theorem}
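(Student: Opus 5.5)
The plan is to run the metallic argument of Theorem~\ref{thm:metclosed} verbatim at $\lambda=2$: reduce to Proposition~\ref{prop:metallic} (whose hypothesis allows $0<\lambda\le2$) and verify the sub-exponential edge condition \eqref{eq:edgeseq} for the top Dirichlet eigenvalue. The second equality $\gamma_2(z^\star)=g_{\mathbb C\setminus\Sigma_2}(z^\star)$ is immediate from \eqref{eq:capval}, since $\log_+(2/2)=0$.

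\emph{Step 1: uniform subexponential transfer bound on $\Sigma_2$.} By Theorem~\ref{thm:AA}, $\gamma_2(E)=\log_+(1)=0$ for every $E\in\Sigma_2$, and $\Sigma_2$ is compact. Hence Lemma~\ref{lem:unifsub} applies with $K=\Sigma_2$. It needs only unique ergodicity and continuity of $(E,\phi)\mapsto\log\norm{\mathcal T_n}$, not absolute continuity of the spectrum. It gives $\sup_{E\in\Sigma_2}\sup_\phi\norm{\mathcal T_n(E;\phi)}\le C_\delta e^{n\delta}$. The critical spectrum is a zero-measure Cantor set, but it is non-polar: it has capacity $1$ by \eqref{eq:capval}. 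So the Bernstein--Walsh step carries over unchanged, giving $\mathfrak M_N(E')\le 2C_\delta e^{N\delta}e^{Ng_{\mathbb C\setminus\Sigma_2}(E')}$ for all $E'\in\mathbb C$.

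\emph{Step 2: edge passage.} This is the one point where $\lambda=2$ differs from $\lambda<2$, and it is the step I expect to need care. We must know that $g_{\mathbb C\setminus\Sigma_2}(E_{k(N)})\to0$ when $E_{k(N)}\to\Emax$ (Lemma~\ref{lem:topedge}). In other words, we need continuity of the Green's function at the band edge, i.e.\ regularity of $\Sigma_2$ at $\Emax$. For a Cantor set this is not automatic. I would obtain it from the identity $g_{\mathbb C\setminus\Sigma_2}=\gamma_2$ off $\Sigma_2$ in \eqref{eq:capval}, together with the continuity of $\gamma_2$ on all of $\mathbb C$ (Bourgain--Jitomirskaya, Theorem~\ref{thm:AA}) and $\gamma_2\equiv0$ on $\Sigma_2$. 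This makes the Green's function extend continuously by $0$ to $\Sigma_2$, so every point of $\Sigma_2$, in particular $\Emax$, is regular. Then
\[
\limsup_N\tfrac1N\log\mathfrak M_N(E_{k(N)})\le\delta+\lim_N\gamma_2(E_{k(N)})=\delta .
\]
Letting $\delta\downarrow0$ and using $\mathfrak M_N\ge1$ gives \eqref{eq:edgeseq}.

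\emph{Step 3: conclusion.} Proposition~\ref{prop:metamp} gives boundary amplitude of rate zero for the top mode. Lemma~\ref{lem:detasym} (or Lemma~\ref{lem:Qedge}) gives $\frac1N\log Q(\delta_{k(N)})\to\gamma_2(z^\star)$, since $2E_0-E_{k(N)}\to z^\star$ stays at distance $\ge2\alpha$ from $\Sigma_2$. Lemma~\ref{lem:cd} combines these into $\frac1N\log\abs{c_{k(N)}}\to\gamma_2(z^\star)$. Proposition~\ref{prop:possum} then gives $r\ge\gamma_2(z^\star)$, and the unconditional upper bound of Theorem~\ref{thm:uncond}'s first part (valid at every $\lambda$, every phase) closes the identity.

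The only delicate point is Step 2. Elsewhere no use is made of $\lambda<2$ beyond $\gamma=0$ on the spectrum, and that still holds at criticality.
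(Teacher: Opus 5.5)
Your proposal is correct and follows the paper's proof essentially step for step. You reduce to Proposition~\ref{prop:metallic} and verify \eqref{eq:edgeseq} for the top Dirichlet eigenvalue, using Lemma~\ref{lem:unifsub} on $\Sigma_2$ and Bernstein--Walsh with non-polarity from $\cpc(\Sigma_2)=1$; the edge passage comes from $g_{\mathbb C\setminus\Sigma_2}=\gamma_2$, whose continuity makes $\Emax$ automatically regular, and you also make explicit that the second equality is \eqref{eq:capval} with $\log_+(2/2)=0$, which the paper leaves implicit.
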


\begin{proof}
Proposition~\ref{prop:metallic} holds verbatim at $\lambda=2$: its proof invokes only the uniform hyperbolicity at the
off-spectral point $w_N\to z^\star$ (with $\dist(z^\star,\Sigma_2)\ge2\Emax+2\alpha$) and the deterministic amplitude bound
\eqref{eq:metamp}, neither of which uses subcriticality. It therefore suffices to verify \eqref{eq:edgeseq} for the top
eigenvalue $E_{k(N)}:=\max\spec(H_{[1,N]})$, which tends to $\Emax$ by Lemma~\ref{lem:topedge}.

\emph{Uniform sub-exponential transfer bound.} At the self-dual coupling the Lyapunov exponent still vanishes on the
spectrum: $\gamma_2(E)=\log_+(2/2)=0$ for $E\in\Sigma_2$ (Theorem~\ref{thm:AA}), so Lemma~\ref{lem:unifsub}, applied
with $K=\Sigma_2$, gives, for every $\delta>0$,
\begin{equation}\label{eq:subexp}
\sup_{E\in\Sigma_2}\ \sup_\phi\ \norm{\mathcal T_n(E;\phi)}\ \le\ C_\delta\,e^{\,n\delta}\qquad(n\ge0),
\end{equation}
exactly the bound used in the subcritical case.

\emph{Edge passage --- automatic regularity.} As in Theorem~\ref{thm:metclosed}, $\Sigma_2$ is non-polar
($\cpc(\Sigma_2)=\max(1,1)=1$ by \eqref{eq:capval}) with connected complement. Bernstein--Walsh then gives, for
$E'\in\mathbb C$,
\[
\mathfrak M_N(E')\ \le\ 2C_\delta\,e^{N\delta}\,e^{N g_{\mathbb C\setminus\Sigma_2}(E')},
\]
and therefore
\[
\frac1N\log\mathfrak M_N(E_{k(N)})\ \le\ \frac{\log(2C_\delta)}{N}+\delta+g_{\mathbb C\setminus\Sigma_2}(E_{k(N)}).
\]
At the self-dual coupling $\cpc(\Sigma_2)=1$, so by the Frostman representation \eqref{eq:capval} the Green's function
\emph{coincides with the Lyapunov exponent}, $g_{\mathbb C\setminus\Sigma_2}=\gamma_2-\log_+(2/2)=\gamma_2$, which is
continuous on $\mathbb C$ and vanishes on $\Sigma_2$ (Theorem~\ref{thm:AA}). Hence the band edge is automatically a
regular point --- no separate edge-regularity input is needed --- and since $E_{k(N)}\to\Emax\in\Sigma_2$
(Lemma~\ref{lem:topedge}),
\[
g_{\mathbb C\setminus\Sigma_2}(E_{k(N)})=\gamma_2(E_{k(N)})\ \xrightarrow[N\to\infty]{}\ \gamma_2(\Emax)=0 .
\]
Letting $N\to\infty$ and then $\delta\to0$ yields \eqref{eq:edgeseq}; Proposition~\ref{prop:metallic} concludes.
\end{proof}

\begin{remark}[What the subcritical route removes, and the critical endpoint]\label{rem:metallic}
Theorem~\ref{thm:metclosed} closes the metallic identity \emph{unconditionally}, and for every phase and every
Diophantine frequency, where the bracket of \S\ref{sec:edge} delivered it only modulo the square-root edge
regularity of Proposition~\ref{prop:cond}. No such regularity, and no near-edge resonance avoidance, enters: the
subexponential bound \eqref{eq:unifsub} is uniform over all of $\Sigma_\lambda$ --- gap edges and resonant
energies included --- so the transfer norm is controlled at every spectral energy with no arithmetic side
condition (only unique ergodicity of the rotation is used). The one remaining point, that the finite-volume
eigenvalues sit slightly off $\Sigma_\lambda$, is
handled by Bernstein--Walsh: it turns \eqref{eq:unifsub} into the off-spectrum factor
$e^{N g_{\mathbb C\setminus\Sigma_\lambda}(E_{k(N)})}$, which mere continuity of the Green's function --- free from
Theorem~\ref{thm:AA}, with no square-root rate needed --- sends to $0$ as $E_{k(N)}\to\Emax$. The metallic side
thus matches the localized side: each rests on a single external input --- localization for $\lambda>2$, the
vanishing of the Lyapunov exponent on the spectrum for $\lambda\le2$ --- and on the
deterministic amplitude identity of Proposition~\ref{prop:metamp}, applied locally for $\lambda>2$
(Theorem~\ref{thm:uncond}) and globally on $\Sigma_\lambda$ for $\lambda<2$.

At the endpoint $\lambda=2$ the mechanism is unchanged: the critical Lyapunov exponent still vanishes on $\Sigma_2$,
Lemma~\ref{lem:unifsub} gives \eqref{eq:subexp}, and that is all the identity needs. That the finite-volume edge
eigenvalues approach a \emph{zero-measure}
critical spectrum poses no difficulty: at the
self-dual coupling $\cpc(\Sigma_2)=1$, so by \eqref{eq:capval} the Green's function \emph{equals} the Lyapunov exponent,
$g_{\mathbb C\setminus\Sigma_2}=\gamma_2$, which is continuous and vanishes on $\Sigma_2$ (Theorem~\ref{thm:AA}). The band
edge is therefore automatically regular --- continuity of $\gamma_2$, not a square-root modulus, is what is used --- and
Theorem~\ref{thm:critical} closes the critical case unconditionally. Finer subcritical control --- the almost
reducibility of Avila--Jitomirskaya \cite{AvilaJitomirskaya2010} and its quantitative consequences --- is available
for $\lambda<2$ and is deliberately not used: the \emph{identity} rests on the vanishing exponent alone, and is
therefore insensitive to what degenerates at the critical point. The localized-side mirror (Proposition~\ref{prop:curv} as $\lambda\downarrow2$) is a
genuine open neighbourhood --- edge-curvature collapse --- but the self-dual point itself is now settled from both sides.
\end{remark}

\section{Strategy and outlook}\label{sec:outlook}

The self-dual line $\lambda>0$ is covered: the metallic side $0<\lambda<2$ unconditionally and for every phase
(Theorem~\ref{thm:metclosed}), the critical coupling $\lambda=2$ unconditionally (Theorem~\ref{thm:critical}), and the
localized side $\lambda>2$ at every Diophantine frequency and almost every phase (Theorem~\ref{thm:uncond}, through the
boundary-weight cancellation of Lemma~\ref{lem:cd}; under the polynomial-prefactor localization \textup{(PL)},
whose window-gap component is a theorem for $\lambda\ge\lambda_1$, Remark~\ref{rem:plstatus}). One extension of
the \emph{localized}-side theorem remains: phase uniformity over every $\phi$, the present $\lambda>2$ result covering
almost every $\phi$. The frequency restriction is now removed, Theorem~\ref{thm:uncond} holding at every Diophantine
$\beta$ of any finite type $\tau$, the three-distance input of Lemma~\ref{lem:resonance} costing only the
type-dependent rate $O(N^{-2/(2+\tau)})$. For phase uniformity
\S\ref{sec:condoverlap} offers two routes. The soft one: the every-phase identity is \emph{equivalent} to upper
semicontinuity of $\phi\mapsto r(\phi)$ (Proposition~\ref{prop:phaseunif}), the universal pointwise upper bound
\eqref{eq:uppersup} reducing it to excluding a downward rate-dip at the measure-zero resonant phases, with no appeal to
the dynamics there. The analytic one: reduce the identity, for every phase, to the single overlap bound
\eqref{eq:overlap}. We set out the complementary structure of an eventual proof in that generality: which
explicit certificates capture which summand of $\gamma_\lambda(z^\star)$, why an elementary resonance-based route to
the data term fails for the golden mean, and the resulting milestones.

\subsection*{Scope of the dual certificates}
The Rayleigh form $\Ecal_N=\max_c (c^\top\tilde b)^2/(c^\top C c)$ invites explicit dual certificates; the
two families each reach only one summand of the split \eqref{eq:bracket}.

\emph{(i) The conditioning-optimal certificate.} Let $\Pi(z)=\mathcal T^{(N)}_{11}(z)$, the degree-$N$ polynomial
whose zeros are the Dirichlet eigenvalues $E_k$, and $m_N=\mathcal T_{21}/\mathcal T_{11}$ the associated
$m$-function, with simple poles at the $E_k$ and residues $\tilde b_k/\Pi'(E_k)$ (recall $\tilde b_k=\mathcal T_{21}(E_k)$,
$\mathcal T_{11}(E_k)=0$). For a rational $q$, residue calculus gives
\[
\sum_k \frac{q(E_k)}{\Pi'(E_k)}\,\tilde b_k=\frac{1}{2\pi i}\oint_\Gamma m_N(z)\,q(z)\,dz,\qquad \Gamma\supset\Sigma_\lambda .
\]
Choosing $q$ small on $\Sigma_\lambda$ and resonant at $z^\star$ (the Bernstein--Walsh/Zolotarev extremal, for
which $(\max_{\Sigma_\lambda}\abs{q}/\abs{q(z^\star)})^{1/\deg q}\to e^{-g_{\mathbb C\setminus\Sigma_\lambda}(z^\star)}$)
drives the denominator $c^\top C c$ to be exponentially small at rate $g_{\mathbb C\setminus\Sigma_\lambda}(z^\star)$
by condenser theory, an estimate that is potential-theoretic and edge-free. The numerator, however, does not grow:
off the spectrum the transfer cocycle is hyperbolic, both entries grow at the common rate $\gamma_\lambda(z^\star)$,
and $m_N(z^\star)=\mathcal T_{21}(z^\star)/\mathcal T_{11}(z^\star)=O(1)$. This certificate thus re-organizes the lower
bracket $r\ge g_{\mathbb C\setminus\Sigma_\lambda}(z^\star)$ but does not see the data term. The obstruction is
quantitative: for a polynomial $q$, the numerator equals the residue at infinity, a fixed pairing
$\sum_j a_j\beta_j$ of the coefficients of $q$ with the moments $\beta_j$ of the boundary spectral measure; although
the moments grow like $e^{N\log_+(\lambda/2)}$, the pairing with any $q$ tuned to $\Sigma_\lambda$ stays bounded.
Numerically (Chebyshev $q$, $\lambda=3$, $\phi=0$, $N=24,\dots,48$) the numerator rate $\frac1{2N}\log\abs{\tilde b^\top w}$
is $-0.04,-0.03,-0.04,-0.01$, i.e.\ tends to $0$: the data growth is invisible to any smooth certificate.

\emph{(ii) The data-aligned certificate.} Taking $c=e_{k^\star}$ at a far-localized mode captures the boundary
growth $\log_+(\lambda/2)$ (Proposition~\ref{prop:modelower}) but is blind to the conditioning.

The phase-uniform identity asks for a certificate carrying the \emph{sum}; the optimizer $c=C^{-1}\tilde b$ does so
by definition, but its rate is \eqref{eq:reductioncrit}. The two effects do not add in an explicit certificate
(Remark~\ref{rem:strong}): the conditioning-optimal direction is fixed by the
node positions $\{\delta_k\}$, whereas the data magnitude is fixed by the localization centres $\{j_k\}$, and the
correlation between them is governed, at the band edge, by the arithmetic of $\{\beta j\bmod1\}$ near $1/2$.

\subsection*{The remaining open extension}
For almost every phase the identity is now complete at every Diophantine frequency: the cocycle reduction \eqref{eq:ccocycle} expresses the rate
as the single-mode maximum, and the boundary-weight form of Lemma~\ref{lem:cd} evaluates the maximizing
coefficient, the arithmetic input being the three-distance Lemma~\ref{lem:resonance} (Theorem~\ref{thm:uncond}). One
extension stays open. Phase uniformity over every $\phi$ needs the near-edge mode at every phase, that is
localization on the exceptional null set; the magnitude bound \eqref{eq:magbound} of \S\ref{sec:nocancel}, the one
estimate left by the overlap reduction \eqref{eq:overlap} of \S\ref{sec:condoverlap}, is its uniform form and the
object of the appendix. The frequency restriction, by contrast, is now removed: Lemma~\ref{lem:resonance} holds for
every Diophantine $\beta$ of finite type $\tau$ through the three-distance theorem, the only cost being the
type-dependent rate $O(N^{-2/(2+\tau)})$ in place of $O(N^{-2/3})$.

\subsection*{A finite-subset bound, and why it is rate-neutral here}
Discarding interpolation constraints in the minimal-norm form of \eqref{eq:reduction} gives, for every index
set $S$, the rigorous lower bound $\Ecal_N\ge\tilde b_S^\top C_S^{-1}\tilde b_S$. For a pair $S=\{p,q\}$, with
$\Sigma=\delta_p+\delta_q$ and $\Delta=\delta_p-\delta_q$, the minor evaluates exactly to
\[
\tilde b_S^\top C_S^{-1}\tilde b_S=\frac{2\Sigma}{\Delta^2}\Big[\Sigma\,(\delta_p\tilde b_p^{\,2}+\delta_q\tilde b_q^{\,2})-4\delta_p\delta_q\,\tilde b_p\tilde b_q\Big].
\]
With one node far-localized, so that $\tilde b_p$ realizes the boundary growth $e^{\gamma N}$, $\gamma=\log_+(\lambda/2)$,
the dominant term is $\Sigma\delta_p\tilde b_p^{\,2}>0$ and the exponential rate of the right-hand side is
\[
\frac1{2N}\log \tilde b_S^\top C_S^{-1}\tilde b_S=\gamma+\rho+o(1),\qquad \rho:=-\frac1N\log\abs{\delta_p-\delta_q}.
\]
A strict rate gain over Proposition~\ref{prop:modelower} therefore occurs if and only if the pair is an
exponential resonance, $\abs{\delta_p-\delta_q}=e^{-\rho N}$ with $\rho>0$: two modes localized at distance
$\asymp N$ whose on-site detuning is dominated by their tunnelling splitting.

For $\beta=(\sqrt5-1)/2$ no such resonance exists. The badly-approximable bound $\norm{n\beta}\gtrsim1/n$ keeps the
on-site energies $\lambda\cos(2\pi\beta j)$ apart at polynomial scale, and the eigenvalue gaps are accordingly
polynomial: numerically $\min_k\abs{E_{k+1}-E_k}\asymp N^{-2}$, so that $-N^{-1}\log\min_k\abs{E_{k+1}-E_k}$
decreases through $0.096,\,0.062,\,0.038,\,0.023,\,0.013$ for $N=50,100,200,400,800$ at $\lambda=3$. Hence
$\rho=O(N^{-1}\log N)\to0$, and the finite-subset bound is rate-neutral: it recovers $\log_+(\lambda/2)$ but cannot
exceed it. An exponential gain would require a Liouville frequency, where localization itself may fail. This
sharpens the tension noted in \S\ref{sec:remarks}: the Diophantine property that makes the model localize is what
removes the elementary, resonance-based route to the data term.

\subsection*{Large-coupling behaviour}
By Proposition~\ref{prop:edge} and \eqref{eq:glimit}, $g_{\mathbb C\setminus\Sigma_\lambda}(z^\star)\to\arccosh3=\log(3+2\sqrt2)\approx1.763$
while $\gamma_{\mathrm{amp}}:=\log_+(\lambda/2)=\log(\lambda/2)\to\infty$. The bracket
$\gamma_{\mathrm{amp}}\le r\le g_{\mathbb C\setminus\Sigma_\lambda}(z^\star)+\gamma_{\mathrm{amp}}$
(lower end for almost every phase, Proposition~\ref{prop:modelower}; upper end unconditional)
therefore gives
\[
r(\alpha,\lambda)=\log(\lambda/2)+\zeta_\lambda,\qquad 0\le\zeta_\lambda\le g_{\mathbb C\setminus\Sigma_\lambda}(z^\star),
\]
so the leading order $r=\log(\lambda/2)+O(1)$, and $r/\log(\lambda/2)\to1$, are established, and the content of
the identity is exactly $\zeta_\lambda=g_{\mathbb C\setminus\Sigma_\lambda}(z^\star)$, that is $r=\log\bigl((3+2\sqrt2)\lambda/2\bigr)+o(1)$.

At large coupling the identity is closed by the boundary-weight cancellation of Lemma~\ref{lem:cd}
(Theorem~\ref{thm:uncond}); the bracket alone does not, its width
$\min\{g_{\mathbb C\setminus\Sigma_\lambda}(z^\star),\gamma_{\mathrm{amp}}\}$ pinching to zero only as
$\lambda\to2^+$ (where the identity already follows from it) and plateauing at
$g_{\mathbb C\setminus\Sigma_\lambda}(z^\star)\to\arccosh3$ as $\lambda\to\infty$. This plateau reflects a frozen
conditioning problem: under $\delta_k\mapsto\delta_k/\lambda$ one has $C=\lambda^{-1}C'$ with
$C'_{kl}=1/(\delta'_k+\delta'_l)$ and, with $j_k$ the localization centre of mode $k$ (Remark~\ref{rem:strong}),
$\delta'_k\to1+\cos(2\pi\beta j_k)$, so the node set $\{\delta'_k\}\to\{1+\cos(2\pi\beta j)\}_{j=1}^N$ is
independent of $\lambda$; the conditioning for the full identity becomes a fixed Cauchy-matrix problem for the
golden-mean cosine nodes. The certified rate confirms this, exceeding $\gamma_{\mathrm{amp}}$ to track
$g_{\mathbb C\setminus\Sigma_\lambda}(z^\star)+\gamma_{\mathrm{amp}}$, with values $2.11,\,2.88,\,3.24,\,3.71$ at
$N=32$ for $\lambda=3,8,12,20$ against the targets $2.31,\,3.17,\,3.57,\,4.07$.

\subsection*{Beyond Aubry--Andr\'e: general analytic potentials}
The Cauchy reduction and the \emph{upper} bound require nothing special about the cosine. Let
$H_V=-\Delta+V(\phi+n\beta)$ on $\ell^2(\mathbb Z)$ with $V$ real-analytic on $\mathbb R/\mathbb Z$ and $\beta$
irrational, $H_V^{[1,N]}$ its Dirichlet restriction on $\{1,\dots,N\}$, $\Sigma_V$ its spectrum, $N_V$ its density
of states, and $\gamma_V$ the Lyapunov exponent of the transfer cocycle. Keep the protocol of \S\ref{sec:setup},
with $E_0=\Emin-\alpha$, $\delta_k=E_k-E_0$ and the reflected edge $z^\star=2E_0-\Emax=2\Emin-2\alpha-\Emax$ (no
spectral symmetry is assumed, so $z^\star=3\Emin-2\alpha$ no longer holds).

\begin{proposition}[Upper bound for general analytic potentials]\label{prop:general}
For every such $H_V$ and every $\alpha>0$, Lemma~\ref{lem:reduction} holds verbatim,
$\Ecal_N=\tilde b^\top C^{-1}\tilde b$, and the cost rate obeys, \emph{unconditionally},
\[
r(\alpha,V)\ \le\ \gamma_V(z^\star),\qquad
\gamma_V(z^\star)=\int_{\Sigma_V}\log\abs{z^\star-t}\,dN_V(t),
\]
the off-spectral Lyapunov exponent at the reflected edge, with $\dist(z^\star,\Sigma_V)=\Emax-\Emin+2\alpha>2\alpha$.
\end{proposition}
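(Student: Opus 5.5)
The plan is to repeat the upper-bound chain of the almost Mathieu case, checking at each step that nothing beyond the Jacobi structure and ergodicity is used.

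\emph{Reduction.} The proof of Lemma~\ref{lem:reduction} uses only three facts: $A_\alpha=E_0I-H_V^{[1,N]}$ is symmetric with $\delta_k=E_k-E_0\ge\alpha$, where $E_k\ge\Emin$ because $H_V^{[1,N]}$ is a compression of $H_V$; the three-term recurrence forces $a_k\ne0$; and the Gramian integral diagonalizes. So it carries over verbatim. The same holds for Proposition~\ref{prop:cocycleform}, Proposition~\ref{prop:possum} (Sturm oscillation for any Jacobi matrix with unit hopping) and Lemma~\ref{lem:cd} (the Wronskian identity). This gives the formula
\[
\frac1{2N}\log\Ecal_N=\max_k\frac1N\log\bigl(Q(\delta_k)(\hat\psi^{(k)}_N)^2\bigr)+O\bigl(\tfrac{\log N}N\bigr),
\]
where the $O(\log N/N)$ uses only $\delta_k\in[\alpha,\Emax-E_0]$.

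\emph{Upper bound.} Since $(\hat\psi^{(k)}_N)^2\le1$ and $Q$ is increasing on $(0,\infty)$ (all its zeros $-\delta_m$ are negative), $\max_k\abs{c_k}\le Q(\max_m\delta_m)$. With $M:=\max_m\delta_m$ we have
\[
\tfrac1N\log Q(M)=\tfrac1N\sum_m\log\abs{w_N-E_m},\qquad w_N=2E_0-E^{(N)}_{\max}.
\]
I would redo Lemma~\ref{lem:Qedge} here.

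\emph{Distance from the spectrum.} Because $w=2E_0-E\le 2E_0-\Emin=\Emin-2\alpha$ for every $E\ge\Emin$, every $\log\abs{w-t}$ with $t\in[\Emin,\Emax]$ is bounded and equicontinuous in $w$ over the reflected segment. The distance claim is elementary: $\dist(z^\star,\Sigma_V)=\Emin-z^\star=\Emax-\Emin+2\alpha$. Strictness, $>2\alpha$, follows because $\Sigma_V$ is not a single point for $V$ real-analytic (or indeed any bounded $V$), since it contains the range of the hopping part up to shifts.

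\emph{Convergence of the eigenvalue measures.} The finite-volume counting measures $\frac1N\sum_m\delta_{E_m}$ converge weakly to $N_V$. For fixed phase this follows from unique ergodicity of the irrational rotation together with continuity of $V$ (the Avron--Simon argument applies to continuous quasiperiodic potentials). This gives $\frac1N\sum_m\log\abs{w-E_m}\to\int\log\abs{w-t}\,dN_V(t)$ uniformly in $w$.

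\emph{Thouless formula.} The general Thouless formula for ergodic Jacobi operators with unit hopping identifies this integral with $\gamma_V(w)$. Off the real spectrum this is the classical identity, valid for every $w\notin\Sigma_V$: both sides are harmonic there and agree via the determinant--transfer relation $\mathcal T_{11}=\pm\chi_N$. So no continuity of $\gamma_V$ on the spectrum is needed.

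\emph{Moving point.} It remains to pass from the moving point $w_N$ to $z^\star$. By Lemma~\ref{lem:topedge} (valid for any continuous almost periodic potential) $E^{(N)}_{\max}\to\Emax$, so $w_N\to z^\star$. The function $\gamma_V$ is harmonic, hence continuous, off $\Sigma_V$, and the convergence above is uniform. Together these give $\limsup_N\frac1{2N}\log\Ecal_N\le\gamma_V(z^\star)$, and a fortiori $r\le\gamma_V(z^\star)$.

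\emph{Main obstacle.} The delicate step is justifying the weak convergence and the Thouless identity for general analytic $V$ at a fixed phase without Diophantine input or spectral symmetry. I would avoid Lemma~\ref{lem:detasym} entirely, since it used $V_n-w\ge c_1>2$ and a Diophantine condition. Instead I would rely only on uniform-in-phase convergence of the integrated density of states (unique ergodicity) tested against $\log\abs{w-\cdot}$, which is continuous on a neighbourhood of $[\Emin,\Emax]$ for $w$ near $z^\star$. If uniformity in phase were in doubt, a fallback is the direct bound $\frac1N\log\abs{\chi_N(w)}\le\frac1N\log\norm{\mathcal T_N(w)}$. Uniform upper semicontinuity from Furman's theorem (as in Lemma~\ref{lem:unifsub}) gives $\limsup\le\gamma_V(w)$ for every phase, which suffices for the upper bound.
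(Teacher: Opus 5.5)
Your proposal is correct and follows essentially the paper's own route: the Cauchy/Wronskian reduction to $\abs{c_k}=Q(\delta_k)(\hat\psi^{(k)}_N)^2\le Q(\max_m\delta_m)$, followed by the Thouless limit of $\frac1N\log Q$ at the moving reflected point $2E_0-E^{(N)}_{\max}\to z^\star$ (via weak convergence of the eigenvalue counting measures and Lemma~\ref{lem:topedge}). One small slip: your stated reason for $\Emax>\Emin$ is wrong, since the spectrum need not contain any interval (e.g.\ Cantor spectra), but the strict inequality $\dist(z^\star,\Sigma_V)>2\alpha$ still holds because the trial vectors $(e_0\pm e_1)/\sqrt2$ give $\Emax-\Emin\ge2$ for any bounded $V$.
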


\begin{proof}
Lemma~\ref{lem:reduction} uses only that $A_\alpha=E_0I-H_V$ is symmetric and Hurwitz; the inverse-free form
$C^{-1}=DCD$ (Proposition~\ref{prop:cocycleform}) is pure Cauchy-matrix algebra; the sign-definiteness and the
single-mode reduction $r=\liminf_N\max_k\frac1N\log\abs{c_k}$ (Proposition~\ref{prop:possum}) use only the Sturm
alternation of a finite Jacobi matrix; and $\abs{c_k}=Q(\delta_k)(\hat\psi^{(k)}_N)^2$ (Lemma~\ref{lem:cd}) is the
Wronskian identity for a one-dimensional discrete Schr\"odinger operator. None of these uses the cosine, almost
reducibility, or the capacity split. By the Thouless formula for unit hopping
$\frac1N\log Q(\delta_k)\to\gamma_V(2E_0-E_k)$ uniformly; the kernel $Q$ is increasing in $\delta_k$, so its maximum
is at $\delta_N$, and $\gamma_V(w)=\int\log\abs{w-t}\,dN_V(t)$ is strictly decreasing for $w<\Emin$ (its derivative
$\int(w-t)^{-1}\,dN_V(t)<0$ there), hence largest at the leftmost reflected point $w=2E_0-\Emax=z^\star$. With
$(\hat\psi^{(k)}_N)^2\le1$ and $E_N\to\Emax$ (Lemma~\ref{lem:topedge}, whose proof needs only minimality of the
shift, not the cosine),
\[
\max_k\tfrac1N\log\abs{c_k}\ \le\ \tfrac1N\log Q(\delta_N)+o(1)\ \longrightarrow\ \gamma_V(z^\star),
\]
so $r(\alpha,V)\le\gamma_V(z^\star)$.
\end{proof}

\begin{proposition}[Determinant form of the cost]\label{prop:detform}
For every analytic $V$ and every $\alpha>0$ the controllability Gramian $W$ is positive definite and
\[
\Ecal_N=\frac{\det W_{[1,N-1]}}{\det W},\qquad
\det W=\Bigl(\prod_{k}q_k^2\Bigr)\,\frac{\prod_{k<l}(E_k-E_l)^2}{\prod_{k,l}(E_k+E_l-2E_0)},
\]
with $W_{[1,N-1]}$ the leading principal minor and $q_k=\langle e_1,v_k\rangle$. The denominator factors
$E_k+E_l-2E_0=E_l-(2E_0-E_k)$ are the distances from the spectrum to its reflection $\{2E_0-E_k\}$ about $E_0$, whose
leftmost point is $z^\star$.
\end{proposition}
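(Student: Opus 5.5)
The plan has three parts: positive definiteness of $W$, the ratio $\Ecal_N=\det W_{[1,N-1]}/\det W$, and the closed form of $\det W$.

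\emph{Positive definiteness and the ratio.} I work with the Gramian of Lemma~\ref{lem:reduction}, which holds verbatim for analytic $V$ (Proposition~\ref{prop:general}). There $W=\sum_{k,l}\frac{q_kq_l}{\delta_k+\delta_l}v_kv_l^\top$ with $q_k=a_k=\langle e_1,v_k\rangle\neq0$. Writing $U=[v_1,\dots,v_N]$ (orthogonal) gives $W=U D_q C D_q U^\top$. Since $C$ is a positive definite Cauchy matrix, $W\succ0$. Then Cramer's rule gives $e_N^\top W^{-1}e_N=(W^{-1})_{NN}=\operatorname{cof}_{NN}(W)/\det W$. This cofactor is the determinant of $W$ with row and column $N$ deleted, i.e.\ the leading principal minor $W_{[1,N-1]}$. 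That yields $\Ecal_N=\det W_{[1,N-1]}/\det W$ directly.

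\emph{The determinant.} From the factorization, $\det W=(\det U)^2(\prod_k q_k)^2\det C=(\prod_kq_k^2)\det C$. For $\det C$ I invoke the Cauchy determinant formula for $C_{kl}=1/(x_k+y_l)$ with $x=y=\delta$:
\[
\det C=\frac{\prod_{k<l}(\delta_l-\delta_k)^2}{\prod_{k,l}(\delta_k+\delta_l)}.
\]
Next I substitute $\delta_k=E_k-E_0$. Then $\delta_l-\delta_k=E_l-E_k$, so the numerator is $\prod_{k<l}(E_k-E_l)^2$. Also $\delta_k+\delta_l=E_k+E_l-2E_0$, giving the stated denominator. Finally, $E_k+E_l-2E_0=E_l-(2E_0-E_k)$ is the distance from $E_l$ to the reflected point $2E_0-E_k$; this quantity is positive because $E_k,E_l\ge E_{\min}>E_0$. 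The leftmost reflected point is $2E_0-E_N\to2E_0-\Emax=z^\star$, using Lemma~\ref{lem:topedge}.

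\emph{Main obstacle.} The only step needing care is the reliance on the Cauchy determinant identity, since everything else is bookkeeping; I take that identity as classical. The rest is checking that the sign conventions of $q_k$ and $\det U=\pm1$ disappear on squaring.
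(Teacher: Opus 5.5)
Your proposal is correct and follows the paper's proof essentially line for line. It uses the eigenbasis factorization $W=U D_q C D_q U^\top$, Cramer's rule for $(W^{-1})_{NN}$, and Cauchy's determinant with $\delta_k=E_k-E_0$. The one difference is cosmetic: the paper obtains $W\succ0$ from controllability of $(A_\alpha,e_1)$, i.e.\ cyclicity of $e_1$ for a Jacobi matrix, while you read it off the factorization using $q_k\neq0$ and $C\succ0$, which is the same fact. Your remark that the leftmost finite-volume reflected point is $2E_0-E_N\to z^\star$ is, if anything, slightly more precise than the statement.
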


\begin{proof}
$A_\alpha=E_0I-H_V$ is Hurwitz and $(A_\alpha,e_1)$ is controllable (a Jacobi matrix is cyclic for $e_1$), so
$W\succ0$ and Cramer's rule gives $\Ecal_N=(W^{-1})_{NN}=\det W_{[1,N-1]}/\det W$. With $H_V=Q\diag(E_k)Q^\top$,
$W=Q\widetilde WQ^\top$ where $\widetilde W_{kl}=q_kq_l/(\delta_k+\delta_l)$, $\delta_k=E_k-E_0$ (proof of
Proposition~\ref{prop:general}); hence $\det W=(\prod_kq_k^2)\det C$ with $C_{kl}=(\delta_k+\delta_l)^{-1}$, and
Cauchy's determinant $\det C=\prod_{k<l}(\delta_k-\delta_l)^2/\prod_{k,l}(\delta_k+\delta_l)$, with
$\delta_k-\delta_l=E_k-E_l$ and $\delta_k+\delta_l=E_k+E_l-2E_0$, completes the evaluation.
\end{proof}

\begin{proposition}[Reflected extremal form and an explicit lower bound]\label{prop:reflect}
Let $\mu_N^{(R)}=\sum_k r_k^2\delta_{E_k}$, $r_k=\langle e_N,v_k\rangle$, be the spectral measure of $(H_V^{[1,N]},e_N)$, write
$\Pi_{N-1}$ for the polynomials of degree $\le N-1$, and set
\[
\|Q\|_*^2=\frac1{2\pi}\int_{\mathbb R}\frac{\abs{Q(E_0+i\omega)}^2}{\prod_k(\delta_k^2+\omega^2)}\,d\omega,\qquad \delta_k=E_k-E_0 .
\]
Then
\[
\frac1{\Ecal_N}=\min\Bigl\{\|Q\|_*^2:\ Q\in\Pi_{N-1},\ \textstyle\int Q\,d\mu_N^{(R)}=1\Bigr\},
\]
where $\prod_k(\delta_k^2+\omega^2)=\Delta_N(z)\Delta_N(2E_0-z)$ on $z=E_0+i\omega$, so that $\langle\,\cdot\,,\cdot\,\rangle_*$ pairs the
spectrum $\{E_k\}$ with its reflection $\{2E_0-E_k\}$ about $E_0$, whose leftmost point is $z^\star$. With
$Q_0(z)=\prod_{E_l\ne\Emax}\bigl(z-(2E_0-E_l)\bigr)$, for which $\|Q_0\|_*^2=1/(2\delta_{\max})$, $\delta_{\max}=\Emax-E_0$,
\[
\Ecal_N\ \ge\ 2\delta_{\max}\Bigl(\sum_k r_k^2\!\!\prod_{E_l\ne\Emax}\!\!(\delta_k+\delta_l)\Bigr)^{2}.
\]
\end{proposition}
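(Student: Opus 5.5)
The plan is to read the first identity as the dual (minimum-norm) form of $\Ecal_N=e_N^\top W^{-1}e_N$, written in a polynomial parametrization, and then to evaluate it with Plancherel and Lagrange interpolation. The explicit bound will follow by plugging in the trial polynomial $Q_0$.

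\emph{Step 1 (variational form).} Since $W\succ0$ (Proposition~\ref{prop:detform}), the Cauchy--Schwarz inequality in the $W$-inner product gives
\[
1/\Ecal_N=\min\{x^\top Wx:\ x^\top e_N=1\}.
\]
A finite Jacobi matrix is cyclic for $e_N$, exactly as for $e_1$, so every $x\in\R^N$ is uniquely $x=Q(H_V^{[1,N]})e_N$ with $Q\in\Pi_{N-1}$.

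In the eigenbasis $v_k$, with $q_k=\langle e_1,v_k\rangle$ and $r_k=\langle e_N,v_k\rangle$, the constraint becomes
\[
x^\top e_N=\sum_k r_k^2Q(E_k)=\int Q\,d\mu_N^{(R)}.
\]
From the definition of the Gramian in \eqref{eq:gram},
\[
x^\top Wx=\int_0^\infty f(s)^2\,ds,\qquad f(s)=e_1^\top e^{A_\alpha s}x=\sum_k q_kr_kQ(E_k)e^{-\delta_ks}.
\]

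\emph{Step 2 (the weights $q_kr_k$).} This is the one step that needs care, namely the normalization and the sign. I would reuse the Wronskian identity from the proof of Lemma~\ref{lem:cd}. With $y_1=1$, that identity gives $\norm{\psi^{(k)}}^2=-y_{N+1}'(E_k)\,y_N(E_k)$. Therefore
\[
q_kr_k=\frac{y_1y_N}{\norm{\psi^{(k)}}^2}=-\frac{1}{y_{N+1}'(E_k)}=\frac{\epsilon}{\chi_N'(E_k)},
\]
with a sign $\epsilon=\pm1$ independent of $k$, because $y_{N+1}=\mathcal T_{11}=(-1)^N\chi_N$ for unit hopping. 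Since $V$ enters only the diagonal, this holds verbatim for analytic $V$.

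\emph{Step 3 (Plancherel and Lagrange).} The Fourier transform of $f$ on $(0,\infty)$ is
\[
\hat f(\omega)=\sum_k \frac{\epsilon\,Q(E_k)}{\chi_N'(E_k)(\delta_k+i\omega)}.
\]
The partial-fraction (Lagrange) expansion
\[
\frac{Q(z)}{\chi_N(z)}=\sum_k\frac{Q(E_k)}{\chi_N'(E_k)(z-E_k)},\qquad \deg Q\le N-1,
\]
evaluated at $z=E_0-i\omega$, gives $\hat f(\omega)=-\epsilon\,Q(E_0-i\omega)/\chi_N(E_0-i\omega)$. Moreover
\[
\abs{\chi_N(E_0-i\omega)}^2=\prod_k(\delta_k^2+\omega^2).
\]
Plancherel, followed by $\omega\mapsto-\omega$ and the fact that $Q$ has real coefficients, then yields $x^\top Wx=\|Q\|_*^2$. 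This proves the extremal formula.

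The factorization $\prod_k(\delta_k^2+\omega^2)=\Delta_N(z)\Delta_N(2E_0-z)$ on $z=E_0+i\omega$ is immediate from $\abs{z-E_k}=\abs{(2E_0-z)-E_k}$ on that line. Here $\Delta_N$ is $\chi_N$ up to sign.

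\emph{Step 4 (trial polynomial).} I read $\Emax$ in the statement as the top Dirichlet eigenvalue $E_N$, so that $\delta_{\max}=E_N-E_0$; by Lemma~\ref{lem:topedge} this tends to $\Emax-E_0$. On $z=E_0+i\omega$,
\[
\abs{Q_0(z)}^2=\prod_{l\ne N}\abs{i\omega+\delta_l}^2=\prod_{l\ne N}(\delta_l^2+\omega^2).
\]
The integrand of $\|Q_0\|_*^2$ therefore collapses to $1/(\delta_{\max}^2+\omega^2)$, and
\[
\|Q_0\|_*^2=\frac1{2\pi}\cdot\frac{\pi}{\delta_{\max}}=\frac1{2\delta_{\max}}.
\]
The constraint value is
\[
\int Q_0\,d\mu_N^{(R)}=\sum_k r_k^2\prod_{l\ne N}(E_k-2E_0+E_l)=\sum_k r_k^2\prod_{l\ne N}(\delta_k+\delta_l).
\]
Scaling $Q_0$ to meet the constraint, the minimum gives
\[
1/\Ecal_N\le\|Q_0\|_*^2\Big/\Bigl(\int Q_0\,d\mu_N^{(R)}\Bigr)^2,
\]
which rearranges to the stated lower bound.

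The main obstacle is Step~2, fixing $q_kr_k=\pm1/\chi_N'(E_k)$ with a $k$-independent sign. That sign is what lets the Lagrange sum collapse to a single rational function; the remaining steps are routine.
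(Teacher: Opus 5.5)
Your proof is correct, and it evaluates the same variational problem as the paper through a different parametrization. The paper writes $\Ecal_N=(W^{-1})_{NN}=1/d_N^2$, with $d_N$ the $L^2(d\omega/2\pi)$-distance from $g_N$ to $\operatorname{span}\{g_j\}_{j<N}$ and $g_j(\omega)=G_{j1}(E_0+i\omega)$. This is your $\min\{x^\top Wx:\ x^\top e_N=1\}$, written in the coordinate basis. The paper then needs two Jacobi-matrix facts. The first is $G_{j1}=\Delta_{[j+1,N]}/\Delta_N$. The second is that the trailing-block determinants $\Delta_{[j+1,N]}$ are the monic orthogonal polynomials of $\mu_N^{(R)}$, which turns the span into $\Delta_N^{-1}\{P\in\Pi_{N-1}:\int P\,d\mu_N^{(R)}=0\}$.

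You instead parametrize the competitor in the Krylov basis $x=Q(H_V^{[1,N]})e_N$, so the constraint $\int Q\,d\mu_N^{(R)}=1$ appears automatically. You replace the orthogonal-polynomial identification by the single residue identity $q_kr_k=(-1)^{N+1}/\chi_N'(E_k)$ together with Lagrange interpolation. That identity is the Wronskian computation of Lemma~\ref{lem:cd}, equivalently $G_{1N}=\pm1/\chi_N$, and the interpolation collapses $\hat f$ to $\pm Q/\chi_N$ on the line $\operatorname{Re}z=E_0$.

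Your route is more self-contained: it reuses a fact the paper already proves, and needs neither the distance-to-span formula nor the orthogonal-polynomial structure of the trailing blocks. In exchange, the paper's route exhibits the complementary directions explicitly as the mean-zero polynomials of $\mu_N^{(R)}$. That structure is what underlies the Christoffel--Darboux comparison in Remark~\ref{rem:general}.

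Your reading of $\Emax$ in $Q_0$ as the top Dirichlet eigenvalue is the one the paper's own evaluation $\|Q_0\|_*^2=1/(2\delta_{\max})$ tacitly uses. Since the top Dirichlet eigenvalue is strictly below the infinite-volume edge, the literal reading would give $\deg Q_0=N$ and $\|Q_0\|_*=\infty$. Finally, rescaling $Q_0$ is legitimate because $\int Q_0\,d\mu_N^{(R)}$ is a sum of strictly positive terms.
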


\begin{proof}
$W\succ0$ gives $\Ecal_N=(W^{-1})_{NN}=1/d_N^2$ with
$d_N=\mathrm{dist}_{L^2(\mathbb R,\,d\omega/2\pi)}\bigl(g_N,\operatorname{span}\{g_j\}_{j<N}\bigr)$, where $g_j(\omega)=G_{j1}(E_0+i\omega)$,
$G=(H_V^{[1,N]}-z)^{-1}$: the representation $W=\frac1{2\pi}\int G(E_0+i\omega)e_1e_1^\top G(E_0-i\omega)\,d\omega$ makes
$W_{jk}=\langle g_j,g_k\rangle$, and the diagonal of an inverse Gram matrix is the reciprocal squared distance to the span of
the remaining vectors. For a Jacobi matrix $G_{j1}=\Delta_{[j+1,N]}/\Delta_N$, and the bottom-block characteristic
polynomials $\Delta_{[j+1,N]}$ ($j=1,\dots,N$) are the monic orthogonal polynomials of $\mu_N^{(R)}$ of degrees
$N-1,\dots,0$; hence $g_N=\Delta_N^{-1}$ and $\operatorname{span}\{g_j\}_{j<N}=\Delta_N^{-1}\{P\in\Pi_{N-1}:\int P\,d\mu_N^{(R)}=0\}$.
Since $\abs{\Delta_N(E_0+i\omega)}^2=\prod_k(\delta_k^2+\omega^2)$, the map $Q\mapsto Q/\Delta_N$ is an isometry
$(\Pi_{N-1},\|\cdot\|_*)\to L^2$ carrying $\{\int Q\,d\mu_N^{(R)}=1\}$ onto $g_N+\operatorname{span}\{g_j\}_{j<N}$, which gives the
stated minimum. Finally $\abs{Q_0(E_0+i\omega)}^2=\prod_{E_l\ne\Emax}(\delta_l^2+\omega^2)$ yields
$\|Q_0\|_*^2=\frac1{2\pi}\int(\delta_{\max}^2+\omega^2)^{-1}d\omega=1/(2\delta_{\max})$ and
$Q_0(E_k)=\prod_{E_l\ne\Emax}(\delta_k+\delta_l)$, and the minimum is $\le\|Q_0\|_*^2/(\int Q_0\,d\mu_N^{(R)})^2$.
\end{proof}

\begin{proposition}[Sharp reduction to a boundary edge mode]\label{prop:edgemode}
Let $V$ be analytic in a positive-Lyapunov localized regime and $\beta$ Diophantine. Suppose that for each large $N$ the
operator $H_V^{[1,N]}$ admits an eigenpair $(E_{k_0},v_{k_0})$ with
\[
E_{k_0}\ge\Emax-\omega_N,\qquad \abs{\langle e_N,v_{k_0}\rangle}^2\ge e^{-\nu_N N},\qquad \omega_N,\nu_N\to0 .
\]
Then $r(\alpha,V)=\gamma_V(z^\star)$.
\end{proposition}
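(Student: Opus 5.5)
The plan is to run the single-mode argument of Theorem~\ref{thm:uncond} and Proposition~\ref{prop:metallic} for a general analytic $V$. The upper bound $r(\alpha,V)\le\gamma_V(z^\star)$ is already unconditional by Proposition~\ref{prop:general}, so only the lower bound needs work. For that we use the cancellation-free reduction $r=\liminf_N\max_k\frac1N\log\abs{c_k}$ (Proposition~\ref{prop:possum}) together with the boundary-weight identity $\abs{c_k}=Q(\delta_k)(\hat\psi^{(k)}_N)^2$ (Lemma~\ref{lem:cd}). Both were noted in the proof of Proposition~\ref{prop:general} to use only Sturm alternation and the Wronskian identity, so they hold for $H_V$. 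We test the maximum at the hypothesised mode $k_0=k_0(N)$:
\[
\frac1N\log\abs{c_{k_0}}=\frac1N\log Q(\delta_{k_0})+\frac1N\log\abs{\langle e_N,v_{k_0}\rangle}^2\ \ge\ \frac1N\log Q(\delta_{k_0})-\nu_N .
\]
Since $\nu_N\to0$, the second term is harmless. This replaces the amplitude step of Theorem~\ref{thm:uncond} (the $SL_2$ inversion near the boundary) and the metallic bound of Proposition~\ref{prop:metamp} by a direct hypothesis.

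It remains to show $\frac1N\log Q(\delta_{k_0})\to\gamma_V(z^\star)$. We write $Q(\delta_{k_0})=\prod_m(E_m-w_N)$ with the reflected point $w_N:=2E_0-E_{k_0}$. Because $E_{k_0}\in[\Emax-\omega_N,\Emax]$ (the upper end by compression, as in Lemma~\ref{lem:topedge}), we get $w_N\in[z^\star,z^\star+\omega_N]$. Every $E_m\in[\Emin,\Emax]$, so $E_m-w_N\ge E_m-2E_0+\Emax-\omega_N\ge2\alpha+(\Emax-\Emin)-\omega_N$, which stays bounded below by $\alpha$ for large $N$. The family $t\mapsto\log\abs{w-t}$ for $w$ in a small real neighbourhood $U$ of $z^\star$ is therefore uniformly bounded and equicontinuous on $[\Emin,\Emax]$. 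We then argue as in Lemma~\ref{lem:Qedge}. The finite-volume counting measures $\frac1N\sum_m\delta_{E_m}$ converge weakly to $N_V$ (unique ergodicity, valid for any continuous $V$ and irrational $\beta$). Hence $\frac1N\sum_m\log\abs{w-E_m}\to\int\log\abs{w-t}\,dN_V(t)$ uniformly for $w\in U$. By the Thouless formula for unit hopping, which is valid off the spectrum as used in Proposition~\ref{prop:general}, the limit equals $\gamma_V(w)$. Uniform convergence lets us evaluate at the moving point $w_N$. Continuity of $\gamma_V$ off $\Sigma_V$ (it is harmonic there, as the logarithmic potential of $N_V$) gives $\gamma_V(w_N)\to\gamma_V(z^\star)$, since $\omega_N\to0$.

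Combining the two steps yields $\liminf_N\frac1N\log\abs{c_{k_0}}\ge\gamma_V(z^\star)$, hence $r\ge\gamma_V(z^\star)$, and equality follows with Proposition~\ref{prop:general}. The localization and Diophantine assumptions in the statement are not needed beyond making the hypothesised edge mode plausible; the proof uses only the two displayed properties of $(E_{k_0},v_{k_0})$.

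The main point requiring care is the uniformity in the second step. One must check that the weak convergence of counting measures holds for every phase, or at least for the phase in question, and that the reflected points $w_N$ remain a fixed positive distance from $\Sigma_V$. Both hold here: the first by unique ergodicity of the rotation, the second by the distance estimate above. Unlike Lemma~\ref{lem:detasym}, no quantitative $O(1/N)$ rate is needed.
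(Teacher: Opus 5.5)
Your proof is correct, but you obtain the single-mode lower bound from a different lemma than the paper does. The paper never passes through the coefficients $c_k$. It uses the reflected extremal form of Proposition~\ref{prop:reflect}, which characterizes $1/\Ecal_N$ as a minimum over polynomials of degree $\le N-1$ normalized against the spectral measure of $(H_V^{[1,N]},e_N)$. It tests that form on $Q_0(z)=\prod_{l<N}\bigl(z-(2E_0-E_l)\bigr)$, with eigenvalues ordered increasingly so that $\delta_N=\delta_{\max}$. It then keeps the $k_0$ term of the manifestly nonnegative sum $\sum_k r_k^2Q_0(E_k)$, where $r_k=\langle e_N,v_k\rangle$. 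This gives
\[
\Ecal_N\ \ge\ 2\delta_N\Bigl(r_{k_0}^2\textstyle\prod_{l<N}(\delta_{k_0}+\delta_l)\Bigr)^2=\dfrac{2\delta_N\,c_{k_0}^2}{(\delta_{k_0}+\delta_N)^2}.
\]
This has the same rate as your diagonal bound $\Ecal_N\ge c_{k_0}^2/(2\delta_{k_0})$ from Proposition~\ref{prop:possum}. In fact, the paper's full bound is exactly the Cauchy--Schwarz inequality for $\Ecal_N=c^\top Cc$ in the $C$-inner product, tested against the coordinate vector of the top node.

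What each route buys:
\begin{itemize}
\item The paper's route needs no Sturm sign bookkeeping and no Wronskian identity, and it feeds directly into the Christoffel/determinant picture of Remark~\ref{rem:general}.
\item Your route reuses the machinery of Theorem~\ref{thm:uncond} verbatim. This is legitimate, since the proof of Proposition~\ref{prop:general} records that Proposition~\ref{prop:possum} and Lemma~\ref{lem:cd} hold for any $V$.
\item Your route gives the exact identity $\abs{c_{k_0}}=Q(\delta_{k_0})\,r_{k_0}^2$ with no omitted factor.
\item You justify the Thouless limit at the moving point $w_N$ through the uniform equicontinuity argument of Lemma~\ref{lem:Qedge}. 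The paper compresses this step into a single appeal to the Thouless formula.
\end{itemize}
The second half of both arguments is the same: the Thouless formula at the reflected point, and continuity of $\gamma_V$ off $\Sigma_V$. You are right that neither proof uses the localization or Diophantine hypotheses.
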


\begin{proof}
The upper bound is Proposition~\ref{prop:general}. For the lower bound, keeping only the term $k=k_0$ in
Proposition~\ref{prop:reflect} (every term is nonnegative) and writing $r_{k_0}=\langle e_N,v_{k_0}\rangle$,
\[
\Ecal_N\ \ge\ 2\delta_{\max}\Bigl(r_{k_0}^2\!\!\prod_{E_l\ne\Emax}\!\!(\delta_{k_0}+\delta_l)\Bigr)^{2},
\]
hence
\[
\frac1{2N}\log\Ecal_N\ \ge\ \frac1{2N}\log(2\delta_{\max})+\frac1N\log r_{k_0}^2+\frac1N\!\!\sum_{E_l\ne\Emax}\!\!\log(E_{k_0}+E_l-2E_0).
\]
The first term is $O(N^{-1})$. The third converges to $\gamma_V(2E_0-E_{k_0})$ by the Thouless formula at the off-spectral
point $2E_0-E_{k_0}$, whose distance to $\Sigma_V$ is at least $2\alpha$ (the single omitted factor contributes
$O(N^{-1})$). Since $E_{k_0}\le\Emax$ and $E_{k_0}\ge\Emax-\omega_N$ give $2E_0-E_{k_0}\in[z^\star,z^\star+\omega_N]$, and
$z\mapsto\gamma_V(z)$ is real-analytic off $\Sigma_V$, one has $\gamma_V(2E_0-E_{k_0})=\gamma_V(z^\star)+O(\omega_N)$.
With $\frac1N\log r_{k_0}^2\ge-\nu_N$ this gives $\frac1{2N}\log\Ecal_N\ge\gamma_V(z^\star)-o(1)$, so
$r(\alpha,V)\ge\gamma_V(z^\star)$.
\end{proof}

\begin{remark}[The identity beyond the self-dual cosine: status and routes]\label{rem:general}
The matching lower bound, hence the identity $r=\gamma_V(z^\star)$, does \emph{not} follow from
Proposition~\ref{prop:general} alone; what it needs is a near-edge mode ($E_{k^\star}\to\Emax$) that is
right-boundary localized, exactly as in Theorem~\ref{thm:uncond} --- the Thouless rate
$\frac1N\log Q(\delta_k)\to\gamma_V(2E_0-E_k)$ and its maximum at $z^\star$ being insensitive to the form of $V$.
In the cosine that mode is furnished by Lemma~\ref{lem:edgeexp}, whose quadratic band-edge defect rests on
$\theta=0$ being a \emph{critical point} of the finite-window edge $\Lambda_R(\theta)$, guaranteed there by the
reflection parity $\Lambda_R(-\theta)=\Lambda_R(\theta)$ ($v_{-k}(-\theta)=\lambda\cos(\theta+ka)=v_k(\theta)$). For a
general analytic $V$ a critical point $\theta^\ast_R$ of $\Lambda_R$ still makes the band-edge defect quadratic in
$\theta-\theta^\ast_R$, with \emph{no} nondegeneracy needed (only the one-sided Taylor bound at a critical point; the
edge nondegeneracy $\lim_R\Lambda_R''(0)<0$ of Proposition~\ref{prop:curv} is not at stake), and for even $V$ the
parity pins $\theta^\ast_R\equiv0$ (Theorem~\ref{thm:generalsym}).

The obstruction lies elsewhere: for a generic
asymmetric $V$ the maximizer $\theta^\ast_R$ \emph{does not stabilize} --- it tracks the best Diophantine approximant
in the window and jumps at the convergents of $\beta$ (Remark~\ref{rem:asymnum}) --- so the hypothesis of
Proposition~\ref{prop:generalid} is not met, and the boundary-mode route alone does not reach the general identity.
Yet the identity \emph{holds} numerically far beyond the even class (Remark~\ref{rem:asymnum}), so it is expected for
every analytic $V$ in the localized regime, and two routes remain. \emph{(i) Christoffel/determinant.} The cost has the exact determinant form
$\Ecal_N=\det W_{[1,N-1]}/\det W$ (Proposition~\ref{prop:detform}); numerically it equals, up to a bounded factor, the
Christoffel--Darboux kernel of the spectral measure of $(H_V^{[1,N]},e_1)$ at the reflected edge,
$K_N(z^\star)=\sum_{j<N}\abs{p_j(z^\star)}^2$ (Remark~\ref{rem:asymnum}), and
$\frac1N\log K_N(z^\star)\to2\gamma_V(z^\star)$ by the off-spectral growth $\abs{p_j(z^\star)}\sim e^{j\gamma_V(z^\star)}$
(Thouless). The sharp lower bound is therefore the determinant-ratio asymptotic
$\frac1{2N}\log(\det W_{[1,N-1]}/\det W)\to\gamma_V(z^\star)$, the route used for the analogous single-site
observability cost; the diagonal bound $\Ecal_N\ge1/W_{NN}$ alone reaches only $\gamma_V(E_0)<\gamma_V(z^\star)$, since
$W_{NN}\asymp e^{-2N\gamma_V(E_0)}$ by the Green's-function decay at $E_0$. The reflected extremal form
(Proposition~\ref{prop:reflect}) makes the mechanism explicit: its lower bound is numerically tight, and since
$\frac1N\log\prod_{E_l\ne\Emax}(\delta_k+\delta_l)\to\gamma_V(2E_0-E_k)$ its rate is
$\max_k[\gamma_V(2E_0-E_k)+\frac1N\log r_k^2]\le\gamma_V(z^\star)$, with equality exactly when near-edge modes
$E_k\to\Emax$ keep non-negligible weight $r_k^2$ at the boundary site $N$ --- the same near-edge/boundary resonance that
the moving target below supplies, so the two routes reduce to one analytic input. This common input is exactly the
boundary edge mode of Proposition~\ref{prop:edgemode} --- a true eigenmode $E_{k_0}\to\Emax$ with
$\frac1N\log\abs{\langle e_N,v_{k_0}\rangle}^2\to0$ --- a strictly weaker requirement than the stable critical point of
Proposition~\ref{prop:generalid} (which implies it) and, unlike the latter, one that makes sense for asymmetric $V$;
numerically it is met, the optimal $k_0$ being a near-edge mode whose boundary weight is not exponentially small
(Remark~\ref{rem:asymnum}).

\emph{(ii) Moving target.} Since the boundary
zone $[N-\delta_N^{-\tau},N]$ is $\delta_N$-dense for $\tau\ge1$ (three distances), one targets the argmax
$\theta^\ast_{\rho_N}$ at the radius $\rho_N=A\log N$ actually used, not a fixed limit; the jumps of $\theta^\ast_R$
become harmless and the open point reduces to a \emph{no-double-resonance} estimate, that the boundary mode not
hybridize with a second near-edge site (\cite{Bourgain2005}). What is \emph{not} at issue is the energy: the boundary
block $H_V^{[N-\ell,N]}$ is a compression $P\,H_V^{(-\infty,N]}P$, so its top eigenvalue increases with $\ell$ to the edge
$\Emax$, and near-edge energies are freely available; the entire content of Proposition~\ref{prop:edgemode} is that some
such eigenvector carries more than exponentially small weight at the site $N$, which holds once a resonant well lies within
$o(N)$ of the boundary and does not doubly resonate. (The bracket \eqref{eq:bracket} itself has no analogue, since
$\gamma_V$ need not be constant on $\Sigma_V$ and $N_V\ne\mu_{\mathrm{eq}}$; but the identity is reached through the
direct route above, not the bracket.) By Avila's global theory \cite{Avila2015} the subcritical/critical/supercritical
trichotomy of the acceleration then replaces the single self-dual transition; the present unconditional upper bound is
one half of the result.
\end{remark}

\begin{proposition}[Identity for general analytic potentials, conditional on a stable edge critical point]\label{prop:generalid}
Keep the setting of Proposition~\ref{prop:general}, now with $V$ in a positive-Lyapunov localized regime
($\gamma_V>0$ on $\Sigma_V$ and Anderson localization for almost every $\phi$) and $\beta$ Diophantine of type
$\tau$. Write $\Lambda_R(\theta)$ for the top Dirichlet eigenvalue of the radius-$R$ window with on-site potential
$V(\theta+k\beta)$, $\abs k\le R$. Suppose there exist $\theta^\ast\in\mathbb R/\mathbb Z$ and, for each large $R$, a
critical point $\theta^\ast_R$ of $\Lambda_R$ (that is, $\Lambda_R'(\theta^\ast_R)=0$) such that
\[
\Lambda_R(\theta^\ast_R)=\Emax-O(e^{-\gamma R}),\qquad \abs{\theta^\ast_R-\theta^\ast}=O(e^{-cR}).
\]
Then $r(\alpha,V)=\gamma_V(z^\star)$, with $\gamma_V(z^\star)-\max_k\tfrac1N\log\abs{c_k}=O\!\bigl(N^{-2/(2+\tau)}\bigr)$.
\end{proposition}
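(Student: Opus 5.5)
The plan is to transplant the proof of Theorem~\ref{thm:uncond} to general analytic $V$. The stable critical point $\theta^\ast$ takes over the role that the reflection parity \eqref{eq:parity} played for the cosine. The upper bound $r\le\gamma_V(z^\star)$ is Proposition~\ref{prop:general}, so only the lower bound needs work.

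By Proposition~\ref{prop:possum} and Lemma~\ref{lem:cd}, both of which hold verbatim for $V$ as noted in the proof of Proposition~\ref{prop:general}, it suffices to exhibit, for each large $N$, a mode $k^\star$ satisfying
\[
\tfrac1N\log Q(\delta_{k^\star})\ge\gamma_V(z^\star)-O(\Emax-E_{k^\star})-O(1/N)
\qquad\text{and}\qquad
\tfrac2N\log\abs{\hat\psi^{(k^\star)}_N}\ge-O(R_N/N).
\]
The first estimate is the off-spectral determinant asymptotic. I would redo Lemma~\ref{lem:detasym} for $V$, or more simply use the uniform Thouless convergence of Lemma~\ref{lem:Qedge} together with real-analyticity of $\gamma_V$ off $\Sigma_V$. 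This is routine, because $2E_0-E_{k^\star}$ stays at distance $\ge2\alpha$ from $\Sigma_V$.

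For the resonance step, I fix $\delta_N=N^{-1/(2+\tau)}$ and $h_N=\lceil\log^2N\rceil$. I then take $j^\star$ to be the largest site $\le N-h_N$ with $\norm{\phi/2\pi+j^\star\beta-\theta^\ast}<\delta_N$, where $\theta^\ast$ is written in the paper's normalization $V(\theta+k\beta)$. Lemma~\ref{lem:resonance} applies unchanged with the target $\theta^\ast$ in place of $0$, since the three-distance argument is translation invariant. It gives $R_N=N-j^\star=O(\delta_N^{-\tau})$.

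The band-edge defect is where the hypothesis enters. I work at window radius $\rho=\rho_N=\lceil A\log N\rceil$ and let $\theta_{j^\star}$ be the window phase at $j^\star$. Since $\Lambda_\rho'(\theta^\ast_\rho)=0$ and $\Lambda_\rho$ is real-analytic near its maximum with $R$-uniform bounds on $\Lambda_\rho''$ (the first-order perturbation bound $\abs{\Lambda_\rho''}\le\norm{V''}_\infty+2\sup\norm{\dot H\hat\psi}^2/\mathrm{gap}$), Taylor expansion gives
\[
\Emax-\Lambda_\rho(\theta_{j^\star})\le O(e^{-\gamma\rho})+C\abs{\theta_{j^\star}-\theta^\ast_\rho}^2\le O(N^{-\gamma A})+C(\delta_N+e^{-c\rho})^2=O(\delta_N^2),
\]
using the hypothesis $\abs{\theta^\ast_\rho-\theta^\ast}=O(e^{-c\rho})$ and choosing $A$ large relative to $c$, $\gamma$ and $\xi$. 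This replaces the even-trial-vector argument of Lemma~\ref{lem:edgeexp}. The remaining parts of Lemma~\ref{lem:edgeexp} — the quasimode pinning $E^{(j^\star)}=\Lambda_\rho(\theta_{j^\star})+O(N^{\xi}e^{-\gamma\rho})$, existence of a Dirichlet eigenvalue of $H_V^{[1,N]}$ near it, and $O(1)$-centring of its eigenvector — I would take over with the analogue of (PL) for $V$, namely polynomial-prefactor localization together with the simple window gap. I regard these as part of the "localized regime" assumed in the statement.

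I expect this localization input to be the main obstacle. The statement only assumes Anderson localization, so I must either read (PL) into "localized regime" or prove that the top window eigenvalue is simple with a gap $\gg e^{-\gamma\rho}$. For the latter, the first thing I would try is the detuning argument of Lemma~\ref{lem:edgeexp}: a nondegeneracy of $V$ near $\theta^\ast$, together with the type bound, separates competing wells by $\gtrsim\norm{k\beta}^2$. A degenerate critical point would instead need a higher-order power, which would only change the exponent.

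With $E_{k^\star}$ in hand, the amplitude bound follows by inverting the $SL_2$ transfer across the $R_N+O(1)$ sites to the boundary, exactly as in Theorem~\ref{thm:uncond}. Adding the two estimates gives
\[
\tfrac1N\log\abs{c_{k^\star}}\ge\gamma_V(z^\star)-O(\delta_N^2)-O\bigl(1/(\delta_N^\tau N)\bigr),
\]
and the choice $\delta_N=N^{-1/(2+\tau)}$ balances the two error terms to $O(N^{-2/(2+\tau)})$.
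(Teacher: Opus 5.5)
Your route is the paper's: the resonant site $j^\star$ from Lemma~\ref{lem:resonance} with the target arc translated to $\theta^\ast$, a one-sided Taylor estimate at the critical point $\theta^\ast_{\rho_N}$ combined with the anchor, the $SL_2$ inversion to the boundary, and the balance at $\delta_N=N^{-1/(2+\tau)}$. Two steps do not close as written, but both can be repaired.

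The Taylor step relies on an $R$-uniform two-sided bound $\abs{\Lambda_\rho''}\le\norm{V''}_\infty+2\norm{\dot H\hat\psi}^2/\mathrm{gap}$. Nothing makes the top window gap uniform in $\rho$. Even for the cosine, Lemma~\ref{lem:edgeexp} gives only $\gtrsim\rho^{-2\tau}$, which at $\rho=\lceil A\log N\rceil$ would cost a power of $\log N$ in the rate, and for general $V$ you have no gap bound at all. Only the lower bound is needed, and it is free. For the top eigenvalue the second-order sum is nonnegative, so $\Lambda_\rho''\ge\langle\hat\psi,\ddot H\hat\psi\rangle\ge-\norm{V''}_\infty$. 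Equivalently, test $H_\rho(\theta_{j^\star})$ against the top eigenvector at $\theta^\ast_\rho$ and drop the linear term by Hellmann--Feynman. Hence $\Emax-\Lambda_\rho(\theta_{j^\star})\le\tfrac12\norm{V''}_\infty(2\delta_N)^2+O(e^{-\gamma\rho})$, with no gap and no nondegeneracy. This is the one-sided estimate the paper's proof invokes.

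The step you flag as the obstacle is a genuine gap: the existence of a Dirichlet mode pinned to $\Lambda_\rho(\theta_{j^\star})$ and centred near $j^\star$. You delegate it to (PL) plus a simple window gap, but neither is a hypothesis of the proposition. The paper's proof passes the same step with ``by localization'', whereas Theorem~\ref{thm:generalsym} assumes (PL) explicitly. The input can be removed as follows.

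Let $\varphi$ be the top eigenvector of the window $\{j^\star-\rho,\dots,j^\star+\rho\}$, extended by zero. That window operator is a compression of $H_V^{[1,N]}$, so $\langle\varphi,H_V^{[1,N]}\varphi\rangle=\Lambda_\rho(\theta_{j^\star})=\Emax-\eta$ with $\eta=O(\delta_N^2)$. Every Dirichlet eigenvalue is $\le\Emax$. Writing $\varphi=\sum_ka_k\hat\psi^{(k)}$ gives $\sum_ka_k^2(\Emax-E_k)=\eta$, so modes with $E_k<\Emax-2\eta$ carry at most half the weight. Some $k^\star$ therefore has $E_{k^\star}\ge\Emax-2\eta$ and $a_{k^\star}^2\ge1/(2N)$. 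By Cauchy--Schwarz, $\hat\psi^{(k^\star)}$ has an entry of modulus $\ge(2N(2\rho+1))^{-1/2}$ at some site $n$ of the window. Your $SL_2$ inversion from $n$ to $N$, with $\kappa:=\sup_j\norm{T_j(E_{k^\star})}$, then gives $\abs{\hat\psi^{(k^\star)}_N}\ge\kappa^{-(R_N+\rho)}(2N(2\rho+1))^{-1/2}$.

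These are exactly your two estimates, now obtained with no localization, gap or centring input. The detuning programme you sketch is therefore unnecessary, and the proof closes under the stated hypotheses, at every phase.

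One last point concerns the quantitative rate. Bare Thouless convergence (Lemma~\ref{lem:Qedge}) yields only $o(1)$ in the $Q$-factor, so take your first option, the analogue of Lemma~\ref{lem:detasym}. Its cone condition holds for general $V$. Since $V_n\ge\Emin$, you get $V_n-(2E_0-E_{k^\star})\ge E_{k^\star}-\Emin+2\alpha\to\Emax-\Emin+2\alpha>2$, where two-site trial vectors give $\Emax-\Emin\ge2$.
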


\begin{proof}
Run the argument of Theorem~\ref{thm:uncond} with the target $\theta^\ast$ in place of $0$; only the band-edge defect
must be re-derived, and \emph{no nondegeneracy of the edge is used} --- the bound below is the one-sided Taylor
estimate at a critical point. Fix $R=\rho_N=A\log N$ with $A$ so large that
$e^{-c\rho_N}=N^{-cA}\le\delta_N:=N^{-1/(2+\tau)}$. Lemma~\ref{lem:resonance}, whose three-distance input is unchanged
under translating the target arc, furnishes a site $j^\star\le N$ with $\norm{\beta j^\star+\phi-\theta^\ast}<\delta_N$
and $N-j^\star\le C_\beta\delta_N^{-\tau}$; its window phase $\theta_{j^\star}=\phi+\beta j^\star$ then obeys
$\abs{\theta_{j^\star}-\theta^\ast_{\rho_N}}\le C\delta_N$. As $\theta^\ast_{\rho_N}$ is critical, Taylor's theorem with
$\Lambda'_{\rho_N}(\theta^\ast_{\rho_N})=0$ and $\sup_\theta\abs{\Lambda''_{\rho_N}(\theta)}\le\sup\abs{V''}+4=:C_V$
gives $\abs{\Lambda_{\rho_N}(\theta^\ast_{\rho_N})-\Lambda_{\rho_N}(\theta_{j^\star})}\le\tfrac12C_V C^2\delta_N^2$. The
band-edge mode localized at $j^\star$ satisfies $E_{k^\star}=\Lambda_{\rho_N}(\theta_{j^\star})+O(e^{-\gamma\rho_N})$ by
localization and $E_{k^\star}\le\Emax$ by interlacing; with the anchor
$\Lambda_{\rho_N}(\theta^\ast_{\rho_N})=\Emax-O(e^{-\gamma\rho_N})$,
\[
0\ \le\ \Emax-E_{k^\star}\ \le\ C'\delta_N^2+O(e^{-\gamma\rho_N})\ =\ O(N^{-2/(2+\tau)}).
\]
By Proposition~\ref{prop:general}, $\frac1N\log Q(\delta_{k^\star})=\gamma_V(2E_0-E_{k^\star})+o(1)=\gamma_V(z^\star)-O(\delta_N^2)$,
while the deterministic $SL_2$ amplitude bound of Theorem~\ref{thm:uncond} gives
$\frac2N\log\abs{\hat\psi^{(k^\star)}_N}\ge-2\gamma\,\frac{N-j^\star}{N}\ge-2\gamma N^{-2/(2+\tau)}$. Hence
$\frac1N\log\abs{c_{k^\star}}\ge\gamma_V(z^\star)-O(N^{-2/(2+\tau)})$, so $r\ge\gamma_V(z^\star)$; with the upper bound of
Proposition~\ref{prop:general}, equality.
\end{proof}

\begin{theorem}[Identity for even analytic potentials in the localized regime]\label{thm:generalsym}
Let $V:\mathbb R/\mathbb Z\to\mathbb R$ be real-analytic and \emph{symmetric about its global maximum}: after centring,
$V(-x)=V(x)$ with $x=0$ the unique global maximum. Suppose $V$ is in a positive-Lyapunov localized regime---Anderson localization for almost every $\phi$, as holds at
large coupling for analytic potentials \cite{BourgainGoldstein}---with the localization in the polynomial-prefactor
form \textup{(PL)} of Lemma~\ref{lem:semiloc} (a hypothesis here as in the almost Mathieu case,
cf.\ Remark~\ref{rem:plstatus}), and $\beta$ is Diophantine of type $\tau$. Then for every $\alpha>0$,
\[
r(\alpha,V)=\gamma_V(z^\star),\qquad \gamma_V(z^\star)-\max_k\tfrac1N\log\abs{c_k}=O\!\bigl(N^{-2/(2+\tau)}\bigr),
\]
in this regime. Here $\gamma_V$ need not be constant on $\Sigma_V$, so $N_V\ne\mu_{\mathrm{eq}}$ and the capacity
bracket \eqref{eq:bracket} has no analogue; the identity is nonetheless exact. The almost Mathieu case $V=\lambda\cos$
is the single-harmonic instance, recovering the upper end of Theorem~\ref{thm:uncond}.
\end{theorem}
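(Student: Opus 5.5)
The plan is to deduce Theorem~\ref{thm:generalsym} from Proposition~\ref{prop:generalid}, taking $\theta^\ast=0$ and $\theta^\ast_R\equiv0$. Once this stable critical point is verified, the stated gap $O(N^{-2/(2+\tau)})$ and the identity follow from that proposition. Its upper half is Proposition~\ref{prop:general}. Its lower half reruns Theorem~\ref{thm:uncond}: the three-distance site supplied by Lemma~\ref{lem:resonance}, the Thouless/determinant rate of $Q$, and the $SL_2$ amplitude bound. We must therefore produce, for every large $R$, a critical point of $\Lambda_R$ whose location is stable and whose value is within $O(e^{-\gamma R})$ of $\Emax$.

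\emph{Step 1 (parity).} The argument copies \eqref{eq:parity}. The reflection $R_\flat:e_k\mapsto e_{-k}$ commutes with the window hopping, and $V(-\theta-k\beta)=V(\theta+k\beta)$ gives $R_\flat H_R(\theta)R_\flat=H_R(-\theta)$. Hence $\Lambda_R(-\theta)=\Lambda_R(\theta)$. The top eigenvalue is simple by Perron--Frobenius, so $\Lambda_R$ is real-analytic near $0$ and therefore $\Lambda_R'(0)=0$. This holds for every $R$, so the location requirement $\abs{\theta^\ast_R-\theta^\ast}=0$ is trivially satisfied.

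\emph{Step 2 (edge anchor).} We need $\Lambda_R(0)=\Emax-O(e^{-\gamma R})$. The upper inequality $\Lambda_R(0)\le\Emax$ follows because $H_R(0)$ is a compression of a phase-translate of $H_V$, whose spectrum is $\Sigma_V$ by minimality. For the lower bound we take an eigenvalue of $H_V$ near $\Emax$, which exists by localization at a.e.\ phase. Its eigenfunction has its centre at a site whose phase lies near the unique maximizer $0$ of $V$. We translate it by an almost period that brings the centre to phase $0$ at site $0$, and restrict it to the window. By \textup{(PL)}, the boundary truncation error is $O(N^\xi e^{-\gamma R})$, and this polynomial factor is absorbed into the choice of $R=\rho_N=A\log N$ exactly as in Theorem~\ref{thm:uncond}. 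The resulting quasimode has to be pinned to the \emph{top} eigenvalue of $H_R(0)$, which requires the window-gap property of Lemma~\ref{lem:edgeexp}.

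\emph{Main obstacle.} The gap property is the hard step. For the cosine it came from the detuning bound $\lambda(\cos\theta_j-\cos\theta_{j+k})\ge4\lambda\norm{k\beta}^2$. For general even $V$ with a unique nondegenerate maximum at $0$, one would first try the analogous bound $V(0)-V(k\beta)\ge c\norm{k\beta}^2$, obtained by analyticity and compactness. If the maximum is degenerate (order $2m$), the bound becomes $c\norm{k\beta}^{2m}$, which is still polynomial and so still dominates $e^{-\gamma R}$. The same Combes--Thomas/Feshbach scheme then separates competitors, and below the strong-coupling threshold the gap property is taken as the window-gap component of \textup{(PL)}, as the hypotheses allow. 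With Steps 1--2 in place, Proposition~\ref{prop:generalid} applies, and the sup bound on $\Lambda_R''$ used in its Taylor step holds by analytic perturbation theory.
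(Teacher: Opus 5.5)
Your proposal is correct and follows the paper's proof. Evenness of $V$ gives $H_R(-\theta)$ as the reflection-conjugate of $H_R(\theta)$, so $\Lambda_R'(0)=0$ for every $R$ and $\theta^\ast_R\equiv0$ is a drift-free critical point; the anchor $\Lambda_R(0)=\Emax-O(e^{-\gamma R})$ is imported from the band-edge argument of Lemma~\ref{lem:edgeexp}, with the deepest well at the window centre; and Proposition~\ref{prop:generalid} then delivers the identity with gap $O(N^{-2/(2+\tau)})$. You are more explicit than the paper that pinning the anchor quasimode to the top eigenvalue uses the window-gap component of \textup{(PL)} (detuning $V(0)-V(k\beta)\gtrsim\norm{k\beta}^{2m}$), a point the paper's proof passes over, but this does not change the argument.
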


\begin{proof}
We verify the hypothesis of Proposition~\ref{prop:generalid} with $\theta^\ast=0$ and $\theta^\ast_R\equiv0$, so that
stability is trivial. Let $J$ be the window reflection $(Ju)_k=u_{-k}$. Since $-\Delta$ commutes with $J$, the operator
$JH_R(\theta)J$ has on-site potential $V(\theta-k\beta)=V\bigl(-(\theta-k\beta)\bigr)=V(-\theta+k\beta)$ by evenness,
which is exactly the on-site potential of $H_R(-\theta)$; hence $JH_R(\theta)J=H_R(-\theta)$ and
$\Lambda_R(-\theta)=\Lambda_R(\theta)$. Thus $\Lambda_R$ is even, so $\Lambda_R'(0)=0$ for every $R$: a critical point
fixed at $0$, with no drift. The anchor $\Lambda_R(0)=\Emax-O(e^{-\gamma R})$ is the band-edge estimate already used in
Lemma~\ref{lem:edgeexp}, whose argument needs only that the window's deepest well sit at its centre $k=0$ (value
$V(0)=\max V$, the other wells $V(k\beta)$, $0<\abs k\le R$, being strictly shallower since $0$ is the unique maximum
and $k\beta\notin\mathbb Z$): the localized top mode is captured up to its exponential tail, and $\Lambda_R(0)\le\Emax$
by interlacing. Both hypotheses of Proposition~\ref{prop:generalid} hold, giving the identity. The failure of the
capacity bracket for non-constant $\gamma_V$ is recorded in Remark~\ref{rem:general}.
\end{proof}

\begin{remark}[Numerical confirmation]\label{rem:evennum}
For the even bichromatic potential $V(x)=\cos2\pi x+\tfrac25\cos4\pi x$ (global maximum at $x=0$), $\lambda=5$,
$\alpha=\tfrac12$ and golden $\beta$, the reflected exponent agrees to $4\times10^{-5}$ between the Thouless integral
$\int\log\abs{z^\star-t}\,dN_V(t)$ and the off-spectral transfer Lyapunov exponent, giving $\gamma_V(z^\star)=2.805$;
the Lyapunov exponent is non-constant on $\Sigma_V$, so no capacity split exists. All finite-$N$ rates in this remark
and the next use the protocol of \S\ref{sec:certified}: sites $1,\dots,N$, phase $\phi=0$, $E_0=E_1(N)-\alpha$ (the
values, notably at small $N$, depend on this convention). The certified rate
$\frac1{2N}\log\Ecal_N$ at the Fibonacci scales $N=13,21,34,55,89$ equals $2.660,\,2.715,\,2.750,\,2.769,\,2.781$,
increasing monotonically toward $\gamma_V(z^\star)$ with gaps $0.145,\,0.090,\,0.055,\,0.036,\,0.024$ that decay faster
than the generic $O(N^{-2/3})$ --- in line with \S\ref{sec:certified}.
\end{remark}

\begin{remark}[Numerical confirmation, asymmetric potential]\label{rem:asymnum}
For the asymmetric $V(x)=\cos2\pi x+\tfrac{3}{10}\sin2\pi x+\tfrac25\cos4\pi x$ (so $V(-x)\ne V(x)$, global maximum at
$x\approx0.018$), $\lambda=5$, $\alpha=\tfrac12$ and golden $\beta$, the Thouless integral and the off-spectral
transfer Lyapunov exponent agree at $\gamma_V(z^\star)=2.898$ to $3\times10^{-5}$, and the certified rate
$\frac1{2N}\log\Ecal_N$ at $N=13,21,34,55,89$ is $2.766,\,2.793,\,2.846,\,2.854,\,2.873$, rising monotonically toward
it (gaps $0.132\to0.026$). The window-edge maximizer does \emph{not} converge: $\theta^\ast_R\approx
0.93,\,0.98,\,0.095,\,0.095,\,0.095,\,0.22$ for $R=10,20,40,80,160,320$, jumping with the convergents of $\beta$.
Finally the ratio $\Ecal_N/K_N(z^\star)$ to the Christoffel--Darboux kernel stays in $[5,20]$ over this range with no
exponential drift --- the empirical signature of $\Ecal_N\asymp K_N(z^\star)$ underlying route~(i) of
Remark~\ref{rem:general}. The reflected lower bound of Proposition~\ref{prop:reflect} is itself tight here, its rate
matching $\frac1{2N}\log\Ecal_N$ to five digits, and it is carried by a single boundary edge mode: over $N=21,34,55$
the maximizing index $k_0$ is a near-edge mode (gap to $\Emax$ of order $10^{-1}$, fluctuating with the convergents)
whose boundary weight is not exponentially small ($\frac1N\log\abs{\langle e_N,v_{k_0}\rangle}^2$ of order $10^{-4}$),
confirming the hypothesis of Proposition~\ref{prop:edgemode}.
\end{remark}

\medskip\noindent\emph{Three further directions.} The reduction is one-dimensional only through the Sturm and Wronskian
structure of Lemma~\ref{lem:reduction} and Lemma~\ref{lem:cd}; on a strip or ladder of width $m$ it persists with the
scalar transfer cocycle replaced by a $2m\times2m$ symplectic cocycle and the boundary-weight analysis by a
block-Jacobi oscillation theory, the natural route to the multichannel controllability cost. For a Liouville frequency
the localization input fails, and the finite-subset bound above suggests the identity can then break, the
rate dropping below $\gamma_\lambda(z^\star)$ along resonant scales. Finally, $r(\alpha,\lambda)$ is the leading order
of a nonlinear remote-stabilization cost; promoting the motivating linearization of \S\ref{sec:setup} to a theorem ---
nonlinear corrections uniformly higher order in $N$ near the hyperbolic equilibrium --- would make the rate a
statement about the nonlinear control problem itself.

\subsection*{Milestones}
\begin{itemize}
\item[(M1)] Reduction of the phase-uniform identity to the single inequality \eqref{eq:reductioncrit}
(Proposition~\ref{prop:reduction}): \emph{done}.
\item[(M2)] Bounded absolute gap and edge sharpness (Corollary~\ref{cor:summary}, Proposition~\ref{prop:edge}):
\emph{done}.
\item[(M3)] Cancellation-free positive-sum reduction (Proposition~\ref{prop:possum}) and the single-mode
extremal form $r=\liminf_N\max_k\frac1N\log\abs{c_k}$ with the joint band-edge/boundary criterion
(Corollary~\ref{cor:singlemode}): \emph{done}.
\item[(M4)] Conditioning rate equal to $g_{\mathbb C\setminus\Sigma_\lambda}(z^\star)$, for every phase
(Proposition~\ref{prop:cond}): \emph{done} at the level of the condenser rate, modulo the standard edge-regularity
lower bound; in the interval case it is the classical Bernstein--Walsh/Zolotarev estimate.
\item[(M5)] The edge identity $r(\alpha,\lambda)=\gamma_\lambda(z^\star)$ for almost every phase and every Diophantine
frequency: \emph{done} under the polynomial-prefactor localization \textup{(PL)}, whose window-gap component is a
theorem for $\lambda\ge\lambda_1$ (Theorem~\ref{thm:uncond}, Remark~\ref{rem:plstatus}), through the
boundary-weight form of Lemma~\ref{lem:cd} and the boundary-resonance Lemma~\ref{lem:resonance}, with no spacing or
conditioning input.
\item[(M6)] The joint band-edge/boundary statement for any Diophantine $\beta$ (type $\tau$), equivalently the magnitude
bound \eqref{eq:magbound}: \emph{done} for almost every phase (Theorem~\ref{thm:uncond}, with the coupling proviso of~(M5)), with gap $O(N^{-2/(2+\tau)})$;
uniformity over every phase is the remaining phase-uniform identity, equivalent by Proposition~\ref{prop:phaseunif} to
upper semicontinuity of the cost rate $\phi\mapsto r(\phi)$.
\item[(M7)] The metallic identity $r(\alpha,\lambda)=\gamma_\lambda(z^\star)$ for $0<\lambda<2$: \emph{done}
unconditionally, for every phase and every Diophantine frequency (Theorem~\ref{thm:metclosed}), via the uniform
subexponential transfer bound on the zero-exponent spectrum (Lemma~\ref{lem:unifsub}), a Bernstein--Walsh estimate,
and the deterministic
amplitude identity of Proposition~\ref{prop:metamp}.
\item[(M8)] The critical coupling $\lambda=2$: \emph{done} unconditionally (Theorem~\ref{thm:critical}). The transfer
input is the uniform sub-exponential bound \eqref{eq:subexp} (the critical Lyapunov exponent vanishes on $\Sigma_2$), and
the band edge is automatically regular because $\cpc(\Sigma_2)=1$ forces $g_{\mathbb C\setminus\Sigma_2}=\gamma_2$ (continuous). No square-root edge modulus and no polynomial
transfer bound are needed.
\item[(M9)] Beyond Aubry--Andr\'e: for a general real-analytic quasi-periodic potential $V$, the Cauchy reduction
and the upper bound $r(\alpha,V)\le\gamma_V(z^\star)$: \emph{done} unconditionally (Proposition~\ref{prop:general}). For $V$ even about its maximum the full identity
$r(\alpha,V)=\gamma_V(z^\star)$ is \emph{done} under \textup{(PL)} (Theorem~\ref{thm:generalsym}). For asymmetric $V$ the
identity is conjectural but numerically robust (Remark~\ref{rem:asymnum}); the finite-window edge maximizer does not
stabilize, so the boundary-mode route gives way to a Christoffel/determinant lower bound ($\Ecal_N\asymp K_N(z^\star)$)
or a no-double-resonance estimate (Remark~\ref{rem:general}).
\end{itemize}

\appendix

\section{A renormalization route toward the full identity (open)}\label{app:renorm}

Theorem~\ref{thm:uncond} establishes the magnitude bound \eqref{eq:magbound} for almost every phase; for the
phase-uniform statement the reduction of \S\S\ref{sec:condoverlap}--\ref{sec:nocancel} leaves it as the single
estimate, and \S\ref{sec:certified} exhibits along the Fibonacci scales a monotone approach to it.
Below we set out the renormalization structure that would turn that approach into a proof, the objects it must
control, and the precise fixed-point statement that remains. Nothing below is claimed as proved: it is the
strategy to which the evidence of \S\ref{sec:certified} points.

\subsection*{The exact cocycle factorization}
For $\beta=(\sqrt5-1)/2$ let $F_n$ be the Fibonacci numbers and let
\[
M_n(E,\phi)=A_{F_n}(E,\phi)\cdots A_1(E,\phi),\qquad
A_j(E,\phi)=\begin{psmallmatrix}\lambda\cos(2\pi\beta j+\phi)-E & -1\\[2pt] 1 & 0\end{psmallmatrix},
\]
be the transfer matrix over $[1,F_n]$ in the notation of \S\ref{sec:setup}; its entries are the
$\mathcal T_{11},\mathcal T_{21}$ of \S\ref{sec:cocycle} at scale $F_n$. From $F_{n+1}=F_n+F_{n-1}$ and the
quasiperiodicity of the potential,
\begin{equation}\label{eq:fib}
M_{n+1}(E,\phi)=M_{n-1}\bigl(E,\phi+\theta_n\bigr)\,M_n(E,\phi),\qquad
\theta_n=2\pi\bigl(F_n\beta-F_{n-1}\bigr)=2\pi(-1)^{\,n-1}\beta^{\,n},
\end{equation}
exactly, with the phase drift $\theta_n\to0$ geometrically. Iterating \eqref{eq:fib} is the renormalization:
it drives a dynamics on pairs $(M_{n-1},M_n)\in SL(2,\mathbb R)^2$ by
$(M_{n-1},M_n)\mapsto(M_n,\,M_{n-1}(\,\cdot\,,\phi+\theta_n)M_n)$, whose trace coordinates
$x_n=\tfrac12\operatorname{tr}M_n$ obey, up to the drift $\theta_n$, the Fibonacci trace map
$x_{n+1}=2x_nx_{n-1}-x_{n-2}$. The spectrum is the locus of bounded orbits, and its hierarchical band
structure at level $n{+}1$ is assembled from levels $n,n{-}1$ through \eqref{eq:fib}. This is the same
renormalization \emph{structure} used for the Fibonacci Hamiltonian (S\"ut\H{o} \cite{Suto1987},
Casdagli \cite{Casdagli1986}; in hyperbolic-dynamical form, Damanik--Gorodetski \cite{DamanikGorodetski2011}),
applied here to the almost Mathieu cocycle rather than to a substitution-invariant potential. The distinction
is essential: for the Sturmian Fibonacci potential the block composition carries no phase drift, the trace map
is exact, and the invariant $I=\mu^2/4$ of \cite{Suto1987} (the Fricke--Vogt invariant, in the terminology
of \cite{DamanikGorodetski2011}) is conserved, whereas \eqref{eq:fib}
is exact but drifts by $\theta_n$, so the trace map closes only up to $\theta_n$ and no invariant survives. The
cited framework is therefore a structural template here, not an importation of those theorems.

\subsection*{The objects to renormalize}
By Proposition~\ref{prop:cocycleform} the cost at scale $N=F_n$ needs no matrix inversion: it is fixed by the
eigenvalues $E_k$ (the zeros of $(M_n)_{11}$), the boundary amplitudes $\tilde b_k=(M_n)_{21}(E_k)$, and the
Cauchy matrix $C$ of the shifted nodes $\delta_k=E_k-E_0$. All three are functions of the cocycle $M_n$, hence
carried by \eqref{eq:fib}: the node configuration renormalizes with the hierarchical bands, and $\tilde b_k$
with the off-diagonal entry along the orbit. The worst-conditioned Gram direction $u_{F_n}$ is determined by
$\{\delta_k\}$, so it too is a functional of the renormalization orbit.

\subsection*{The fixed-point statement (open)}
By Corollary~\ref{cor:nocancel} the full identity is equivalent to \eqref{eq:magbound}, which along $N=F_n$
reads
\[
\max_{k}\ \frac1{F_n}\log\bigl(\abs{u_{F_n,k}}\,\abs{\tilde b_k}\bigr)\ -\ \gamma_{\mathrm{amp},F_n}
\ \xrightarrow[\;n\to\infty\;]{}\ 0,
\qquad\gamma_{\mathrm{amp},F_n}=\tfrac1{2F_n}\log\norm{\tilde b}^2
\]
(numerically $\gamma_{\mathrm{amp},F_n}\to\log_+(\lambda/2)$ along the Fibonacci subsequence).
The evidence of \S\ref{sec:certified} is the finite-scale signature of a renormalization fixed point: along
$F_n$ the cost rate rises monotonically to $\gamma_\lambda(z^\star)$, and the maximizing index $k^\star$ is a
right-boundary mode, $j_{k^\star}/F_n\to1$ (numerically near $0.99$, with arithmetic fluctuations). The program
is to show that the rescaled profile
$k\mapsto \frac1{F_n}\log(\abs{u_{F_n,k}}\abs{\tilde b_k})$ converges, under \eqref{eq:fib}, to a
fixed shape whose maximum sits at the boundary mode and equals the running amplitude rate
$\gamma_{\mathrm{amp},F_n}$: equivalently, that the
worst-conditioned Gram direction keeps weight of rate zero on a boundary-carrying mode along the orbit, the
off-diagonal growth then supplying $\gamma_{\mathrm{amp}}$ by Lemma~\ref{lem:Qedge} read along the orbit.

\subsection*{A potential-theoretic decomposition of the carrying mode}
At any scale $N$ the inverse-free identity localizes the obstruction further. Write
$\mathcal D_k=\frac1N\log\bigl(\abs{u_{N,k}}\,\abs{\tilde b_k}\bigr)$. By
Proposition~\ref{prop:cocycleform} the diagonal of $C^{-1}=DCD$ is explicit,
$(C^{-1})_{kk}=Q(\delta_k)^2/\bigl(2\delta_k\,P'(\delta_k)^2\bigr)$. At a lobe maximum of $\abs{u_{N,k}}$,
where $\abs{u_{N,k}}$ is of order one, the top eigenvalue $\sigma_{\min}^{-1}$ of $C^{-1}$ dominates this
entry, so $\abs{u_{N,k}}^2=\sigma_{\min}\,(C^{-1})_{kk}\,(1+o(1))$; numerically the relative defect is below
$4\times10^{-5}$ at $N=36$ and decreases with $N$. Taking logarithms and using
$\frac1N\log\sigma_{\min}^{1/2}\to-g_{\mathbb C\setminus\Sigma_\lambda}(z^\star)$ (Proposition~\ref{prop:cond}),
\begin{equation}\label{eq:decomp}
\mathcal D_{k}= -g_{\mathbb C\setminus\Sigma_\lambda}(z^\star)+\frac1N\log\abs{Q(\delta_k)}-\frac1N\log\abs{P'(\delta_k)}
+\frac1N\log\abs{\tilde b_k}+o(1).
\end{equation}
The three node functionals have Thouless limits,
\[
\begin{gathered}
\tfrac1N\log\abs{Q(\delta_k)}\to\gamma_\lambda(2E_0-E_k),\qquad
\tfrac1N\log\abs{P'(\delta_k)}\to\gamma_\lambda(E_k)=\log_+(\lambda/2),\\
\tfrac1N\log\abs{\tilde b_k}\to\log_+(\lambda/2)\,(2j_k/N-1),
\end{gathered}
\]
the first by Lemma~\ref{lem:Qedge}; the second the logarithmic potential
$\int\log\abs{\delta_k-x}\,d\nu(x)$ of the density of states $\nu$ at the node, equal to the Lyapunov
exponent by the Thouless formula; the third the boundary amplitude, a transfer-cocycle entry fixed by the
localization centre $j_k$ (Remark~\ref{rem:strong}), \emph{not} a density-of-states quantity. The last two
cancel only at right-boundary modes ($j_k/N\to1$); in general, with $j_k$ the centre,
\[
\mathcal D_k\longrightarrow -g_{\mathbb C\setminus\Sigma_\lambda}(z^\star)+g_{\mathbb C\setminus\Sigma_\lambda}(2E_0-E_k)+\log_+(\lambda/2)\,(2j_k/N-1),
\]
the same joint band-edge/boundary functional as Corollary~\ref{cor:singlemode}. Its maximum equals
$\gamma_{\mathrm{amp}}$ exactly when a mode attains both the band edge $E_k\to\Emax$ (first term then $-g_{\mathbb C\setminus\Sigma_\lambda}(z^\star)+g_{\mathbb C\setminus\Sigma_\lambda}(z^\star)=0$)
and the right boundary $j_k/N\to1$ (last term $\to\log_+(\lambda/2)$), giving
\[
\mathcal D_{k^\star}\longrightarrow -g_{\mathbb C\setminus\Sigma_\lambda}(z^\star)+\gamma_\lambda(z^\star)=\log_+(\lambda/2)=\gamma_{\mathrm{amp}},
\]
which is \eqref{eq:magbound}. The carrier is therefore not \emph{forced} to the edge by the node potential
alone, as a careless cancellation of the last two terms would suggest: it must be jointly a band-edge and a
right-boundary mode. At every Diophantine frequency the three-distance estimate (Lemma~\ref{lem:resonance}) supplies such a mode,
which is what Theorem~\ref{thm:uncond} uses, the type only setting the rate.
This still locates the data term: $\gamma_{\mathrm{amp}}$
is the off-spectral amplification $\gamma_\lambda(z^\star)-g_{\mathbb C\setminus\Sigma_\lambda}(z^\star)$ at the reflected edge, realized when the
boundary amplitude ($j_k\to N$) coincides with the node potential, in agreement with the role of $z^\star$ in the
main text.
Numerically $\mathcal D_{k^\star}$ stays near $\gamma_{\mathrm{amp}}=\log_+(3/2)$ with the same arithmetic
fluctuations (values $0.31,0.40,0.41,0.42,0.38$ at $N=20,28,36,44,52$), while
$\frac1N\log\abs{u_{N,k^\star}}\to0$ and the cancellation
$-\frac1N\log\abs{P'(\delta_{k^\star})}+\frac1N\log\abs{\tilde b_{k^\star}}\to0$.

\subsection*{Reduction to two inputs}
The decomposition \eqref{eq:decomp} reduces \eqref{eq:magbound} to two inputs: (A) the
dominance of $\sigma_{\min}^{-1}$ in the diagonal of $C^{-1}$ at the lobe maxima of $u_N$, a spectral-gap property
of $C$; and (B) the existence of a band-edge mode that is right-boundary localized, i.e.\ an index $k^\star$ with
$E_{k^\star}\to\Emax$ and $j_{k^\star}/N\to1$ jointly. Under (A) each lobe maximum carries the joint functional
of Corollary~\ref{cor:singlemode} obtained above, whose maximum reaches $\gamma_{\mathrm{amp}}$ precisely
under (B).
Input (B) is the continued-fraction (arithmetic) content for $\beta=(\sqrt5-1)/2$, and input (A) is a spectral-gap
statement for $C$ carried by the renormalization \eqref{eq:fib}. At finite scale both fluctuate, visible in the
drift of the carrier index $k^\star/N$, so the analysis is the Fibonacci Hamiltonian one rather
than a perturbation. The duality $\lambda\mapsto4/\lambda$ does not assist: the cost rate is a localized-phase
quantity whose growth is the localization itself, and it degenerates in the dual extended phase. The
factorization \eqref{eq:fib} and the decomposition \eqref{eq:decomp} are thus the two lenses on the single
remaining input \eqref{eq:magbound}, now isolated as the spectral-gap property (A).

\end{document}